\DeclareSymbolFont{cyrletters}{OT2}{wncyr}{m}{n}
\DeclareMathSymbol{\Sha}{\mathalpha}{cyrletters}{"58}
\newtheorem{theorem}{Theorem}[section]
\newtheorem{lemma}[theorem]{Lemma}
\newtheorem{corollary}[theorem]{Corollary}
\newtheorem{proposition}[theorem]{Proposition}
\newtheorem{conjecture}[theorem]{Conjecture}
\theoremstyle{definition}
\newtheorem{example}[theorem]{Example}
\theoremstyle{remark}
\newtheorem{remark}[theorem]{Remark}
\newcommand{\C}{\mathbb C}
\newcommand{\R}{\mathbb R}
\newcommand{\Q}{\mathbb Q}
\newcommand{\G}{\mathbb G}
\newcommand{\Z}{\mathbb Z}
\newcommand{\F}{\mathbb F}
\newcommand{\A}{\mathbb A}
\begin{document}

\title{{Wild Kernels and divisibility in K-groups of global fields}}

\author[Grzegorz Banaszak]{Grzegorz Banaszak}
\address{Department of Mathematics and Computer Science, Adam Mickiewicz University, 
Pozna\'{n} 61614, Poland}
\email{banaszak@amu.edu.pl}



\subjclass[2000]{19D10, 11G30}
\date{}
\keywords{K-theory, Galois cohomology, global field, divisible elements, wild kernel}

\thanks{Partially supported by
the NCN (National Center for Science of Poland) 
grant NN201607440}

\begin{abstract}
{In this paper we study the divisibility and the wild kernels 
in algebraic K-theory of global fields $F.$ We extend the notion of the wild kernel
to all K-groups of global fields and prove that Quillen-Lichtenbaum conjecture for $F$
is equivalent to the equality of wild kernels with corresponding groups of 
divisible elements in K-groups of $F.$ We show that there exist generalized Moore exact 
sequences for even K-groups of global fields. Without appealing to the Quillen-Lichtenbaum 
conjecture we show that the group of divisible elements is isomorphic to the corresponding 
group of \' etale divisible elements and we apply this result for the proof of the $lim^1$ 
analogue of Quillen-Lichtenbaum conjecture. We also apply this isomorphism to investigate:
the imbedding obstructions in homology of $GL,$ 
the splitting obstructions for the Quillen localization sequence, the order of the group of
divisible elements via special values of $\zeta_{F}(s).$ Using the motivic cohomology results due 
to Bloch, Friedlander, Levine, Lichtenbaum, Morel, Rost, Suslin, Voevodsky and Weibel, 
which established the Quillen-Lichtenbaum conjecture, we conclude that wild kernels are equal to corresponding
groups of divisible elements.}
\end{abstract}

\maketitle


\section{Introduction}
Let $l$ be a prime number and let $F$ be a global field of characteristic 
$\text{char}\, F  \not= l.$ If $l = 2$ we assume that $\mu_{4} \subset F.$
The main goal of this paper is to establish general results concerning divisibility and 
wild kernels in algebraic K-theory of global fields. 
\medskip

It has already been shown by 
Bass \cite{B}, Tate \cite{Ta2} and Moore (see \cite[p. 157]{Mi}) that for a
number field $F$ the group $K_2 (F)$ and 
in particular the group of divisible elements and the wild kernel in $K_2 (F)$ are closely related 
to arithmetic of $F$ and the Dedekind zeta $\zeta_{F} (s)$ at $s = -1.$ The divisible 
elements for the Galois cohomology of number fields and local fields in the mix characteristic 
case were introduced in \cite{Sch2}. The divisible elements and wild kernels for the odd torsion part 
for the even higher K-groups of number fields were introduced in \cite{Ba1} and \cite{Ba2}. The results of 
Bass \cite{B}, and Moore (see \cite[p. 157]{Mi}) concerning the divisible elements and the wild kernel 
for $K_2$ where extended in \cite{Ba2} to higher even K-groups of number fields $F$ and values of $\zeta_{F} (s)$ 
at negative odd integers. The {\' e}tale wild kernel as a Shafarevich group in Galois cohomology of number fields
was introduced in \cite{Ng}. The work in \cite{Ba1}, \cite{Ba2}, 
\cite{Ng} and \cite{Sch2} was carried out under assumption $l > 2.$ The wild kernels for the $2$-primary part for 
the higher, even K-groups of number fields were introduced in \cite{Os} and in \cite{We3} were studied for all $l \geq 2.$ 
\bigskip

In this paper we investigate the wild kernels and the divisible elements for even and odd K-groups of global 
fields and for all $l \geq 2$ 
(see Theorems \ref{Theorem 1.1} - \ref{Theorem 1.11} and Corollary \ref{Theorem 1.12} in this introduction for the
statement of main results). These results are new in the ${\rm{char}} \, F > 0$ case and 
some of them are new in the ${\rm{char}} \, F = 0$ case for $l \geq 2.$ Some of these results have already
been known in the ${\rm{char}} \, F = 0$ case, often for $l > 2,$ so we make corresponding references 
in this introduction. 
The Theorems \ref{Theorem 1.1} - \ref{Theorem 1.8},  Theorems 
\ref{Theorem 1.10} - \ref{Theorem 1.11}
and Corollary \ref{Theorem 1.12} are proven without appealing to the Quillen-Lichtenbaum conjecture.
The Theorem \ref{Theorem 1.9} is a consequence 
of the Quillen-Lichtenbaum conjecture that in turn resulted from the Voevodsky results \cite{V1}, 
\cite{V2} and the motivic cohomology results due to Bloch, Friedlander, 
Levine, Lichtenbaum, Morel, Rost, Suslin, Voevodsky, Weibel, and others
(see e.g. \cite{BL} cf. \cite[Appendix B]{RW}, \cite{L}, \cite{MV}, \cite{R}, \cite{VSF}, \cite{We4})
\medskip

It was shown in \cite{Ba2} and \cite{BGKZ} in the case of number fields that the Quillen-Lichtenbaum 
conjecture for odd torsion part of the even K-groups is equivalent to the isomorphism between corresponding 
wild kernels and divisible elements. 
C. Weibel \cite{We3} and K. Hutchinson \cite{Hu} worked on wild kernels and divisible 
elements in the K-theory of number fields for $l \geq 2.$ 
C. Weibel \cite{We3} computed the index of the the group of divisible elements in the 
corresponding wild kernel for even K-theory of number fields for the 2-primary part.
The key ingredients in his proof were the results on motivic cohomology that led to the
computations of the $2$-primary part of Quillen K-theory (cf. \cite{RW}).
In this paper we show (see Theorem \ref{Theorem 1.9}) that the wild kernel is isomorphic to the 
divisible elements in K-groups of global fields for all indexes $n > 1$ and $l \geq 2$ (assuming
$\mu_4 \in F$ in the case of the $2$-primary part). To prove this we show that, under 
our assumptions, the Quillen-Lichtenbaum conjecture is equivalent to the isomorphism between wild 
kernel and the divisible elements. In the number field case Theorem \ref{Theorem 1.9} 
follows from \cite{Ba2} (for $l$ odd) and \cite{We3}, \cite{Hu} (for $l = 2$). 
\medskip

Recall that the divisible elements in K-groups of number fields are in the center 
of classical conjectures in algebraic number theory and algebraic K-theory. 
Indeed, the conjectures of Kummer-Vandiver and Iwasawa can be reformulated in 
terms of divisible elements in even K-groups of $\Q$ \cite{BG1},  \cite{BG2}. 
We have already pointed out in \cite[p. 292]{Ba2}, that the group of divisible 
elements in an even K-group of a number field $F$ is an analogue of the class group 
of $\mathcal{O}_F$. Moreover, as shown in section 6 of this paper, there is a positive 
integer $N_{0}$ 
such that for every positive integer $N,$ such that $N_{0} \, | \, N,$ 
there is the following exact sequence:
\begin{equation}\small{
0 \rightarrow K_{2n} ({\mathcal O}_F) \rightarrow 
K_{2n} (F) [N] 
\rightarrow \bigoplus_{v} \, K_{2n-1} (k_v)[N] \rightarrow 
D (n) \rightarrow 0.}
\nonumber\end{equation} 
Recall that the class group $Cl ({\mathcal O}_F)$ appears in the classical 
exact sequence:
\begin{equation}\small{
0 \rightarrow K_{1} ({\mathcal O}_F) \rightarrow 
K_{1} (F) 
\rightarrow \bigoplus_{v} \, K_{0} (k_v) \rightarrow 
 Cl ({\mathcal O}_F) \rightarrow 0.}
\nonumber\end{equation} 
In addition, as already mentioned above, the conjecture of Quillen-Lichtenbaum can be reformulated 
in terms of the wild kernels and divisible elements (see also Theorem \ref{QLequivalent} 
for more detailed presentation). At the last but not the least we would like to point out 
the close relation of divisible elements and the Coates-Sinnott conjecture \cite{Ba1}, \cite{BP}.
\bigskip

The organization of the paper is as follows.
In chapter 2 we introduce basic notation and recall some classical
facts about the cohomological dimension. 
In chapter 3 we extend results of P. Schneider \cite{Sch2} concerning the divisible
elements in Galois cohomology (in \cite{Sch2} the 
assumption was $\text{char} \, F = 0$ and $l > 2$). Namely we obtain 
Theorem \ref{functionfieldSchneider3} and Corollary \ref{DIVandSZA} which lead us to the following  
analog of the classical Moore exact sequence (see \cite[p. 157]{Mi}) for higher {\' e}tale K-theory: 

\begin{theorem} \label{Theorem 1.1} 
Let $n \geq 1.$ 
For every finite $S \supset S_{\infty, l}$ there are exact sequences: 
\begin{equation}
\small{0 \rightarrow D^{et}(n) \rightarrow
K_{2n}^{et}({\mathcal O}_{F, S})  
\rightarrow \bigoplus_{v \in S} \, W^{n} (F_v) 
\rightarrow W^{n} (F) \rightarrow
0.}
\nonumber\end{equation}
\begin{equation}
\small{0 \rightarrow D^{et}(n) \rightarrow
K_{2n}^{et}(F)_l 
\rightarrow \bigoplus_{v} \, W^{n} (F_v) 
 \rightarrow W^{n} (F) \rightarrow 0} 
\nonumber\end{equation}
where $D^{et}(n) := div \, K_{2n}^{et}(F)_l.$  In particular: 
\begin{equation}
\frac{|K_{2n}^{et}({\mathcal O}_{F, S})|}{| D^{et}(n)|} = 
\frac{{\bigl |}\prod_{v \in S} w_{n} (F_v) {\bigl |}_{l}^{-1} }{|w_{n} (F)|_{l}^{-1}}.
\nonumber\end{equation}
\end{theorem}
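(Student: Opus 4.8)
The plan is to deduce Theorem \ref{Theorem 1.1} from the Galois-cohomological results already in hand, namely Theorem \ref{functionfieldSchneider3} and Corollary \ref{DIVandSZA}, by invoking the standard dictionary between {\' e}tale $K$-theory and continuous {\' e}tale cohomology. First I would set up that dictionary. Because $\mathrm{cd}_l(\mathcal{O}_{F,S}) \leq 2$ and $H^0_{et}(\mathcal{O}_{F,S}, \Z_l(n)) = 0$ for $n \geq 1$ (the $l$-adic cyclotomic character has infinite order, so $\Z_l(n)$ has no nonzero invariants), the Dwyer--Friedlander spectral sequence collapses and its single potentially nontrivial extension degenerates, giving a canonical isomorphism $K_{2n}^{et}(\mathcal{O}_{F,S}) \cong H^2_{et}(\mathcal{O}_{F,S}, \Z_l(n+1))$, a finite group for $n \geq 1$. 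Passing to the filtered colimit over $S$ yields $K_{2n}^{et}(F)_l \cong H^2_{et}(F, \Z_l(n+1)) = \varinjlim_S H^2_{et}(\mathcal{O}_{F,S}, \Z_l(n+1))$, a torsion group, and under this isomorphism the divisible elements $D^{et}(n)$ correspond to the Shafarevich--Tate group $\Sha^2 := \ker\bigl(H^2_{et}(F,\Z_l(n+1)) \to \prod_v H^2(F_v, \Z_l(n+1))\bigr)$; this is exactly the content of Corollary \ref{DIVandSZA}, and for $S \supseteq S_{\infty,l}$ it agrees with the analogous kernel computed over $\mathcal{O}_{F,S}$.

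Next I would feed these identifications into Theorem \ref{functionfieldSchneider3}, which supplies the four-term Moore-type exact sequence in cohomology obtained from global (Poitou--Tate) duality for the module $\Z_l(n+1)$: the localization map $H^2_{et}(\mathcal{O}_{F,S},\Z_l(n+1)) \to \bigoplus_{v \in S} H^2(F_v,\Z_l(n+1))$ has kernel $\Sha^2$ and a cokernel governed by the dual of the global invariants of $\Q_l/\Z_l(-n)$. Local Tate duality identifies each $H^2(F_v, \Z_l(n+1))$ with the finite group $W^n(F_v)$ of order the $l$-part of $w_n(F_v)$, while the duality in \ref{functionfieldSchneider3} identifies the cokernel with the finite group $W^n(F)$ of order the $l$-part of $w_n(F)$ (here one uses $w_n = w_{-n}$, since $G_F$ acts trivially on $\mu_{l^k}^{\otimes n}$ exactly when it does on $\mu_{l^k}^{\otimes -n}$; the hypothesis $\mu_4 \subset F$ for $l=2$ also makes $F$ totally imaginary, so the archimedean terms vanish). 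Substituting $K_{2n}^{et}(\mathcal{O}_{F,S})$ for $H^2_{et}(\mathcal{O}_{F,S},\Z_l(n+1))$ and $D^{et}(n)$ for $\Sha^2$ gives the first exact sequence.

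The second sequence then follows by taking the direct limit over finite $S \supset S_{\infty,l}$, noting that $K_{2n}^{et}(F)_l = \varinjlim_S K_{2n}^{et}(\mathcal{O}_{F,S})$ and $\bigoplus_v W^n(F_v) = \varinjlim_S \bigoplus_{v\in S} W^n(F_v)$, while $D^{et}(n)$ and $W^n(F)$ are unchanged. The index formula is then purely formal: all four groups of the first sequence are finite, so exactness forces $|K_{2n}^{et}(\mathcal{O}_{F,S})| \cdot |W^n(F)| = |D^{et}(n)| \cdot \prod_{v\in S} |W^n(F_v)|$, and dividing, together with $|W^n(F_v)| = |w_n(F_v)|_l^{-1}$, $|W^n(F)| = |w_n(F)|_l^{-1}$ and the multiplicativity of $|\cdot|_l$, produces exactly the stated quotient.

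The genuine mathematical content sits entirely in the two inputs I am invoking, and that is where I expect the main obstacle to lie: the identification $D^{et}(n) = \Sha^2$ of Corollary \ref{DIVandSZA} (delicate precisely for $l = 2$ and in the $\mathrm{char}\,F>0$ case, which is where the function-field refinement of Schneider's argument is needed), together with the verification in Theorem \ref{functionfieldSchneider3} that the cokernel of localization is \emph{canonically} $W^n(F)$ and not merely a group of the same order, so that the maps in the sequence are the correct Moore/Hilbert-symbol maps. Once those are granted, the residual work here is only the degeneration of the {\' e}tale $K$-theory spectral sequence and the bookkeeping of the colimit and order counts, all of which are routine given the finiteness statements above.
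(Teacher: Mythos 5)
Your overall route is the one the paper itself takes: identify $K_{2n}^{et}$ with $H^{2}_{cont}(-,\Z_l(n+1))$ via the degenerate Dwyer--Friedlander spectral sequence, feed this into the generalized Schneider sequences of Theorem \ref{functionfieldSchneider3} (with the kernel term identified via Corollary \ref{DIVandSZA}), pass to the direct limit over $S$, and count orders using finiteness. The one place where you deviate --- identifying the kernel through $\Sha^{2}(F,\Z_l(n+1))$ rather than directly through $D_{n+1}(F) = div\,(H^{1}(G_F,W^{n+1})/Div)$, which is literally the first term of the sequences (\ref{Gen.Sch.OS}) and (\ref{Gen.Sch.F}) --- is cosmetic, since Corollary \ref{DIVandSZA} is itself a consequence of Theorem \ref{functionfieldSchneider3}.

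There is, however, a genuine gap. Both Theorem \ref{functionfieldSchneider3} and Corollary \ref{DIVandSZA} are \emph{conditional} statements, with hypothesis $i_{n+1}(F)=0$ (Schneider's vanishing for the twist $n+1$), and you never discharge that hypothesis. The paper's proof begins exactly there: Lemma \ref{SchneiderConj.} proves $H^{2}(\mathcal{O}_{F,S},W^{m})=0$, hence $i_{m}=0$, for all $m>1$, using the Quillen localization sequence, Soul\'e's results, and the surjectivity of the Dwyer--Friedlander maps; only then are the sequences (\ref{Gen.Sch.OS}) and (\ref{Gen.Sch.F}) available for the twist $n+1 \geq 2$. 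The same omission resurfaces in your finiteness claim: the assertion that $K_{2n}^{et}(\mathcal{O}_{F,S}) \cong H^{2}_{cont}(\mathcal{O}_{F,S},\Z_l(n+1))$ is ``a finite group for $n \geq 1$'' is not a formal consequence of $\mathrm{cd}_l \leq 2$; this $H^{2}_{cont}$ is a finitely generated $\Z_l$-module whose rank vanishes precisely when $H^{2}(G_{S},\Q_l(n+1))=0$, i.e.\ precisely when $i_{n+1}=0$. So the finiteness you need for the order formula is equivalent to the very hypothesis you left unverified. (The paper instead derives finiteness from finiteness of $K_{2n}(\mathcal{O}_{F,S})$ --- Borel, Harder, Quillen --- together with surjectivity of the Dwyer--Friedlander map $K_{2n}(\mathcal{O}_{F,S}) \rightarrow K_{2n}^{et}(\mathcal{O}_{F,S})$.) Once Lemma \ref{SchneiderConj.} is explicitly invoked, your argument closes up and coincides with the paper's proof.
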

\medskip

In chapter 4 we investigate the divisibility in K-theory and {\' e}tale K-theory
of $F.$ Let $$D(n) := div\,  K_{2n} (F).$$ 
For every $k > 0$ define: 
$$D (n, l^k) := \text{ker} \,  (K_{2n} ({\mathcal O}_F, \,  \Z /l^k) 
\rightarrow K_{2n} (F, \,  \Z/l^k)),$$ 
$$D^{et} (n, l^k) := 
\text{ker} \,  (K_{2n}^{et} ({\mathcal O}_{F} [1/l], \,  \Z /l^k) 
\rightarrow K_{2n}^{et} (F, \,  \Z/l^k)).$$
\medskip

\noindent
The following result shows that the Dwyer-Friedlander homomorphism is an isomorphism
when restricted to the groups $D (n, l^k)$ and $div \, K_{2n}(F)_l:$
\begin{theorem}\label{Theorem 1.2}
If $l > 2$ then $\forall \, k \geq 1$ there is 
the following canonical isomorphism:
\begin{equation}
D (n, l^k) \cong
D^{et} (n, l^k)
\nonumber\end{equation}
If $l = 2$ then $\forall \, k \geq 2$ 
there is the following canonical isomorphism:
\begin{equation}
D (n, 2^k) \cong
D^{et} (n, 2^k)
\nonumber\end{equation} 
If $l \geq  2$ then there is the following isomorphism $D(n)_l \cong  D^{et} (n)$ 
or more explicitly
\begin{equation}
div \, K_{2n}(F)_l \,\, \cong \,\, div \, K_{2n}^{et} (F)_l
\nonumber\end{equation}  
\end{theorem}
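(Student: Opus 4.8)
The plan is to compare the algebraic and {\'e}tale localization sequences through the Dwyer--Friedlander natural transformation $\phi$, exploiting the fact that the obstruction to $\phi$ being an isomorphism on the full groups (which is precisely the content of Quillen--Lichtenbaum) never interferes with the kernels $D(n,l^k)$ and $D^{et}(n,l^k)$. First I would set up the commutative ladder whose top row is the mod-$l^k$ localization sequence $K_{2n+1}(F,\Z/l^k)\xrightarrow{\partial}\bigoplus_v K_{2n}(k_v,\Z/l^k)\xrightarrow{i}K_{2n}(\mathcal O_F,\Z/l^k)\xrightarrow{\alpha}K_{2n}(F,\Z/l^k)$ and whose bottom row is the corresponding {\'e}tale sequence for $\mathcal O_F[1/l]$ (with final map $\beta$), the vertical maps being $\phi_F$, $\psi=\bigoplus_v\phi_{k_v}$, $\phi_{\mathcal O}$ and $\phi_F$. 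By definition $D(n,l^k)=\mathrm{im}(i)=\ker(\alpha)$ and $D^{et}(n,l^k)=\mathrm{im}(i^{et})=\ker(\beta)$. Two elementary remarks drive the argument: since $k_v$ is finite, $K_*(k_v)$ is finite of order prime to $\mathrm{char}\,k_v$, so for $v\mid l$ the group $K_{2n}(k_v,\Z/l^k)$ vanishes and the algebraic residue term reduces to a sum over the places not above $l$, matching the {\'e}tale side; and on these terms $\psi$ is an isomorphism, because Quillen's computation of $K_*$ of a finite field agrees with its {\'e}tale $K$-theory (cohomological dimension one).

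With this in place the first two isomorphisms reduce to a diagram chase that uses only the unconditional surjectivity of $\phi_F$ in odd degrees. For surjectivity of $\phi_{\mathcal O}$ onto $D^{et}(n,l^k)$: any $x\in D(n,l^k)$ is $x=i(c)$, so $\phi_{\mathcal O}(x)=i^{et}\psi(c)$ by the right square, whence $\phi_{\mathcal O}(D(n,l^k))=i^{et}(\mathrm{im}\,\psi)=\mathrm{im}(i^{et})=D^{et}(n,l^k)$ since $\psi$ is onto. For injectivity: if $\phi_{\mathcal O}(i(c))=i^{et}\psi(c)=0$, then $\psi(c)=\partial^{et}(w)$ for some $w$, and here I invoke the Dwyer--Friedlander--Soul{\'e} surjectivity of $\phi_F$ to write $w=\phi_F(u)$; the left square gives $\psi(c)=\partial^{et}\phi_F(u)=\psi\,\partial(u)$, injectivity of $\psi$ gives $c=\partial(u)$, and $i\partial=0$ forces $x=i(c)=0$. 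Thus $\phi_{\mathcal O}$ restricts to the isomorphism $D(n,l^k)\cong D^{et}(n,l^k)$. Crucially this never uses injectivity of $\phi_F$ or $\phi_{\mathcal O}$ on the full groups, which is what lets us avoid Quillen--Lichtenbaum.

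The third isomorphism I would obtain by passing to the colimit over $k$: identifying $div\,K_{2n}(F)_l$ (respectively $D^{et}(n)=div\,K_{2n}^{et}(F)_l$) as the colimit of the system $\{D(n,l^k)\}_k$ (respectively $\{D^{et}(n,l^k)\}_k$) along the coefficient inclusions $\Z/l^k\to\Z/l^{k+1}$, and noting that the level-$k$ isomorphisms, being induced by the single transformation $\phi$, are compatible with these inclusions; the colimit is then an isomorphism $div\,K_{2n}(F)_l\cong div\,K_{2n}^{et}(F)_l$.

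I expect the obstacle to be twofold. First, securing the precise unconditional surjectivity of $\phi_F$ in degree $2n+1$ uniformly for all $l\geq2$: for $l=2$ this is exactly where the hypotheses $k\geq2$ and $\mu_4\subset F$ enter, the latter ensuring $\mathrm{cd}_2 F\leq2$ (no real places) so that $K_{2n+1}^{et}(F,\Z/2^k)$ is concentrated in the $H^1$-line where the Soul{\'e} Chern class is surjective. Second, pinning down the identification of the divisible elements with the finite levels together with its compatibility with the transition maps; reconciling $\mathcal O_F$ with $\mathcal O_F[1/l]$ and accounting for the infinite places is the bookkeeping requiring the most care there.
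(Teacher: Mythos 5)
Your finite-level argument is correct and is essentially the paper's own proof: the same ladder comparing the mod-$l^k$ localization sequence over $F$ with the \'etale one over $\mathcal{O}_F[1/l]$, the same two Dwyer--Friedlander inputs (the comparison map on the residue-field terms is an isomorphism, the map over $F$ in odd degree is an epimorphism), and the same diagram chase identifying the induced map $D(n,l^k)\to D^{et}(n,l^k)$ as an isomorphism. Your observation that the terms at $v\mid l$ vanish because $|K_{2n-1}(k_v)|$ is prime to $l$ is exactly what reconciles the two residue sums, a point the paper leaves implicit.

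The passage to the third isomorphism, however, contains a genuine error. You propose to realize $div\,K_{2n}(F)_l$ as the colimit of the system $\{D(n,l^k)\}_k$ along the coefficient inclusions $\Z/l^k\to\Z/l^{k+1}$. That colimit is zero. Under the Bockstein identification $D(n,l^k)\cong\ker\bigl(K_{2n}(\mathcal{O}_F)/l^k\to K_{2n}(F)/l^k\bigr)$, the map induced by $\Z/l^k\hookrightarrow\Z/l^{k+1}$ is multiplication by $l$; since $K_{2n}(\mathcal{O}_F)$ is finite, iterating these transition maps kills every element, so $\varinjlim_k D(n,l^k)=0$ --- this is precisely (\ref{dirlim}) of Theorem \ref{limitsondivisible} in the paper. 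The group $D(n)_l$ is recovered from the finite levels in the opposite way: either as the inverse limit along the coefficient-reduction maps $\Z/l^{k+1}\to\Z/l^k$ (equation (\ref{projlim})), or, as the paper does, by stabilization --- finiteness of $K_{2n}(\mathcal{O}_F)$ and of $K_{2n}^{et}(\mathcal{O}_F[1/l])_l$ gives $D(n,l^k)\cong D(n)_l$ and $D^{et}(n,l^k)\cong D^{et}(n)$ for all $k\gg 0$ (equations (\ref{1.3}) and (\ref{1.4})), after which the finite-level isomorphism at a single large $k$ yields $div\,K_{2n}(F)_l\cong div\,K_{2n}^{et}(F)_l$. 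Your levelwise isomorphisms, being induced by the one natural transformation, are compatible with the reduction maps just as well as with the inclusions, so the repair costs nothing; but as written your last step produces only $0\cong 0$.
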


\noindent
The last isomorphism of Theorem \ref{Theorem 1.2} extends \cite[Theorem 3]{Ba2}
which was my joint result with M. Kolster. The Theorem 3 of \cite{Ba2} concerned
the number field case and $l > 2.$ By Theorem \ref{Theorem 1.2} and Corollary \ref{DIVandSZA} 
(see section 3) the divisible elements are expressed in terms of 
Tate-Shafarevich groups for all $n > 0:$
\begin{equation}
D(n)_l \, \cong \, D^{et} (n) \, \cong \, D_{n+1} (F) \, = \,  
\Sha^{2}_{S} (F, \Z_l (n+1)) = \Sha^{2} (F, \Z_l (n+1)).
\nonumber\end{equation}
We also get the following $lim^{1}$ analogue of the Quillen-Lichtenbaum conjecture.

\begin{theorem}\label{Theorem 1.3}
For every $n \geq 1$ there is the following isomorphism:
\begin{equation}\small{
{\varprojlim_{k}}^{1} \, K_{n} (F,\, \Z / l^k) \,\, \stackrel{\cong}{\longrightarrow} \,\, 
{\varprojlim_{k}}^{1} \, K_{n}^{et} (F,\, \Z / l^k).} \label{lim 1 QL2}
\end{equation}
Moreover there is the following equality:
\begin{equation}\small{
{\varprojlim_{k}}^{1} \, K_{2n} (F,\, \Z / l^k) = 0} \label{lim 1 QL3}
\end{equation}
and the exact sequence:
\begin{equation}\small{
0 \rightarrow D (n)_l \rightarrow {\varprojlim_{k}}^{1} \, K_{2n+1} (F, \, \Z / l^k)
\rightarrow {\varprojlim_{k}}^{1} \bigoplus_{v} \, K_{2n} (k_v, \Z / l^k)
\rightarrow 0} \label{lim 1 QL4}
\end{equation}
\end{theorem}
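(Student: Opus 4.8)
The plan is to reduce every ${\varprojlim}^1$ appearing in the statement to a $\varprojlim$/${\varprojlim}^1$ computation for the torsion towers $\{K_{m-1}(F)[l^k]\}_k$, whose transition maps are multiplication by $l$. For each $m$ the universal coefficient sequence
\[ 0\to K_m(F)/l^k\to K_m(F,\Z/l^k)\to K_{m-1}(F)[l^k]\to 0 \]
is a short exact sequence of inverse systems indexed by $k$. The transition maps of $\{K_m(F)/l^k\}_k$ are the canonical surjections, so this tower is Mittag--Leffler and ${\varprojlim_k}^1 K_m(F)/l^k=0$. The associated six term $\varprojlim$--${\varprojlim}^1$ sequence then yields a natural isomorphism ${\varprojlim_k}^1 K_m(F,\Z/l^k)\cong{\varprojlim_k}^1 K_{m-1}(F)[l^k]$, and the identical argument applies verbatim to {\'e}tale K-theory and to the local groups $K_m(k_v,\Z/l^k)$. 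Throughout I use that a tower of finite groups satisfies Mittag--Leffler, hence has vanishing ${\varprojlim}^1$.

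To prove \eqref{lim 1 QL3} I first record that $K_{2n-1}(F)_l$ is finite: by Quillen's finite generation of $K_*(\mathcal O_F)$, Borel's rank computation, and the localization sequence together with $K_{2n}(k_v)=K_{2n-2}(k_v)=0$, the group $K_{2n-1}(F)_l$ is a subquotient of a finitely generated group (for $n=1$ it is simply $\mu_{l^\infty}(F)$). Hence $\{K_{2n-1}(F)[l^k]\}_k$ is a tower of finite groups, so ${\varprojlim_k}^1 K_{2n-1}(F)[l^k]=0$, and by the reduction above ${\varprojlim_k}^1 K_{2n}(F,\Z/l^k)=0$. The same finiteness gives ${\varprojlim_k}^1 K_{2n}(\mathcal O_F,\Z/l^k)={\varprojlim_k}^1 K_{2n+1}(\mathcal O_F,\Z/l^k)=0$, which I will need below.

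For the isomorphism \eqref{lim 1 QL2} I would use the Dwyer--Friedlander comparison maps, which assemble into a morphism between the two universal coefficient ladders; by the reduction of the first paragraph this reduces the claim to showing that $K_{m-1}(F)[l^k]\to K_{m-1}^{et}(F)[l^k]$ induces an isomorphism on ${\varprojlim}^1$. In odd degree both towers are finite and the statement is trivial. In even degree I would invoke Theorem \ref{Theorem 1.2}, whose isomorphisms $D(n,l^k)\cong D^{et}(n,l^k)$ and $div\,K_{2n}(F)_l\cong div\,K_{2n}^{et}(F)_l$ control the kernel and cokernel of the comparison on torsion and force the induced map of towers to be a pro-isomorphism, hence an isomorphism on ${\varprojlim}^1$.

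Finally, for \eqref{lim 1 QL4} I would feed the mod $l^k$ localization sequence
\[ K_{2n+1}(\mathcal O_F,\Z/l^k)\to K_{2n+1}(F,\Z/l^k)\xrightarrow{\partial_k}\bigoplus_v K_{2n}(k_v,\Z/l^k)\to K_{2n}(\mathcal O_F,\Z/l^k)\to K_{2n}(F,\Z/l^k) \]
into the $\varprojlim$--${\varprojlim}^1$ machine, viewing it as an exact sequence of towers in $k$ and splitting it into short exact sequences of towers. Using \eqref{lim 1 QL3}, the vanishing results above, and that each $K_{2n-1}(k_v)$ is finite (so $\varprojlim_k\bigoplus_v K_{2n}(k_v,\Z/l^k)=0$), the only surviving contributions give
\[ 0\to E\to{\varprojlim_k}^1 K_{2n+1}(F,\Z/l^k)\to{\varprojlim_k}^1\bigoplus_v K_{2n}(k_v,\Z/l^k)\to 0, \]
where $E=\varprojlim_k\operatorname{coker}(\partial_k)=\ker\bigl({\varprojlim_k}^1 K_{2n+1}(F,\Z/l^k)\to{\varprojlim_k}^1\bigoplus_v K_{2n}(k_v,\Z/l^k)\bigr)$. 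It then remains to identify $E$ with $D(n)_l$. Here I would pass to the {\'e}tale side via \eqref{lim 1 QL2}: on that side the exact sequence of Theorem \ref{Theorem 1.1}, read through the continuous-cohomology description, exhibits the residual term as $D^{et}(n)$, which equals the finite group $\Sha^{2}(F,\Z_l(n+1))=D(n)_l$ by Theorem \ref{Theorem 1.2} and Corollary \ref{DIVandSZA}. The main obstacle is exactly this identification $E\cong D(n)_l$: one must match the purely homological residual term $\varprojlim_k\operatorname{coker}(\partial_k)$ with the arithmetically defined divisible elements, for which the naive ``stabilize and take $\varprojlim$'' reasoning fails (the induced transition maps are multiplication by $l$), so the compatibility of the mod $l^k$ boundary with the integral boundary and the $\Sha^2$-description must be used. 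A secondary difficulty is the pro-isomorphism assertion in \eqref{lim 1 QL2}, namely controlling the kernel and cokernel of the Dwyer--Friedlander map on the torsion towers through Theorem \ref{Theorem 1.2}.
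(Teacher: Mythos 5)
Your homological skeleton is exactly the paper's: the Bockstein reduction giving (\ref{lim 1 QL3}), killing the contribution of the finite groups $K_{*}(\mathcal{O}_{F,S},\Z/l^k)$, and running the $\varprojlim$--${\varprojlim}^1$ sequence on the tower form of the localization sequence; your derivation of the short exact sequence with kernel $E=\varprojlim_k\,\text{coker}\,(\partial_k)$ is correct. The genuine gap is precisely the step you flag: identifying $E$ with $D(n)_l$. Your proposed \'etale detour does not close it. Indeed $\text{coker}\,(\partial_k)$ \emph{is} by definition $D(n,l^k)$, so after transporting the problem through (\ref{lim 1 QL2}) you face the identical question on the \'etale side, namely whether $\varprojlim_k D^{et}(n,l^k)\cong D^{et}(n)$: the exact sequence of Theorem \ref{Theorem 1.1} is a statement about the integral group $K_{2n}^{et}(F)_l$, and neither it nor Corollary \ref{DIVandSZA} computes the inverse limit of a tower of cokernels, while the isomorphism (\ref{lim 1 QL2}) only matches the ambient ${\varprojlim}^1$-groups and says nothing about their kernel terms.

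What closes the gap in the paper is Theorem \ref{limitsondivisible}, equation (\ref{projlim}), and its proof shows that your stated reason for the difficulty is a misconception. By (\ref{1.1}) one has $D(n,l^k)\cong\text{ker}\,(K_{2n}(\mathcal{O}_F)/l^k\rightarrow K_{2n}(F)/l^k)$, and in this description the transition maps of the \emph{inverse} system are induced by the natural surjections $K_{2n}(\mathcal{O}_F)/l^{k+1}\rightarrow K_{2n}(\mathcal{O}_F)/l^k$, not by multiplication by $l$; since $K_{2n}(\mathcal{O}_F)$ is finite, (\ref{1.3}) makes the system constant with bijective transition maps for $k\geq k(l)$, whence $\varprojlim_k D(n,l^k)\cong D(n)_l$ at once. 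Multiplication by $l$ occurs as the transition maps of the \emph{direct} system, which is exactly why $\varinjlim_k D(n,l^k)=0$ in (\ref{dirlim}); the snake-lemma identification converts the multiplication-by-$l$ maps of the ambient tower $\bigoplus_v K_{2n-1}(k_v)[l^k]$ into eventually bijective maps on the cokernels, so the ``stabilize and take $\varprojlim$'' argument does work. A secondary imprecision: in your argument for (\ref{lim 1 QL2}) in odd degree, the groups $D(n,l^k)$ of Theorem \ref{Theorem 1.2} are not the kernel and cokernel of the comparison $K_{2n}(F)[l^k]\rightarrow K_{2n}^{et}(F)[l^k]$; to extract your pro-isomorphism you would have to compare the four-term sequences (\ref{1.2}) and (\ref{1.22}), using pro-triviality of the towers $K_{2n}(\mathcal{O}_F)[l^k]$ and $K_{2n}^{et}(\mathcal{O}_F[1/l])[l^k]$, the Dwyer--Friedlander isomorphism over the residue fields, and Theorem \ref{Theorem 1.2} on the right-hand terms. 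The paper sidesteps this entirely by obtaining (\ref{lim 1 QL2}) from the five lemma applied to the two ${\varprojlim}^1$ short exact sequences, so once the identification of $E$ with $D(n)_l$ is in place the comparison isomorphism comes for free.
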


\noindent
Theorem \ref{Theorem 1.3} for the number field case and $l > 2$ was proved in 
\cite{BZ1}. 
\medskip

In the end of chapter 4 we show that the natural maps:
\begin{equation}
H_{2n} (GL(\mathcal{O}_{F}), \, \Z / l^k)  \,\, \rightarrow \,\,
H_{2n} (GL(F), \, \Z / l^k)
\nonumber
\end{equation}  
are not injective in general. We have the following theorem:
\begin{theorem}\label{Theorem 1.4}
For every $n \geq 1,$ $k \geq 1$ and $l > n + 1 $ the kernel of the natural map 
\begin{equation}
H_{2n} (GL(\mathcal{O}_{F}), \, \Z / l^k)  \,\, \rightarrow \,\,
H_{2n} (GL(F), \, \Z / l^k)
\label{ker on homology OF to F 1. Introduction}
\end{equation}  
contains a subgroup isomorphic to $D (n, l^k).$
\end{theorem}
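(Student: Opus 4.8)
The plan is to transport $D(n,l^k)$ from $K$-theory into the homology of $GL$ by means of the Hurewicz homomorphism and to show that this transport is injective precisely when $l>n+1$. Since the plus construction does not change homology, $H_{2n}(GL(R),\Z/l^k)=H_{2n}(BGL(R)^+,\Z/l^k)$, while $K_{2n}(R,\Z/l^k)=\pi_{2n}(BGL(R)^+,\Z/l^k)$. Naturality of the mod $l^k$ Hurewicz map $h$ with respect to the inclusion $\mathcal{O}_F\hookrightarrow F$ then yields the commutative square
\[
\begin{array}{ccc}
K_{2n}(\mathcal{O}_F,\Z/l^k) & \longrightarrow & K_{2n}(F,\Z/l^k)\\
\downarrow & & \downarrow\\
H_{2n}(GL(\mathcal{O}_F),\Z/l^k) & \stackrel{\phi}{\longrightarrow} & H_{2n}(GL(F),\Z/l^k)
\end{array}
\]
whose vertical maps are both $h$. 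By definition every $x\in D(n,l^k)$ dies in $K_{2n}(F,\Z/l^k)$, so commutativity forces $\phi(h(x))=0$. Hence $h$ carries $D(n,l^k)$ into $\ker\phi$, and it remains only to see that $h$ is injective on $D(n,l^k)$: the image $h(D(n,l^k))\cong D(n,l^k)$ will then be the required subgroup of $\ker\phi$.

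For the injectivity I would invoke the Arlettaz bound on the kernel of the Hurewicz homomorphism of the infinite loop space $BGL(\mathcal{O}_F)^+$: the kernel of $h$ in degree $2n$ is annihilated by a product of the exponents of the stable homotopy groups $\pi_i^s$ of spheres with $i<2n$. The first $l$-torsion in the stable stems occurs in degree $2l-3$, so the hypothesis $l>n+1$ (equivalently $l\ge n+2$, whence $2n\le 2l-4<2l-3$) guarantees that all of these exponents are prime to $l$, and therefore the kernel of $h$ in degree $2n$ is annihilated by an integer prime to $l$. Since $D(n,l^k)$ is annihilated by $l^k$, its intersection with this kernel is killed by a number coprime to $l^k$ and hence is trivial, so $h|_{D(n,l^k)}$ is injective and the theorem follows.

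The main technical point to nail down is the coefficient form of the Arlettaz estimate: the bounds are classically phrased for the integral Hurewicz map $\pi_m\to H_m(-,\Z)$, and one must verify that the same product of stable exponents still annihilates the kernel of the mod $l^k$ Hurewicz map. I expect to obtain this by comparing the integral and mod $l^k$ Hurewicz maps through the universal coefficient and Bockstein sequences, using that the integral kernels and the torsion linking $K_{2n}$ with $K_{2n-1}$ are governed by the same stable stems. Checking that the index range in the annihilating product is exactly the one for which $l>n+1$ is the sharp hypothesis (rather than, say, $l>2n$) is the delicate point; once that is in place, the rest is formal naturality together with the diagram chase above.
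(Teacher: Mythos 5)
Your overall skeleton is exactly the paper's proof: the commutative square of mod $l^k$ Hurewicz maps for $\mathcal{O}_F \hookrightarrow F$, together with injectivity of $h_{2n}\colon K_{2n}(\mathcal{O}_F,\Z/l^k)\to H_{2n}(GL(\mathcal{O}_F),\Z/l^k)$ in the range $l>n+1$; even your numerology ($2n\le 2l-4<2l-3$, the degree of the first $l$-torsion in $\pi_*^s$) is the same as the paper's. The paper, however, obtains the injectivity by directly quoting Arlettaz's Hurewicz theorem \emph{with $\Z/l^k$ coefficients} for infinite loop spaces (\cite[Cor. 7.19]{Ar2}, cf. \cite{Ar1}), whereas you propose to deduce the coefficient statement from the \emph{integral} kernel bound via universal coefficient and Bockstein sequences. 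That deduction — the step you yourself flag as the point to nail down — cannot work. Write $\pi_*=K_*(\mathcal{O}_F)$, $H_*=H_*(GL(\mathcal{O}_F);\Z)$, $I=\mathrm{im}\,h_{2n}\subset H_{2n}$. The Bockstein sequences split the problem into two parts: injectivity of $h_{2n-1}$ on $\pi_{2n-1}[l^k]$, which your coprimality argument does handle, and injectivity of the reduced map $\bar h\colon \pi_{2n}/l^k\to H_{2n}/l^k$. For the second part the integral kernel bound is irrelevant: since the image of $\ker h_{2n}$ in $\pi_{2n}/l^k$ is killed both by $l^k$ and by an integer prime to $l$, one gets $\ker\bar h\cong (I\cap l^kH_{2n})/l^k I$, a quotient of the $l^k$-torsion of $\mathrm{coker}\,h_{2n}$. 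This measures the failure of \emph{purity} of the Hurewicz image, about which the exponents of stable stems say nothing; indeed an integrally injective map can die mod $l$: the inclusion $\Z/l^2\hookrightarrow \Z/l^4$ onto $l^2(\Z/l^4)$ induces the zero map on mod $l$ reductions.

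This is, moreover, precisely the case your theorem is about, not a pathology one may hope to avoid: by the paper's identification (\ref{1.1}), $D(n,l^k)$ is a subgroup of $K_{2n}(\mathcal{O}_F)/l^k\subset K_{2n}(\mathcal{O}_F,\Z/l^k)$ consisting of classes of elements of $K_{2n}(\mathcal{O}_F)$ that become $l^k$-divisible in $K_{2n}(F)$, so injectivity of $h_{2n}$ on $D(n,l^k)$ is exactly the assertion that such elements do not also become $l^k$-divisible in $H_{2n}(GL(\mathcal{O}_F);\Z)$ — the purity statement your reduction cannot supply. What is needed is the coefficient form of Arlettaz's theorem itself, whose proof is genuinely stable rather than formal: one smashes the fiber of $S\to H\Z$ with the Moore spectrum and the $K$-theory spectrum, and the obstruction groups vanish because they are simultaneously $l^k$-torsion (being built from mod $l^k$ homology) and annihilated by the exponents of $\pi_i^s$ for $i$ in the range, which are prime to $l$ when $l>n+1$. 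Replace your universal-coefficient reduction by an appeal to that result (as the paper does), and the rest of your argument — the naturality square and the coprimality diagram chase — is correct and is the paper's proof.
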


In particular we show that the maps:
\begin{equation}
H_{22} (GL(\Z), \, \Z / 691) \,\, \rightarrow \,\,
H_{22} (GL(\Q), \, \Z / 691)
\nonumber
\end{equation}
\begin{equation}
H_{30} (GL(\Z), \, \Z / 3617) \,\, \rightarrow \,\,
H_{30} (GL(\Q), \, \Z / 3617)
\nonumber
\end{equation}
are not injective. Moreover let $E/\F_p$ be the 
elliptic curve $y^2 = x^3 + 1.$ For $ p \geq 5$ and 
$p \equiv 2 \mod 3$ this curve is supersingular. 
If $F = \F_p (E)$ is the function field of $E$ then 
we show that the following maps are not injective:
\begin{equation}
H_{6} (GL(\mathcal{O}_{\F_{29} (E)}), \, \Z / 5) \,\, \rightarrow \,\,
H_{6} (GL(\F_{29} (E)), \, \Z / 5),
\nonumber 
\end{equation}  
\begin{equation}
H_{10} (GL(\mathcal{O}_{\F_{41} (E)}), \, \Z / 7) \,\, \rightarrow \,\,
H_{10} (GL(\F_{41} (E)), \, \Z / 7).
\nonumber 
\end{equation} 
For $ p \geq 3$ and 
$p \equiv 3 \mod 4$ the elliptic curve $y^2 = x^3 + x$ is supersingular.
In particular we show that the following map is not injective:
\begin{equation}
H_{6} (GL(\mathcal{O}_{\F_{19} (E)}), \, \Z / 5) \,\, \rightarrow \,\,
H_{6} (GL(\F_{19} (E)), \, \Z / 5).
\nonumber 
\end{equation}  
\bigskip

In chapter 5 we define wild kernel $K_{n}^{w} ({\mathcal O}_{F})_l$ for all $n > 0:$
\begin{equation}
K_{n}^{w} ({\mathcal O}_{F})_l \, := \, \text{ker} \, ( \, K_{n} (F)_l 
\rightarrow \bigoplus_{v} \, K_{n}^{et} (F_v)_l \, )
\nonumber\end{equation} 
and observe that: 
\begin{equation}
div \, K_{n} (F)_l \,  \subset  \, K_{n}^{w} ({\mathcal O}_{F})_l \, 
\subset \, K_{n} ({\mathcal O}_{F})_l.
\nonumber \end{equation}  
Further, we obtain the analogue of the classical Moore exact sequence for higher K-groups: 
\begin{theorem} \label{Theorem 1.5}
For every $n \geq 1$ and every finite set 
$S \supset S_{\infty, l}$ there are the following exact sequences:
\begin{equation}
\small{0 \rightarrow K_{2n}^{w} ({\mathcal O}_{F})_l \rightarrow
K_{2n} ({\mathcal O}_{F,S})_l
\rightarrow \bigoplus_{v \in S} W^n (F_v)
\rightarrow W^{n} (F) \rightarrow
0.}
\label{Moore exact sequence11}\end{equation}
\begin{equation}
\small{0 \rightarrow K_{2n}^{w} ({\mathcal O}_{F})_l \rightarrow
K_{2n} (F)_l
\rightarrow \bigoplus_{v} W^n (F_v)
\rightarrow W^{n} (F) \rightarrow
0.}
\label{Moore exact sequence22}\end{equation}
In particular:
\begin{equation}
\small{\frac{|K_{2n} (\mathcal{O}_{F, S})_l|}{|K_{2n}^{w} (\mathcal{O}_{F})_l|} 
= {\bigl|}\frac{\prod_{v \in S} w_{n} (F_v)}{w_{n} (F)}{\bigr|}_{l}^{-1}.}
\label{Kth OS comp to Wild}\end{equation}
\end{theorem}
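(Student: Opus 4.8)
The plan is to deduce both sequences from the {\'e}tale Moore sequences of Theorem \ref{Theorem 1.1} by means of the Dwyer--Friedlander map, comparing the K-theoretic four term complex with its {\'e}tale counterpart spot by spot. Write $\psi \colon K_{2n}(F)_l \to K_{2n}^{et}(F)_l$ for the Dwyer--Friedlander homomorphism and $\mathrm{loc}^{et} \colon K_{2n}^{et}(F)_l \to \bigoplus_v K_{2n}^{et}(F_v)_l$ for {\'e}tale localization. By naturality of $\psi$ the map defining the wild kernel equals $\mathrm{loc}^{et}\circ\psi$, so that $K_{2n}^{w}(\mathcal{O}_F)_l = \psi^{-1}(\ker \mathrm{loc}^{et})$; likewise the higher symbol map $K_{2n}(F)_l \to \bigoplus_v W^n(F_v)$ is by construction $\mathrm{loc}^{W}\circ\psi$, where $\mathrm{loc}^{W}\colon K_{2n}^{et}(F)_l \to \bigoplus_v W^n(F_v)$ is the map from the second {\'e}tale sequence of Theorem \ref{Theorem 1.1}. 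This yields a morphism of four term complexes from the claimed K-theory sequence to the {\'e}tale one which is $\psi$ in the second spot and the identity in the last two spots.

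First I would settle exactness at the first two spots, which uses nothing about the image of $\psi$. By construction $\ker(\mathrm{loc}^{W}\circ\psi) = \psi^{-1}(\ker \mathrm{loc}^{W}) = \psi^{-1}(D^{et}(n))$, since $\ker \mathrm{loc}^{W} = D^{et}(n)$ by Theorem \ref{Theorem 1.1}. On the other hand, the consequence of Theorem \ref{Theorem 1.2} recorded above identifies $D^{et}(n)$ with $\Sha^{2}(F, \Z_l(n+1))$; as $K_{2n}^{et}(F_v)_l \cong H^{2}(F_v, \Z_l(n+1))$ and $\Sha^{2}(F,\Z_l(n+1))$ is by definition the kernel of $H^{2}(F,\Z_l(n+1)) \to \bigoplus_v H^{2}(F_v,\Z_l(n+1))$, naturality of the comparison between {\'e}tale K-theory and {\'e}tale cohomology gives $\ker \mathrm{loc}^{et} = \Sha^{2}(F,\Z_l(n+1)) = D^{et}(n)$. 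Hence $\ker(\mathrm{loc}^{W}\circ\psi) = \psi^{-1}(D^{et}(n)) = \psi^{-1}(\ker \mathrm{loc}^{et}) = K_{2n}^{w}(\mathcal{O}_F)_l$, which is exactness at $K_{2n}(F)_l$; injectivity of $K_{2n}^{w}(\mathcal{O}_F)_l \hookrightarrow K_{2n}(F)_l$ is immediate from the definition.

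Exactness at the last two spots is where the only genuine input on $\psi$ enters, and I expect it to be the main obstacle. Surjectivity onto $W^n(F)$ is inherited verbatim from Theorem \ref{Theorem 1.1}, the last two terms and the map between them being literally the same. For exactness at $\bigoplus_v W^n(F_v)$ one has $\mathrm{im}(\mathrm{loc}^{W}\circ\psi) = \mathrm{loc}^{W}(\mathrm{im}\,\psi)$, and this must be shown to equal $\ker(\bigoplus_v W^n(F_v) \to W^n(F)) = \mathrm{im}\,\mathrm{loc}^{W}$ from the {\'e}tale sequence. Equivalently one needs $\mathrm{im}\,\psi + D^{et}(n) = K_{2n}^{et}(F)_l$, that is, surjectivity of the Dwyer--Friedlander map modulo its divisible part. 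I would obtain this from the surjectivity of $K_{2n}(\mathcal{O}_{F,S}, \Z/l^k) \to K_{2n}^{et}(\mathcal{O}_{F,S}, \Z/l^k)$, passed to the limit over $k$ and localized from $\mathcal{O}_{F,S}$ to $F$; because $D^{et}(n)$ is exactly the divisible subgroup annihilated by $\mathrm{loc}^{W}$, surjectivity modulo divisibles is enough, so no appeal to the injectivity half of the Quillen--Lichtenbaum conjecture is needed.

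Finally, the first sequence follows by the identical comparison applied to the first {\'e}tale sequence of Theorem \ref{Theorem 1.1}, using $\psi \colon K_{2n}(\mathcal{O}_{F,S})_l \to K_{2n}^{et}(\mathcal{O}_{F,S})$ together with the fact that an element of $K_{2n}(\mathcal{O}_{F,S})_l$ is unramified outside $S$, so the wild kernel, defined through all places, coincides with the kernel of the map into $\bigoplus_{v \in S} W^n(F_v)$; the inclusions $div\,K_{2n}(F)_l \subset K_{2n}^{w}(\mathcal{O}_F)_l \subset K_{2n}(\mathcal{O}_F)_l$ ensure that $K_{2n}^{w}(\mathcal{O}_F)_l$ sits inside $K_{2n}(\mathcal{O}_{F,S})_l$. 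The order formula is then the alternating product of the orders in the resulting four term exact sequence.
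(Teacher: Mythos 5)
Your proposal is correct and takes essentially the same route as the paper: the paper's own proof deduces both sequences from the \'etale Moore sequences of Theorem \ref{MooreEt} together with the surjectivity of the Dwyer--Friedlander maps $K_{n}(\mathcal{O}_{F,S})_l \rightarrow K_{n}^{et}(\mathcal{O}_{F,S})_l$ and $K_{n}(F)_l \rightarrow K_{n}^{et}(F)_l$ (Theorem \ref{H2 of F}), which is precisely the comparison you carry out, and that surjectivity is itself proved there by the Bockstein-plus-limits argument you sketch. The only adjustment needed in your sketch is that the finite-coefficient input should be taken in degree $2n+1$, so that the Bockstein surjection $K_{2n+1}(X,\, \Z/l^k) \rightarrow K_{2n}(X)[l^k]$ reaches the $l^k$-torsion of $K_{2n}$; with that, one gets full surjectivity of $\psi$ (not merely surjectivity modulo $D^{et}(n)$), exactly as in the paper.
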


\noindent
The exact sequences (\ref{Moore exact sequence11}) and (\ref{Moore exact sequence22})
where established for number fields in \cite{Ba2} ($l >2$) and \cite{We3} ($l=2$).
\noindent
In chapter 5 we define another wild kernel $WK_{n} (F)$ for all $n \geq 0:$
\begin{equation}
WK_{n} (F) \, := \, \text{ker} \, ( \, K_{n} (F) \rightarrow \bigoplus_{v} \, K_{n} (F_v) \, )
\nonumber\end{equation} 
We observe that for all $n \geq 0:$
\begin{equation}
WK_{n} (F) \subset K_{n} (\mathcal{O}_{F})_{tor}
\nonumber\end{equation}
\begin{equation}
WK_{n} (F)_l \subset K_{2n}^{w} ({\mathcal O}_{F})_l
\nonumber\end{equation}
and if $K_{n} (F_v)_l 
\stackrel{\cong}{\longrightarrow} K_{n}^{et} (F_v)_l $ 
for every $v \in S_l,$ then:
\begin{equation}
div \, K_{n} (F)_l \,  \subset  \, WK_{n} (F)_l \, 
\subset \, K_{n} ({\mathcal O}_{F})_l.
\nonumber \end{equation}

\noindent
The Dwyer-Friedlander homomorphisms \cite{DF} which are surjective: 
\begin{equation}
K_n (\mathcal{O}_{F, S})_l \rightarrow K_{n}^{et} (\mathcal{O}_{F, S})_l
\nonumber \end{equation}
\begin{equation}
K_n (F)_l \rightarrow K_{n}^{et} (F)_l,
\nonumber\end{equation}
(see also \cite{Ba2}) are also split as follows by results of \cite{Ba2}, \cite{Ca}, \cite{K}
which can be extended also to the function field case.
We use this to establish the following properties of wild kernels and 
divisible elements
(see Theorems  \ref{H2 of F}, \ref{wild maps on div via a split map 1} 
and \ref{wild maps on div via a split map 2}). 

\begin{theorem} \label{Theorem 1.6}
For all $n \geq 1$ the Dwyer-Friedlander homomorphisms induce the following 
canonical map: 
\begin{equation}
K_{n}^{w} ({\mathcal O}_{F})_l \rightarrow
div \, K_{n}(F)_l
\nonumber\end{equation}
which is split surjective. The Dwyer-Friedlander homomorphisms
induce the following canonical map:
\begin{equation}
WK_{n} (F)_l \rightarrow div \, K_{n}(F)_l
\nonumber\end{equation}
which is split surjective if $K_{n} (F_v)_l 
\stackrel{\cong}{\longrightarrow} K_{n}^{et} (F_v)_l $ 
for every $v \in S_l.$
\end{theorem}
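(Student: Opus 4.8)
The plan is to derive the statement from two inputs recalled in the setup. First, the Dwyer--Friedlander homomorphism $\phi_F\colon K_n(F)_l \to K_n^{et}(F)_l$ is surjective, natural with respect to the completions $F\hookrightarrow F_v$, and admits a section $r_F$ (by \cite{Ba2}, \cite{Ca}, \cite{K}). Second, by Theorem \ref{Theorem 1.2} in even degrees (the odd degrees being analogous or trivial) $\phi_F$ restricts to a canonical isomorphism $\alpha\colon div\,K_n(F)_l \xrightarrow{\cong} div\,K_n^{et}(F)_l$, so the nontrivial content lies in the even case. I will also use, from Corollary \ref{DIVandSZA} and the \'etale Moore sequence of Theorem \ref{Theorem 1.1}, that the \'etale wild kernel $\ker\bigl(K_n^{et}(F)_l \to \bigoplus_v K_n^{et}(F_v)_l\bigr)$ coincides with $div\,K_n^{et}(F)_l$.

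To construct the canonical map, take $x\in K_n^w(\mathcal{O}_F)_l$, so that $x$ dies in every $K_n^{et}(F_v)_l$. Naturality of $\phi_F$ means the map $K_n(F)_l \to K_n^{et}(F_v)_l$ defining the wild kernel factors as $\phi_F$ followed by the \'etale localization $K_n^{et}(F)_l \to K_n^{et}(F_v)_l$; hence $\phi_F(x)$ lies in the \'etale wild kernel, which is $div\,K_n^{et}(F)_l$ by the first paragraph. Composing with $\alpha^{-1}$ produces the canonical homomorphism
\[
\psi\colon K_n^w(\mathcal{O}_F)_l \xrightarrow{\ \phi_F\ } div\,K_n^{et}(F)_l \xrightarrow{\ \alpha^{-1}\ } div\,K_n(F)_l .
\]

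For split surjectivity I would exhibit a section. By the inclusions recalled in the setup, $div\,K_n(F)_l \subseteq K_n^w(\mathcal{O}_F)_l$, and on this subgroup $\phi_F$ is by definition $\alpha$; thus $\psi(d)=\alpha^{-1}(\alpha(d))=d$ for $d\in div\,K_n(F)_l$, so the inclusion $div\,K_n(F)_l \hookrightarrow K_n^w(\mathcal{O}_F)_l$ is a section of $\psi$, giving surjectivity and the splitting at once. Alternatively, and more in the spirit of the split map, $r_F\circ\alpha$ lands in $K_n^w(\mathcal{O}_F)_l$ because $\phi_F(r_F(\alpha(d)))=\alpha(d)$ lies in the \'etale wild kernel and therefore localizes to zero, and one checks $\psi\circ(r_F\circ\alpha)=\mathrm{id}$.

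For the second assertion I would restrict $\psi$ to $WK_n(F)_l$. Under the hypothesis $K_n(F_v)_l \xrightarrow{\cong} K_n^{et}(F_v)_l$ for $v\in S_l$, the setup provides $div\,K_n(F)_l \subseteq WK_n(F)_l \subseteq K_n^w(\mathcal{O}_F)_l$, so $\psi|_{WK_n(F)_l}$ maps onto $div\,K_n(F)_l$ with section the inclusion $div\,K_n(F)_l \hookrightarrow WK_n(F)_l$. The step I expect to be the genuine obstacle is the claim of the second paragraph, that $\phi_F$ carries the wild kernel into the \'etale divisible elements: it rests on the naturality of the Dwyer--Friedlander map under localization together with the nontrivial identification of the \'etale wild kernel with $div\,K_n^{et}(F)_l$ supplied by the results of section~3. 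Granting that, split surjectivity becomes a formal consequence of Theorem \ref{Theorem 1.2} and the containments $div\,K_n(F)_l\subseteq WK_n(F)_l\subseteq K_n^w(\mathcal{O}_F)_l$.
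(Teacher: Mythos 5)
Your proposal is correct, and the construction of the canonical map is exactly the paper's: both arguments rest on naturality of the Dwyer--Friedlander map with respect to $F \hookrightarrow F_v$ together with the identification of the \'etale wild kernel $\ker\bigl(K_n^{et}(F)_l \to \prod_v K_n^{et}(F_v)_l\bigr)$ with $div\, K_n^{et}(F)_l$, which is Lemma \ref{div as ker} in the paper (the K-theoretic form of the Section 3 results you cite). Where you genuinely differ is the splitting mechanism. The paper (Theorems \ref{wild maps on div via a split map 1} and \ref{wild maps on div via a split map 2}) runs a diagram argument: the middle vertical arrow $K_n(F)_l \to K_n^{et}(F)_l$ is split surjective by Theorem \ref{H2 of F} (whose proof uses Kaplansky's pure-subgroup theorem), the right vertical arrow is the identity (resp.\ an isomorphism, by Corollary \ref{K-th to etale K-th for local fields} plus the hypothesis at $v \in S_l$), and the restriction of the section to kernels splits the induced map --- this is precisely your ``alternative'' section $r_F\circ\alpha$. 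Your primary argument instead takes the section to be the inclusion $div\,K_n(F)_l \subseteq K_n^w(\mathcal{O}_F)_l$ (resp.\ $\subseteq WK_n(F)_l$), using that $\phi_F$ restricted to divisible elements is the isomorphism $\alpha$ of Theorem \ref{Theorem 1.2}. This is a real, if modest, gain: the splitting becomes formal, with no appeal to Theorem \ref{H2 of F} at all --- only Theorem \ref{Theorem 1.2}, Lemma \ref{div as ker}, and the containments of divisible elements in the wild kernels are needed, the latter holding because the groups $K_n^{et}(F_v)_l$ (and, under the stated hypothesis, $K_n(F_v)_l$) are finite. The one point your argument leans on that deserves a word is that the isomorphism of Theorem \ref{Theorem 1.2} really is the restriction of the Dwyer--Friedlander homomorphism; this is how the paper itself phrases that theorem, and it is consistent with its proof, since the identifications there (the isomorphism of cokernels of $\partial$ and $\partial^{et}$, and the Bockstein identifications for $k \gg 0$) are all induced by Dwyer--Friedlander maps.
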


\noindent
The first splitting map of the Theorem \ref{Theorem 1.6} in case of number fields 
and $l > 2$ was done in \cite{Ng} by use of an argument from \cite{Ba2}.
Note that Theorem \ref{Theorem 1.6} is obvious for $n$ odd since in this case 
$div \, K_{n}(F) = 0.$ 
\medskip

\noindent
Theorem \ref{Theorem 1.7} below shows that the Quillen-Lichtenbaum conjecture holds modulo the 
wild kernel:
\begin{theorem} \label{Theorem 1.7}
The Dwyer-Friedlander homomorphisms induce the following canonical isomorphisms for all 
$n \geq 1:$
\begin{equation}
K_{n}(\mathcal{O}_{F,S})_l / K_{n}^{w}(\mathcal{O}_{F})_l 
\,\, \stackrel{\cong}{\longrightarrow} \,\,
K_{n}^{et}(\mathcal{O}_{F,S})_l / div K_{n}^{et} (F)_l  
\nonumber\end{equation}
\begin{equation}
K_{n}(F)_l / K_{n}^{w}(\mathcal{O}_{F})_l 
\,\, \stackrel{\cong}{\longrightarrow} \,\,
K_{n}^{et}(F)_l / div K_{n}^{et} (F)_l  
\nonumber\end{equation}
\end{theorem}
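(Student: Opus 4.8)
The plan is to derive both isomorphisms from a single commutative square, using only the surjectivity of the Dwyer--Friedlander homomorphism together with the identification of the \'etale wild kernel with the \'etale divisible elements. For each $n \geq 1$ I would assemble the square
\begin{equation}
\begin{array}{ccc}
K_{n}(F)_{l} & \xrightarrow{\ \lambda\ } & \bigoplus_{v} K_{n}^{et}(F_{v})_{l} \\
\downarrow & & \| \\
K_{n}^{et}(F)_{l} & \xrightarrow{\ \lambda^{et}\ } & \bigoplus_{v} K_{n}^{et}(F_{v})_{l},
\end{array}
\nonumber\end{equation}
where the left vertical arrow is the global Dwyer--Friedlander homomorphism $\phi$, the map $\lambda$ is localization $K_{n}(F)_{l} \to \bigoplus_{v} K_{n}(F_{v})_{l}$ followed by the local Dwyer--Friedlander maps, so that $\ker \lambda = K_{n}^{w}(\mathcal{O}_{F})_{l}$ by the definition of the wild kernel, and $\lambda^{et}$ is \'etale localization. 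The right vertical arrow is the identity. The structural key is that this square commutes, by naturality of the Dwyer--Friedlander transformation with respect to each completion $F \hookrightarrow F_{v}$; this lets me write $\lambda = \lambda^{et}\circ\phi$.

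Next I would invoke the \'etale Moore sequence of Theorem \ref{Theorem 1.1} together with Corollary \ref{DIVandSZA} to identify the \'etale wild kernel $\ker\lambda^{et}$ with $div\,K_{n}^{et}(F)_{l}$, which requires reconciling the local target $K_{n}^{et}(F_{v})_{l}$ with the term $W^{n}(F_{v})$ appearing in that sequence. Granting this, the factorization $\lambda = \lambda^{et}\circ\phi$ yields immediately
\begin{equation}
K_{n}^{w}(\mathcal{O}_{F})_{l} \;=\; \ker\lambda \;=\; \phi^{-1}(\ker\lambda^{et}) \;=\; \phi^{-1}\bigl(div\,K_{n}^{et}(F)_{l}\bigr),
\nonumber\end{equation}
using only that $\phi$ is a homomorphism. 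Since $\phi$ is surjective, it then induces an isomorphism of the quotient $K_{n}(F)_{l}/\phi^{-1}(div\,K_{n}^{et}(F)_{l})$ onto $K_{n}^{et}(F)_{l}/div\,K_{n}^{et}(F)_{l}$, which is precisely the second asserted isomorphism. That $\phi$ does carry $K_{n}^{w}(\mathcal{O}_{F})_{l}$ into $div\,K_{n}^{et}(F)_{l}$ is the well-definedness already recorded in Theorem \ref{Theorem 1.6}.

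For the $\mathcal{O}_{F,S}$ statement I would restrict the same square to the $S$-integral subgroups, using that $\phi$ maps $K_{n}(\mathcal{O}_{F,S})_{l}$ onto $K_{n}^{et}(\mathcal{O}_{F,S})_{l}$ and that both $K_{n}^{w}(\mathcal{O}_{F})_{l}$ and $div\,K_{n}^{et}(F)_{l}$ already lie in the respective $S$-integral groups. Equivalently, for even $n$ one may compare the two Moore sequences of Theorems \ref{Theorem 1.5} and \ref{Theorem 1.1} term by term via $\phi$ and apply the five lemma: the two right-hand boundary maps into $\bigoplus_{v\in S} W^{n}(F_{v})$ and $W^{n}(F)$ are identities, the rows are exact, and $\phi$ is surjective, so the induced map on the quotients by $K_{n}^{w}(\mathcal{O}_{F})_{l}$ and $div\,K_{n}^{et}(F)_{l}$ is forced to be an isomorphism.

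The main obstacle I anticipate is the uniform identification $\ker\lambda^{et} = div\,K_{n}^{et}(F)_{l}$ in all cases: showing that the \'etale wild kernel coincides with the divisible elements is a Shafarevich-group computation, and it is exactly here that the hypotheses $l \geq 2$, the assumption $\mu_{4}\subset F$ when $l = 2$, and the passage to positive characteristic must be controlled, so I would lean on the earlier sections rather than reprove it. A secondary point requiring care is the commutativity of the square at every place $v$, including the archimedean and $l$-adic ones, where the local comparison maps and the matching of $K_{n}^{et}(F_{v})_{l}$ with $W^{n}(F_{v})$ behave least uniformly.
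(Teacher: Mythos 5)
Your proposal is correct and follows essentially the same route as the paper: the paper's proof consists of exactly your commutative square (stated as two diagrams with exact rows, one for $F$ and one for $\mathcal{O}_{F,S}$), with the identification $\ker\lambda^{et} = div\,K_{n}^{et}(F)_{l}$ supplied by Lemma \ref{div as ker} (which rests on Theorem \ref{functionfieldSchneider3} and Tate's results, covering both parities of $n$), and the conclusion drawn from the split surjectivity of the Dwyer--Friedlander maps in Theorem \ref{H2 of F}. The points you flagged as needing care are precisely what the paper's earlier sections settle, so leaning on them as you propose is exactly what the paper does.
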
 
\medskip

\noindent
Theorem \ref{Theorem 1.8} below shows that the difference between the wild kernels 
and the divisible elements is the obstruction to the Quillen-Lichtenbaum conjecture. 
\begin{theorem}\label{Theorem 1.8} Let $n > 1.$ The following two conditions are equivalent:
\begin{equation} 
K_{n} (\mathcal{O}_{F}) \otimes \Z_l \stackrel{\cong}{\longrightarrow} 
K_{n}^{et} (\mathcal{O}_{F}[1/l]),
\nonumber\end{equation}
\begin{equation}
K_{n}^{w}(\mathcal{O}_{F})_l = div K_n (F)_l.  
\nonumber\end{equation} 
Moreover assume that $K_{n} (F_v)_l 
\stackrel{\cong}{\longrightarrow} K_{n}^{et} (F_v)_l$ for every $v \in S_l.$
Then the two conditions above are equivalent to: 
\begin{equation} 
WK_n(F)_l = div K_n (F)_l.
\nonumber\end{equation}  
\end{theorem}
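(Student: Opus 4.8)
The plan is to obtain the equivalence of the first two conditions from Theorems \ref{Theorem 1.2}, \ref{Theorem 1.6} and \ref{Theorem 1.7} by a single diagram chase, and then to absorb the third condition by identifying the two wild kernels under the extra local hypothesis. Taking $S = S_l$, so that $\mathcal{O}_{F,S_l} = \mathcal{O}_F[1/l]$ and $K_n(\mathcal{O}_F)\otimes\Z_l$ is identified with $K_n(\mathcal{O}_{F,S_l})_l$ in the relevant degrees, the first condition is exactly the assertion that the Dwyer-Friedlander homomorphism $\phi\colon K_n(\mathcal{O}_{F,S_l})_l \to K_n^{et}(\mathcal{O}_{F,S_l})_l$ is an isomorphism; since $\phi$ is surjective this amounts to $\ker\phi = 0$. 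To control $\ker\phi$ I would place $\phi$ in a morphism of short exact sequences whose top row is $0 \to K_n^w(\mathcal{O}_F)_l \to K_n(\mathcal{O}_{F,S_l})_l \to K_n(\mathcal{O}_{F,S_l})_l/K_n^w(\mathcal{O}_F)_l \to 0$ and whose bottom row is $0 \to div\,K_n^{et}(F)_l \to K_n^{et}(\mathcal{O}_{F,S_l})_l \to K_n^{et}(\mathcal{O}_{F,S_l})_l/div\,K_n^{et}(F)_l \to 0$; the three vertical maps induced by $\phi$ are the split surjection $K_n^w(\mathcal{O}_F)_l \to div\,K_n(F)_l \cong div\,K_n^{et}(F)_l$ of Theorems \ref{Theorem 1.6} and \ref{Theorem 1.2} on the left, $\phi$ itself in the middle, and the isomorphism of Theorem \ref{Theorem 1.7} on the right.

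Applying the snake lemma, the right arrow being an isomorphism forces $\ker\phi \cong \ker\bigl(K_n^w(\mathcal{O}_F)_l \to div\,K_n^{et}(F)_l\bigr)$, so $\phi$ is injective precisely when this left arrow is injective. The left arrow is split surjective, and by the last isomorphism of Theorem \ref{Theorem 1.2} its restriction to the subgroup $div\,K_n(F)_l \subseteq K_n^w(\mathcal{O}_F)_l$ is an isomorphism onto $div\,K_n^{et}(F)_l$; hence $K_n^w(\mathcal{O}_F)_l = div\,K_n(F)_l \oplus \ker(\text{left arrow})$, and the left arrow is injective if and only if $K_n^w(\mathcal{O}_F)_l = div\,K_n(F)_l$. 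Stringing these equivalences together gives the equivalence of the first two conditions.

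For the final clause I would reduce the third condition to the second by showing that, under the hypothesis $K_n(F_v)_l \cong K_n^{et}(F_v)_l$ for $v \in S_l$, one has $WK_n(F)_l = K_n^w(\mathcal{O}_F)_l$. The inclusion $WK_n(F)_l \subseteq K_n^w(\mathcal{O}_F)_l$ has already been recorded; for the reverse I would take $\alpha \in K_n^w(\mathcal{O}_F)_l$, which dies in every $K_n^{et}(F_v)_l$, and argue that it dies in every $K_n(F_v)_l$. At the places $v \in S_l$ this is immediate from the assumed local isomorphism, while at the remaining places, where the residue characteristic differs from $l$, the comparison map $K_n(F_v)_l \to K_n^{et}(F_v)_l$ is injective by the unconditional local comparison for such local fields. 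Once $WK_n(F)_l = K_n^w(\mathcal{O}_F)_l$ is established, the second and third equalities are literally the same statement.

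I expect the genuine work to lie in two bookkeeping points rather than in any deep difficulty: checking that the left vertical map really does restrict to the Theorem \ref{Theorem 1.2} isomorphism on $div\,K_n(F)_l$, so that the direct-sum decomposition above is legitimate and forces $\ker(\text{left arrow}) = 0 \iff K_n^w(\mathcal{O}_F)_l = div\,K_n(F)_l$; and justifying the injectivity of the local comparison map at the tame places $v \notin S_l$ without invoking the full Quillen-Lichtenbaum conjecture, so that the argument remains, as promised, independent of the motivic input.
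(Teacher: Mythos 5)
Your assembly of the first equivalence is viable and genuinely different from the paper's: the paper (Theorem \ref{QLequivalentMorePrecise}) routes through the field-level condition $K_{n} (F)_l \cong K_{n}^{et} (F)_l$, the diagram following Lemma \ref{div as ker}, and continuous/$\varprojlim^1$ K-theory, whereas you extract it from Theorems \ref{Theorem 1.2}, \ref{Theorem 1.6}, \ref{Theorem 1.7} by a single snake-lemma argument at the level of $\mathcal{O}_{F,S_{\infty,l}}$; granting your opening identification, that argument (including the decomposition $K_{n}^{w}(\mathcal{O}_F)_l = div\, K_n(F)_l \oplus \ker$ of the left arrow) is correct. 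The genuine gap is the opening identification itself when $n$ is odd. In this paper $(\,\cdot\,)_l$ denotes the $l$-primary torsion subgroup, and for odd $n$ both $K_n(\mathcal{O}_F) \otimes \Z_l$ and $K_n^{et}(\mathcal{O}_F[1/l])$ have free $\Z_l$-summands of positive rank, so the first condition of the theorem is strictly stronger than ``$\phi$ is an isomorphism on $l$-torsion subgroups''. Your chain of equivalences therefore proves only the implication (condition 1) $\Rightarrow$ ($K_n^w(\mathcal{O}_F)_l = div\,K_n(F)_l$) in odd degrees; for the converse you additionally need the equality of the $\Z_l$-ranks of the two sides, which is true unconditionally (Borel's rank computation against the rank of $H^1_{cont}$, using finiteness of $H^2$ as in Lemma \ref{SchneiderConj.}) but is an input your argument never supplies. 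Alternatively one can run the comparison with $\Z/l^k$-coefficients, as in the paper's Theorem \ref{QLequivalent}, where the Bockstein sequences keep track of the free parts. (For even $n$ your identification is fine: all groups are finite, and the $l$-part of $K_{n-1}(k_v)$ vanishes for $v \,|\, l$ since $q_v$ is then a power of $l$.)

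The second gap is in the last clause. Both $WK_n(F)$ and $K_n^w(\mathcal{O}_F)_l$ are defined by local conditions at \emph{all} places, and your case division ``$v \in S_l$, or residue characteristic prime to $l$'' omits the archimedean places, where the injectivity you need actually fails: by Suslin's theorem $K_n(\C)_l \cong \Q_l/\Z_l$ for odd $n$, while $K_n^{et}(\C)_l = 0$ (real places with $l$ odd fail similarly in suitable odd degrees), so ``$\alpha$ dies in every $K_n^{et}(F_v)_l$, hence in every $K_n(F_v)_l$'' is unjustified at $v \,|\, \infty$. At the finite places your claim is indeed covered unconditionally by Corollary \ref{K-th to etale K-th for local fields}, and in characteristic $p$ there are no archimedean places, so there your argument is complete; but in the number-field case the reverse inclusion $K_n^w(\mathcal{O}_F)_l \subseteq WK_n(F)_l$ needs the paper's Lemma \ref{arch infty not important for wild}: once $\alpha$ dies in $K_n(F_{v'})_l$ at a single finite place $v'$ (which your argument does establish), it dies in $K_n(F_{v'}^h)_l$ because henselization injects into completion on K-groups, and every archimedean completion receives $F$ through $F_{v'}^h \subset \overline{F}$, which forces vanishing at $v \,|\, \infty$. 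Note that the paper's own proof of Theorem \ref{wild maps on div via a split map 2} leans on the same lemmas at exactly this point. With these two patches your proof goes through.
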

\medskip

\noindent 
At the end of chapter 5, by use of the Rost-Voevodsky theorem, we prove: 

\begin{theorem}\label{Theorem 1.9} For every $n > 1$ we have the following equality: 
\begin{equation} 
K_{n}^{w}(\mathcal{O}_{F})_l = div K_n (F)_l. 
\nonumber
\end{equation} 
Assume that $K_{n} (F_v)_l 
\stackrel{\cong}{\longrightarrow} K_{n}^{et} (F_v)_l$ for every $v \in S_l$ and every
$n > 1.$ Then for every $n \geq 0:$  
\begin{equation} 
WK_n(F) = div\, K_n (F).
\nonumber\end{equation}  
\end{theorem}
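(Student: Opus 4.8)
The plan is to derive both equalities from the equivalence established in Theorem~\ref{Theorem 1.8}, feeding in the Quillen--Lichtenbaum conjecture, which is now a theorem by virtue of the Bloch--Kato conjecture proved by Voevodsky and Rost together with the motivic cohomology results cited in the introduction. Concretely, the Rost--Voevodsky theorem furnishes, for every $n > 1$, the isomorphism
\begin{equation}
K_n(\mathcal{O}_F) \otimes \Z_l \stackrel{\cong}{\longrightarrow} K_n^{et}(\mathcal{O}_F[1/l]),
\nonumber\end{equation}
which is precisely the first of the two equivalent conditions of Theorem~\ref{Theorem 1.8}. First I would invoke that equivalence to conclude immediately that $K_n^w(\mathcal{O}_F)_l = div\, K_n(F)_l$ for all $n > 1$, giving the first assertion. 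This step is uniform in the parity of $n$: for odd $n$ both sides collapse to the divisible elements, which vanish, so the content in that case is simply the triviality of the wild kernel.

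For the second equality I would argue one prime at a time. Under the standing local hypothesis $K_n(F_v)_l \cong K_n^{et}(F_v)_l$ for $v \in S_l$, the final clause of Theorem~\ref{Theorem 1.8} upgrades the previous conclusion to $WK_n(F)_l = div\, K_n(F)_l$ for every $n > 1$. The two low degrees left uncovered are handled directly: for $n = 0$ one has $K_0(F) = \Z$, so both groups vanish, and for $n = 1$ one has $K_1(F) = F^\times$, whose divisible subgroup is trivial for a global field and for which the map $F^\times \to \bigoplus_v F_v^\times$ is injective, so both groups again vanish. Since $WK_n(F) \subset K_n(\mathcal{O}_F)_{tor}$ and likewise $div\, K_n(F)_l \subset K_n(\mathcal{O}_F)_l$, both $WK_n(F)$ and $div\, K_n(F)$ are finite torsion groups and decompose as direct sums of their $l$-primary parts (compare the identification with a Tate--Shafarevich group in Theorem~\ref{Theorem 1.2}). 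Summing the prime-by-prime equalities then yields $WK_n(F) = div\, K_n(F)$ for all $n \geq 0$.

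The hard part will not be any new argument but the careful bookkeeping over the exceptional primes in that direct sum. I must account for $l = 2$, which is legitimate precisely because of the global assumption $\mu_4 \subset F$ built into the hypotheses, so that Theorem~\ref{Theorem 1.8} applies to the $2$-primary part as well. When $\mathrm{char}\, F = p > 0$ I must also confirm that the $p$-primary parts of both $WK_n(F)$ and $div\, K_n(F)$ vanish, so that the primes dividing the characteristic contribute nothing; this rests on the absence of $p$-torsion in the higher K-theory of function fields over finite fields. Granting these points, the theorem follows formally from Theorem~\ref{Theorem 1.8} and the now-established Quillen--Lichtenbaum conjecture.
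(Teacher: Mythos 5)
Your proposal is correct, and its core is exactly the paper's argument: the Rost--Voevodsky/Quillen--Lichtenbaum theorem (Theorem \ref{Quillen-Lichtenbaum conj. equivalent conditions}) is fed into the equivalences of Theorem \ref{Theorem 1.8} (in the body, Theorems \ref{QLequivalentMorePrecise} and \ref{wild maps on div via a split map 2}), and the degrees $n = 0, 1$ are checked by hand, which the paper disposes of in a one-line remark after Theorem \ref{div equals wild kernel due to QL true}. Where you genuinely diverge is the final prime-by-prime summation, and the comparison is instructive: the paper's body version of this result keeps the subscript $l$ on both sides, i.e.\ it proves $WK_n(F)_l = div \, K_n(F)_l$ only for the single prime $l$ fixed at the outset of the paper, so no decomposition over primes is ever performed there; the integral formulation $WK_n(F) = div\, K_n(F)$ appears only in the introduction's wording of the theorem. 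Your bookkeeping --- the $2$-primary part being admissible because $\mu_4 \subset F$, and the vanishing of the $p$-primary parts when $\text{char}\, F = p > 0$ because higher K-theory of characteristic-$p$ fields has no $p$-torsion (a Geisser--Levine/Izhboldin input that the paper never cites or needs) --- is precisely what is required to upgrade the paper's $l$-primary conclusion to the literal integral statement, at the cost of implicitly strengthening the local hypothesis to $K_n(F_v)_{l'} \cong K_n^{et}(F_v)_{l'}$ at $S_{l'}$ for every relevant prime $l'$ rather than the fixed $l$ alone. So, modulo supplying a reference for the characteristic-$p$ torsion claim, your route proves the statement as literally worded, which is slightly more than the paper's own proof establishes; the paper's route is shorter exactly because it quietly retreats to the $l$-primary version.
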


\noindent
In the number field case the equality $K_{n}^{w}(\mathcal{O}_{F})_l = div K_n (F)_l,$ 
in Theorem \ref{Theorem 1.9}, follows from \cite{Ba2} (for $l$ odd) and \cite{We3}, \cite{Hu} (for $l = 2$). 
Observe that for $0 \leq n \leq 1$ we have $WK_n(F) = div\, K_n (F) = 0$ for obvious reasons. 
\bigskip

In chapter 6 we investigate the obstructions for the splitting of the Quillen localization
sequence and complete a statement of \cite[Cor. 1 and Prop. 1 p. 293]{Ba2}. 
Recall, that Tate (see \cite[Theorem 11.6]{Mi}) proved that there is the following isomorphism
$$K_{2} (\Q) \, \cong \, K_{2}(\Z) \, \oplus \,  \bigoplus_{p} \, K_{1} (\F_p).$$
The results concerning the splitting of the Quillen exact sequence for higher K-groups
of number fields were obtained in \cite{Ba1} and \cite{Ba2}. A very special case of results of \cite{Ba1} 
is the following isomorphism:
$$K_{2n} (\Q)_l \, \cong \, K_{2n}(\Z)_l \, \oplus \,  \bigoplus_{p} \, K_{2n-1} (\F_p)_l,$$
for $n = 3, 5, 7, 9$ and $l > 2.$ Moreover for $n$ odd and $l > 2$ 
the following conditions are equivalent
\cite[Cor. 2 p. 294]{Ba2} (see also Corollary \ref{splitting4} in section 6):

$$K_{2n} (\Q)_l \, \cong \, K_{2n}(\Z)_l \, \oplus \,  \bigoplus_{p} \, K_{2n-1} (\F_p)_l.$$
$$|{w_{n+1} (\Q) \zeta_{\Q} (-n)} |_{l}^{-1} = 1.$$   
\medskip

In this paper, under the assumption that $l \geq 2$ and $F$ is a global field with $\text{char}\, F  \not= l$ 
(if $l = 2$ we assume $\mu_{4} \subset F$), we get the following result concerning the splitting of the 
Quillen localization sequence that extends the splitting
results of \cite{Ba1}, \cite{Ba2} and \cite{Ca} in the number field case.
\newpage

\begin{theorem}\label{Theorem 1.10}
Let $n \geq 1.$ The following conditions are equivalent:
\begin{enumerate} 
\item{} $D (n, l^k) = 0$ \,\, for every \,\, $0 < k \leq k(l),$  
\item{} $D^{et} (n, l^k) = 0$ \,\, for every \,\, $0 < k \leq k(l),$  
\item{} $K_{2n} (F)_l \, \cong \, K_{2n} ({\mathcal O}_F)_l \, \oplus \,  
\bigoplus_{v} \, K_{2n-1} (k_v)_l,$
\item{} $K_{2n}^{et} (F)_l \, \cong \, K_{2n}^{et} ({\mathcal O}_{F}[1/l])_l \, \oplus \,  
\bigoplus_{v} \, K_{2n-1}^{et} (k_v)_l,$
\end{enumerate} 
where $k(l)$ is defined by (\ref{1.3}) and (\ref{1.4}) in section 4.
\end{theorem}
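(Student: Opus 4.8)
The plan is to establish the two ``internal'' equivalences $(1)\Leftrightarrow(3)$ and $(2)\Leftrightarrow(4)$ by a single argument applied to the Quillen localization sequence and to its \'etale analogue, and then to bridge the two halves with Theorem \ref{Theorem 1.2}. First I would reinterpret the vanishing of $D(n,l^k)$. Because $K_{2n}(k_v)=0$ and the $l$-torsion of $K_{2n-2}(k_v)$ vanishes for $n\ge 1$, the universal coefficient theorem gives $K_{2n}(k_v,\Z/l^k)\cong K_{2n-1}(k_v)[l^k]$, and exactness of the localization sequence with $\Z/l^k$-coefficients shows that $D(n,l^k)=0$ is equivalent to surjectivity of the boundary map $K_{2n+1}(F,\Z/l^k)\to\bigoplus_v K_{2n-1}(k_v)[l^k]$. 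Comparing with integral coefficients through the Bockstein square attached to $\Z\xrightarrow{l^k}\Z\to\Z/l^k$, and using that the Bockstein $K_{2n}(k_v,\Z/l^k)\to K_{2n-1}(k_v)$ is an isomorphism onto $K_{2n-1}(k_v)[l^k]$, I would conclude that $D(n,l^k)=0$ holds exactly when the integral boundary restricted to $l^k$-torsion, $\partial\colon K_{2n}(F)[l^k]\to\bigoplus_v K_{2n-1}(k_v)[l^k]$, is onto.

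Next I would read off the splitting from this family of torsion-level surjections. Put $A=K_{2n}(\mathcal O_F)_l$, $B=K_{2n}(F)_l$ and $C=\bigoplus_v K_{2n-1}(k_v)_l$, so that the localization sequence yields $0\to A\to B\xrightarrow{\partial}C'\to 0$ with $C'=\mathrm{im}(\partial)$; the cokernel $C/C'$ embeds in the (finite) torsion subgroup of $K_{2n-1}(\mathcal O_F)$. If $\partial$ is onto $C[l^k]$ for every $k$ then, since $C$ is torsion, $C'=C$; moreover each cyclic summand $K_{2n-1}(k_v)_l\cong\Z/l^{e_v}$ lifts to an element of $B$ killed by $l^{e_v}$, hence of order exactly $l^{e_v}$, and assembling these lifts produces a section of $\partial$. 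Conversely an isomorphism $B\cong A\oplus C$ makes $\partial$ a split surjection, so it is onto every $l^k$-torsion subgroup. Combined with the previous paragraph this shows that $(3)$ is equivalent to the statement that $D(n,l^k)=0$ for all $k\ge 1$; the verbatim argument applied to the \'etale localization sequence, using $K_{2n-1}^{et}(k_v)_l\cong K_{2n-1}(k_v)_l$, shows that $(4)$ is equivalent to $D^{et}(n,l^k)=0$ for all $k\ge 1$.

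It remains to replace ``all $k$'' by ``$k\le k(l)$'' and to link the two halves. For this I would use the Shafarevich-group description of the divisible elements supplied by Corollary \ref{DIVandSZA} and the discussion following Theorem \ref{Theorem 1.2}, by which $div\,K_{2n}^{et}(F)_l\cong\Sha^2(F,\Z_l(n+1))$ is a \emph{finite} group and the $D^{et}(n,l^k)$ are its finite-coefficient companions. Since $F$ has $l$-cohomological dimension $2$ one has $\Sha^3=0$, so these finite-coefficient groups vanish simultaneously; choosing $k(l)$ as in (\ref{1.3}) and (\ref{1.4}) to exceed the $l$-exponents of $\Sha^2(F,\Z_l(n+1))$ and of the finite groups $K_{2n}(\mathcal O_F)_l$ and $K_{2n-1}(\mathcal O_F)_{tor}$, the vanishing of $D^{et}(n,l^k)$ for $k\le k(l)$ forces $\Sha^2(F,\Z_l(n+1))=0$ and hence $D^{et}(n,l^k)=0$ for all $k$; the reverse implication is immediate. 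This gives $(2)\Leftrightarrow(4)$, and the same input together with Theorem \ref{Theorem 1.2} gives $(1)\Leftrightarrow(3)$. Finally the last isomorphism of Theorem \ref{Theorem 1.2}, namely $div\,K_{2n}(F)_l\cong div\,K_{2n}^{et}(F)_l$ valid for all $l\ge 2$, lets me route $(1)\Leftrightarrow(2)$ through the vanishing of the divisible elements, so that the $k=1$, $l=2$ case causes no trouble and all four conditions become equivalent.

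The main obstacle is precisely this stabilization. A priori $D(n,l^k)=0$ for small $k$ is neither stronger nor weaker than $div\,K_{2n}(F)_l=0$ --- elementary inclusions of finite cyclic $l$-groups show that torsion-level surjectivity can fail at one level while holding at others --- so the equivalence really relies on the cohomological interpretation of $D(n,l^k)$ as a Shafarevich group, together with the finiteness of $\Sha^2(F,\Z_l(n+1))$ and the vanishing of $\Sha^3$, to calibrate $k(l)$ correctly. A secondary delicate point is the Bockstein and universal-coefficient bookkeeping at $l=2$ (where $\mu_4\subset F$ is needed) and in positive characteristic, which must be checked so that the torsion-level criterion for $D(n,l^k)$ is valid for every $l\ge 2$ and every global field, not only in the classical number-field, $l>2$ situation.
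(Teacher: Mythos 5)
Your first two paragraphs follow the paper's own route (Propositions \ref{splitting1} and \ref{splitting1etale}): the identification of $D(n,l^k)$ with the cokernel of the torsion-level boundary map via the exact sequence (\ref{1.2}), and the construction of a section $\Lambda=\prod_v\Lambda_v$ of $\partial$ by lifting a generator of each cyclic summand $K_{2n-1}(k_v)_l$ at a level $k$ where $K_{2n-1}(k_v)_l=K_{2n-1}(k_v)[l^k]$, are exactly the paper's arguments. The genuine departure, and the genuine gap, is your third paragraph, the bridge $(1)\Leftrightarrow(2)$. The groups $D^{et}(n,l^k)$ are not ``finite-coefficient companions'' of $\Sha^{2}(F,\Z_l(n+1))$: the paper shows $\Sha^{2}_{S}(F,\Z/l^k(n))=0$ unconditionally (see (\ref{Sha2 is zero for Z mod lk})), and no $\mathrm{cd}_l\le 2$ or $\Sha^{3}=0$ argument controls $D^{et}(n,l^k)$ for small $k$; the asserted ``simultaneous vanishing'' is in fact false for groups of the relevant shape. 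Concretely, let $B=\bigoplus_{m\ge 1}(\Z/l^m)^{(\aleph_0)}$ (each cyclic order occurring infinitely often, as it does in $\bigoplus_v K_{2n-1}(k_v)_l$), let $f$ generate one $\Z/l^2$-summand, and put $A=\langle lf\rangle\cong\Z/l$, $C=B/A\cong B$. Then $0\to A\to B\to C\to 0$ has the shape of (\ref{QuillenLocSeq}), and $(A\cap l^kB)/l^kA=0$ for every $k\ge 2$, while $(A\cap lB)/lA\cong\Z/l\ne 0$. So vanishing of the divisible elements --- equivalently of the groups $D(n,l^k)$ for $k\ge k(l)$, by (\ref{1.3}) and (\ref{1.4}) --- says nothing about the levels $k<k(l)$, and routing $(1)\Leftrightarrow(2)$ through $div\,K_{2n}(F)_l\cong div\,K^{et}_{2n}(F)_l$ alone cannot transfer the small-level hypotheses that are part of conditions $(1)$ and $(2)$. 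This is precisely why the paper's proof of Theorem \ref{splitting2} invokes the level-by-level isomorphisms $D(n,l^k)\cong D^{et}(n,l^k)$ of Theorem \ref{Quillen and etale obstr are eq} rather than only the isomorphism of divisible elements. Your stated motivation for avoiding them --- that those isomorphisms miss the case $l=2$, $k=1$ --- is not resolved by your route: your argument proves nothing at that level either, it skips it.

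The same example exposes your converse step $(3)\Rightarrow(1)$. In it $B\cong A\oplus C$ as abstract groups, yet the surjection $B\to C$ is not split, since a splitting would force $A\cap l^kB=l^kA$ for every $k$, contradicting the computation at level $1$. Hence ``an isomorphism $B\cong A\oplus C$ makes $\partial$ a split surjection'' is false for torsion groups of this kind; what an abstract isomorphism in $(3)$ honestly yields is $div\,K_{2n}(F)_l=0$ (a direct sum of cyclic groups has no nonzero divisible elements), i.e. vanishing of $D(n,l^k)$ for $k\ge k(l)$ but not for the smaller levels demanded by $(1)$. In fairness, the paper's own one-sentence treatment of this direction in Proposition \ref{splitting1} is open to a similar objection, since an abstract isomorphism of $l^k$-torsion subgroups of infinite groups does not by itself force the cokernel in (\ref{1.2}) to vanish; but a complete proof must supply input beyond the abstract group structure you use, and your proposal does not close this gap.
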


\noindent
The group of divisible elements is the obstruction to splitting of the following natural
boundary map in the Quillen localization sequence:
\begin{theorem}\label{Theorem 1.11}
Let $n > 0$ and let $k \geq k(l).$
The following conditions are equivalent:
\begin{enumerate} 
\item{}
$\partial_{1} \, : \, K_{2n} (F)_l
\rightarrow {\bigoplus_{{l^{k} \, | \, q_{v}^n - 1}}} \, K_{2n-1} (k_v)_l \quad\, \text{is split surjective},$
\item{} $D (n)_l = 0.$ 
\end{enumerate} 
\end{theorem}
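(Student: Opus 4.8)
The plan is to read off the statement from the $l$-adic Quillen localization sequence
\begin{equation}
K_{2n}(\mathcal{O}_F)_l \xrightarrow{\ i\ } K_{2n}(F)_l \xrightarrow{\ \partial\ } \bigoplus_v K_{2n-1}(k_v)_l \xrightarrow{\ \rho\ } K_{2n-1}(\mathcal{O}_F)_l,
\nonumber
\end{equation}
supplemented by the Moore exact sequence of Theorem \ref{Theorem 1.5}. Write $V_k=\{v:l^k\mid q_v^n-1\}$, so that for $v\in V_k$ one has $K_{2n-1}(k_v)_l\cong\Z/l^{a_v}$ with $a_v\geq k$, and put $C_1=\bigoplus_{v\in V_k}K_{2n-1}(k_v)_l$, $C_2=\bigoplus_{v\notin V_k}K_{2n-1}(k_v)_l$, so that $\partial_1=\mathrm{pr}_{C_1}\circ\partial$. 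The first facts I would record are that $\ker\partial=\mathrm{im}\,i$ is finite, that $\bigoplus_v K_{2n-1}(k_v)_l$ has no nonzero divisible elements, whence $D(n)_l=div\,K_{2n}(F)_l\subseteq\ker\partial\subseteq\ker\partial_1$, and --- crucially --- that every $v\notin V_k$ satisfies $a_v\leq k-1$, so $C_2$ has exponent dividing $l^{k-1}$. Since $\partial(\ker\partial_1)\subseteq C_2$ and $\ker\partial$ is finite, $\ker\partial_1$ has bounded exponent; in particular $\bigcap_m l^m\ker\partial_1=0$.

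For $(1)\Rightarrow(2)$ this last remark does all the work, and it needs nothing about $k(l)$. Suppose $s$ is a section of $\partial_1$, so that $K_{2n}(F)_l=\ker\partial_1\oplus s(C_1)$. Let $x\in D(n)_l\subseteq\ker\partial_1$; being a divisible element, for each $m$ I can write $x=l^m(u+s(c))$ with $u\in\ker\partial_1$ and $c\in C_1$. Applying $\partial_1$ gives $0=l^mc$, hence $s(l^mc)=0$ and $x=l^mu\in l^m\ker\partial_1$. As this holds for every $m$, $x\in\bigcap_m l^m\ker\partial_1=0$, so $D(n)_l=0$.

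For $(2)\Rightarrow(1)$ I would first dispose of surjectivity and then of the existence of a section. Surjectivity: the image of $\partial$ is $\ker\rho$, and $\mathrm{pr}_{C_1}(\ker\rho)=C_1$ holds once $\rho(C_1)\subseteq\sum_{w\notin V_k}\mathrm{im}\,\rho_w$; since $K_{2n-1}(\mathcal{O}_F)_l$ is finite and the places outside $V_k$ form a set of positive density, a Chebotarev argument shows these transfers already exhaust $\mathrm{im}\,\rho$. Granting surjectivity, lift each standard generator $e_v$ $(v\in V_k)$ to $y_v$ with $\partial_w y_v=\delta_{vw}e_v$ for all $w\in V_k$; then $g_v:=l^{a_v}y_v$ has vanishing residues (at $w\in V_k$ because $e_v$ has order $l^{a_v}$, at $w\notin V_k$ because $a_w<k\leq a_v$), so $g_v\in\ker\partial$, and the splitting of $\partial_1$ is equivalent to $g_v\in l^{a_v}\ker\partial_1$ for every $v$.

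The heart of the matter, and the step I expect to be the main obstacle, is to show that each obstruction class $g_v$ lies in $D(n)_l$; once this is known, $D(n)_l=0$ forces $g_v=0$ and the section exists. I would establish this through the {\'e}tale comparison of Theorem \ref{Theorem 1.2} and the identification $div\,K_{2n}(F)_l\cong div\,K_{2n}^{et}(F)_l\cong\Sha^2(F,\Z_l(n+1))$: the class of $g_v$ in $H^2(\mathcal{O}_F[1/l],\Z_l(n+1))$ is locally trivial, its residues at finite $v\nmid l$ vanishing because $g_v\in\ker\partial$ and its local components at $v\mid l$ and at archimedean places being controlled precisely because $a_v\geq k\geq k(l)$, the threshold $k(l)$ being tied to the $l$-part of $w_n(F)$, i.e.\ to $|W^n(F)|$. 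Hence $g_v$ represents a class in $\Sha^2(F,\Z_l(n+1))=D(n)_l$, and since the Dwyer-Friedlander map is injective on divisible elements (Theorems \ref{Theorem 1.6} and \ref{Theorem 1.7}) the identification is faithful, so $D(n)_l=0$ yields $g_v=0$ for all $v$ and completes $(2)\Rightarrow(1)$. The delicate points are the local analysis at $v\mid l$ that places $g_v$ in $\Sha^2$ rather than merely in $\ker\partial$, and the verification that the range $k\geq k(l)$ is exactly what makes the large-residue places $V_k$ detect the entire divisible subgroup --- mirroring the complementary role of the range $0<k\leq k(l)$ in Theorem \ref{Theorem 1.10}.
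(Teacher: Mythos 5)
Your proof of the implication (1)$\Rightarrow$(2) is correct, and it is in fact slicker than the paper's own argument for that direction: once you note that $\partial(\ker\partial_1)\subseteq C_2$ has exponent dividing $l^{k-1}$ and that $\ker\partial$ is finite, the bounded exponent of $\ker\partial_1$ plus the section kills $D(n)_l$ directly, and this works for every $k\geq 1$ with no reference to $k(l)$; the paper instead runs the truncated localization sequence at level $k'=k+k_0$ and invokes Lemma \ref{nerrowlocalization1}. (Also, you do not need any Chebotarev argument for surjectivity of $\partial_1$: the paper's sequence (\ref{QuillenLocSeq}), i.e.\ Soul\'e's theorem, already gives surjectivity of $\partial$, hence of $\partial_1$.)

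The direction (2)$\Rightarrow$(1), however, has a genuine gap exactly at what you call the heart of the matter. First, the assertion ``$g_v$ lies in $D(n)_l$'' is not a well-posed goal: $g_v=l^{a_v}y_v$ depends on the choice of lift, and changing $y_v$ by $u\in\ker\partial_1$ changes $g_v$ by $l^{a_v}u$. Since $\ker\partial_1$ can have exponent as large as $l^{k-1+k_0}$ ($l^{k_0}$ the exponent of $K_{2n}(\mathcal{O}_F)_l$), one can have $g_v\neq 0$ even when $D(n)_l=0$ and the splitting exists (take $y_v=s(e_v)+u$ for a section $s$; then $g_v=l^{a_v}u$). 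So the element-wise claim is false in general; only the coset $g_v+l^{a_v}\ker\partial_1$ is an invariant, and proving that this coset meets $D(n)_l$ when $D(n)_l=0$ is literally proving $g_v\in l^{a_v}\ker\partial_1$, i.e.\ the splitting itself --- the route as you set it up is circular. Second, the proposed $\Sha$-argument cannot supply the missing step: $g_v\in\ker\partial$ controls only the residues (boundary maps) at finite $v\nmid l$, whereas membership in $\Sha^2(F,\Z_l(n+1))\cong D(n)_l$ requires in addition that the localizations in $H^2(F_v,\Z_l(n+1))$ vanish for $v\in S_{\infty,l}$; that extra local condition is precisely the difference between $K_{2n}(\mathcal{O}_F)_l$ and $D(n)_l$, and nothing in ``$a_v\geq k\geq k(l)$'' provides it. Indeed $k(l)$ has nothing to do with $w_n(F)$: it is defined by the stabilization conditions (\ref{1.3}) and (\ref{1.4}), i.e.\ by the exponents of $K_{2n}(\mathcal{O}_F)_l$, $K_{2n}^{et}(\mathcal{O}_F[1/l])_l$ and of the groups $D(n,l^k)$, $D^{et}(n,l^k)$.

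The step that actually closes the argument --- and the only place where $k\geq k(l)$ is needed --- is the truncated localization sequence (\ref{1.2}) together with (\ref{1.1}) and (\ref{1.3}). The paper's route: $D(n)_l=0$ and $a_v\geq k\geq k(l)$ give $D(n,l^{a_v})\cong D(n)_l=0$, so by (\ref{1.2}) the generator $e_v$ lifts to an element $y_v\in K_{2n}(F)[l^{a_v}]$ with $\partial(y_v)=e_v$ exactly; then $e_v\mapsto y_v$ already defines $\Lambda_v$, with no correction term and no obstruction class at all. Alternatively, your scheme can be repaired in the same spirit: $g_v$ lies in $K_{2n}(\mathcal{O}_F)\cap l^{a_v}K_{2n}(F)$, hence by (\ref{1.1}) it represents a class in $D(n,l^{a_v})\cong D(n)_l=0$, whence $g_v\in l^{a_v}K_{2n}(\mathcal{O}_F)_l\subseteq l^{a_v}\ker\partial_1$, which by your own criterion yields the section. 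Either way, the detection of the obstruction happens through $D(n,l^{a_v})$, not through local triviality at $v\mid l$.
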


\noindent
This implies the following corollary:
\begin{corollary}\label{Theorem 1.12}
Let $F$ be a totally real number field, $n$ odd and $l > 2$ or 
let $F$ be a global field of $\text{char} \, F > 0,$  $n \geq 1$
and $l \not= \, \text{char} \, F.$ Then for every $k \geq k(l)$ the 
following conditions are equivalent:
\begin{enumerate} 
\item{} The following surjective map splits
$$\partial_{1} \, : \, K_{2n} (F)_l
\rightarrow {\bigoplus_{{l^{k} \, | \, q_{v}^n - 1}}} \, K_{2n-1} (k_v)_l $$
\item{} 
$$\huge{|}\frac{w_{n} (F) \, w_{n+1} (F) \zeta_{F} (-n)}{\prod_{v \in S_{\infty, l}} \,  w_n (F_v)} \big{|}_{l}^{-1} = 1.$$  
\end{enumerate}
\end{corollary}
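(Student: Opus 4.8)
\section*{Proof proposal for Corollary \ref{Theorem 1.12}}

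The plan is to reduce the Corollary to Theorem \ref{Theorem 1.11} together with an explicit computation of the order $|D(n)_l|$ in terms of the special value $\zeta_F(-n)$. For $k \geq k(l)$ the map $\partial_1$ is already surjective onto the indicated target, so that condition (1) of the Corollary is literally the statement that $\partial_1$ is split surjective. By Theorem \ref{Theorem 1.11} this is equivalent to $D(n)_l = 0$, and since $D(n)_l$ is a finite $l$-group this in turn is equivalent to $|D(n)_l| = 1$. Hence everything comes down to proving the order formula
$$|D(n)_l| \; = \; \left| \frac{w_{n} (F)\, w_{n+1} (F)\, \zeta_{F} (-n)}{\prod_{v \in S_{\infty, l}} \,  w_n (F_v)} \right|_{l}^{-1},$$
after which condition (2) is exactly the assertion $|D(n)_l| = 1$.

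To establish the order formula I would first invoke the isomorphism $D(n)_l \cong D^{et}(n)$ of Theorem \ref{Theorem 1.2}, so that $|D(n)_l| = |D^{et}(n)|$. Applying Theorem \ref{Theorem 1.1} with the minimal set $S = S_{\infty,l}$, for which ${\mathcal O}_{F,S} = {\mathcal O}_F[1/l]$, gives
$$|D^{et}(n)| \; = \; |K_{2n}^{et}({\mathcal O}_F[1/l])| \cdot \frac{|w_n(F)|_l^{-1}}{\bigl| \prod_{v \in S_{\infty,l}} w_n(F_v) \bigr|_l^{-1}}.$$
Next, the Dwyer--Friedlander descent identifies the even {\'e}tale K-group with second {\'e}tale cohomology, so that $|K_{2n}^{et}({\mathcal O}_F[1/l])| = |H^2_{et}({\mathcal O}_F[1/l], \Z_l(n+1))|$.

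The remaining and decisive step is the cohomological interpretation of the zeta value, namely
$$|H^2_{et}({\mathcal O}_F[1/l], \Z_l(n+1))| \; = \; |\zeta_F(-n)|_l^{-1} \cdot |w_{n+1}(F)|_l^{-1}.$$
Here the torsion subgroup $H^1_{et}({\mathcal O}_F[1/l], \Z_l(n+1))_{tors}$ equals $H^0(F, \Q_l/\Z_l(n+1))$, whose order is the $l$-part $|w_{n+1}(F)|_l^{-1}$; when $F$ is totally real and $n$ is odd the group $H^1_{et}({\mathcal O}_F[1/l], \Z_l(n+1))$ is in fact finite (its $\Z_l$-rank is $r_2 = 0$), so the $l$-adic Euler characteristic reduces the claim to $|\zeta_F(-n)|_l = |H^1|/|H^2|$. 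In the number field case this last identity is precisely the Iwasawa Main Conjecture for totally real fields (Mazur--Wiles, Wiles), the hypothesis $n$ odd guaranteeing $\zeta_F(-n) = \zeta_F(1-(n+1)) \neq 0$ and $l > 2$ avoiding the $2$-adic sign and regulator subtleties. In the function field case the same identity is the cohomological formula for the special value of $\zeta_F$ of a smooth curve over a finite field, furnished by the Weil conjectures, with no archimedean contribution and $l \neq \mathrm{char}\, F$. Substituting the last two displays into the expression for $|D^{et}(n)|$ and collapsing the $w$-factors yields the claimed order formula, and I expect the principal obstacle to be exactly this step (iii): in characteristic zero it rests on the full strength of the Main Conjecture and dictates the hypotheses ``$F$ totally real, $n$ odd, $l>2$,'' while in positive characteristic one must instead track $\zeta_F(-n)$ through the {\'e}tale cohomology of the curve, which is where the genuinely new content in the $\mathrm{char}\, F > 0$ case resides.
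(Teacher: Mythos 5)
Your proposal is correct, and its skeleton is exactly the paper's: the paper proves this corollary as Corollaries \ref{splitting6} and \ref{splitting7}, in both cases invoking Theorem \ref{splitting5} (which is Theorem \ref{Theorem 1.11}) to translate splitting of $\partial_1$ into $D(n)_l = 0$, and then quoting an order formula for $|D(n)_l|$ in terms of $\zeta_F(-n)$. In the totally real case your derivation of that order formula --- Theorem \ref{Theorem 1.2} to pass to $D^{et}(n)$, Theorem \ref{Theorem 1.1} with $S = S_{\infty,l}$, the Dwyer--Friedlander identification of $K_{2n}^{et}$ with $H^2$, and Wiles' theorem --- is precisely how the paper obtains (\ref{number of elements in D(n)l 1}), quoted there from \cite{Ba2}, so that half is the paper's own argument. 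The one place you genuinely diverge is characteristic $p$: the paper never states a zeta formula for $H^2$ of the open curve $\text{spec}\, \mathcal{O}_F$; instead it uses the Leray spectral sequence for $\text{spec}\, F \rightarrow X$ together with the Moore sequences to identify $D^{et}(n) \cong H^1(X, W^{n+1})$ on the \emph{complete} curve, quotes \cite{Ko} for $|H^1(X, W^{n+1})| = |(q^{n+1}-1)(q^n-1)\zeta_X(-n)|_l^{-1}$, and converts $\zeta_X$ into $\zeta_F$ through the Euler factors at $v \mid \infty$, arriving at (\ref{D(n) expressed by zeta of F and X}). Your variant --- Theorem \ref{Theorem 1.1} at $S = S_{\infty}$ plus the identity $|H^2(\mathcal{O}_F, \Z_l(n+1))| = |w_{n+1}(F)\, \zeta_F(-n)|_l^{-1}$ --- is equivalent (one can check it against the paper's formulas), but that identity is itself the nontrivial content in positive characteristic rather than an off-the-shelf consequence of ``the Weil conjectures''; to make your decisive step rigorous you would either reproduce the paper's complete-curve computation or supply the corresponding trace-formula argument for the affine curve, so the rearrangement buys uniformity of exposition across the two cases but no real economy.
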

\bigskip

\noindent
Observe that $\big{|}  w_{n} (\R) \big{|}_{l}^{-1} = \big{|}  w_{n} (F) \big{|}_{l}^{-1} = 1$  for $F$ totally real, $n$ odd and $l$ odd.


\section{Basic notation and set up}

\subsection{Notation}
\begin{enumerate}
\item{} $l$ is a prime number.
\item{} $F \,\, := $  a global field. 
\item{} $p := \text{char} \, F,$ if $\text{char} \, F > 0.$   
\item{} $\mathcal{O}_F :=
\left\{
\begin{array}{lll}
\text{the integral closure of} \,\,\,\,  \Z \,\, \text{in} \,\, F &\rm{if} & \text{char} \, F = 0\\
\text{the integral closure of} \,\,\,\, \F_p [t] \,\, \text{in} \,\, F &\rm{if}& \text{char} \, F > 0\\
\end{array}\right.$
\item{} $v$ a place of $F.$
\item{} $S_{\infty} := \left\{
\begin{array}{lll}
\{v : \, v | \infty\} &\rm{if} & \text{char} \, F = 0\\
\{v : \, v | v_{t^{-1}}\} &\rm{if}& \text{char} \, F > 0\\
\end{array}\right.$
\item{}  $S_{l} := \left\{
\begin{array}{lll}
 \{v\, : \, v | l\} &\rm{if} & \text{char} \, F = 0\\
\emptyset &\rm{if}& \text{char} \, F > 0\\
\end{array}\right.$
\item{} $S_{\infty, l} := S_{\infty} \cup S_l.$
\item{} $S$ a finite set of places of $F$ containing $S_{\infty, l}.$
\item{} $\mathcal{O}_{F, S}$ the ring of $S$-integers in $F.$ Note that 
$\mathcal{O}_{F, S_{\infty}} = \mathcal{O}_{F}.$ 
\item{} $F_v$ the completion of $F$ at $v.$
\item{} $F_{v}^h$ the henselization of $F$ at $v$ 
($v$  nonarchimedean  if $\text{char} \, F = 0$)
\item{} $\mathcal{O}_v := \left\{
\begin{array}{lll}
\{\alpha \in F_v: \, v(\alpha) \geq 0\} &\rm{if} & v \not | \infty \,\, \text{and char} \, F = 0\\
\{\alpha \in F_v: \, v(\alpha) \geq 0\} &\rm{if} & \text{char} \, F > 0\\
\end{array}\right.$ 
\item{} $k_v :=  \left\{
\begin{array}{lll}
\mathcal{O}_F / v = \mathcal{O}_v /v &\rm{if} & v \notin S_{\infty} \,\, \text{and char} \, F \geq 0\\
\mathcal{O}_v /v  &\rm{if} & v \in S_{\infty} \,\, \text{and char}  \, F > 0\\
\end{array}\right.$ 
\item{} $\overline{F}_s$ \,\, the separable closure of $F.$
\item{} $F_S \subset \overline{F}_s$ the maximal separable extension of $F$ unramified outside $S.$
\item{} $G_{F} := G(\overline{F}_s  / F).$
\item{} $G_{S} := G(F_S / F).$ 
\item{} $W^{n} := W^{n}_{l} := \Q_l / \Z_l (n)$ for any $n \in \Z.$
\item{} $W^{n} (L) := W^{n}_{l} (L) := H^0(G_{L}, \Q_l/\Z_l (n))$ for a field $L$ with $\text{char} \, L \not= l.$ 
\item{} $w_{n} (L) := \prod_{l \not= \text{char} \, L } |W^{n}_{l} (L)|$ whenever 
$|W^{n}_{l} (L)| < \infty$ for every $l \not= \text{char} \, L$ and $|W^{n}_{l} (L)| = 1$ for almost every $l.$ 
\item{} $div A := \{a \in A: \,\,\, \forall_{m \in \Z} \,\,\, \exists_{a^{\prime} \in A}\,\,\, ma^{\prime} = a\}$
for an abelian group $A,$
\item{} $Div A :=$ the maximal divisible subgroup of $A,$
\item{} $A / Div := A / Div A.$
\end{enumerate}

\subsection{Fields of cohomological dimension $\leq$ 2}
Let $L$ be a field. If $L = \F_q$ is a finite field with 
$q$ elements then $\text{cd}_l (\F_q) = 1.$ If $L$ is a local field
it follows from \cite[II, sec. 4.3, Prop. 12,]{Se} that $\text{cd}_l (L) \leq 2.$
If $L = F$ is a global field and $l > 2$ then $\text{cd}_l (F) \leq 2$ by 
\cite[II, sec. 4.3, Prop. 11 and  Prop. 13,]{Se}.
If $\text{char} \, F = 0,$ then
$\text{cd}_2 (F) \leq 2$ iff $F_v = \C$ for every $v | \infty.$ 
It is so because for any $m \geq 3$ and any $2$-torsion, finite $G_F$-module $M$ 
there is the following natural isomorphism \cite[Theorem 4.8 (c) Chap. I]{M1}:  
\begin{equation}
H^m (F, M) \,\, \stackrel{\cong}{\longrightarrow}\,\, \bigoplus_{v \, \text{real}} \,\,
H^m (F_v, M) 
\label{obstacle for cd2 leq 2 for l = 2}\end{equation}
Hence if $\text{char} \, F = 0,$ and $F$ does not have real imbeddings then 
trivially $H^{m} (F_v, M) = 0$ for all $v | \infty,$ all
$G(\overline{F_v} / F_v)$-modules $M$ and all $m > 0.$
This will always be the case in this paper since for $l = 2$ we will assume that 
$\mu_4 \subset F.$ 
\medskip

\noindent
The localization sequence for {\' e}tale cohomology \cite[pp. 267-268]{So1} shows that
$\text{cd}_l (\mathcal{O}_v) \leq 2$ for all nonarchimedean $v$  
and $\text{cd}_l (\mathcal{O}_{F,S}) \leq 2$ for all finite $S \supset S_{\infty, l}$
\medskip

\begin{lemma} Let $L$ be a field such that $\text{char} \, L \not= l$
and $\mu_{l^{\infty}} \subset L.$  
Assume that $K_2 (L^{\prime}) / Div K_2 (L^{\prime})$ is torsion 
for any algebraic extension $L^{\prime}/L.$ Then $\text{cd}_l (L) \leq 1.$
\label{fields cont. all l power roots of 1 have cdl at most 1} 
\end{lemma}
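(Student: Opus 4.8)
The plan is to use Serre's classical criterion that expresses the bound $\text{cd}_l \leq 1$ through Brauer groups, and to feed the hypothesis in through the Merkurjev--Suslin theorem with divisible coefficients. Recall that for a field $L$ with $\text{char}\, L \not= l$ one has $\text{cd}_l (L) \leq 1$ if and only if the $l$-primary part $\text{Br}(L')\{l\}$ of the Brauer group vanishes for every finite separable extension $L'/L$ (see \cite[II, \S 3]{Se}). Every such $L'$ is algebraic over $L$ and contains $\mu_{l^\infty}$, so the hypothesis applies to it: $K_2(L')/Div\, K_2(L')$ is torsion. Hence it suffices to prove $\text{Br}(L')\{l\} = 0$ for one fixed but arbitrary finite separable $L'/L$.

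First I would identify this Brauer group with a $K$-theoretic object. By the Merkurjev--Suslin theorem the norm residue map gives a natural isomorphism $K_2(L')/l^k \cong H^2(L', \mu_{l^k}^{\otimes 2})$ for every $k \geq 1$. Passing to the colimit over $k$ (the transition maps being multiplication by $l$ on both sides, compatibly by naturality) yields
\begin{equation}
K_2(L') \otimes \Q_l/\Z_l \,\cong\, H^2(L', \Q_l/\Z_l (2)).
\nonumber\end{equation}
Since $\mu_{l^\infty} \subset L'$, the Tate twist is trivial, so $\Q_l/\Z_l (2) \cong \Q_l/\Z_l (1) = \mu_{l^\infty}$ as $G_{L'}$-modules, whence $H^2(L', \Q_l/\Z_l (2)) \cong H^2(L', \mu_{l^\infty}) = \text{Br}(L')\{l\}$. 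Combining these gives $\text{Br}(L')\{l\} \cong K_2(L') \otimes \Q_l/\Z_l$. The essential point is to work with the divisible coefficients $\Q_l/\Z_l$ rather than passing through $\text{Br}(L')[l] \cong K_2(L')/l$: tensoring with the divisible group $\Q_l/\Z_l$ annihilates the torsion of $K_2(L')$, so only its torsion-free quotient will matter.

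It remains to show $K_2(L') \otimes \Q_l/\Z_l = 0$, and here the hypothesis enters. Write $A = K_2(L')$ and $D = Div\, A$, and let $A_{\mathrm{tor}}$, $D_{\mathrm{tor}}$ denote torsion subgroups. Since $A/D$ is torsion, for $a \in A$ some multiple $ma$ lies in $D$, and divisibility of $D$ lets me write $ma = md$ with $d \in D$, so $a - d$ is torsion; hence $A = A_{\mathrm{tor}} + D$ and consequently $A/A_{\mathrm{tor}} \cong D/D_{\mathrm{tor}}$ is torsion-free and divisible, i.e. a $\Q$-vector space, in particular $l$-divisible. Because $A_{\mathrm{tor}} \otimes \Q_l/\Z_l = 0$ (a cyclic torsion group tensored with the $l$-divisible group $\Q_l/\Z_l$ vanishes) and an $l$-divisible group tensored with $\Q_l/\Z_l$ vanishes, I conclude $A \otimes \Q_l/\Z_l \cong (A/A_{\mathrm{tor}}) \otimes \Q_l/\Z_l = 0$. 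Thus $\text{Br}(L')\{l\} = 0$ for every finite separable $L'/L$, and Serre's criterion gives $\text{cd}_l(L) \leq 1$. The main obstacle is precisely the second step: one must resist arguing through $\text{Br}(L')[l] \cong K_2(L')/l$, since the hypothesis does not force $K_2(L')$ itself to be $l$-divisible — its $l$-torsion could a priori obstruct this — and it is the colimit formulation with $\Q_l/\Z_l$-coefficients that makes the torsion irrelevant and reduces everything to the divisibility of $A/A_{\mathrm{tor}}$ that the hypothesis supplies.
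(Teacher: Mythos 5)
Your proof is correct, but it takes a genuinely different route from the paper's at the decisive step. Both arguments run through the Merkurjev--Suslin theorem \cite{MS} and the Brauer-group criterion for $\text{cd}_l \leq 1$ (you cite Serre \cite{Se}; the paper cites Shatz \cite{Sh}), so the task in either case is to kill an $l$-primary Brauer group attached to $K_2(L')$. The difference is in how the torsion of $K_2(L')$ is handled. The paper stays at finite level, using $K_2(L')/l \cong Br(L')[l]\otimes \Z/l(1)$; since the hypothesis only controls $K_2(L')/Div$, the $l$-primary torsion of $K_2(L')$ could a priori survive in $K_2(L')/l$, and the paper eliminates it by invoking Suslin's theorem on torsion in $K_2$ \cite[Theorem 1.8]{Su2}: every $l^k$-torsion class is a symbol $\{\xi_{l^k},a\}$, hence $l$-divisible within the torsion subgroup once $\mu_{l^{\infty}}\subset L'$. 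You instead pass to the colimit and use $Br(L')\{l\}\cong K_2(L')\otimes \Q_l/\Z_l$, so all torsion is annihilated by tensoring with a divisible group and only the divisibility of $K_2(L')/K_2(L')_{tor}$ --- exactly what the hypothesis supplies --- is needed; you correctly identified that the naive mod-$l$ argument has a gap, and that gap is precisely what the paper fills with \cite{Su2}, a deep input your route avoids entirely. What each approach buys: yours is more economical (Merkurjev--Suslin plus elementary abelian group theory), while the paper's finite-level argument needs no colimit bookkeeping and yields the stronger intermediate conclusion that $K_2(L')$ is actually $l$-divisible, not merely that $K_2(L')\otimes\Q_l/\Z_l=0$. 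Two small points in your write-up deserve sharper wording: the phrase ``compatibly by naturality'' glosses the one nontrivial verification, namely that the norm residue maps commute with multiplication by $l$ on $K_2/l^k$ and the coefficient inclusions $\Z/l^k(2)=(\Q_l/\Z_l(2))[l^k]\subset(\Q_l/\Z_l(2))[l^{k+1}]$ (note these are not the maps $\mu_{l^k}^{\otimes 2}\to\mu_{l^{k+1}}^{\otimes 2}$ induced by $\mu_{l^k}\subset\mu_{l^{k+1}}$, which introduce a factor $l^2$); this is standard and checkable on symbols, but it is a check, not formal naturality. Also, to conclude $A_{tor}\otimes\Q_l/\Z_l=0$ you need $\Q_l/\Z_l$ divisible by every integer (to kill prime-to-$l$ torsion as well), which of course it is; calling it merely ``$l$-divisible'' undersells what the argument uses.
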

\begin{proof} Let $L^{\prime}/L$ be an algebraic extension.  
Since $\mu_l \subset L^{\prime},$ by Merkurev-Suslin Theorem \cite{MS}: 
\begin{equation}
K_{2} (L^{\prime}) /l K_{2} (L^{\prime}) \stackrel{\cong}{\longrightarrow} 
H^2(L^{\prime}, \, \Z/l (2)) \stackrel{\cong}{\longrightarrow}
Br(L^{\prime})[l] \otimes \Z/l\Z (1). 
\label{Merkurev-Suslin}\end{equation}
By assumption $K_{2} (L^{\prime}) /l K_{2} (L^{\prime}) = K_{2} (L^{\prime})_l /l K_{2} (L^{\prime})_l.$
By Suslin theorem \cite[Theorem 1.8]{Su2} if $\alpha \in K_2 (L^{\prime})[l^k]$ then there is 
$a \in L^{\prime}$ such that $\alpha = \{\xi_{l^k}, \, a\}.$ Hence 
$\alpha$ is divisible by $l$ in $K_{2} (L^{\prime})_l$ because $\mu_{l^{\infty}} \subset L^{\prime}.$
This shows that $Br(L^{\prime})[l] = 0.$ Hence $\text{cd}_l (L) \leq 1$ by \cite[Corollary 2, p. 100]{Sh}.
\end{proof}

\begin{corollary} Let $L$ be an algebraic extension of a global or local field. 
Let $\text{char} \, L \not= l$ and $\mu_{l^{\infty}} \subset L.$ Then $\text{cd}_l (L) \leq 1.$
\label{loc. and glob. fields cont. all l power roots of 1 have cdl at most 1}
\end{corollary}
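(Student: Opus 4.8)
The goal is to prove Corollary~\ref{loc. and glob. fields cont. all l power roots of 1 have cdl at most 1}: for $L$ an algebraic extension of a global or local field with $\text{char}\, L \neq l$ and $\mu_{l^\infty} \subset L$, one has $\text{cd}_l(L) \leq 1$. The plan is to deduce this from Lemma~\ref{fields cont. all l power roots of 1 have cdl at most 1}, whose sole hypothesis (beyond the roots-of-unity and characteristic conditions already in place) is that $K_2(L')/Div\, K_2(L')$ is torsion for every algebraic extension $L'/L$. So the entire task reduces to verifying this $K_2$-divisibility condition for algebraic extensions of global and local fields.

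First I would observe that any algebraic extension $L'$ of an algebraic extension $L$ of a global (resp.\ local) field is itself an algebraic extension of that same global (resp.\ local) field. Hence it suffices to prove: \emph{if $L'$ is any algebraic extension of a global or local field, then $K_2(L')/Div\, K_2(L')$ is torsion.} This removes the need to carry $L$ and $L'$ separately and lets me work with a single algebraic extension of a base field of known type. The natural strategy is to reduce to the finite-level case by a colimit argument, since $K_2$ commutes with filtered colimits of rings: writing $L'$ as the direct limit of its subfields $E$ finitely generated (hence finite) over the base, we get $K_2(L') = \varinjlim_E K_2(E)$.

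Next I would invoke the structure of $K_2$ for genuine global and local fields. For a local field $E$, the group $K_2(E)$ is known (Moore, Tate) to be the direct sum of a finite cyclic group and a uniquely divisible group; in particular $K_2(E)/Div\, K_2(E)$ is finite, hence torsion. For a global field $E$, Garland's theorem (for number fields) together with the work of Bass--Tate and the Moore sequence shows $K_2(\mathcal{O}_E)$ is finite and $K_2(E)$ fits into the Moore exact sequence, from which $K_2(E)/Div\, K_2(E)$ is again torsion (indeed the divisible part is the wild kernel up to finite index). The subtlety is that divisibility and the maximal divisible subgroup do not automatically pass through the colimit: an element can become divisible only in the limit, so $Div\, K_2(L')$ can be strictly larger than $\varinjlim Div\, K_2(E)$. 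Therefore I would argue directly at the level of the quotient, showing that any element $\alpha \in K_2(L')$ of infinite order cannot survive to the torsion-free quotient, i.e.\ that the torsion-free rank is killed; here the uniform divisibility input is that $\mu_{l^\infty} \subset L'$ forces the Suslin-type divisibility used in the Lemma's proof, so one expects the non-torsion part to be divisible.

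The main obstacle will be precisely this colimit-versus-divisibility issue: ensuring that $K_2(L')/Div\, K_2(L')$ is torsion when $L'$ is an \emph{infinite} algebraic extension, since the clean finiteness statements hold only at finite level and divisibility is not a colimit-stable property. I expect to resolve it by exploiting that $L'$ contains all $l$-power roots of unity, which via the Merkurev--Suslin isomorphism~(\ref{Merkurev-Suslin}) and Suslin's theorem~\cite[Theorem 1.8]{Su2} (the same tools used inside the Lemma) makes the $l$-primary torsion of $K_2(L')$ divisible; combined with the finite-level structure results to control the prime-to-$l$ and the rank behavior, this forces the quotient by the maximal divisible subgroup to be torsion. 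Once that divisibility condition is established for all algebraic $L'/L$, Lemma~\ref{fields cont. all l power roots of 1 have cdl at most 1} applies verbatim and yields $\text{cd}_l(L) \leq 1$, completing the proof.
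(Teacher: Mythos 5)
Your strategy coincides with the paper's: reduce, via Lemma~\ref{fields cont. all l power roots of 1 have cdl at most 1}, to showing that $K_2(L^{\prime})/Div\, K_2(L^{\prime})$ is torsion for every algebraic extension $L^{\prime}$ of the base global or local field, and then verify this from the classical structure of $K_2$ --- finiteness of $K_2$ of rings of integers plus the Quillen localization sequence in the global case (the paper cites \cite{Bo}, \cite{Ha}, \cite{Q2}, \cite{Q1}), and the Tate--Merkurev results in the local case (the paper cites \cite{Ta3}, \cite{Me}). So in outline your proposal is the paper's proof.

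The one defect is your ``main obstacle,'' which is not an obstacle, and whose proposed remedy would not work anyway. You only need one direction of compatibility between $Div$ and the colimit: if $\alpha \in K_2(L^{\prime})$ is the image of $\alpha_E \in K_2(E)$ for some finite-level $E$, and $n\alpha_E \in Div\, K_2(E)$, then $n\alpha$ lies in the image of $Div\, K_2(E)$, which is a divisible subgroup of $K_2(L^{\prime})$ (homomorphic images of divisible groups are divisible), hence $n\alpha \in Div\, K_2(L^{\prime})$. Thus torsion-ness of the quotient at every finite level passes immediately to the colimit; the fact that $Div\, K_2(L^{\prime})$ may be strictly larger than $\varinjlim_E Div\, K_2(E)$ only helps, since a larger divisible subgroup makes the quotient smaller. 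By contrast, the remedy you sketch --- Suslin's theorem \cite[Theorem 1.8]{Su2} via the Merkurev--Suslin isomorphism --- controls $l$-primary \emph{torsion} elements and says nothing about elements of infinite order, which are exactly what the quotient by $Div$ is about; it is also the machinery internal to the Lemma's own proof, so importing it to verify the Lemma's hypothesis gains nothing. Note finally that in the global case the whole discussion is vacuous: $K_2(E)$ is itself torsion for a global field $E$ (finite $K_2(\mathcal{O}_E)$ and finite groups $k_v^{\times}$ in the localization sequence), torsion is preserved by filtered colimits, and a torsion group trivially has torsion quotient by its maximal divisible subgroup --- which is precisely how the paper phrases that half of the argument.
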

\begin{proof} 
Let $L$ be an algebraic extension of a global field. 
Observe that for $n > 0$ the $K_{2n}$ groups of rings of integers 
in global fields are finite by results of Borel \cite{Bo}, Harder \cite{Ha} and  Quillen \cite{Q2}. 
Hence by the Quillen localization sequence \cite{Q1} the group $K_2 (L^{\prime})$ is torsion 
for every algebraic extension $L^{\prime}/L.$ 
If $L$ is an algebraic extension of a local field then by \cite{Ta3} and \cite{Me} 
the group $K_2 (L^{\prime})/Div K_2 (L^{\prime})$ is torsion for every algebraic extension 
$L^{\prime}/L.$
Now the claim follows from Lemma \ref{fields cont. all l power roots of 1 have cdl at most 1}.  
\end{proof}

Let $L$ be a field such that $\text{char} \, L \not= l.$
We have
$G(L(\mu_{l^{\infty}})/L) \cong \Delta \times \Gamma$ where 
$\Delta := G(L(\mu_{l})/L)$ and $\Gamma := G(L(\mu_{l^{\infty}})/L(\mu_l)).$ 
 
\begin{lemma} Let $L$ be a field such that $\text{char} \, L \not= l.$ 
If $l =2$ assume that $\mu_{4} \subset L.$ 
Then $\text{cd}_l (G(L(\mu_{l^{\infty}})/L) \leq 1.$
\label{Gal. Gr which has cdl at most 1}
\end{lemma}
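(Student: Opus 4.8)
The plan is to reduce the computation to the two factors of the decomposition $G(L(\mu_{l^{\infty}})/L) \cong \Delta \times \Gamma$ recorded just before the lemma, and to exploit that essentially only one factor carries $l$-power order. First I would dispose of $\Delta = G(L(\mu_l)/L)$. This group is finite, and the action of an automorphism on the roots of unity embeds $\Delta \hookrightarrow \text{Aut}(\mu_l) = (\Z/l)^{\ast}$. For $l$ odd this target has order $l-1$, which is prime to $l$; for $l = 2$ we have $\mu_2 = \{\pm 1\} \subset L$ since $\text{char}\, L \neq 2$, so $\Delta$ is trivial. Hence in every case $|\Delta|$ is finite and prime to $l$, whence $\text{cd}_l(\Delta) = 0$.

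Next I would analyse $\Gamma = G(L(\mu_{l^{\infty}})/L(\mu_l))$ through the cyclotomic character $\chi_l \colon G(L(\mu_{l^{\infty}})/L) \hookrightarrow \Z_l^{\ast}$, which is injective because $L(\mu_{l^{\infty}})$ is generated over $L$ by the $l$-power roots of unity. Since $\Gamma$ fixes $\mu_l$ and, when $l = 2$, fixes $\mu_4$ by the hypothesis $\mu_4 \subset L$, the image $\chi_l(\Gamma)$ lies in the kernel of $\Z_l^{\ast} \to (\Z/l)^{\ast}$ for $l$ odd, respectively in the kernel of $\Z_2^{\ast} \to (\Z/4)^{\ast}$ for $l = 2$. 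That kernel is $1 + l\Z_l$ (resp. $1 + 4\Z_2$), a group isomorphic to $\Z_l$ and in particular torsion-free. Therefore $\Gamma$ is isomorphic to a closed subgroup of $\Z_l$, i.e. either trivial or $\cong \Z_l$; as $\Z_l$ is a free pro-$l$ group of rank one, $\text{cd}_l(\Z_l) = 1$ and hence $\text{cd}_l(\Gamma) \leq 1$.

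Finally I would assemble the two pieces. Because $|\Delta|$ is prime to $l$ while $\Gamma$ is pro-$l$, the subgroup $\{1\} \times \Gamma$ is a pro-$l$ Sylow subgroup of $G = \Delta \times \Gamma$, of index $|\Delta|$ prime to $l$. By Serre's comparison of cohomological dimension with that of a Sylow subgroup \cite[I, sec. 3.3, Prop. 14]{Se}, this gives $\text{cd}_l(G) = \text{cd}_l(\Gamma) \leq 1$, which is the assertion. Equivalently, one may run the Hochschild--Serre spectral sequence $H^p(\Delta, H^q(\Gamma, M)) \Rightarrow H^{p+q}(G, M)$ for an $l$-torsion module $M$: as $\Delta$ has order prime to $l$, the rows $p \geq 1$ vanish, so $H^n(G, M) \cong H^n(\Gamma, M)^{\Delta} = 0$ for $n \geq 2$.

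I expect the only delicate point to be the case $l = 2$, where torsion-freeness of $\Gamma$ is exactly what can break down: without $\mu_4 \subset L$ the image $\chi_2(\Gamma)$ may contain the order-two element $-1 \in \Z_2^{\ast}$, giving $\Gamma$ a nontrivial finite subgroup and thus $\text{cd}_2(\Gamma) = \infty$. The standing hypothesis $\mu_4 \subset L$ is precisely what confines $\chi_2(\Gamma)$ to the torsion-free group $1 + 4\Z_2 \cong \Z_2$, and this is the step where that assumption is indispensable.
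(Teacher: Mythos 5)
Your proof is correct and takes essentially the same route as the paper: both use the decomposition $G(L(\mu_{l^{\infty}})/L) \cong \Delta \times \Gamma$ with $|\Delta|$ finite of order prime to $l$ and $\text{cd}_l (\Gamma) \leq 1$ (since $\Gamma$ is trivial or $\cong \Z_l$), assembled via the Hochschild--Serre spectral sequence $H^{p} (\Delta, H^{q} (\Gamma, M)) \Rightarrow H^{p+q} (G, M)$, whose terms vanish for $p > 0$ or $q > 1$. Your additional details --- the cyclotomic-character argument showing $\chi_l(\Gamma) \subset 1 + l\Z_l$ (resp. $1 + 4\Z_2$) is torsion-free, which the paper compresses into ``by assumptions,'' and the equivalent Sylow-subgroup phrasing --- are sound elaborations of the same argument rather than a different proof.
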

\begin{proof} 
By assumptions $\Gamma \cong \Z_l$ if $\mu_{l^{\infty}} \not\subset L$ and 
$\Gamma = 1$ if $\mu_{l^{\infty}} \subset L.$ Moreover
$\Delta \subset {\Z/l}^{\times}$
if $l > 2$ and $\Delta := 1$ if $l=2.$
Consider the spectral sequence for any $l$-torsion 
$G(L(\mu_{l^{\infty}})/L)$-module $M.$ 
\begin{equation}
E_{2}^{p,q} = H^{p} (\Delta, \, H^{q} (\Gamma, \, M)) \Rightarrow H^{p+q} (G(L(\mu_{l^{\infty}})/L),\,  M).
\end{equation}
$E_{2}^{p,q} = 0$ for all $p > 0$ and $q > 1$ because 
$l \not| \, |\Delta|$ and $\text{cd}_l (\Gamma) \leq 1$ by \cite[Chap. IV, Cor. 3.2]{Ri}.   
Hence $H^{m} (G(L(\mu_{l^{\infty}})/L),\,  M) = 0$ for all $m > 1.$
\end{proof}
\medskip

\noindent
The following two theorems are straightforward extensions of well know results of Tate \cite{Ta1}
and Schneider \cite{Sch2} to the framework of general fields.
\begin{theorem} \label{TateIwasawaLetter}
Let $L$ be a field such that $\text{char} \, L \not= l$
and $\mu_{l^{\infty}} \not\subset L.$ If $l =2$ assume that $\mu_{4} \subset L.$ Let
$M$ be a discrete $G(L(\mu_{l^{\infty}})/L)$-module. Then 
\begin{equation}
H^1 (G(L(\mu_{l^{\infty}})/L), \, M \otimes_{\Z} W) = 0.
\label{TateIwasawaLetter1}\end{equation} 
\end{theorem}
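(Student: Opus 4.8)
The plan is to cut the computation over $G := G(L(\mu_{l^{\infty}})/L)$ down to its pro-$l$ part $\Gamma$ and then evaluate the latter by Pontryagin duality together with a short orbit argument. Write $W = W^{n}$ and $A := M \otimes_{\Z} W$; since $W$ is $l$-divisible and $l$-torsion, so is $A$. Using the decomposition $G = \Delta \times \Gamma$ recalled just before Lemma~\ref{Gal. Gr which has cdl at most 1}, I would run the Hochschild--Serre spectral sequence $E_{2}^{p,q} = H^{p}(\Delta, H^{q}(\Gamma, A)) \Rightarrow H^{p+q}(G, A)$ exactly as in the proof of that lemma. Because $A$ is $l$-torsion while $\Delta$ has order prime to $l$ (indeed $\Delta \hookrightarrow (\Z/l)^{\times}$ for $l > 2$ and $\Delta = 1$ for $l = 2$), we get $E_{2}^{p,q} = 0$ for $p > 0$, so $H^{1}(G, A) \cong H^{1}(\Gamma, A)^{\Delta}$. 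It thus suffices to prove $H^{1}(\Gamma, A) = 0$.

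As $\mu_{l^{\infty}} \not\subset L$, we have $\Gamma \cong \Z_{l}$; fix a topological generator $\gamma$. For a procyclic group and a discrete module the standard identification gives $H^{1}(\Gamma, A) \cong A_{\Gamma} = \mathrm{coker}(\gamma - 1 : A \to A)$. Now I would dualize: Pontryagin duality is exact and contravariant, so $(A_{\Gamma})^{\vee} \cong \ker\big((\gamma-1)^{\vee}\big) = (A^{\vee})^{\Gamma}$, since the transpose of $\gamma$ acts on $A^{\vee}$ as $\gamma^{-1}$. Hence $H^{1}(\Gamma, A) = 0$ is equivalent to $(A^{\vee})^{\Gamma} = 0$. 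The tensor--hom adjunction computes the dual explicitly: $A^{\vee} = \mathrm{Hom}_{\Z}(M \otimes_{\Z} \Q_{l}/\Z_{l}(n), \, \Q_{l}/\Z_{l}) \cong \mathrm{Hom}_{\Z}(M, \Z_{l})(-n) =: M^{*}(-n)$, where $M^{*}$ is a torsion-free $\Z_{l}$-module carrying the contragredient $\Gamma$-action twisted by $\chi^{-n}$.

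The heart of the matter is the vanishing $(M^{*}(-n))^{\Gamma} = 0$. Set $c := \chi(\gamma)^{n} \in \Z_{l}^{\times}$. An $x \in M^{*}$ is $\Gamma$-invariant in $M^{*}(-n)$ exactly when $x(\gamma^{-1} m) = c\, x(m)$ for all $m \in M$. Fix $m$; continuity of the action on the discrete module $M$ gives $\gamma^{l^{N}} m = m$ for some $N$, and iterating the relation yields $x(m) = c^{\,l^{N}} x(m)$, i.e. $(1 - c^{\,l^{N}})\, x(m) = 0$ in $\Z_{l}$. The hypotheses are exactly what force $c$ to have infinite order: for $n \neq 0$ and $\mu_{l^{\infty}} \not\subset L$ the character $\chi|_{\Gamma}$ is injective with image in the torsion-free group $1 + l\Z_{l}$ (when $l = 2$ the assumption $\mu_{4} \subset L$ puts the image in $1 + 4\Z_{2}$, again torsion-free), so $c \neq 1$ and $c^{\,l^{N}} \neq 1$. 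Since $\Z_{l}$ is a domain it follows that $x(m) = 0$ for every $m$, hence $x = 0$ and $(A^{\vee})^{\Gamma} = 0$.

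I expect the main obstacle to lie precisely in this last step, which also consumes every hypothesis. The divisibility of $A$ --- equivalently the torsion-freeness of $M^{*}$ --- is indispensable: for a non-divisible coefficient module such as $\Z/l(n)$ the group $\Gamma$ acts trivially modulo $l$ and $H^{1}$ need not vanish, so the statement genuinely fails without it. Likewise the twist must be nontrivial ($c \neq 1$, which requires $n \neq 0$, and $\mu_{4} \subset L$ when $l = 2$), as this is what makes $1 - c^{\,l^{N}}$ a nonzero, hence injective, multiplier on $M^{*}$. By contrast the three supporting moves --- collapse of the spectral sequence, the procyclic identification of $H^{1}$, and the adjunction computing $A^{\vee}$ --- are routine; the real design choice is to pass to $A^{\vee}$, so that the opaque surjectivity of $\gamma - 1$ on the divisible module $A$ becomes the transparent statement $(A^{\vee})^{\Gamma} = 0$.
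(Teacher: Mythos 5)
You prove the right statement, and your argument is sound; it differs from the paper's in how the essential vanishing is established, so a comparison is worthwhile. The reduction to the pro-$l$ part is the same in both: the paper notes $H^1(\Delta, M\otimes_{\Z}W)=0$ because $|\Delta|$ is prime to $l$ while the module is $l$-primary, and then applies inflation--restriction; your collapse of the Hochschild--Serre spectral sequence is the identical computation. The divergence is at $H^1(\Gamma, M\otimes_{\Z}W)=0$: the paper does not prove this at all, it cites Tate's letter to Iwasawa (``as in \cite{Ta1}''). Tate's argument is the primal one: identify $H^1(\Gamma,A)$ with the coinvariants $A_{\Gamma}$ and show $\gamma-1$ is surjective on $A=M\otimes_{\Z}W$ directly. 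Concretely, given $a=\sum_i m_i\otimes w_i$, pick $N$ with $\gamma^{l^N}m_i=m_i$ for all $i$; on the $\Gamma$-submodule generated by the orbits of the $m_i$, the operator $\gamma^{l^N}-1$ acts through the second tensor factor as multiplication by $c^{l^N}-1=l^s u$ with $u\in\Z_l^{\times}$ (where $c=\chi(\gamma)^n$), and this is surjective because $W$ is $l$-divisible; since $\gamma^{l^N}-1=(\gamma-1)(1+\gamma+\cdots+\gamma^{l^N-1})$, one concludes $a\in(\gamma-1)A$. Your proof is the Pontryagin-dual rendition of the same mechanism: surjectivity of $\gamma-1$ on the divisible module $A$ becomes vanishing of $\Gamma$-invariants on the compact dual $A^{\vee}\cong\mathrm{Hom}_{\Z}(M,\Z_l)(-n)$, and divisibility of $W$ becomes torsion-freeness of $\Z_l$, so that the punchline $(1-c^{l^N})\,x(m)=0\Rightarrow x(m)=0$ is a one-line computation in an integral domain. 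What your route buys is a self-contained proof (the paper's is a citation) whose delicate point is maximally transparent; what Tate's primal version buys is brevity and no duality formalism, at the price of checking that surjectivity on the images of the finitely generated pieces suffices. Both consume exactly the same hypotheses: discreteness of $M$, and infinite order of $c$, which is where $\mu_{l^{\infty}}\not\subset L$, the nontriviality of the twist, and (for $l=2$) $\mu_4\subset L$ enter.

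Two caveats, neither of which undermines your proof, and both of which the paper itself shares. First, the statement is only true with a nontrivial Tate twist: for $W=W^0$ and $M=\Z$ trivial one has $H^1(G,\Q_l/\Z_l)\cong\mathrm{Hom}_{\mathrm{cont}}(\Gamma,\Q_l/\Z_l)\neq 0$ whenever $\Gamma\cong\Z_l$. So the theorem must be read, as you do and as its application in the proof of Theorem~\ref{Ge of Sch} (where $W=W^{n-1}$ with $n\neq 1$) confirms, with $W=W^m$, $m\neq 0$; you are right to single out $m\neq 0$ as a genuine hypothesis rather than use it silently. Second, $\mu_{l^{\infty}}\not\subset L$ does not by itself give $\Gamma\cong\Z_l$: for $l$ odd it can happen that $\mu_{l^{\infty}}\subset L(\mu_l)$ while $\mu_{l^{\infty}}\not\subset L$ (take $L$ to be the cyclotomic $\Z_l$-extension of $\Q$, so that $L(\mu_l)=\Q(\mu_{l^{\infty}})$), in which case $\Gamma=1$. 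This is harmless --- then $H^1(\Gamma,A)=0$ trivially and your $\Delta$-step still finishes the proof --- but strictly the dichotomy is ``$\Gamma\cong\Z_l$ or $\Gamma=1$'', a slip that also occurs in the proof of Lemma~\ref{Gal. Gr which has cdl at most 1}.
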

\begin{proof} Is clear that $H^1 (\Delta, \, M \otimes_{\Z} W) = 0.$ Hence to get 
(\ref{TateIwasawaLetter1}) it is enough to prove 
$H^1 (\Gamma, \, M \otimes_{\Z} W) = 0$ as in \cite{Ta1} 
and apply the inflation-restriction exact sequence.
\end{proof}

\begin{theorem} \label{Ge of Sch} 
Let $L$ be a field such that $\text{char} \, L \not= l$
and $\mu_{l^{\infty}} \not\subset L.$ If $l =2$ assume that $\mu_{4} \subset L.$
Assume that $K_2 (L^{\prime}) / Div K_2 (L^{\prime})$ is torsion 
for any algebraic extension $L^{\prime}/L(\mu_{l^{\infty}}).$ 
Then
\begin{equation}
H^m(G_{L},\, W^n) \,\, \cong \,\,
\left\{
\begin{array}{lllll}
0 &\rm{if} & m > 2 & \rm{and} & n  \in \Z\\
0&\rm{if}& m = 2 & \rm{and} & n \not= 1\\
Br (L)_l &\rm{if} & m = 2 & \rm{and} & n = 1\\
\end{array}\right.
\label{Ge of Sch1}
\end{equation}
\end{theorem}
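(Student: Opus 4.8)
The plan is to descend to the field $L_{\infty} := L(\mu_{l^{\infty}})$ and play off two cohomological dimension bounds. Write $G := G(L_{\infty}/L)$ and $H := G_{L_{\infty}}$, so that $1 \to H \to G_L \to G \to 1$. By hypothesis $K_2(L')/Div\,K_2(L')$ is torsion for every algebraic extension $L'/L_{\infty}$, and $\mu_{l^{\infty}} \subset L_{\infty}$; hence Lemma \ref{fields cont. all l power roots of 1 have cdl at most 1} applied to $L_{\infty}$ gives $\text{cd}_l(H) = \text{cd}_l(L_{\infty}) \leq 1$. On the other hand, Lemma \ref{Gal. Gr which has cdl at most 1} (using $\mu_4 \subset L$ when $l=2$) gives $\text{cd}_l(G) \leq 1$. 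These are the two structural facts I would lean on.

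First I would feed both bounds into the Hochschild--Serre spectral sequence $E_2^{p,q} = H^p(G,\,H^q(H,\,W^n)) \Rightarrow H^{p+q}(G_L,\,W^n)$. Since $W^n$ is $l$-torsion, the two dimension bounds force $E_2^{p,q}=0$ unless $0 \le p,q \le 1$, so the whole spectral sequence lives in a $2\times 2$ square. This already yields $H^m(G_L,W^n)=0$ for every $m>2$. In total degree $2$ the only possibly nonzero term is $E_2^{1,1}$ (both the incoming differential $d_2\colon E_2^{-1,2}\to E_2^{1,1}$ and the outgoing $d_2\colon E_2^{1,1}\to E_2^{3,0}$ land in or come from zero groups, while $E_\infty^{2,0}=E_\infty^{0,2}=0$), giving the canonical isomorphism $H^2(G_L,W^n) \cong H^1(G,\,H^1(H,W^n))$.

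The next step is to identify the $G$-module $H^1(H,W^n)$. Since $\mu_{l^{\infty}} \subset L_{\infty}$, the group $H$ acts trivially on every $\mu_{l^k}$, so Kummer theory together with Hilbert 90 over $L_{\infty}$ gives $H^1(H,\mu_{l^k}) \cong L_{\infty}^{\times}/(L_{\infty}^{\times})^{l^k}$ as $G$-modules; twisting by the ($H$-trivial) factor $\mu_{l^k}^{\otimes(n-1)}$ and passing to the direct limit over $k$ yields the $G$-module isomorphism $H^1(H,W^n) \cong L_{\infty}^{\times} \otimes_{\Z} W^{n-1}$, whence $H^2(G_L,W^n) \cong H^1(G,\, L_{\infty}^{\times} \otimes_{\Z} W^{n-1})$. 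Now I split on the Tate twist. For $n \neq 1$ the twist $W^{n-1}$ is nontrivial, and since $\mu_{l^{\infty}} \not\subset L$ forces $\Gamma \cong \Z_l$ to act through an infinite-order power of the cyclotomic character, Theorem \ref{TateIwasawaLetter} (applied to the discrete module $M = L_{\infty}^{\times}$) gives $H^1(G,\, L_{\infty}^{\times} \otimes_{\Z} W^{n-1}) = 0$, so $H^2(G_L,W^n)=0$. For $n=1$ I would bypass this identity and compute directly: the Kummer sequence $0 \to \mu_{l^k} \to \overline{L}_s^{\times} \xrightarrow{\,l^k\,} \overline{L}_s^{\times} \to 0$ with Hilbert 90 ($H^1(G_L,\overline{L}_s^{\times})=0$) gives $H^2(G_L,\mu_{l^k}) \cong Br(L)[l^k]$, and passing to the limit $W^1 = \varinjlim_k \mu_{l^k}$ yields $H^2(G_L,W^1) \cong Br(L)_l$.

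I expect the main obstacle to be the careful bookkeeping of the $G$-module structure in the identification $H^1(H,W^n) \cong L_{\infty}^{\times} \otimes_{\Z} W^{n-1}$: one must get the Tate twist to come out as $n-1$ (so that exactly $n \neq 1$ corresponds to a nontrivial twist, hence to vanishing), and one must check that the direct limit over $k$ is compatible with the $G$-action and that the coefficient bound $\text{cd}_l(H)\le 1$ really does suppress all higher $H^q(H,-)$ terms and make the Kummer computation exact. Matching the resulting twisted discrete module to the hypotheses of Theorem \ref{TateIwasawaLetter} is the genuine crux of the $n\neq 1$ vanishing; the rest is formal once the two cohomological dimension bounds are in place.
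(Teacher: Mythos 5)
Your proposal is correct and follows essentially the same route as the paper: the Hochschild--Serre spectral sequence for $1 \to G_{L(\mu_{l^{\infty}})} \to G_L \to G(L(\mu_{l^{\infty}})/L) \to 1$, the two cohomological dimension bounds from Lemmas \ref{fields cont. all l power roots of 1 have cdl at most 1} and \ref{Gal. Gr which has cdl at most 1} to confine the $E_2$-page to a $2\times 2$ square, the Kummer-theoretic identification $H^1(G_{L(\mu_{l^{\infty}})}, W^n) \cong L(\mu_{l^{\infty}})^{\times} \otimes_{\Z} W^{n-1}$ killed by Theorem \ref{TateIwasawaLetter} when $n \neq 1$, and the direct Kummer/Hilbert 90 computation of $Br(L)_l$ when $n = 1$. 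Your added care about the $G$-module structure and the twist by $n-1$ only makes explicit what the paper leaves implicit.
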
 

\begin{proof} Consider the spectral sequence:
\begin{equation}
E_{2}^{p,q} = H^{p} (G(L(\mu_{l^{\infty}})/L), \, 
H^{q} (G_{L(\mu_{l^{\infty}})}, \, W^n)) \Rightarrow H^{p+q} (G_L,\,  W^n)
\end{equation}
Observe that 
$E_{2}^{p,q} = 0$ for all $p > 1$ or $q > 1$ by Lemmas \ref{fields cont. all l power roots of 1 have cdl at most 1} 
and \ref{Gal. Gr which has cdl at most 1}. Hence $H^{m} (G_L, \, W^n) = 0$ for all $m > 2$ and all $n \in \Z$ 
and $E_{2}^{2,0} = E_{2}^{0,2} = 0$ for all $n \in \Z.$ If $n \not= 1$ then by Theorem
\ref{TateIwasawaLetter}
$$E_{2}^{1,1} = H^{1} (G(L(\mu_{l^{\infty}})/L), \, H^{1} (G_{L(\mu_{l^{\infty}})}, \, W^n)) =$$
$$ = H^{1} (G(L(\mu_{l^{\infty}})/L), \,\, L(\mu_{l^{\infty}})^{\times} \otimes_{\Z} \, W^{n-1})) = 0.$$   
Hence $H^2(G_{L},\, W^n) = 0$ for $n \not= 1.$ For $n = 1$ the long cohomology exact sequence
associated with the short exact sequence 
$1 \rightarrow \mu_{l^k} \rightarrow \G_m \rightarrow \G_m \rightarrow 1$ and the Hilbert 90
show that $H^2(G_{L},\, \mu_{l^k}) \cong Br(L)[l^k]$ for each $k.$ Hence $H^2(G_{L},\, W) = Br(L)_l.$
\end{proof}

\begin{corollary} Let $L$ be a global or local field with $\text{char} \, L \not= l.$ 
Assume that $\mu_{4} \subset L$ if $l =2.$ Then the isomorphism 
(\ref{Ge of Sch1}) holds for $L.$
\label{Ge of Sch2}
\end{corollary}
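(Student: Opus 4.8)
The plan is to reduce the corollary to a direct application of Theorem \ref{Ge of Sch}, so the only real work is to verify its hypotheses for a global or local field $L$ with $\text{char}\, L \neq l$ (and $\mu_4 \subset L$ when $l = 2$). Two hypotheses remain to be checked: that $\mu_{l^\infty} \not\subset L$, and that $K_2(L')/Div\, K_2(L')$ is torsion for every algebraic extension $L'/L(\mu_{l^\infty})$.

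First I would dispose of the condition $\mu_{l^\infty} \not\subset L$. A global field is a finite extension of $\Q$ or of $\F_p(t)$, and a nonarchimedean local field is a finite extension of $\Q_p$ or of $\F_q((t))$; in each of these cases $L(\mu_{l^\infty})/L$ is an infinite extension, so $\mu_{l^\infty} \not\subset L$. The only remaining possibility is an archimedean local field: for $l = 2$ the field $\R$ is excluded by the standing assumption $\mu_4 \subset L$, while $\mu_{l^\infty} \cap \R = \{1\}$ for odd $l$, so again $\mu_{l^\infty} \not\subset \R$. The genuinely exceptional case is $L = \C$, where $\mu_{l^\infty} \subset \C$; but then $G_L = 1$, so $H^m(G_L, W^n) = 0$ for all $m \geq 1$ and all $n$, and $Br(\C)_l = 0$, whence (\ref{Ge of Sch1}) holds trivially. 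More conceptually, whenever $\mu_{l^\infty} \subset L$ one has $\text{cd}_l(L) \leq 1$ by Corollary \ref{loc. and glob. fields cont. all l power roots of 1 have cdl at most 1}, forcing $H^m(G_L, W^n) = 0$ for $m \geq 2$ and $Br(L)_l = 0$, so (\ref{Ge of Sch1}) holds for such $L$ as well.

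For the torsion hypothesis, the key observation is that $L(\mu_{l^\infty})$ is an algebraic extension of the global or local field $L$, so every algebraic extension $L'/L(\mu_{l^\infty})$ is itself an algebraic extension of a global or local field. Consequently the required torsion statement is exactly what is established inside the proof of Corollary \ref{loc. and glob. fields cont. all l power roots of 1 have cdl at most 1}: in the global case the finiteness of $K_{2n}$ of rings of integers (Borel, Harder, Quillen) together with the Quillen localization sequence shows that $K_2(L')$ is already torsion, while in the local case the results of Tate and Merkurjev give that $K_2(L')/Div\, K_2(L')$ is torsion. In either case the hypothesis of Theorem \ref{Ge of Sch} is met, and the theorem then applies verbatim to yield (\ref{Ge of Sch1}) for $L$.

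I expect the only delicate point to be the bookkeeping around $\mu_{l^\infty} \subset L$: one must confirm that among global and local fields this occurs only for $L = \C$ (and is ruled out for $\R$ precisely by the $l = 2$ assumption $\mu_4 \subset L$), and dispatch that degenerate case by hand rather than through Theorem \ref{Ge of Sch}, whose spectral-sequence argument presupposes $\mu_{l^\infty} \not\subset L$. Everything else is a routine matching of the corollary's hypotheses against those of the theorem, using Corollary \ref{loc. and glob. fields cont. all l power roots of 1 have cdl at most 1} as the supplier of the torsion input.
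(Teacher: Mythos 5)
Your proof is correct and takes essentially the same route as the paper: the torsion hypothesis of Theorem \ref{Ge of Sch} is supplied by the argument inside the proof of Corollary \ref{loc. and glob. fields cont. all l power roots of 1 have cdl at most 1} (Borel--Harder--Quillen plus localization in the global case, Tate and Merkurjev in the local case), and the theorem is then applied verbatim. Your extra bookkeeping on the hypothesis $\mu_{l^{\infty}} \not\subset L$ --- in particular dispatching $L = \C$ by hand, where $G_L = 1$ and $Br(\C)_l = 0$ make (\ref{Ge of Sch1}) trivial --- is a careful refinement of a point the paper's one-line proof passes over silently, not a different approach.
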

\begin{proof} It is shown in the proof of
Corollary \ref{loc. and glob. fields cont. all l power roots of 1 have cdl at most 1}
that $K_2 (L^{\prime}) / Div K_2 (L^{\prime})$ is torsion 
for any algebraic extension $L^{\prime}/L(\mu_{l^{\infty}}).$ Hence the claim
follows from Theorem \ref{Ge of Sch}.  
\end{proof}

\noindent
\subsection{Some useful isomorphisms}
Let $F$ be a global field such that $\mu_4 \subset F$ if $l = 2.$ Then 
the $l$-cohomological dimension of any of 
the following rings ${\mathcal O}_{F,S},$ $F,$ ${\mathcal O}_{v},$ and $F_v$ is $\leq $2 for 
every $l.$ Hence the Dwyer-Friedlander spectral sequence \cite{DF} Proposition 5.1 shows that 
for $X = \, \text{spec} \, {\mathcal O}_{F,S}, 
\, \text{spec} \, F, \, \text{spec} \, {\mathcal O}_{v}, \, \text{spec} \, F_v$ there 
are natural isomorphisms:
\begin{equation}
K_{2n}^{et} (X)  \cong H^{2}_{cont} (X, \Z_l (n+1)),
\quad\quad
K_{2n+1}^{et} (X)  \cong H^{1}_{cont} (X, \Z_l (n+1))
\label{DF SpecSeqResult1}\end{equation} 
\medskip

\noindent
We will often use the following
comparison isomorphisms (\ref{compIso1}),  (\ref{compIso2}),
(\ref{compIso3}) between \' etale cohomology of some affine schemes
and corresponding Galois cohomology. For a commutative ring $R$ with 
identity and an \' etale sheaf $\mathcal{F}$ 
on $\text{spec} \, R$ we put: 
$$H^{\ast} (R, \, \mathcal{F}) :=  
H^{\ast}_{et} (\text{spec} \, R, \, \mathcal{F}).$$
\medskip

\noindent
For a field $K$ and an \' etale sheaf $\mathcal{F}$ on ${\rm{spec}}\, K$
let $M_{\mathcal{F}}$ is the discrete $G_K
:= G(\overline{K}_s/K)$-module corresponding to $\mathcal{F}.$
Then \cite[Chap. II, Theorem 1.9, Chap. III Example 1.7]{M2}: 
\begin{equation}
H^{\ast} ( K, \, \mathcal{F}) \cong  H^{\ast} (G_K, \, M_{\mathcal{F}}),
\label{compIso1}\end{equation}
\bigskip

\noindent
Let $v$ be a nonarchimedean place of a global field $F$ and let $\mathcal{F}$ be an \' etale sheaf  
on $\text{spec} \, {\mathcal O}_{v}.$ Let $G_{v}^{nr}:= G_{F_v} / I_v \cong G(\overline{k_v} / k_v)$
where $I_v$ is the inertia subgroup. Let  $M_{\mathcal{F}}$ be the discrete $G_{v}^{nr}$-module 
corresponding to $\mathcal{F}.$ Then \cite[Chap. III, Th. 4.9]{A}
\begin{equation}
H^{\ast} ({\mathcal O}_{v}, \, \mathcal{F}) \cong  
H^{\ast} (G_{v}^{nr}, \, M_{\mathcal{F}})
\label{compIso2}\end{equation}
\medskip

\noindent
Let $\mathcal{F}$ be a constructible \' etale sheaf on $\text{spec} \, {\mathcal O}_{F,S}$ 
and let $M_{\mathcal{F}}$ be the discrete $G_{F,S}$-module corresponding to $\mathcal{F}.$ Then
\cite[Chap. II, Prop. 2.9]{M1}:
\begin{equation}
H^{\ast} ({\mathcal O}_{F,S}, \, \mathcal{F}) \cong  
H^{\ast} (G_S, \, M_{\mathcal{F}}).
\label{compIso3}\end{equation}
\bigskip

\noindent
From now on in this paper $\text{char} \, F \not= l$ and $\mu_4 \subset F$  if $l = 2.$


\section{Galois cohomology of local and global fields}

\subsection{Tate-Shafarevich groups and Tate-Poitou duality}
\medskip

The r-th Tate-Shafarevich group $\Sha^{r}_{S} (F, M)$ for a $G_{F, S}$-module 
$M$ is defined as follows \cite[p. 70]{M1}:  

\begin{equation}
\Sha^{r}_{S} (F, M) \,\, : = \,\, \text{ker}  \, ( H^r (G_{F, S}, \, M) \,\,  
\stackrel{}{\longrightarrow}\,\,  
\,\,  \prod_{v \in S} \, H^r (G_{F_v}, \, M)) 
\label{S-Sha}\end{equation}
In this paper $S$ is finite. In loc. cit. $\Sha^{r}_{S} (F, M)$ 
is defined for any nonempty $S$ (containing $S_{\infty}$  if  $\text{char}\, F = 0$).
In particular for a $G_{F}$-module $M:$  
\begin{equation}
\Sha^{r} (F, M) \,\, : = \,\, \text{ker} \, ( H^r (G_F,\, M) \,\,  
\stackrel{}{\longrightarrow}\,\,  
\,\,  \prod_v \, H^r (G_{F_v}, \, M)) 
\label{Sha}\end{equation}

\noindent
Observe that for an abelian variety $A/F$ and $M = A(\overline{F}_s)$ we have 
$\Sha^{1} (F, A(\overline{F})) \subset \Sha (A/F)$ where 
$\Sha (A/F)$ is the classical Tate-Shafarevich group. 
By Tate-Poitou duality (see eg. \cite[Theorem 4.10, Chap. I]{M1}) for any finite $G_{S}$-module $M$ 
with order being a unit in ${\mathcal O}_{F,S}$ and for the $G_{S}$-module
$M^{D} := Hom (M, \,\overline{F}^{\times})$ there is the following perfect pairing
\begin{equation}
\Sha^{1}_{S} (F, M) \times \Sha^{2}_{S} (F, M^{D}) 
\stackrel{}{\longrightarrow}\,\, \Q/\Z
\label{GeneralTateDuality}\end{equation} 

\noindent
Since $\Z/ l^k (n)^D \cong \Z/ l^k (1-n)$ we get perfect pairing:
\begin{equation}
\Sha^{1}_{S} (F, \Z/ l^k (n)) \times \Sha^{2}_{S} (F, \Z/ l^k (1-n)) 
\stackrel{}{\longrightarrow}\,\, \Z/l^k
\label{TateDuality2}\end{equation} 
Passing on the left and the target of (\ref{TateDuality2}) to the direct limit and 
on the right of (\ref{TateDuality2}) to the inverse limit we get perfect pairing: 
\begin{equation}
\Sha^{1}_{S} (F, \Q_l/ \Z_l (n)) \times \Sha^{2}_{S} (F, \Z_l (1-n)) 
\stackrel{}{\longrightarrow}\,\, \Q_l / \Z_l
\label{TateDuality3}\end{equation} 

Let $M$ be a finite 
discrete $G({\overline F} / F)$-module. 
If $\rho_M \, :\ G({\overline F} / F) \rightarrow Aut_{\Z} (M),$ then we put
$F(M) := {\overline F}^{\text{ker} \, \rho_M}.$ Let $v$ be a place of $F$ and 
let $w$ denote a place of $F (M)$ over $v.$ 
Consider the following commutative diagram with exact columns c.f. \cite[p. 79]{Ne}:
$$
\small{\xymatrix{
& 0 \ar@<0.1ex>[d]^{} & 0 \ar@<0.1ex>[d]^{}\\
& \quad H^1 (G(F( M)/ F), M)   
\ar@<0.1ex>[d]^{} \ar[r]^{}  \quad & 
\quad  \prod_v \prod_{w | v}\, H^1 (G(F( M)_w/ F_v), M) \ar@<0.1ex>[d]^{} \\
\quad & 
\quad H^1 (G_F,\, M)  \ar@<0.1ex>[d]^{} \ar[r]^{}  \quad & 
\quad  \prod_v \prod_{w | v}\, H^1 (G_{F_v}, \, M) \ar@<0.1ex>[d]^{} \\
0 \quad \ar[r]^{}  \quad & 
\quad H^1 (G_{F( M)}, \, M)   
\ar[r]^{}  \quad & 
\quad \prod_{v} \prod_{w | v} \, H^1 (G_{F( M)_w}, \, M) \\}}
\label{}$$

\noindent
By Chebotarev's density theorem the bottom horizontal 
arrow is a monomorphism, hence 
$\Sha^{1} (F (M), M) = 0.$   
Hence it is clear that the upper horizontal arrow is a monomorphism if 
and only if $\Sha^{1} (F, M) = 0.$ 
If $F (M) / F$ is cyclic then by Chebotarev's theorem there are infinitely many
places $v$ such that $G(F( M)/ F) = G(F(M)_w/ F_v).$ So it is clear that in 
this case the top horizontal arrow is a monomorphism hence 
$\Sha^{1} (F, M) = 0.$ 
\medskip

\noindent
The most interesting case for this paper is 
$M = \Z/ l^k (n),$ where $l \not= \text{char}\, F$ and $\mu_4 \in F$ if $l = 2.$ 
In this case all horizontal arrows in the above diagram are monomorphisms.
Hence for every $n \in \Z$ and every $k \geq 0$  
\begin{equation}
\Sha^{1} (F, \Z/ l^k (n)) = 0.
\label{Sha1 is zero for Z mod lk}
\end{equation}
By Tate-Poitou duality (\ref{TateDuality2}) for every $n \in \Z$ and every $k \geq 0$ 
\begin{equation}
\Sha^{2} (F, \Z/ l^k (n)) = 0.
\label{Sha2 is zero for Z mod lk}
\end{equation}
The equalities (\ref{Sha1 is zero for Z mod lk}) and 
(\ref{Sha2 is zero for Z mod lk}), in the number field case, 
have already been observed by Neukirch \cite[Satz 4.5]{Ne}. Hence (\ref{Sha2 is zero for Z mod lk}) 
and the exact sequence of $G_F$-modules
\begin{equation}
0 \stackrel{}{\longrightarrow} \Z/l^k (n) \stackrel{}{\longrightarrow} W^n \stackrel{l^k}{\longrightarrow}
W^n \stackrel{}{\longrightarrow} 0
\label{QlZl times l QlZl exact sequence}\end{equation}
give the following commutative diagram with exact rows and columns:
$$\small{\xymatrix{
& 0 \ar@<0.1ex>[d]^{} & 0 \ar@<0.1ex>[d]^{}\\
0 \quad \ar[r]^{} \quad 
& \quad H^1 (G_F, W^n) / l^k   
\ar@<0.1ex>[d]^{} \ar[r]^{\prod_v \, r_v}  \quad & 
\quad  \prod_v \, H^1 (G_{F_v}, W^n)/l^k \ar@<0.1ex>[d]^{} \\
0 \quad \ar[r]^{} \quad & 
\quad H^2 (G_F, \Z/l^k (n))  \ar[r]^{\prod_v \, r_v}  \quad & 
\quad  \prod_v \, H^2 (G_{F_v}, \, \Z/l^k (n))  \\}}
\label{}$$

\noindent
This diagram gives the following equality: 
\begin{equation}
div \,  H^1 (G_F, W^n) \, = \, \{h: \,\, 
r_v (h) \in div ( H^1 (G_{F_v}, W^n)), \,\, \text{for all} \,\, v \}
\label{Schneider Lemma 4.4}
\end{equation}
For an abelian group $M$ put $M^{\ast} := Hom (M, \, \Q/ \Z).$ For a finite $S$ containing
$S_{\infty, l}$ and for a finite $G_S$-module $M$ Tate-Poitou duality gives the following exact sequence.
\begin{equation}\small{
0 \rightarrow H^0 (G_{S}, M) \stackrel{}{\longrightarrow}
\bigoplus_{v \in S} H^0 (G_{F_v}, M) \stackrel{}{\longrightarrow} H^2 (G_{S}, M^{D})^{\ast}}
\label{longexactTatePoitou1}\end{equation}
\begin{equation}
\small{\rightarrow  H^1 (G_{S}, M) \stackrel{}{\longrightarrow}
\bigoplus_{v \in S} H^1 (G_{F_v}, M) \stackrel{}{\longrightarrow}  H^1 (G_{S}, M^{D})^{\ast}}
\nonumber\end{equation}
\begin{equation}\small{
\rightarrow H^2 (G_{S}, M) \stackrel{}{\longrightarrow}
\bigoplus_{v \in S} H^2 (G_{F_v}, M) \stackrel{}{\longrightarrow} H^0 (G_{S}, M^D)^{\ast}
\rightarrow 0}
\nonumber\end{equation}
where for $v$ archimedean $H^i (G_{F_v}, M)$ is $H^i_{T} (G_{F_v}, M).$
Taking $M := \Z/l^k (n)$ and passing to direct limits gives the exact sequence:
\begin{equation}\small{
0 \rightarrow H^0 (G_{S}, W^n) \stackrel{}{\longrightarrow}
\bigoplus_{v \in S} H^0 (G_{F_v}, W^n) \stackrel{}{\longrightarrow} H^2 (G_{S}, \Z_l (1-n))^{\ast}}
\label{longexactTatePoitou2}\end{equation}
\begin{equation}\small{
\rightarrow H^1 (G_{S}, W^n) \stackrel{}{\longrightarrow}
\bigoplus_{v \in S} H^1 (G_{F_v}, W^n) \stackrel{}{\longrightarrow} H^1 (G_{S}, \Z_l (1-n))^{\ast}}
\nonumber\end{equation}
\begin{equation}\small{
\rightarrow H^2 (G_{S}, W^n) \stackrel{}{\longrightarrow}
\bigoplus_{v \in S} H^2 (G_{F_v}, W^n) \stackrel{}{\longrightarrow} H^0 (G_{S}, \Z_l (1-n))^{\ast}
\rightarrow 0}
\nonumber\end{equation}
We notice that for $v$ archimedean $H^i_{T} (G_{F_v}, M) = 0$ for an $l$-torsion $G_{F_v}$-module $M$ since $G_{F_v} = G(\C/\R)$ or trivial and $\mu_{4} \subset F$
if $l = 2.$ Hence in our case there is no contribution of archimedean part 
to the Tate-Poitou exact sequences with $l$-torsion Galois modules.


\subsection{Divisible elements in Galois cohomology of local fields}
For every prime $v$ (nonarchimedean if ${\rm char} \, F = 0$)
the Theorem \ref{Ge of Sch} and Corollary \ref{Ge of Sch2} give:
\begin{equation}
H^m(G_{F_v},\, W^n)
\,\, \cong \,\,
\left\{
\begin{array}{lllll}
0 &\rm{if} & m > 2 & \rm{and} & n  \in \Z\\
0&\rm{if}& m = 2 & \rm{and} & n \not= 1\\
Br (F_v)_l \cong \Q_l/\Z_l  & \rm{if} & m = 2 & \rm{and} & n = 1\\
\end{array}\right.
\label{generallocalfieldSchneider111}
\end{equation}

\noindent
It follows from local Tate duality \cite[I Corollary 2.3]{M1} or 
\cite[II, sec. 5.2 Theorem 2 and the remark following it]{Se},  
that for each $i$ such that $0 \leq i \leq 2$ there is 
a perfect pairing
\begin{equation}
H^{i} (F_v, \Q_l / \Z_l (n)) \times H^{2-i} (F_v, \Z_l (1-n)) \rightarrow \Q_l /\Z_l.
\label{LocalTate Duality}\end{equation}
Hence the following group is finite for every $n \not= 1:$
\begin{equation}
H^2(G_{F_v}, \Z_l (n)) \cong H^0(G_{F_v}, W^{1-n})^{\ast}  \cong 
W^{n-1} (F_v),
\label{generallocalfieldSchneider112}
\end{equation}

\noindent
so by (\ref{generallocalfieldSchneider111}):
\begin{equation}
H^1(G_{F_v},\, W^n)/Div  \cong H^2(G_{F_v}, \Z_l (n)) 
\,\,\, \text{for} \,\,\, n \not= 1.
\label{generallocalfieldSchneider12}
\end{equation}

\noindent
Moreover by Hilbert 90 we have:
\begin{equation}
H^1(G_{F_v}, W^1) \cong F_{v}^{\times} \otimes \Q_l/ \Z_l \cong Div \, H^1(G_{F_v}, \, W^{1}) .
\label{generallocalfieldSchneider13}
\end{equation}

\noindent
Hence for any local field $F_v,$ any prime $l \not = {\rm char} \, F_v$ and any 
$n \in \Z:$ 
\begin{equation}
div \, H^1(G_{F_v},\, W^n) = Div \, H^1(G_{F_v},\, W^n) .
\label{generallocalfieldSchneider13}
\end{equation}

\noindent
By (\ref{compIso2}), (\ref{generallocalfieldSchneider112}) and (\ref{generallocalfieldSchneider12}),
for any $v \notin S_l,$ the boundary map $\partial_v$ in the localization sequence 
for $\mathcal{O}_v$ gives the following isomorphism:
\begin{equation}
\partial_v \,\, :\,\, H^1(G_{F_v}, W^n)/ Div \stackrel{\cong}{\longrightarrow}
H^{0} (G(\overline{k_v}/k_v), \, W^{n-1}) 
\label{generallocalfieldSchneider113}
\end{equation}
\medskip

\begin{theorem}\label{generallocalfieldSchneider2} 
There is the following isomorphism:
\begin{equation}
Div \, H^1(G_{F_v}, W^n)
\,\, \cong \,\,
\left\{
\begin{array}{lll}
(\Q_l/\Z_l)^{[F_v:\, \Q_l] + 1} &\rm{if} &  v \in S_l, \,\,\, n \in  \{0, 1\}\\
(\Q_l/\Z_l)^{[F_v:\, \Q_l]} &\rm{if} & v \in S_l, \,\,\, n \notin \{0, 1\}\\
\Q_l/\Z_l&\rm{if} & v \notin S_l, \,\,\, n \in  \{0, 1\}\\
0&\rm{if} & v \notin S_l, \,\,\, n \notin  \{0, 1\}\\
\end{array}\right.
\label{generallocalfieldSchneider14}\end{equation}
\end{theorem}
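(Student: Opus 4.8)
The plan is to determine the maximal divisible subgroup $Div\, H^1(G_{F_v}, W^n)$ by computing its $\Q_l/\Z_l$-corank, treating the value $n=1$ separately from all other $n$.

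First I would dispose of the case $n=1$. By the Hilbert $90$ computation already recorded in (\ref{generallocalfieldSchneider13}), $H^1(G_{F_v}, W^1)$ is entirely divisible and equals $F_v^{\times} \otimes_{\Z} \Q_l/\Z_l$. I would then feed in the structure of the local unit group: writing $F_v^{\times} \cong \Z \oplus \mu(F_v) \oplus U$ with $U$ the pro-unit part of $\mathcal{O}_v^{\times}$, tensoring with $\Q_l/\Z_l$ annihilates the finite factor $\mu(F_v)$ and retains only the $\Z_l$-free part of $U$. When $v \in S_l$ the residue characteristic is $l$, so $U \cong \Z_l^{[F_v:\Q_l]}$ and the valuation factor $\Z \otimes \Q_l/\Z_l = \Q_l/\Z_l$ survives as well, yielding $(\Q_l/\Z_l)^{[F_v:\Q_l]+1}$. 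When $v \notin S_l$ the unit filtration is pro-$p$ with $p \neq l$ and contributes nothing, so only $\Z \otimes \Q_l/\Z_l = \Q_l/\Z_l$ survives. This already produces the two $n=1$ entries of the table.

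For $n \neq 1$ I would reduce the problem to a dimension count over $\Q_l$. By (\ref{generallocalfieldSchneider12}) the quotient $H^1(G_{F_v}, W^n)/Div$ is finite, so $Div\, H^1(G_{F_v}, W^n)$ has the same corank as $H^1(G_{F_v}, W^n)$ itself. Using the short exact sequence $0 \to \Z_l(n) \to \Q_l(n) \to W^n \to 0$, I would identify $Div\, H^1(G_{F_v}, W^n)$ with the image of the natural map $H^1(G_{F_v}, \Q_l(n)) \to H^1(G_{F_v}, W^n)$: this image is divisible, being a quotient of a $\Q_l$-vector space, while the cokernel embeds into the finitely generated $\Z_l$-module $H^2(G_{F_v}, \Z_l(n))$ and hence has no nonzero divisible subgroup; maximality of $Div$ then forces equality. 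Consequently the corank equals $\dim_{\Q_l} H^1(G_{F_v}, \Q_l(n))$.

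Finally I would compute this dimension from the local Euler--Poincar\'e characteristic formula. The invariants $H^0(G_{F_v}, \Q_l(n))$ vanish unless $n=0$, and by local Tate duality $\dim_{\Q_l} H^2(G_{F_v}, \Q_l(n)) = \dim_{\Q_l} H^0(G_{F_v}, \Q_l(1-n))$ vanishes unless $n=1$; for $v \notin S_l$ one uses here that the cyclotomic character is unramified with Frobenius acting on $\Q_l(n)$ by $q_v^n \neq 1$, while for $v \in S_l$ the character has infinite image. Since the Euler characteristic $\dim H^0 - \dim H^1 + \dim H^2$ equals $-[F_v:\Q_l]$ when $v \in S_l$ and $0$ when $v \notin S_l$ (the order of the coefficients being a unit in the latter regime), solving for $\dim H^1$ in the cases $n=0$ and $n \notin \{0,1\}$ reproduces exactly the remaining table entries. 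The main obstacle I expect is bookkeeping rather than conceptual: pinning down the correct normalization of the Euler characteristic in the two residue-characteristic regimes, and checking cleanly that the divisible subgroup is precisely the image of the rational cohomology so that the corank count is legitimate.
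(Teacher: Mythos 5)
Your proof is correct, but it follows a genuinely different route from the paper's. The paper handles every $n$ (including $n=1$) uniformly: it computes the corank of $H^1(G_{F_v}, W^n)$ by counting $\F_l$-dimensions along the long exact sequence (\ref{longexactH0H1H2}) induced by multiplication by $l$ on $W^n$, combining the finite-coefficient Euler characteristic (\ref{Euler char 3}) with the vanishing/Brauer-group description of $H^2(G_{F_v}, W^n)$ in (\ref{generallocalfieldSchneider111}); in particular the $n=1$ entries come from $H^2(G_{F_v}, W^1) \cong Br(F_v)_l \cong \Q_l/\Z_l$, not from unit groups. You instead split off $n=1$, settling it via Hilbert 90 and the structure theorem for $F_v^{\times}$, and for $n \neq 1$ you pass to the sequence $0 \to \Z_l(n) \to \Q_l(n) \to W^n \to 0$, identify $Div \, H^1(G_{F_v}, W^n)$ with the image of $H^1(G_{F_v}, \Q_l(n))$, and count $\Q_l$-dimensions using the rational Euler characteristic together with the vanishing of $h^0$ and of $h^2 = \dim_{\Q_l} H^0(G_{F_v}, \Q_l(1-n))$. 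Both arguments rest on the same two pillars (local Euler characteristic and local Tate duality), but they package them differently: your characterization of the maximal divisible subgroup as the image of rational cohomology is conceptually clean and is in the spirit of \cite[Prop. 2.3]{Ta2}, at the cost of invoking continuous cohomology with $\Z_l(n)$ and $\Q_l(n)$ coefficients (to make the identification airtight you should state explicitly that $H^1(G_{F_v}, \Z_l(n))$ and $H^2(G_{F_v}, \Z_l(n))$ are finitely generated $\Z_l$-modules, which is standard for local fields and is exactly what makes the image a full corank-$h^1$ divisible group and the cokernel free of divisible elements) and of using the structure theory of local units for $n=1$; the paper's mod-$l$ bookkeeping is less illuminating but more economical, staying entirely with finite coefficients and treating $n=1$ on the same footing as all other $n$.
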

\begin{proof}
For any finite $G_{F_v}$-module $M$ with order $m := |M|$ prime to $\text{char} \, F_v$ 
the Euler characteristic can be computed as follows \cite[chap. I Theorem 2.8 ]{M1}:
\begin{equation}
\chi (G_{F_v}, M) := \frac{|H^0 (G_{F_v}, \, M)| \, |H^2 (G_{F_v}, \, M)| }{|H^1 (G_{F_v}, \, M)|}
\, = \, |\,\, {\mathcal O}_v / m {\mathcal O}_v \,\, |^{-1} 
\label{Euler char 1}\end{equation}
Hence 
\begin{equation}
\chi (G_{F_v}, \Z/l (n)) =
\left\{
\begin{array}{lll} 
l^{- [F_v : \, \Q_l]} &\rm{if} & v \in S_l\\
1 &\rm{if} & v \notin S_l
\end{array}\right.
\label{Euler char 2}\end{equation}
Taking $log_l$ in (\ref{Euler char 2}) gives
\begin{equation}
\sum_{i=0}^2 {\rm{dim}}_{\Z/l}\,  H^i (G_{F_v}, \Z/l (n)) \,\, = 
\left\{
\begin{array}{lll} 
-[F_v : \, \Q_l] &\rm{if} & v \in S_l\\
0 &\rm{if} & v \notin S_l
\end{array}\right.
\label{Euler char 3}
\end{equation}
Observe that 
\begin{equation}
H^0 (G_{F_v}, W^n) \cong
\left\{
\begin{array}{lll}
\Q_l/\Z_l &\rm{if} & \,\,\, n = 0\\
{\rm{finite}}&\rm{if} &  \,\,\, n \not= 0\\
\end{array}\right.
\label{H^0(Ql mod Zl for small n)}
\end{equation}

\noindent
Computing the divisible rank of $H^1 (G_{F_v}, W^n)$ by use of (\ref{generallocalfieldSchneider111}), 
(\ref{Euler char 3}), (\ref{H^0(Ql mod Zl for small n)}) and the following exact 
sequence gives the formula (\ref{generallocalfieldSchneider14}). 
\begin{equation}
0 \rightarrow H^0 (G_{F_v}, \Z/l (n)) \stackrel{}{\longrightarrow}
H^0 (G_{F_v}, W^n) \stackrel{l}{\longrightarrow} H^0 (G_{F_v}, W^n)
\label{longexactH0H1H2}\end{equation}
\begin{equation}
\rightarrow H^1 (G_{F_v}, \Z/l (n)) \stackrel{}{\longrightarrow}
H^1 (G_{F_v}, W^n) \stackrel{l}{\longrightarrow} H^1 (G_{F_v}, W^n)
\nonumber\end{equation}
\begin{equation}
\rightarrow H^2 (G_{F_v}, \Z/l (n)) \stackrel{}{\longrightarrow}
H^2 (G_{F_v}, W^n) \stackrel{l}{\longrightarrow} H^2 (G_{F_v}, W^n) \rightarrow 0.
\nonumber\end{equation}
\end{proof}

\begin{remark}
Theorem \ref{generallocalfieldSchneider2} was proved by P. Schneider \cite[Satz 4 sec. 3]{Sch2}
for $l > 2$ and $\text{char} \, F_v = 0.$
\end{remark}


\subsection{Divisible elements in Galois cohomology of global fields}
Consider any first quadrant spectral sequence $E_{2}^{i,j} \Rightarrow E^{n}$ with 
$n = i + j \geq 0.$ The differentials $d_{r}^{i,j} \, :\, E_{r}^{i,j} \rightarrow
E_{r}^{i+r,j-r+1}$ define
$E_{r+1}^{i,j} := \text{ker} \, d_{r}^{i,j} \, / \, \text{im} \, d_{r}^{i-r,j+r-1}$ 
for every $r \geq 2.$ 
For $r > n+1$ we have $d_{r}^{i,j} = 0.$ Put 
$E_{\infty}^{i,j} := E_{n+2}^{i,j} = E_{n+3}^{i,j} = \dots.$ 
The filtration $0 \subset E^{n}_{n} \subset 
E^{n}_{n-1} \subset \dots \subset E^{n}_{0} = E^n$ gives 
$E^{n}_{i} / E^{n}_{i+1} \cong E^{i, j}_{\infty}.$   
If $E_{2}^{i,j} = 0$ for every $i > 2$ then the exact sequence 
of lower terms extends to the following exact sequence: 
\begin{equation}
0 \rightarrow E_{2}^{1,0} \rightarrow E^{1} \rightarrow E_{2}^{0,1} \rightarrow E_{2}^{2,0} \rightarrow
E_{1}^{2} \rightarrow E_{2}^{1,1} \rightarrow 0
\label{SixTermExactSSeq.oflowerTerms}\end{equation}
\noindent
Consider the Leray spectral sequence for the natural map $j \, : \, \text{spec} \, F 
\rightarrow \, \text{spec} \,{\mathcal O}_{F,S}:$
\begin{equation}
E_{2}^{i,j} = H^i ({\mathcal O}_{F,S}, \, R^j j_{\ast} W^n) \Rightarrow H^{i+j} 
(F, \, W^n) 
\label{OS to F spectral sequence}\end{equation}
Since $\text{cd}_l (G_S) = \text{cd}_l ({\mathcal O}_{F,S}) = 2$ the exact sequence 
(\ref{SixTermExactSSeq.oflowerTerms}) gives the following localization 
exact sequence in cohomology:
\begin{equation}\small{
0 \rightarrow H^1 ({\mathcal O}_{F,S}, W^n) \stackrel{}{\longrightarrow}
 H^1 (F, W^n) \stackrel{\partial}{\longrightarrow} \bigoplus_{v \notin S} H^0 (k_v, W^{n-1})
\rightarrow}
\label{localization seq. in coh. 1}\end{equation}
\begin{equation}\small{
\rightarrow H^2 ({\mathcal O}_{F,S}, W^n) \stackrel{}{\longrightarrow}
 H^2 (F, W^n) \stackrel{\partial}{\longrightarrow} \bigoplus_{v \notin S} H^1 (k_v, W^{n-1})
\rightarrow 0}
\nonumber\end{equation}
By Theorem \ref{Ge of Sch} and Corollary \ref{Ge of Sch2} we get
$H^2 (G_F, W^n) = 0$ for all $n \not= 1.$ Hence for any $n \not= 1$ the sequence 
(\ref{localization seq. in coh. 1}) has the following form:
\begin{equation}\small{
0 \rightarrow H^1 ({\mathcal O}_{F,S}, W^n) \stackrel{}{\longrightarrow}
 H^1 (F, W^n) \stackrel{\partial}{\longrightarrow} \bigoplus_{v \notin S} H^0 (k_v, W^{n-1})
\rightarrow H^2 ({\mathcal O}_{F,S}, W^n)
\rightarrow 0}
\label{localization seq. in coh. 2}\end{equation}

\noindent
By \cite[Prop. 2.3]{Ta2} there is the following exact sequence:
\begin{equation}
\small{0 \rightarrow H^{1} (G_{S}, \, W^n) / Div 
\rightarrow H^{2} (G_{S}, \, \Z_l (n))
\rightarrow H^{2} (G_{S}, \, \Q_l (n)) \rightarrow H^{2} (G_{S}, \, W^n) \rightarrow 0}
\label{TateExact1}\end{equation}
P. Schneider defined numbers $i_n (F)$ as follows:
\begin{equation}
i_n (F) := \text{dim}_{\F_l} (Div H^{2} (G_{S}, \, W^n))[l]\, 
\nonumber\end{equation}
In other words $i_n (F)$ is the number of copies of
$\Q_l/\Z_l$ in $Div \, H^{2} (G_{S}, \, W^n).$ Note that
$i_n (F) = 0$ iff  $H^{2} (G_{S}, \, \Q_l (n)) = 0$ since 
$H^{i} (G_{S}, \, \Z_l (n))$ are finitely generated $\Z_l$-modules.  
It is immediate from (\ref{localization seq. in coh. 2}) that $i_n (F)$ does not depend on the finite 
set $S$ and for every finite $S$ containing $S_{l}:$
\begin{equation}
Div \, H^1 (G_{F, S}, W^n) = Div \, H^1 (G_{F}, W^n).
\label{MaxDivDoesNotDependOnS}
\end{equation}

\begin{conjecture} (P. Schneider) \quad
$i_n = 0$ for all $n \not= 1.$
\end{conjecture}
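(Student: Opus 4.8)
\noindent
The plan is to reduce the vanishing of $i_n$ to a finiteness statement and then to analyse the surviving cases along the cyclotomic tower by Iwasawa theory. Since each $H^i(G_S, \Z_l(n))$ is a finitely generated $\Z_l$-module and $H^i(G_S, \Q_l(n)) \cong H^i(G_S, \Z_l(n)) \otimes_{\Z_l} \Q_l$, the equivalence recorded just after the definition of $i_n$ shows that $i_n = 0$ if and only if $H^2(G_S, \Z_l(n))$ is finite. For $n \geq 2$ I would dispose of the conjecture immediately: this is Soul\'e's theorem that $H^2(G_S, \Q_l(n)) = 0$, which rests on the finiteness of the even $K$-groups $K_{2n-2}(\mathcal{O}_{F,S})$ quoted from Borel, Harder and Quillen in the proof of Corollary \ref{loc. and glob. fields cont. all l power roots of 1 have cdl at most 1}. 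Hence the entire arithmetic content of the conjecture lies in the range $n \leq 0$.

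\noindent
For $n \leq 0$ I would pass to the cyclotomic $\Z_l$-extension $F_\infty / F$, set $\Gamma := G(F_\infty/F) \cong \Z_l$ with topological generator $\gamma_0$, and work over the Iwasawa algebra $\Lambda := \Z_l[[\Gamma]]$. Writing $G_{S,\infty}$ for the analogue of $G_S$ over $F_\infty$, the Hochschild--Serre spectral sequence for $F_\infty/F$, together with $\text{cd}_l \Gamma = 1$ (as $\Gamma \cong \Z_l$), expresses $H^2(G_S, \Z_l(n))$ as a two-step extension built from the $\Gamma$-invariants and $\Gamma$-coinvariants of the finitely generated $\Lambda$-modules $H^j(G_{S,\infty}, \Z_l(n))$. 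The relevant module is $\Lambda$-torsion by the weak Leopoldt statement for the cyclotomic tower (which is known), so its characteristic power series $f$ is nonzero. Under the resulting control isomorphisms the finiteness of $H^2(G_S, \Z_l(n))$ then translates into the non-vanishing of $f$ at the point $T_n$ determined by the $n$-th cyclotomic twist $\gamma_0 \mapsto \kappa(\gamma_0)^n$, where $\kappa$ is the cyclotomic character. In this way the conjecture becomes the assertion that $f$ has no zero among the cyclotomic points $T_n$ with $n \neq 1$.

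\noindent
The main obstacle is exactly this last non-vanishing. Because $f \neq 0$ it vanishes at only finitely many $T_n$, so $i_n = 0$ already holds for all but finitely many $n$; the force of the conjecture is the absence of an exceptional zero for \emph{every} $n \neq 1$. The value $n = 1$ must be excluded, since there the twist is trivial and $H^2(G_F, W^1) \cong Br(F)_l \neq 0$ by Theorem \ref{Ge of Sch} — the forced zero coming from $\Z_l(1)$. To control the remaining points I would invoke the Main Conjecture of Iwasawa theory, which identifies $f$ up to a unit with a $p$-adic $L$-function, so that $i_n = 0$ becomes the non-vanishing of that $p$-adic $L$-function at the integer $n$. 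For $n \geq 2$ this is the classical non-vanishing handled above, but for $n \leq 0$ it is a non-vanishing at points outside the range of interpolation, and I do not expect this to follow from the methods of the present paper. The naive hope of reducing $n \leq 0$ to $1-n \geq 2$ by global duality also fails, since Poitou--Tate duality mixes in the local terms rather than giving a clean symmetry $i_n \leftrightarrow i_{1-n}$. I therefore anticipate that the statement is genuinely open, the obstruction being precisely this exceptional-zero question for the associated $p$-adic $L$-function — which is why it is recorded here as Schneider's conjecture.
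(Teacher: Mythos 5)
Your reading of the situation is the correct one: this statement is recorded in the paper as a \emph{conjecture}, and the paper contains no proof of it. What the paper does prove is the partial case $n > 1$, in Lemma \ref{SchneiderConj.}: there $H^2({\mathcal O}_{F,S}, W^n) = 0$ is deduced by comparing the Quillen localization sequence (exact by Soul\'e's results) with the cohomological localization sequence via the surjective Dwyer--Friedlander maps --- ultimately the same input you invoke, namely the finiteness of the even $K$-groups due to Borel, Harder and Quillen, but packaged so that the argument works for all global fields and all $l \geq 2$ under the standing hypotheses, rather than only for number fields and $l > n$ as in Soul\'e's original theorem. Your identification of $i_n = 0$ with the finiteness of $H^2(G_S, \Z_l(n))$ is exactly the remark the paper makes immediately after defining $i_n(F)$, and your exclusion of $n = 1$ via the forced divisible piece $Br(F)_l$ matches Theorem \ref{Ge of Sch}. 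Your assessment of the remaining range $n \leq 0$ is also accurate: these cases are genuinely open (for $n = 0$ the statement is equivalent to Leopoldt's conjecture for $F$ at $l$), the Iwasawa-theoretic reformulation as non-vanishing of a characteristic power series at cyclotomic twist points is the standard way to see the difficulty, and no Poitou--Tate duality trick reduces $n \leq 0$ to $n \geq 2$. So your ``proposal'' is not a proof --- but correctly so: it proves exactly the part the paper proves (which the paper then uses as a hypothesis, e.g.\ in Theorem \ref{functionfieldSchneider3}, precisely because the general case is unavailable), and it rightly declares the rest open.
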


The following lemma is well known. It was first proven by 
Soul{\' e} \cite[Th{\' e}or{\` e}me 5]{So1} in the number 
field case for $l > n.$ We include here a proof that works 
for all global fields and $l \geq 2$ provided $\mu_4 \subset F$ if $l = 2.$ 

\begin{lemma} $H^2 ({\mathcal O}_{F,S}, W^n) = 0$ for any $n > 1$ and 
any finite $S \supset S_{\infty, l}.$ In particular 
$i_n = 0$ for all $n > 1.$
\label{SchneiderConj.}\end{lemma}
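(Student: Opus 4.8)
The plan is to pass to Galois cohomology and reduce the statement to the finiteness of an integral cohomology group, which I then read off from a finite $K$-group. By the comparison isomorphism (\ref{compIso3}) one has $H^2(\mathcal{O}_{F,S}, W^n) = H^2(G_S, W^n)$, so it suffices to prove $H^2(G_S, W^n) = 0$ for $n > 1$. First I would record the structural consequence of the Tate sequence (\ref{TateExact1}): since the $\Z_l$-modules $H^i(G_S, \Z_l(n))$ are finitely generated and $H^2(G_S, \Q_l(n)) = H^2(G_S, \Z_l(n)) \otimes_{\Z_l} \Q_l$, the last two terms of (\ref{TateExact1}) exhibit $H^2(G_S, W^n)$ as the cokernel of $H^2(G_S, \Z_l(n)) \to H^2(G_S, \Q_l(n))$, hence as a divisible group isomorphic to $(\Q_l/\Z_l)^{i_n}$ with $i_n = \text{rank}_{\Z_l}\, H^2(G_S, \Z_l(n))$. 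Thus the whole claim is equivalent to $H^2(G_S, \Z_l(n))$ being finite, i.e. to $i_n = 0$.

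Next I would identify this integral group with a $K$-group. By (\ref{DF SpecSeqResult1}) there is a natural isomorphism $K_{2n-2}^{et}(\mathcal{O}_{F,S}) \cong H^2_{cont}(\mathcal{O}_{F,S}, \Z_l(n)) = H^2(G_S, \Z_l(n))$, taking $m = n-1$ there. The Dwyer--Friedlander homomorphism $K_{2n-2}(\mathcal{O}_{F,S})_l \to K_{2n-2}^{et}(\mathcal{O}_{F,S})$ is surjective \cite{DF}. Since $n > 1$ forces $2n-2 = 2(n-1)$ with $n-1 \geq 1$, the even $K$-group $K_{2n-2}(\mathcal{O}_{F,S})$ is finite: the groups $K_{2m}(\mathcal{O}_F)$ are finite for $m \geq 1$ by \cite{Bo}, \cite{Ha}, \cite{Q2}, and passing from $\mathcal{O}_F$ to $\mathcal{O}_{F,S}$ alters $K_{2n-2}$ only by the finite boundary terms $K_{2n-3}(k_v)$ appearing in the Quillen localization sequence \cite{Q1}. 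Hence $K_{2n-2}(\mathcal{O}_{F,S})_l$ is finite, its surjective image $H^2(G_S, \Z_l(n))$ is finite, so $i_n = 0$ and $H^2(G_S, W^n) = 0$.

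Finally I would observe that $i_n$ is independent of $S$ (as recorded just after (\ref{localization seq. in coh. 2})), so the vanishing for one convenient choice, say $S = S_{\infty,l}$ with $\mathcal{O}_{F,S} = \mathcal{O}_F[1/l]$, already yields it for every finite $S \supset S_{\infty,l}$. The main obstacle is precisely the finiteness of $H^2(G_S, \Z_l(n))$; every other step is a formal manipulation of exact sequences already in place. I should note that there is a parallel classical route with exactly the same crux: using the local vanishing $H^2(G_{F_v}, W^n) = 0$ for $n \neq 1$ (see (\ref{generallocalfieldSchneider111})) together with Tate--Poitou duality one obtains $H^2(G_S, W^n) \cong \Sha^1_S(F, \Z_l(1-n))^{\ast}$, reducing the problem to $\Sha^1_S(F, \Q_l(1-n)) = 0$, which one would settle through the global Euler characteristic formula and Borel's computation of the $K_{2n-1}$-ranks. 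Whether approached via this duality, via the global Euler characteristic, or via the finite even $K$-group as above, the genuine arithmetic content is identical, namely that the relevant $\Z_l$-rank vanishes for $n > 1$; I expect the $K$-theoretic route to be the most economical given the tools already assembled.
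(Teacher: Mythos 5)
Your proof is correct, but it follows a genuinely different route from the paper's. The paper argues via a diagram comparison: it takes the $l$-primary Quillen localization sequence $0 \to K_{2n-2}(\mathcal{O}_{F,S})_l \to K_{2n-2}(F)_l \stackrel{\partial}{\to} \bigoplus_{v\notin S} K_{2n-3}(k_v)_l \to 0$, exact by Quillen \cite{Q1} and Soul\'e \cite{So1}, \cite{So2}, maps it by the Dwyer--Friedlander homomorphisms to the row $H^1(\mathcal{O}_{F,S},W^n)/Div \to H^1(F,W^n)/Div \to \bigoplus_{v\notin S}H^0(k_v,W^{n-1})$, and uses surjectivity of the middle vertical map and bijectivity of the right one \cite{DF} to conclude that the cohomological boundary map is surjective; the four-term sequence (\ref{localization seq. in coh. 2}) then forces its cokernel $H^2(\mathcal{O}_{F,S},W^n)$ to vanish. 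You instead reduce, via Tate's sequence (\ref{TateExact1}), to the finiteness of $H^2(G_S,\Z_l(n)) \cong K_{2n-2}^{et}(\mathcal{O}_{F,S})$ (using (\ref{DF SpecSeqResult1})), and get that finiteness from Dwyer--Friedlander surjectivity together with the finiteness of $K_{2n-2}(\mathcal{O}_{F,S})$ due to Borel \cite{Bo}, Harder \cite{Ha} and Quillen \cite{Q2}. So the two proofs share the Dwyer--Friedlander surjectivity, but the essential arithmetic input differs: the paper uses Soul\'e's surjectivity of the K-theoretic boundary map, while you use the vanishing of the ranks of even K-groups. Your argument is in fact the one the paper itself deploys later, in the proof of Theorem \ref{MooreEt}, to show that $H^1(G_{F,S},W^{n+1})/Div$ is finite; it is slightly more economical here, whereas the paper's diagram chase has the side benefit of directly establishing the surjectivity of $H^1(F,W^n)/Div \to \bigoplus_{v\notin S}H^0(k_v,W^{n-1})$, which feeds into the Moore-type sequences of Theorem \ref{functionfieldSchneider3}. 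One point worth making explicit in your write-up: (\ref{compIso3}) is stated for constructible sheaves, so identifying $H^2(\mathcal{O}_{F,S},W^n)$ with $H^2(G_S,W^n)$ requires passing to the colimit over the sheaves $\Z/l^k(n)$; this is harmless (\'etale cohomology of the Noetherian scheme $\text{spec}\,\mathcal{O}_{F,S}$ commutes with filtered colimits), and the paper makes the same identification implicitly.
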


\begin{proof} 
Consider the following commutative diagram for each $n > 1:$
$$\small{\xymatrix{
0 \ar[r]^{}  & K_{2n -2} ({\mathcal O}_{F,S})_l \ar@<0.1ex>[d]^{} \ar[r]^{}  
&  K_{2n-2} (F)_l \ar@<0.1ex>[d]^{} \ar[r]^{\partial} & 
 \bigoplus_{v \notin S} \, K_{2n-3} (k_v)_l \ar@<0.1ex>[d]^{\cong} \ar[r]^{} & 0 & \\
0\ar[r]^{} & H^{1} ({\mathcal O}_{F,S}, \, W^n) / Div  \ar[r]^{}  & H^{1} (F, \, W^n) / Div 
\ar[r]^{\partial} & \bigoplus_{v \notin S} \,  H^{0} (k_{v}, \, W^{n-1}) \ar[r]^{}  & 0}}
\label{Com.Diag.Kth.Coh1} $$

The top row is exact by Quillen localization sequence \cite{Q1} and results of Soul\' e
\cite[Th\' eor\`eme 3 p. 274]{So1}, \cite[Th\' eor\`eme 1 p. 326]{So2}. 
The left and the middle vertical arrows are surjective by
\cite[Theorems 8.7 and 8.9]{DF} and the right vertical arrow is an isomorphism 
by \cite[Corollary 8.6]{DF}. This implies that the bottom sequence is also exact.
Hence (\ref{localization seq. in coh. 2}) shows that
$ H^2 ({\mathcal O}_{F,S}, W^n) = 0$ for all $n > 1.$ So $i_n = 0$
for all $n > 1.$ 
\end{proof}

Put:
\begin{equation}
D_n (F) := div ( H^1 (G_{F}, W^n) / Div)
\nonumber\end{equation} 
\medskip

\noindent
The following theorem extends \cite[Satz 8 sec. 4]{Sch2}.
\begin{theorem}\label{functionfieldSchneider3}
Assume that $i_n = 0$ for $n \not= 1.$ 
There are the following exact sequences:
\begin{equation}
\small{0 \rightarrow D_{n} (F) \rightarrow
H^{1} (G_{S}, \, W^n) / Div 
\rightarrow \bigoplus_{v \in S} \, W^{n-1} (F_v) 
\rightarrow W^{n-1} (F) \rightarrow
0.}
\label{Gen.Sch.OS}\end{equation}

\begin{equation}
\small{0 \rightarrow D_{n} (F) \rightarrow
H^{1} (G_F, \, W^n) / Div 
\rightarrow \bigoplus_{v} \, W^{n-1} (F_v) 
 \rightarrow W^{n-1} (F) \rightarrow 0} 
\label{Gen.Sch.F}\end{equation}
\end{theorem}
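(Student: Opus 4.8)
The plan is to read both sequences off the Poitou--Tate nine--term exact sequence (\ref{longexactTatePoitou2}) for the module $W^n$, after passing to quotients by maximal divisible subgroups. First I would specialize (\ref{longexactTatePoitou2}) to $n\neq 1$. By the local computation (\ref{generallocalfieldSchneider111}) the terms $\bigoplus_{v\in S}H^2(G_{F_v},W^n)$ vanish, and since $\mu_{l^{\infty}}\not\subset F$ the invariants $H^0(G_S,\Z_l(1-n))$ vanish for $n\neq 1$; the hypothesis $i_n=0$ together with Tate's sequence (\ref{TateExact1}) forces $H^2(G_S,W^n)=0$ (for $n>1$ this is already Lemma \ref{SchneiderConj.}). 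Feeding these vanishings into (\ref{longexactTatePoitou2}) leaves the exact sequence
$$\bigoplus_{v\in S}H^0(G_{F_v},W^n)\to H^2(G_S,\Z_l(1-n))^{\ast}\to H^1(G_S,W^n)\xrightarrow{loc}\bigoplus_{v\in S}H^1(G_{F_v},W^n)\xrightarrow{\phi}H^1(G_S,\Z_l(1-n))^{\ast}\to 0,$$
which is the backbone of the argument.

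Next I would pass to quotients by maximal divisible subgroups. The local input is (\ref{generallocalfieldSchneider12})--(\ref{generallocalfieldSchneider112}), giving $H^1(G_{F_v},W^n)/Div\cong W^{n-1}(F_v)$ for $n\neq 1$, together with the local identity $div\,H^1(G_{F_v},W^n)=Div\,H^1(G_{F_v},W^n)$ established for local fields in the previous subsection. Since the localization map sends divisible elements to divisible elements, $loc$ descends to $\overline{loc}\colon H^1(G_S,W^n)/Div\to\bigoplus_{v\in S}W^{n-1}(F_v)$, which is the middle map of the target sequence (\ref{Gen.Sch.OS}).

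I would then identify kernel and cokernel. For the kernel, combine (\ref{Schneider Lemma 4.4}) with the local equality $div=Div$ and the observation, implicit in (\ref{generallocalfieldSchneider113}), that for $v\notin S$ the subgroup $Div\,H^1(G_{F_v},W^n)$ is exactly the kernel $\ker\partial_v$ of unramified classes. Hence a class $h\in H^1(G_S,W^n)$ satisfies $r_v(h)\in Div\,H^1(G_{F_v},W^n)$ automatically for all $v\notin S$, so the local condition over $S$ is equivalent to the condition over all $v$ in (\ref{Schneider Lemma 4.4}); this shows both that $div\,H^1(G_F,W^n)\subset H^1(G_S,W^n)$ (via (\ref{localization seq. in coh. 2})) and that $\ker\overline{loc}=div\,H^1(G_F,W^n)/Div$. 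Because $Div\subset mH^1(G_F,W^n)$ for every $m$, this last group equals $div(H^1(G_F,W^n)/Div)=D_n(F)$, giving exactness at the left. For the cokernel, the backbone shows $\mathrm{coker}(loc)=H^1(G_S,\Z_l(1-n))^{\ast}$; after modding the source by divisibles this becomes $H^1(G_S,\Z_l(1-n))^{\ast}/Div\cong H^1(G_S,\Z_l(1-n))_{tor}$, and the twist exact sequence $0\to\Z_l(1-n)\to\Q_l(1-n)\to W^{1-n}\to 0$ identifies the latter with $H^0(G_S,W^{1-n})\cong W^{n-1}(F)$, using $w_{n-1}=w_{1-n}$. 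Defining the last arrow of (\ref{Gen.Sch.OS}) as the resulting quotient map yields surjectivity and exactness at $\bigoplus_{v\in S}W^{n-1}(F_v)$.

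I expect the main obstacle to be precisely this cokernel step: one must verify that localization is surjective onto the maximal divisible subgroups, i.e. that $\phi$ carries $\bigoplus_{v\in S}Div\,H^1(G_{F_v},W^n)$ onto $Div\bigl(H^1(G_S,\Z_l(1-n))^{\ast}\bigr)$, so that dividing by divisibles does not enlarge the cokernel. This is the local--global duality heart of Schneider's Satz 8, and it is where the Euler--characteristic computation behind Theorem \ref{generallocalfieldSchneider2} and the vanishing $H^2(G_S,W^n)=0$ must be combined. Finally, the all--places sequence (\ref{Gen.Sch.F}) would follow from (\ref{Gen.Sch.OS}) by passing to the colimit over finite $S\supset S_{\infty,l}$: by (\ref{MaxDivDoesNotDependOnS}) the maximal divisible subgroup, and hence $D_n(F)$, is independent of $S$, while each newly admitted place $v$ contributes exactly $W^{n-1}(F_v)$, and these assemble into $\bigoplus_{v}W^{n-1}(F_v)$.
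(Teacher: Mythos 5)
Your overall architecture --- Poitou--Tate for $W^n$, the kernel identification via (\ref{Schneider Lemma 4.4}), the local equality $div=Div$ together with the unramified-equals-divisible isomorphism (\ref{generallocalfieldSchneider113}), and the colimit over $S$ to obtain (\ref{Gen.Sch.F}) --- coincides in substance with the paper's proof, and those parts of your argument are correct. The genuine gap is exactly the step you yourself flag as ``the main obstacle'' and then leave unproven: the equality $\phi\bigl(\bigoplus_{v\in S}Div\,H^1(G_{F_v},W^n)\bigr)=Div\bigl(H^1(G_S,\Z_l(1-n))^{\ast}\bigr)$. This is not a formality: for a surjection of abelian groups the image of the maximal divisible subgroup is in general only \emph{contained} in the maximal divisible subgroup of the target, so your backbone only yields a surjection of $\mathrm{coker}(\overline{loc})$ onto $H^1(G_S,\Z_l(1-n))^{\ast}/Div$, and a priori $\mathrm{coker}(\overline{loc})$ could be strictly larger than $W^{n-1}(F)$. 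Gesturing at the Euler-characteristic computation and at the vanishing $H^2(G_S,W^n)=0$ does not discharge this. The good news is that the gap closes in two lines from material you already quoted: by (\ref{generallocalfieldSchneider12}) and (\ref{generallocalfieldSchneider112}) the group $\bigoplus_{v\in S}H^1(G_{F_v},W^n)/Div\cong\bigoplus_{v\in S}W^{n-1}(F_v)$ is finite for $n\neq 1$, hence so is its quotient $\mathrm{coker}(\overline{loc})\cong H^1(G_S,\Z_l(1-n))^{\ast}/\phi\bigl(Div(\bigoplus_{v\in S}H^1(G_{F_v},W^n))\bigr)$; the subgroup $Div\bigl(H^1(G_S,\Z_l(1-n))^{\ast}\bigr)/\phi\bigl(Div(\cdots)\bigr)$ is divisible (a quotient of a divisible group) and embeds in this finite group, hence vanishes, which is precisely the equality you need.

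Note also that the paper reaches the cokernel by a different and cleaner route that avoids any divisible-subgroup bookkeeping: instead of truncating the nine-term sequence for $W^n$ and then dividing by $Div$, it takes the \emph{first three terms} of (\ref{longexactTatePoitou2}) for the module $W^{1-n}$, namely $0\to H^0(G_S,W^{1-n})\to\bigoplus_{v\in S}H^0(G_{F_v},W^{1-n})\to H^2(G_S,\Z_l(n))^{\ast}$, and dualizes; dualization of a left-exact sequence is right-exact, so one gets $H^2(G_S,\Z_l(n))\to\bigoplus_{v\in S}H^0(G_{F_v},W^{1-n})^{\ast}\to H^0(G_S,W^{1-n})^{\ast}\to 0$ with no correction terms. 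Then Tate's sequence (\ref{TateExact1}) together with $i_n=0$ identifies $H^2(G_S,\Z_l(n))\cong H^1(G_S,W^n)/Div$, while local duality (\ref{LocalTate Duality}) with (\ref{generallocalfieldSchneider12}) identifies $H^0(G_{F_v},W^{1-n})^{\ast}\cong H^2(G_{F_v},\Z_l(n))\cong H^1(G_{F_v},W^n)/Div\cong W^{n-1}(F_v)$; the quotients by $Div$ thus appear with right-exactness built in, and the surjectivity-onto-divisibles claim never has to be addressed. Your route, once repaired by the finiteness argument above, is a legitimate alternative, but as written the cokernel step is missing its proof.
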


\begin{proof} Let us prove the exactness of (\ref{Gen.Sch.OS}).
Substituting $n$ for $1-n$ in the first three terms of the exact sequence (\ref{longexactTatePoitou2}),
dualizing  and applying \cite[prop. 2.3]{Ta2} gives us the following exact sequence: 
\begin{equation}
\small{
H^{1} (G_{S}, \, W^n) / Div 
\rightarrow \bigoplus_{v \in S} \, H^{1} (G_{F_v}, \, W^n) / Div
\rightarrow W^{n-1} (F) \rightarrow
0.}
\label{Gen.Sch.OS1}\end{equation}

\noindent
For every $S \supset S_{\infty, l}$ consider the following commutative diagram: 
$$\small{\xymatrix{
0 \ar[r]^{} & D_n (F) \ar@<0.1ex>[d]^{} \ar[r]^{} & 
 H^{1} (G_{F}, \, W^n) / Div  \ar@<0.1ex>[d]^{\cong} \ar[r]^{\partial} & 
\bigoplus_{v} \, H^{1} (G_{F_v}, \, W^n) / Div \ar@<0.1ex>[d]^{}& \\
0 \ar[r]^{} & H^{1} ({\mathcal O}_{F,S}, \, W^n) / Div  \ar[r]^{} & H^{1} (F, \, W^n) / Div 
\ar[r]^{\partial} & \bigoplus_{v \notin S} \,  H^{0} (k_{v}, \, W^{n-1}) & }}
\label{Com.Diag.Coh.Coh1} $$
The exactness of the top sequence follows by (\ref{Schneider Lemma 4.4}) and 
(\ref{generallocalfieldSchneider13}). By (\ref{generallocalfieldSchneider113}) this gives the 
following commutative diagram:
$$\small{\xymatrix{
0 \ar[r]^{} & D_n (F) \ar[r]^{}  & 
 H^{1} (G_{F}, \, W^n) / Div   \ar[r]^{\partial} & 
\bigoplus_{v} \, H^{1} (G_{F_v}, \, W^n) / Div & \\
0 \ar[r]^{} & D_n (F) \ar[r]^{} \ar@<0.1ex>[u]^{=} & 
H^{1} ({\mathcal O}_{F,S}, \, W^n) / Div  \ar[r]^{} \ar@<0.1ex>[u]^{} & 
\bigoplus_{v \in S} \, H^{1} (F_v, \, W^n) / Div \ar@<0.1ex>[u]^{}  & }}
\label{Com.Diag.Coh.Coh2} $$
Hence the exact sequence (\ref{Gen.Sch.OS}) is obtained by connecting 
the bottom exact sequence of the last diagram and the exact sequence
(\ref{Gen.Sch.OS1}) and applying isomorphisms 
(\ref{generallocalfieldSchneider112}) and (\ref{generallocalfieldSchneider12}). 
The exact sequence (\ref{Gen.Sch.F}) is obtained from (\ref{Gen.Sch.OS})
by passing to the direct limit over $S.$
\end{proof}

\begin{corollary}\label{DIVandSZA} Let $n \not= 1$ and let
$i_{n} = 0.$ For every finite $S \supset S_{\infty, l}:$ 
\begin{equation}
D_{n} (F) = \Sha^{2}_{S} (F,\, \Z_l (n)) = \Sha^{2} (F,\, \Z_l (n))_l.
\label{DIVandSZA1}\end{equation}
\end{corollary}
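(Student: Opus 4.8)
The plan is to read off both equalities from the exact sequence (\ref{Gen.Sch.OS}) of Theorem \ref{functionfieldSchneider3}, after reinterpreting each of its terms as a cohomology group with $\Z_l(n)$-coefficients. First I would exploit the hypothesis $i_n = 0$, which forces $H^2(G_S, \Q_l(n)) = 0$, so that the Tate sequence (\ref{TateExact1}) collapses to a canonical isomorphism $H^1(G_S, W^n)/Div \cong H^2(G_S, \Z_l(n))$ (and simultaneously $H^2(G_S, W^n) = 0$). On the local side, for every $v$ and every $n \neq 1$ the finiteness of $H^2(G_{F_v}, \Z_l(n))$ in (\ref{generallocalfieldSchneider112}) makes $H^2(G_{F_v}, \Q_l(n)) = 0$, so the local instance of (\ref{TateExact1}) yields $H^1(G_{F_v}, W^n)/Div \cong H^2(G_{F_v}, \Z_l(n))$; combined with (\ref{generallocalfieldSchneider112}) this is exactly the identification $W^{n-1}(F_v) \cong H^2(G_{F_v}, \Z_l(n))$ already built into (\ref{Gen.Sch.OS}).

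By naturality of (\ref{TateExact1}) with respect to the restriction maps induced by $G_{F_v} \to G_S$, these isomorphisms assemble into a commutative square identifying the localization map $H^1(G_S, W^n)/Div \to \bigoplus_{v\in S} H^1(G_{F_v}, W^n)/Div$ of (\ref{Gen.Sch.OS1}) with the localization map $H^2(G_S, \Z_l(n)) \to \bigoplus_{v\in S} H^2(G_{F_v}, \Z_l(n))$. Since $D_n(F)$ is, by (\ref{Gen.Sch.OS}), precisely the kernel of the former, it is carried isomorphically onto the kernel of the latter, which is by definition $\Sha^2_S(F, \Z_l(n))$. This establishes the first equality $D_n(F) = \Sha^2_S(F, \Z_l(n))$, for every finite $S \supset S_{\infty,l}$.

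For the second equality I would first note that $\Sha^2_S(F, \Z_l(n))$ is independent of $S$: this is automatic, since $D_n(F)$ does not involve $S$ yet is isomorphic to $\Sha^2_S$ for every admissible $S$ (in the spirit of the $S$-independence recorded in (\ref{MaxDivDoesNotDependOnS})). To replace the conditions at $v \in S$ by conditions at all places, observe that for $v \notin S$ the restriction $H^2(G_S, \Z_l(n)) \to H^2(G_{F_v}, \Z_l(n))$ factors through the unramified cohomology $H^2(G_v^{nr}, \Z_l(n))$, and $G_v^{nr} \cong \widehat{\Z}$ has $\text{cd}_l = 1$, so this map vanishes; hence the local conditions at $v \notin S$ are vacuous and $\Sha^2_S$ already computes the kernel over \emph{all} places inside $H^2(G_S, \Z_l(n))$. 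The inflation $H^2(G_S, \Z_l(n)) \to H^2(G_F, \Z_l(n))$ commutes with localization at each $v$, so it carries $\Sha^2_S(F, \Z_l(n))$ into $\Sha^2(F, \Z_l(n))_l$; conversely any class of $\Sha^2(F, \Z_l(n))_l$, being unramified outside some finite $S' \supset S$, descends through inflation and, by $S$-independence, already lies in $\Sha^2_S$.

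The step I expect to demand the most care is this last comparison of $G_S$- with $G_F$-cohomology. Because $\Z_l(n) = \varprojlim_k \Z/l^k(n)$ is a pro-object, the injectivity of inflation on the Shafarevich subgroup and the descent of an arbitrary absolute class are not purely formal: one must control a possible $\varprojlim^{1}$ contribution, using that each finite-level group $\Sha^2(F, \Z/l^k(n))$ vanishes by (\ref{Sha2 is zero for Z mod lk}). A clean way to package this is to dualize through the Tate-Poitou pairing (\ref{TateDuality3}) with $n$ replaced by $1-n$, so that $\Sha^2_S(F, \Z_l(n))$ becomes the Pontryagin dual of $\Sha^1_S(F, W^{1-n})$; the needed $S$-independence and identification with the absolute group then follow from the same unramified analysis applied to $W^{1-n}$, together with the vanishing already proved at finite level.
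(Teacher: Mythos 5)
Your handling of the first equality $D_n(F)=\Sha^2_S(F,\Z_l(n))$ is correct and is essentially the paper's own proof, which consists of exactly the steps you describe: $i_n=0$ collapses (\ref{TateExact1}) to $H^1(G_S,W^n)/Div\cong H^2(G_S,\Z_l(n))$, the local identifications are (\ref{generallocalfieldSchneider112}) and (\ref{generallocalfieldSchneider12}), and by naturality the kernel in (\ref{Gen.Sch.OS}) becomes $\Sha^2_S(F,\Z_l(n))$.

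The gap is in the second equality, and the missing idea is the one tool you never invoke: the \emph{second} exact sequence (\ref{Gen.Sch.F}) of Theorem \ref{functionfieldSchneider3}. The paper reads the second equality off (\ref{Gen.Sch.F}) in exactly the same way the first is read off (\ref{Gen.Sch.OS}): Tate's proposition applied to $G_F$ identifies $H^1(G_F,W^n)/Div$ with the torsion subgroup $H^2(G_F,\Z_l(n))_l$, and under this dictionary $D_n(F)$ is, by (\ref{Gen.Sch.F}), the kernel of the localization map over \emph{all} places, i.e.\ $\Sha^2(F,\Z_l(n))_l$. The delicate passage from the $S$-truncated to the absolute setting is thereby done once and for all inside Theorem \ref{functionfieldSchneider3}, whose sequence (\ref{Gen.Sch.F}) is the direct limit over $S$ of (\ref{Gen.Sch.OS}) taken for the \emph{discrete} module $W^n$, where such limits are harmless, rather than inside continuous $\Z_l(n)$-cohomology, where you attempt it.

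That attempt cannot be completed with the tools you cite, for a reason you half-see. The finite-level vanishing (\ref{Sha2 is zero for Z mod lk}) does not ``control the $\varprojlim^1$ contribution''; it shows instead that your comparison map is invisible at every finite level: for $c\in\Sha^2_S(F,\Z_l(n))$, the mod-$l^k$ reduction of its inflation to $G_F$ is locally trivial everywhere (by your unramified argument at $v\notin S$ and local triviality on $S$), hence lies in $\Sha^2(F,\Z/l^k(n))=0$ for every $k$. So the image of $\Sha^2_S$ sits entirely inside the ${\varprojlim_k}^1\, H^1(G_F,\Z/l^k(n))$ term of Jannsen's exact sequence for $H^2(G_F,\Z_l(n))$, whereas $H^2(G_S,\Z_l(n))$ has no ${\varprojlim}^1$ term at all, the groups $H^1(G_S,\Z/l^k(n))$ being finite. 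Since $D_n(F)$ is in general nonzero (for $F=\Q$, $n=12$, $l=691$ it is cyclic of order $691$) while all finite-level Shafarevich groups vanish, the isomorphism you want lives purely in that ${\varprojlim}^1$ term, and no argument that merely propagates finite-level vanishing through limits can establish it. The duality fallback is circular: (\ref{TateDuality3}) is proved in the paper only for the $S$-truncated groups with $S$ finite; your unramified analysis does identify $\Sha^1_S(F,W^{1-n})$ with $\Sha^1(F,W^{1-n})$ (using $n\neq 1$), but concluding would require a Tate--Poitou pairing between $\Sha^1(F,W^{1-n})$ and the absolute group $\Sha^2(F,\Z_l(n))$, i.e.\ over $G_F$ with all places, which is not available in the paper and whose construction is precisely the limit comparison at issue. (A minor, fixable point: the vanishing of $H^2(G_v^{nr},\Z_l(n))$ needs more than $\mathrm{cd}_l(G_v^{nr})=1$, since the coefficients are not torsion; use $\mathrm{cd}_l=1$ at each finite level together with the vanishing of ${\varprojlim}^1$ of finite groups.)
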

\begin{proof} Follows by Theorem (\ref{functionfieldSchneider3})
and \cite[Prop. 2.3]{Ta2}. 
\end{proof}

\noindent
Let (see (\ref{notation D(n)}) in the next chapter): 
\begin{equation}
D^{et}(n) := div \, K_{2n}^{et}(F)_l.
\end{equation}
\begin{theorem}\label{MooreEt} Let $n > 0.$ 
For every finite $S \supset S_{\infty, l}$ there are exact sequences: 
\begin{equation}
\small{0 \rightarrow D^{et}(n) \rightarrow
K_{2n}^{et}({\mathcal O}_{F, S})  
\rightarrow \bigoplus_{v \in S} \, W^{n} (F_v) 
\rightarrow W^{n} (F) \rightarrow
0.}
\label{MooreEt OS}\end{equation}
\begin{equation}
\small{0 \rightarrow D^{et}(n) \rightarrow
K_{2n}^{et}(F)_l 
\rightarrow \bigoplus_{v} \, W^{n} (F_v) 
 \rightarrow W^{n} (F) \rightarrow 0} 
\label{MooreEt F}\end{equation}
Moreover there is the following equality: 
\begin{equation}
\frac{|K_{2n}^{et}({\mathcal O}_{F, S})|}{| D^{et}(n)|} = 
\frac{{\bigl |}\prod_{v \in S} w_{n} (F_v) {\bigl |}_{l}^{-1} }{|w_{n} (F)|_{l}^{-1}}.
\label{EtKTh OS comp to Div}\end{equation}
\end{theorem}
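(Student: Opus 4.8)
The plan is to deduce Theorem \ref{MooreEt} from Theorem \ref{functionfieldSchneider3} applied with the Tate twist $n+1$ in place of $n$, after translating the {\'e}tale $K$-groups into Galois cohomology. Since $n > 0$ we have $n+1 > 1$, so Lemma \ref{SchneiderConj.} guarantees that $i_{n+1} = 0$; this is precisely the hypothesis under which Theorem \ref{functionfieldSchneider3} produces, reading its two sequences with $n$ replaced by $n+1$ and using $W^{(n+1)-1} = W^n$,
\[
0 \rightarrow D_{n+1}(F) \rightarrow H^{1}(G_S, W^{n+1})/Div \rightarrow \bigoplus_{v \in S} W^{n}(F_v) \rightarrow W^{n}(F) \rightarrow 0
\]
together with its direct-limit analogue over $S$. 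The whole task is then to match these with (\ref{MooreEt OS}) and (\ref{MooreEt F}) term by term.

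First I would identify the middle terms. By the Dwyer--Friedlander isomorphism (\ref{DF SpecSeqResult1}) we have $K_{2n}^{et}(\mathcal{O}_{F,S}) \cong H^{2}_{cont}(\mathcal{O}_{F,S}, \Z_l(n+1))$ and $K_{2n}^{et}(F)_l \cong H^{2}_{cont}(F, \Z_l(n+1))$. Because the groups $H^{i}(G_S, \Z/l^k(n+1))$ are finite, the relevant $\varprojlim^{1}$ terms vanish and continuous {\'e}tale cohomology agrees with ordinary Galois cohomology through (\ref{compIso3}) and (\ref{compIso1}), giving $H^{2}_{cont}(\mathcal{O}_{F,S}, \Z_l(n+1)) \cong H^{2}(G_S, \Z_l(n+1))$. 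Since $i_{n+1} = 0$ forces $H^{2}(G_S, \Q_l(n+1)) = 0$, Tate's exact sequence (\ref{TateExact1}) collapses to a canonical isomorphism
\[
K_{2n}^{et}(\mathcal{O}_{F,S}) \cong H^{1}(G_S, W^{n+1})/Div,
\]
and, passing to the direct limit over $S$ as in the proof of (\ref{Gen.Sch.F}), $K_{2n}^{et}(F)_l \cong H^{1}(G_F, W^{n+1})/Div$.

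Next I would match the kernel terms. As the displayed maps are group isomorphisms, they carry divisible elements to divisible elements, so $D^{et}(n) = div\, K_{2n}^{et}(F)_l \cong div(H^{1}(G_F, W^{n+1})/Div) = D_{n+1}(F)$. Substituting these identifications into the two shifted sequences from Theorem \ref{functionfieldSchneider3} yields (\ref{MooreEt OS}) and (\ref{MooreEt F}). The one point requiring care here is verifying that the localization boundary map $K_{2n}^{et}(\mathcal{O}_{F,S}) \rightarrow \bigoplus_{v \in S} W^{n}(F_v)$ agrees, under the above isomorphisms, with the map of Theorem \ref{functionfieldSchneider3}; this is formal since both arise from the same localization/restriction data and the local identification (\ref{generallocalfieldSchneider113}), but it is the step that must be checked rather than merely asserted.

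Finally, the order formula (\ref{EtKTh OS comp to Div}) follows by counting in the finite-length portion of (\ref{MooreEt OS}). For $n > 0$ each $W^{n}(F_v)$ with $v \in S$ and $W^{n}(F)$ is finite, and the quotient $K_{2n}^{et}(\mathcal{O}_{F,S})/D^{et}(n)$ embeds into $\bigoplus_{v \in S} W^{n}(F_v)$, hence is finite; multiplicativity of orders along the exact sequence gives
\[
\frac{|K_{2n}^{et}(\mathcal{O}_{F,S})|}{|D^{et}(n)|} = \frac{\prod_{v \in S}|W^{n}(F_v)|}{|W^{n}(F)|}.
\]
Since $|W^{n}_l(L)| = |w_{n}(L)|_{l}^{-1}$ for $L = F_v$ and $L = F$, this is exactly (\ref{EtKTh OS comp to Div}). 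I expect the main obstacle to be the bookkeeping of the index shift together with the compatibility of the localization boundary maps; the genuine homological content is already supplied by Theorem \ref{functionfieldSchneider3}, Lemma \ref{SchneiderConj.}, the Tate sequence (\ref{TateExact1}), and (\ref{DF SpecSeqResult1}).
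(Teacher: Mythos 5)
Your proposal follows essentially the same route as the paper's own proof: you apply Theorem \ref{functionfieldSchneider3} at the twist $n+1$, with the hypothesis $i_{n+1}=0$ supplied by Lemma \ref{SchneiderConj.}, and you identify the middle terms via the Dwyer--Friedlander isomorphism (\ref{DF SpecSeqResult1}) together with Tate's sequence (\ref{TateExact1}), exactly as the paper does by citing \cite[Prop. 5.1]{DF} and \cite[Prop. 2.3]{Ta2}; the identification $D^{et}(n)\cong D_{n+1}(F)$ and the derivation of (\ref{MooreEt OS}) and (\ref{MooreEt F}) then proceed as in the paper. This part of your argument is correct, and your extra remarks (the $\varprojlim^{1}$ justification for passing between continuous and Galois cohomology, and the compatibility of boundary maps) only make explicit points the paper leaves implicit.

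The one genuine shortfall is in the proof of the order formula (\ref{EtKTh OS comp to Div}). You prove only that the quotient $K_{2n}^{et}({\mathcal O}_{F,S})/D^{et}(n)$ is finite (it embeds into $\bigoplus_{v\in S}W^{n}(F_v)$) and then invoke multiplicativity of orders. That yields the formula read as an index, but (\ref{EtKTh OS comp to Div}) is a ratio of the cardinalities $|K_{2n}^{et}({\mathcal O}_{F,S})|$ and $|D^{et}(n)|$, and finiteness of the quotient does not imply finiteness of either group separately. The paper closes precisely this point: $K_{2n}({\mathcal O}_{F,S})$ is finite by Borel, Harder and Quillen, and the Dwyer--Friedlander map $K_{2n}({\mathcal O}_{F,S})\rightarrow K_{2n}^{et}({\mathcal O}_{F,S})$ is surjective, so $K_{2n}^{et}({\mathcal O}_{F,S})$, hence every term of (\ref{MooreEt OS}), is finite. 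Alternatively you can patch this entirely inside your own framework: since $i_{n+1}=0$ means $H^{2}(G_{S},\Q_l(n+1))=0$, the finitely generated $\Z_l$-module $H^{2}(G_{S},\Z_l(n+1))$ is torsion, hence finite, and by your identifications it is isomorphic to $K_{2n}^{et}({\mathcal O}_{F,S})$. With either patch the counting argument is complete.
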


\begin{proof} 
By Lemma \ref{SchneiderConj.} the sequences (\ref{Gen.Sch.OS}) and 
(\ref{Gen.Sch.F}) are exact. Moreover by \cite[Prop. 5.1]{DF} and 
\cite[prop. 2.3]{Ta2} there are the following isomorphisms:
\begin{equation}
K_{2n}^{et} ({\mathcal O}_{F,S}) \cong H^{2} (G_{F, S}, \, \Z_l (n+1))_l \cong 
H^{1} (G_{F, S}, \, W^{n+1}) / Div.
\nonumber\end{equation}
\begin{equation} 
K_{2n}^{et} (F)_l \cong H^{2} (G_{F}, \, \Z_l (n+1))_l \cong H^{1} (G_{F}, \, W^{n+1}) / Div
\nonumber\end{equation}
Hence  $D^{et}(n) \cong D_{n+1} (F).$
Observe that the group $H^{1} (G_{F, S}, \, W^{n+1}) / Div$ is finite. 
Indeed by \cite{Bo}, \cite{Ha} and \cite{Q2} the group 
$K_{2n} ({\mathcal O}_{F,S})$ is finite and the Dwyer-Friedlander map
$K_{2n} ({\mathcal O}_{F,S}) \rightarrow K_{2n}^{et} ({\mathcal O}_{F,S})$ \cite{DF}
is surjective. The formula (\ref{EtKTh OS comp to Div}) follows from (\ref{MooreEt OS}) because all the terms of  
this exact sequence are finite. 
\end{proof}


\section{Divisible elements in K-groups of global fields}
\subsection{General results on divisible elements}

\noindent
Consider the following commutative diagram. The rows are localization 
sequences and the vertical maps are the Dwyer-Friedlander maps \cite{DF}.


$$\small{\xymatrix{
\ar[r]^{}  & K_{2n+1} (F,\, \Z/l^k)   
\ar@<0.1ex>[d]^{} \ar[r]^{\partial}  & 
 \bigoplus_{v} \, K_{2n} (k_v, \, \Z/l^k) \ar@<0.1ex>[d]^{\cong} \ar[r]^{} & 
K_{2n} ({\mathcal O}_F, \, \Z/l^k) \ar[r]^{} \ar@<0.1ex>[d]^{} &  \\
\ar[r]^{} & K_{2n+1}^{et} (F,\, \Z/l^k)  \ar[r]^{\partial^{et}}  & 
 \bigoplus_{v \not \, | \, l} \, K_{2n}^{et} (k_v, \, \Z/l^k)  \ar[r]^{} & 
  K_{2n}^{et} ({\mathcal O}_{F}[\frac{1}{l}], \, \Z/l^k) \ar[r]^{} &}}
\label{CoeffLocSeq} $$
For every $k > 0$ define: 
\begin{equation}
D (n, l^k) := 
\text{ker} \,  (K_{2n} ({\mathcal O}_F, \,  \Z /l^k) 
\rightarrow K_{2n} (F, \,  \Z/l^k)) \, = \, \text{coker}\, \partial  
\nonumber \end{equation} 
\begin{equation}
D^{et} (n, l^k) := 
\text{ker} \,  (K_{2n}^{et} ({\mathcal O}_{F} [1/l], \,  \Z /l^k) 
\rightarrow K_{2n}^{et} (F, \,  \Z/l^k)) \, = \, \text{coker}\,  \partial^{et} 
\nonumber \end{equation} 
\medskip

\noindent
We do not consider $\text{coker}\, \partial$ and 
$\text{coker} \,  \partial^{et}$ in the following commutative diagram: 
$$\small{\xymatrix{
\ar[r]^{}  & K_{2n} (F,\, \Z/l^k)   
\ar@<0.1ex>[d]^{} \ar[r]^{\partial}  & 
 \bigoplus_{v} \, K_{2n-1} (k_v, \, \Z/l^k) \ar@<0.1ex>[d]^{\cong} \ar[r]^{} & 
K_{2n-1} ({\mathcal O}_F, \, \Z/l^k) \ar[r]^{} \ar@<0.1ex>[d]^{} &  \\
\ar[r]^{} & K_{2n}^{et} (F,\, \Z/l^k)  \ar[r]^{\partial^{et}}  & 
 \bigoplus_{v \not \, | \, l} \, K_{2n-1}^{et} (k_v, \, \Z/l^k)  \ar[r]^{} & 
  K_{2n-1}^{et} ({\mathcal O}_{F}[\frac{1}{l}], \, \Z/l^k) \ar[r]^{} &}}
\label{CoeffLocSeqNoInterest} $$
because the isomorphism $K_{2n-1} ({\mathcal O}_F) 
\cong K_{2n-1} (F)$ (resp. the isomorphism 
$K_{2n-1}^{et} ({\mathcal O}_F [\frac{1}{l}])_l \cong K_{2n-1}^{et} (F)_l$) for every $n > 1$ and 
the comparison of Bockstein sequences for ${\mathcal O}_F$ and $F$ 
(resp. ${\mathcal O}_F [\frac{1}{l}]$ and $F$) show that: 
\begin{equation}
\text{coker}\, \partial \,\, = \,\, \text{ker} \,  (K_{2n-1} ({\mathcal O}_F, \,  \Z /l^k) 
\rightarrow K_{2n-1} (F, \,  \Z/l^k)) \,\,   = 0  
\nonumber \end{equation} 
\begin{equation} 
\text{coker}\,  \partial^{et} \,\, = \,\, \text{ker} \,  (K_{2n-1}^{et} ({\mathcal O}_{F} [1/l], \,  \Z /l^k) 
\rightarrow K_{2n-1}^{et} (F, \,  \Z/l^k)) \,\, = 0
\nonumber \end{equation} 
Comparing the Bockstein exact sequences in K-theory for ${\mathcal O}_F$
and for $F$ (resp. {\' e}tale K-theory for ${\mathcal O}_{F}[1/l]$
and for $F$) we notice that for each $k > 0$: 

\begin{equation}
\small{D (n, l^k) \cong \, \text{ker} \,  (K_{2n} ({\mathcal O}_F) / l^k 
\rightarrow K_{2n} (F) / l^k) \cong}
\label{1.1}\end{equation}
\begin{equation}\small{
\cong K_{2n} ({\mathcal O}_F) \cap 
K_{2n} (F)^{l^k} / K_{2n} ({\mathcal O}_F)^{l^k}.}
\nonumber\end{equation}

\begin{equation}\small{
D^{et} (n, l^k) \cong \, \text{ker} \,  (K_{2n}^{et} ({\mathcal O}_{F}[1/l]) / l^k 
\rightarrow K_{2n}^{et} (F) / l^k) \cong}
\label{1.11}\end{equation}
\begin{equation}\small{
\cong K_{2n}^{et} ({\mathcal O}_{F}[1/l]) \cap 
K_{2n}^{et} (F)^{l^k} / K_{2n}^{et} ({\mathcal O}_{F}[1/l])^{l^k} .}
\nonumber\end{equation}
\medskip

\noindent
Hence for every $k \geq 1$ the group 
$D (n, l^k)$ \, (resp. $D^{et} (n, l^k)$ ) is a subquotient of 
$K_{2n} ({\mathcal O}_F)$ ( $K_{2n}^{et} ({\mathcal O}_{F}[1/l])$ resp.). 
Following \cite{Ba1} we will abbreviate our notation at some places as follows: 
\begin{equation}
D(n) := div \, K_{2n} (F) \quad \text{and} \quad 
D^{et}(n) := div \, K_{2n}^{et} (F)_l \, . 
\label{notation D(n)}\end{equation}
\medskip

\noindent
Applying Bockstein sequences (cf. \cite[Diagrams 2.1 and 2.3, p. 289-290]{Ba2})
for all $k \gg 0$ gives: 
\begin{equation}
K_{2n} ({\mathcal O}_F) / l^k =
K_{2n} ({\mathcal O}_F)_l \quad\quad\quad \text{and} \quad\quad\quad  D (n, l^k)  \cong D(n)_l \, . 
\label{1.3}\end{equation}

\noindent
Moreover for all $k \gg 0$ we get by similar argument:
\begin{equation}
K_{2n}^{et} ({\mathcal O}_{F}[1/l]) / l^k =
K_{2n}^{et} ({\mathcal O}_{F}[1/l])_l  \quad\quad \text{and} \quad\quad 
D^{et} (n, l^k)  \cong D^{et}(n) \, .
\label{1.4}\end{equation}
\medskip

\noindent
Let $k(l)$ be the smallest $k$ such that both conditions (\ref{1.3}) and (\ref{1.4}) hold. 
We observe that if $l \not | \,\, |K_{2n} ({\mathcal O}_F)|$ then 
$D (n, l^k)  = D(n)_l = 0$ for all $k \geq 1.$
\bigskip

\noindent
\begin{theorem}\label{Quillen and etale obstr are eq}
If $l > 2$ then $\forall \, k \geq 1$ there is 
the following canonical isomorphism:
\begin{equation}
D (n, l^k) \cong
D^{et} (n, l^k)
\label{D equals D et}\end{equation}
If $l = 2$ then $\forall \, k \geq 2$ 
there is the following canonical isomorphism:
\begin{equation}
D (n, 2^k) \cong
D^{et} (n, 2^k)
\label{D2  equals D2 et}\end{equation} 
If $l \geq  2$ then there is the following isomorphism $D(n)_l \cong  D^{et} (n)$ 
or more explicitly
\begin{equation}
div \, K_{2n}(F)_l \,\, \cong \,\, div \, K_{2n}^{et} (F)_l
\label{div = divetale}\end{equation}
\end{theorem}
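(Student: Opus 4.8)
The plan is to read off both $D(n,l^k)$ and $D^{et}(n,l^k)$ as the cokernels of the two horizontal boundary maps in the commutative diagram (\ref{CoeffLocSeq}), and to show that the vertical Dwyer--Friedlander maps force these cokernels to coincide. Write $\partial$ and $\partial^{et}$ for the two boundary maps, so that $D(n,l^k)=\mathrm{coker}\,\partial$ and $D^{et}(n,l^k)=\mathrm{coker}\,\partial^{et}$, and let $f\colon K_{2n+1}(F,\Z/l^k)\to K^{et}_{2n+1}(F,\Z/l^k)$ and $g\colon\bigoplus_{v}K_{2n}(k_v,\Z/l^k)\to\bigoplus_{v\nmid l}K^{et}_{2n}(k_v,\Z/l^k)$ be the relevant vertical arrows, so that $g\circ\partial=\partial^{et}\circ f$. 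First I would check that $g$ is an isomorphism: for $v\mid l$ the field $k_v$ has characteristic $l$, so by Quillen's computation of the $K$-theory of finite fields $K_{2n}(k_v)=0$ and $K_{2n-1}(k_v)$ is finite of order prime to $l$, whence $K_{2n}(k_v,\Z/l^k)=0$ and these summands simply disappear; for $v\nmid l$ the comparison map $K_{2n}(k_v,\Z/l^k)\to K^{et}_{2n}(k_v,\Z/l^k)$ is an isomorphism by \cite[Corollary 8.6]{DF}. Since $g$ is an isomorphism carrying $\mathrm{im}\,\partial$ into $\mathrm{im}\,\partial^{et}$, it induces a canonical map $\bar g\colon D(n,l^k)\to D^{et}(n,l^k)$, and this map is automatically surjective.

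The whole content therefore sits in the injectivity of $\bar g$. A direct diagram chase shows that $\bar g$ is injective precisely when $K^{et}_{2n+1}(F,\Z/l^k)=\mathrm{im}\,f+\ker\partial^{et}$; in particular it suffices to prove that the odd-degree comparison map $f$ is surjective. To do this I would compare the two universal-coefficient (Bockstein) sequences: the short exact sequence relating $K_{2n+1}(F,\Z/l^k)$ to $K_{2n+1}(F)_l/l^k$ and $K_{2n}(F)_l[l^k]$, and its étale analogue, fit into a ladder whose outer vertical maps are $K_{2n+1}(F)_l/l^k\to K^{et}_{2n+1}(F)_l/l^k$ and $K_{2n}(F)_l[l^k]\to K^{et}_{2n}(F)_l[l^k]$. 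The integral Dwyer--Friedlander maps are surjective by \cite[Theorems 8.7 and 8.9]{DF}, so the left outer map is surjective after reduction mod $l^k$; feeding this together with the surjectivity on the relevant $l^k$-torsion into the five lemma yields surjectivity of $f$, and hence injectivity of $\bar g$.

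The integral statement then follows formally: by (\ref{1.3}) and (\ref{1.4}) one has $D(n,l^k)\cong D(n)_l$ and $D^{et}(n,l^k)\cong D^{et}(n)$ for all $k\gg 0$, and the isomorphisms $\bar g$ are compatible with the transition maps, so their common stable value gives $\mathrm{div}\,K_{2n}(F)_l\cong\mathrm{div}\,K^{et}_{2n}(F)_l$. Alternatively one can compare the descriptions (\ref{1.1}) and (\ref{1.11}) of $D(n,l^k)$ and $D^{et}(n,l^k)$ as subquotients of $K_{2n}(\mathcal O_F)$ and $K^{et}_{2n}(\mathcal O_F[1/l])$, and run the snake lemma against the mod-$l^k$ reductions of the integral Dwyer--Friedlander maps directly.

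I expect the genuine obstacle to be the surjectivity of $f$ in the $2$-primary case, which is exactly why the hypothesis weakens to $k\ge 2$ when $l=2$: the mod-$2$ Moore spectrum lacks a well-behaved Bott element, so the finite-coefficient comparison map need not be surjective for $k=1$, and one must pass to $\Z/2^k$ with $k\ge 2$ (using $\mu_4\subset F$) in order to control the $l^k$-torsion of the even group and make the five-lemma step go through. The odd-prime case and the case $l=2,\ k\ge 2$ are then uniform once this point is granted.
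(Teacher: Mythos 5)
Your reduction of the theorem to the commutative diagram of localization sequences with $\Z/l^k$-coefficients, your verification that the middle vertical map $g$ is an isomorphism (including the observation that the summands at places $v$ dividing $l$ vanish because $K_{2n}(k_v,\Z/l^k)=0$ when $\mathrm{char}\,k_v = l$ --- a detail the paper leaves implicit in marking that arrow $\cong$ and citing \cite[Corollary 8.6]{DF}), and your passage from the finite-coefficient isomorphisms to (\ref{div = divetale}) via (\ref{1.3}) and (\ref{1.4}) all agree with the paper's proof. The divergence, and the genuine gap, is in how you establish surjectivity of $f\colon K_{2n+1}(F,\Z/l^k)\to K_{2n+1}^{et}(F,\Z/l^k)$. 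The paper does not derive this: it quotes it directly as \cite[Theorem 8.5]{DF}, the surjectivity of the Dwyer--Friedlander map \emph{with finite coefficients}. You instead try to deduce it from the surjectivity of the integral maps (\cite[Theorems 8.7 and 8.9]{DF}) by a five-lemma argument on the Bockstein ladder. That argument requires both outer vertical maps to be surjective. The left one, $K_{2n+1}(F)/l^k\to K_{2n+1}^{et}(F)/l^k$, is indeed surjective, since reduction mod $l^k$ of a surjection is a surjection. But the right one, $K_{2n}(F)[l^k]\to K_{2n}^{et}(F)[l^k]$, is exactly the point you assert without proof ("the surjectivity on the relevant $l^k$-torsion"), and it does \emph{not} follow from surjectivity of $K_{2n}(F)_l\to K_{2n}^{et}(F)_l$: a surjection of abelian groups need not be surjective on $l^k$-torsion subgroups (the reduction $\Z/l^2\to\Z/l$ kills the $l$-torsion of $\Z/l^2$).

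To repair that step one needs the kernel of $K_{2n}(F)_l\to K_{2n}^{et}(F)_l$ to be pure (equivalently here, the surjection to be split); but in this paper purity and splitness are themselves deduced --- see the proof of Theorem \ref{H2 of F}, diagram (\ref{BocksteinDiag1}), and the Caputo--Kaplansky argument \cite{Ca}, \cite{Ka} --- from precisely the finite-coefficient surjectivity \cite[Theorem 8.5]{DF} that you are trying to prove. So your route inverts the actual logical order (finite-coefficient surjectivity $\Rightarrow$ surjectivity on $[l^k]$-torsion $\Rightarrow$ integral surjectivity and splitting) and, as written, is circular. The fix is simply to cite \cite[Theorem 8.5]{DF} for the surjectivity of $f$; this is also where the hypothesis $k\geq 2$ for $l=2$ (together with $\mu_4\subset F$) genuinely enters --- it is a hypothesis of the Dwyer--Friedlander finite-coefficient results, for essentially the multiplicative reason you guessed --- rather than a constraint created by a five-lemma step. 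With that citation replacing your Bockstein argument, the rest of your proof is exactly the paper's.
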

\begin{proof}
For every $l$ odd and $k \geq 1$ (resp. for $l = 2$ and $k \geq 2$) 
consider the following commutative diagram.
$$\small{\xymatrix{
\ar[r]^{} & K_{2n+1} (F,\, \Z/l^k)   
\ar@<0.1ex>[d]^{} \ar[r]^{\partial}  & 
 \bigoplus_{v} \, K_{2n} (k_v, \, \Z/l^k) \ar@<0.1ex>[d]^{\cong} \ar[r]^{} & 
 D (n, l^k) \ar[r]^{} \ar@<0.1ex>[d]^{\cong} & 0 \\
\ar[r]^{} & K_{2n+1}^{et} (F,\, \Z/l^k)  \ar[r]^{\partial^{et}}  & 
 \bigoplus_{v \not \, | \, l} \, K_{2n}^{et} (k_v, \, \Z/l^k)  \ar[r]^{} & 
  D^{et} (n, l^k) \ar[r]^{} & 0}}
\label{CoeffLocSeq} $$
The right vertical arrow is an isomorphism because the middle vertical arrow is an isomorphism
[DF, Corollary 8.6]  and the left vertical arrow is an epimorphism 
[DF, Theorem 8.5]. The isomorphism (\ref{div = divetale}) follows from 
(\ref{1.3}), (\ref{1.4}), (\ref{D equals D et}), (\ref{D2  equals D2 et}). 
\end{proof}
\medskip

\begin{corollary} \label{div via sza}
For all $n > 0$ there are the following 
isomorphisms:
\begin{equation}
D(n)_l \, \cong \, D^{et} (n) \, \cong \, D_{n+1} (F) \, = \,  
\Sha^{2}_{S} (F, \Z_l (n+1)) = \Sha^{2} (F, \Z_l (n+1)).
\label{DIVandSZA2}\end{equation}
\end{corollary}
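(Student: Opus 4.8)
The plan is to assemble the three claimed isomorphisms by chaining together results that have already been established, observing that for $n>0$ all the required hypotheses are in place; the argument is therefore essentially a concatenation of prior steps together with one index shift.

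First, the isomorphism $D(n)_l \cong D^{et}(n)$ is exactly the last assertion (\ref{div = divetale}) of Theorem \ref{Quillen and etale obstr are eq}, which identifies $div\, K_{2n}(F)_l$ with $div\, K_{2n}^{et}(F)_l$; nothing further is needed here. Next, I would establish $D^{et}(n)\cong D_{n+1}(F)$ using the identification already recorded inside the proof of Theorem \ref{MooreEt}. There the Dwyer--Friedlander isomorphism (\ref{DF SpecSeqResult1}) together with \cite[Prop. 2.3]{Ta2} gives
\[
K_{2n}^{et}(F)_l \cong H^2(G_F, \Z_l(n+1))_l \cong H^1(G_F, W^{n+1})/Div.
\]
Since an isomorphism of abelian groups carries the subgroup of divisible elements onto the subgroup of divisible elements, applying $div$ to both ends yields
\[
D^{et}(n) = div\, K_{2n}^{et}(F)_l \cong div\bigl(H^1(G_F, W^{n+1})/Div\bigr) = D_{n+1}(F),
\]
the final equality being the very definition of $D_{n+1}(F)$.

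Finally, I would invoke Corollary \ref{DIVandSZA} with the index $n$ replaced by $n+1$. Its two hypotheses hold for every $n>0$: indeed $n+1\neq 1$, and $i_{n+1}=0$ because $n+1>1$, which is precisely the content of Lemma \ref{SchneiderConj.}. The corollary then delivers
\[
D_{n+1}(F) = \Sha^2_S(F, \Z_l(n+1)) = \Sha^2(F, \Z_l(n+1))_l,
\]
and the subscript $l$ may be suppressed on the last term, since $H^\ast(G_F, \Z_l(n+1))$, hence $\Sha^2(F, \Z_l(n+1))$, is already an $l$-primary group. Concatenating the three displays gives the full chain of the statement.

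Because every link in the chain is borrowed from an earlier result, there is no genuinely hard computational step. The one point that most wants care is verifying that Corollary \ref{DIVandSZA} is being applied at the \emph{shifted} index $n+1$ rather than at $n$: it is exactly the vanishing $H^2(\mathcal{O}_{F,S}, W^{n+1})=0$ of Lemma \ref{SchneiderConj.}, valid only for indices strictly greater than $1$, that forces $i_{n+1}=0$, and this is precisely why the statement must be restricted to $n>0$.
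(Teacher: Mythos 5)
Your proposal is correct and follows essentially the same route as the paper: the paper's proof is precisely the concatenation of Theorem \ref{Quillen and etale obstr are eq} (giving $D(n)_l \cong D^{et}(n)$), the Dwyer--Friedlander isomorphism \cite[Prop. 5.1]{DF} together with \cite[Prop. 2.3]{Ta2} (giving $D^{et}(n) \cong D_{n+1}(F)$, as recorded in the proof of Theorem \ref{MooreEt}), and Corollary \ref{DIVandSZA} applied at the shifted index $n+1$, whose hypothesis $i_{n+1}=0$ is supplied by Lemma \ref{SchneiderConj.} exactly as you argue. Your explicit verification of the index shift and your remark on suppressing the subscript $l$ on $\Sha^{2}(F,\Z_l(n+1))$ only spell out details the paper leaves implicit.
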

\begin{proof}
This follows by Lemma \ref{SchneiderConj.}, Theorem \ref{functionfieldSchneider3}, Corollary 
\ref{DIVandSZA}, Theorem \ref{Quillen and etale obstr are eq} and by the following 
isomorphism $K_{2n}^{et} (F) \cong H^2 (F, \Z_l (n+1))$ \cite[Prop. 5.1]{DF}.
\end{proof}

\begin{theorem}\label{limitsondivisible} For every $n \geq 1$ there are the following 
isomorphisms:

\noindent
\begin{equation}
\varinjlim_{k}\, D (n, l^k) = 0, 
\label{dirlim}\end{equation}
\begin{equation}
\varprojlim_{k}\, D (n, l^k) \cong D(n)_l.
\label{projlim}\end{equation}
\begin{equation}
\varinjlim_{k}\, D^{et} (n, l^k) = 0, 
\label{dirlimetale}\end{equation}
\begin{equation} 
\varprojlim_{k}\, D^{et} (n, l^k) \cong D^{et}(n)
\label{projlimetale}\end{equation}

\end{theorem}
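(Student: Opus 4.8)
The plan is to reduce all four isomorphisms to the elementary behaviour of the finite groups $D(n, l^k)$ and $D^{et}(n, l^k)$ under the two natural coefficient maps, using the stabilization already recorded in (\ref{1.3}) and (\ref{1.4}). First I would pin down the bonding maps. In the direct system the maps $D(n, l^k) \to D(n, l^{k+1})$ are induced by the inclusion $\Z/l^k \hookrightarrow \Z/l^{k+1}$ (that is, $1 \mapsto l$), whose colimit is $\Q_l/\Z_l$; in the inverse system the maps $D(n, l^{k+1}) \to D(n, l^k)$ are induced by the reduction $\Z/l^{k+1} \twoheadrightarrow \Z/l^k$, whose limit is $\Z_l$. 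Via the identification (\ref{1.1}), namely $D(n, l^k) \cong \ker(K_{2n}(\mathcal{O}_F)/l^k \to K_{2n}(F)/l^k)$, these coefficient maps induce on $K_{2n}(\mathcal{O}_F)/l^k$ respectively multiplication by $l$ (for the inclusion, since $\bar x \mapsto \overline{lx}$) and the canonical reduction; by functoriality of (\ref{1.1}) in the coefficient map they carry $D(n, l^k)$ into $D(n, l^{k+1})$, resp. $D(n, l^{k+1})$ into $D(n, l^k)$.

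Next I would invoke finiteness. Since $K_{2n}(\mathcal{O}_F)$ is finite by Borel, Harder and Quillen, each $D(n, l^k)$ is a finite $l$-group, and by (\ref{1.3}) for every $k \geq k(l)$ one has $K_{2n}(\mathcal{O}_F)/l^k = K_{2n}(\mathcal{O}_F)_l$ and $D(n, l^k) \cong D(n)_l$. For the inverse limit (\ref{projlim}) I observe that, for $k \geq k(l)$, the reduction $K_{2n}(\mathcal{O}_F)/l^{k+1} \to K_{2n}(\mathcal{O}_F)/l^k$ is the identity of $K_{2n}(\mathcal{O}_F)_l$, so each bonding map $D(n, l^{k+1}) \to D(n, l^k)$ is an inclusion of two subgroups of $K_{2n}(\mathcal{O}_F)_l$ of the same finite order, hence an isomorphism; the tower is eventually constant and $\varprojlim_k D(n, l^k) \cong D(n)_l$. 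For the direct limit (\ref{dirlim}) the bonding maps are eventually multiplication by $l$ on the finite $l$-group $D(n)_l$, and the composite of $N$ of them is multiplication by $l^N$; choosing $N$ with $l^N D(n)_l = 0$ shows every element dies in the colimit, so $\varinjlim_k D(n, l^k) = 0$.

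Finally, for the étale statements (\ref{dirlimetale}) and (\ref{projlimetale}) I would run the identical argument, now using that $K_{2n}^{et}(\mathcal{O}_F[1/l])$ is finite, being a surjective image of the finite group $K_{2n}(\mathcal{O}_{F,S})$ under the Dwyer-Friedlander map (as already used in the proof of Theorem \ref{MooreEt}), together with the stabilization (\ref{1.4}). Alternatively one may transport the K-theoretic computation across the canonical isomorphisms $D(n, l^k) \cong D^{et}(n, l^k)$ of Theorem \ref{Quillen and etale obstr are eq}: being induced by the Dwyer-Friedlander natural transformation, they are compatible with both the reduction and the inclusion coefficient maps, hence induce isomorphisms of the two towers and carry (\ref{dirlim}), (\ref{projlim}) to (\ref{dirlimetale}), (\ref{projlimetale}).

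I expect the only delicate point to be the bookkeeping of the first paragraph: verifying that under (\ref{1.1}) the inclusion coefficient map genuinely becomes multiplication by $l$ while the reduction becomes the canonical surjection, and that both restrict correctly to the subgroups $D(n, l^k)$ and $D^{et}(n, l^k)$. Once this compatibility is in place, the finiteness of $K_{2n}(\mathcal{O}_F)$ and $K_{2n}^{et}(\mathcal{O}_F[1/l])$ makes both limits immediate.
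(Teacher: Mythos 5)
Your proposal is correct and takes essentially the same approach as the paper: the paper's own proof is the one-line assertion that all four isomorphisms "follow by comparing the Bockstein exact sequences in K-theory for ${\mathcal O}_F$ and for $F$ (resp. \'etale K-theory for ${\mathcal O}_{F}[1/l]$ and for $F$)", and that comparison is precisely what you carry out in detail via (\ref{1.1}), (\ref{1.3}) and (\ref{1.4}), together with finiteness of $K_{2n}({\mathcal O}_F)$ and $K_{2n}^{et}({\mathcal O}_F[1/l])$. Your write-up merely makes explicit the bonding maps (multiplication by $l$ for the colimit, reduction for the limit) and the eventual stabilization that the paper leaves implicit.
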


\begin{proof} The isomorphisms (\ref{dirlim}), (\ref{projlim}), (\ref{dirlimetale}), 
(\ref{projlimetale})
follow by comparing the Bockstein exact sequences in K-theory for ${\mathcal O}_F$
and for $F$ (resp. in {\' e}tale K-theory for ${\mathcal O}_{F}[1/l]$
and for $F$). 
\end{proof}
\medskip

\noindent
\begin{proposition}\label{lim lim1 of direct sum K(kv)1}
For every $n \geq 1:$
\begin{equation}\small{
{\varprojlim_{k}} \, \bigoplus_{v} \, K_{2n} (k_v, \Z / l^k) =
{\varprojlim_{k}} \, \bigoplus_{v} \, K_{2n}^{et} (k_v, \Z / l^k)  = 0 }
\label{lim lim1 of direct sum K(kv)2 }
\end{equation}
and the group
\begin{equation}\small{
{\varprojlim_{k}}^{1} \, \bigoplus_{v} \, K_{2n} (k_v, \Z / l^k) =
{\varprojlim_{k}}^{1} \, \bigoplus_{v} \, K_{2n}^{et} (k_v, \Z / l^k) 
\label{lim lim1 of direct sum K(kv)3 }}
\end{equation}
is torsion free.
\end{proposition}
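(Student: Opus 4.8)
The plan is to reduce the statement to an explicit tower of $l$-groups and then to compute its derived limits by hand. Set $A_k := \bigoplus_v K_{2n}(k_v,\Z/l^k)$. By Quillen's computation of the $K$-theory of finite fields one has $K_{2n}(k_v)=0$ and $K_{2n-1}(k_v)\cong\Z/(q_v^n-1)$ for $n\ge 1$, so the universal coefficient (Bockstein) sequence collapses to a natural isomorphism $K_{2n}(k_v,\Z/l^k)\cong(\Z/(q_v^n-1))[l^k]$. Writing $a_v:=\mathrm{ord}_l(q_v^n-1)$, this group is $\Z/l^{\min(a_v,k)}$, and the map induced by the reduction $\Z/l^{k+1}\to\Z/l^k$ is multiplication by $l$ on the torsion term. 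For $v\in S_l$ the integer $q_v^n-1$ is prime to $l$, so that term vanishes; hence $\bigoplus_v$ and $\bigoplus_{v\nmid l}$ agree, and since the Dwyer--Friedlander map is an isomorphism for finite fields (the middle vertical arrows marked $\cong$ in the diagrams above), the $K$-theoretic tower is isomorphic, compatibly with transitions, to the {\'e}tale tower $\bigoplus_{v\nmid l}K_{2n}^{et}(k_v,\Z/l^k)$. This already identifies the two $\varprojlim$'s and the two $\varprojlim^1$'s, so it suffices to treat $A_k$.

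For the vanishing of $\varprojlim_k A_k$ I would argue componentwise. A compatible family $(\xi_k)$ satisfies $\xi_k^{(v)}=l^{\,j}\,\xi_{k+j}^{(v)}$ for every $j$ and every place $v$; since $\xi_{k+j}^{(v)}$ lies in a group killed by $l^{a_v}$, choosing $j\ge a_v$ forces $\xi_k^{(v)}=0$. As this holds for each $v$ separately, $\xi_k=0$, whence $\varprojlim_k A_k=0$ (and likewise for the {\'e}tale tower). Here $\varprojlim$ does not commute with the infinite direct sum, but the argument is purely componentwise, so this is harmless.

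The substance is the torsion-freeness of $\varprojlim_k^{1} A_k$, which I would treat one prime at a time. For a prime $q\ne l$, multiplication by $q$ is an automorphism of each $A_k$ (an $l$-group), hence of $\varprojlim_k^{1} A_k$, so there is no $q$-torsion. For the $l$-part I would factor multiplication by $l$ on the tower as $A_k\xrightarrow{p}lA_k\xrightarrow{j}A_k$ (termwise) and feed the two short exact sequences of towers $0\to A_k[l]\to A_k\xrightarrow{p}lA_k\to 0$ and $0\to lA_k\xrightarrow{j}A_k\to A_k/lA_k\to 0$ into the six-term $\varprojlim$--$\varprojlim^{1}$ exact sequence. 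Two computations drive this. First, the transition maps of $A_k[l]$ are all zero (multiplication by $l$ annihilates $l$-torsion), so $\varprojlim_k^{1} A_k[l]=0$ and consequently $\varprojlim^{1}p$ is an isomorphism. Second, $\varprojlim_k(A_k/lA_k)=0$, because in each place-component the induced transition maps vanish for $k\ge a_v$, and the componentwise argument then kills every compatible family. Combined with $\varprojlim_k A_k=0$, the six-term sequence of the second extension shows that the connecting map is trivial and $\varprojlim^{1}j$ is injective; therefore multiplication by $l$, being $\varprojlim^{1}j\circ\varprojlim^{1}p$, is injective on $\varprojlim_k^{1}A_k$, so there is no $l$-torsion. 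Hence $\varprojlim_k^{1}A_k$ is torsion-free.

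The main obstacle is precisely that the tower $A_k$ is not Mittag--Leffler: because $a_v\to\infty$ over the places, the images into a fixed level never stabilise, so $\varprojlim^{1}$ is genuinely nonzero and cannot be dismissed cheaply. The crux is the homological bookkeeping that pins the $l$-torsion of $\varprojlim^{1}$ to $\varprojlim_k(A_k/lA_k)$; once this last inverse limit is seen to vanish, the finite-field input makes everything else routine.
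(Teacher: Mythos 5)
Your proof is correct, but your route to the torsion-freeness of $\varprojlim^1$ is genuinely different from the paper's. The paper embeds the direct-sum tower into the product tower $\prod_v K_{2n}(k_v,\Z/l^k)$: since each factor is a tower of finite groups, both $\varprojlim$ and $\varprojlim^1$ of the product vanish (the limit is a product of Tate modules of finite groups, and levelwise finite towers are Mittag--Leffler), so the six-term sequence for $0\to\bigoplus_v\to\prod_v\to\prod_v/\bigoplus_v\to 0$ identifies $\varprojlim^1_k\,\bigoplus_v K_{2n}(k_v,\Z/l^k)$ with $\varprojlim_k\bigl(\prod_v/\bigoplus_v\bigr)$, and this last inverse limit is torsion free by a direct check on compatible families: if $l\bar x_k=0$ for all $k$, then $x_k\equiv l^{j}x_{k+j}=l^{j-1}(lx_{k+j})$ modulo $\bigoplus_v$, and $lx_{k+j}$ is finitely supported, so $\bar x_k = 0$. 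You instead stay entirely inside the tower $A_\bullet$: you dispose of prime-to-$l$ torsion by functoriality of $\varprojlim^1$, and you pin the $l$-torsion by factoring multiplication by $l$ through the subtower $lA_\bullet$, with the two key computations $\varprojlim^1(A_\bullet[l])=0$ (zero transition maps) and $\varprojlim(A_\bullet/lA_\bullet)=0$ (componentwise vanishing of the induced transitions for $k\ge a_v$); both computations are correct, the two short sequences are indeed levelwise exact sequences of towers, and the factorization argument through the six-term sequences is sound. What the paper's method buys is brevity and a reusable explicit description of the $\varprojlim^1$ group as $\varprojlim_k\bigl(\prod_v/\bigoplus_v\bigr)$; what yours buys is independence from the commutation of $\varprojlim$ and $\varprojlim^1$ with infinite products and from the Mittag--Leffler input, replaced by a self-contained criterion of some generality: for any tower of $l$-primary abelian groups, $\varprojlim^1$ has no $l$-torsion as soon as $\varprojlim^1$ of the $l$-torsion subtower and $\varprojlim$ of the mod-$l$ quotient tower both vanish. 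Both arguments rest on the same finite-field input, namely Quillen's computation, the Bockstein identification $K_{2n}(k_v,\Z/l^k)\cong K_{2n-1}(k_v)[l^k]$ with reduction maps acting as multiplication by $l$, and the Dwyer--Friedlander isomorphism to transfer everything to the \'etale side.
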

\begin{proof}
Notice that $K_{2n} (k_v, \Z/l^{k}) \cong K_{2n-1} (k_v) [l^{k}]$ and 
$K_{2n}^{et} (k_v, \Z/l^{k}) \cong K_{2n-1}^{et} (k_v) [l^{k}].$ 
Because $K_{2n} (k_v, \Z/l^{k}) \cong K_{2n}^{et} (k_v, \Z/l^{k})$
by \cite{DF} it is enough to make the proof for K-theory.
Hence (\ref{lim lim1 of direct sum K(kv)2 }) follows because:
\begin{equation}
\small{{\varprojlim_{k}} \, \bigoplus_{v} \, K_{2n} (k_v, \Z / l^k) 
\,\, \subset \,\, {\varprojlim_{k}} \, \prod_{v} \, K_{2n} (k_v, \Z / l^k) = 0.}
\nonumber
\end{equation}
Applying the $lim - lim^{1}$ exact sequence to the 
exact sequence:
\begin{equation}\small{
0 \rightarrow \oplus_{v} \,  K_{2n} (k_v, \Z / l^k) 
\rightarrow \prod_{v} \, K_{2n} (k_v, \Z / l^k) 
\rightarrow \prod_{v} \, K_{2n} (k_v, \Z / l^k) / 
\oplus_{v} \, K_{2n} (k_v, \Z / l^k) 
\rightarrow 0 }
\nonumber
\end{equation} 
gives the natural isomorphism 
\begin{equation}\small{
{\varprojlim_{k}} \prod_{v} \, K_{2n} (k_v, \Z / l^k) / 
\oplus_{v} \, K_{2n} (k_v, \Z / l^k) \,\, \cong \,\,
{\varprojlim_{k}}^{1} \oplus_{v} \, K_{2n} (k_v, \Z / l^k)}
\label{lim lim1 of direct sum K(kv)4}
\end{equation}
The group on the left hand side of (\ref{lim lim1 of direct sum K(kv)4}) 
is clearly torsion free.
\end{proof}

\begin{theorem}\label{lim 1 QL1}
For every $n \geq 1$ there is the following isomorphism:
\begin{equation}\small{
{\varprojlim_{k}}^{1} \, K_{n} (F,\, \Z / l^k) \,\, \stackrel{\cong}{\longrightarrow} \,\, 
{\varprojlim_{k}}^{1} \, K_{n}^{et} (F,\, \Z / l^k).} \label{lim 1 QL2}
\end{equation}
Moreover there is the following equality:
\begin{equation}\small{
{\varprojlim_{k}}^{1} \, K_{2n} (F,\, \Z / l^k) = 0,} \label{lim 1 QL3}
\end{equation}
and the following exact sequence:
\begin{equation}\small{
0 \rightarrow D (n)_l \rightarrow {\varprojlim_{k}}^{1} \, K_{2n+1} (F, \, \Z / l^k)
\rightarrow {\varprojlim_{k}}^{1} \bigoplus_{v} \, K_{2n} (k_v, \Z / l^k)
\rightarrow 0.} \label{lim 1 QL4}
\end{equation}
\end{theorem}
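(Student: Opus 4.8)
The plan is to feed the three results established just above---the isomorphism $D(n,l^k)\cong D^{et}(n,l^k)$ of Theorem~\ref{Quillen and etale obstr are eq}, the limit computations $\varprojlim_k D(n,l^k)\cong D(n)_l$ and $\varinjlim_k D(n,l^k)=0$ of Theorem~\ref{limitsondivisible}, and the vanishing and torsion-freeness of Proposition~\ref{lim lim1 of direct sum K(kv)1}---into the $\varprojlim$--${\varprojlim}^{1}$ exact sequence attached to short exact sequences of towers indexed by $k$, with transition maps induced by the reductions $\Z/l^{k+1}\to\Z/l^k$. I would establish the three assertions in the order (\ref{lim 1 QL3}), then (\ref{lim 1 QL4}), then (\ref{lim 1 QL2}), since the comparison isomorphism (\ref{lim 1 QL2}) is most cheaply read off from the descriptions of the two sides produced along the way.

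For the vanishing (\ref{lim 1 QL3}) I would apply the $\varprojlim$--${\varprojlim}^{1}$ sequence to the universal coefficient (Bockstein) tower $0\to K_{2n}(F)/l^k\to K_{2n}(F,\Z/l^k)\to K_{2n-1}(F)[l^k]\to 0$. The tower $\{K_{2n}(F)/l^k\}$ has surjective transition maps, hence is Mittag-Leffler and contributes nothing to ${\varprojlim}^{1}$; the tower $\{K_{2n-1}(F)[l^k]\}$ has multiplication-by-$l$ transition maps, so its ${\varprojlim}^{1}$ vanishes once the $l$-primary torsion of $K_{2n-1}(F)$ is finite. This finiteness holds because for $n\ge 2$ the localization sequence makes $K_{2n-1}(F)$ a quotient of the finitely generated group $K_{2n-1}(\mathcal O_F)$ (finite generation by \cite{Bo}, \cite{Ha}, \cite{Q2}), while for $n=1$ one has $K_1(F)=F^\times$ with finite $l$-torsion $\mu_{l^{\infty}}(F)$. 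Running the same argument with \'etale $K$-theory gives ${\varprojlim}^{1}K_{2n}^{et}(F,\Z/l^k)=0$, to be reused for the even part of (\ref{lim 1 QL2}).

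For (\ref{lim 1 QL4}) I would cut the $K$-theory localization sequence (the first diagram of this section) into two short exact sequences of towers, $0\to C_k\to K_{2n+1}(F,\Z/l^k)\stackrel{\partial}{\longrightarrow}B_k\to 0$ and $0\to B_k\to\bigoplus_v K_{2n}(k_v,\Z/l^k)\to D(n,l^k)\to 0$, where $B_k:=\text{im}\,\partial$ and $C_k:=\text{ker}\,\partial$ is the image of $K_{2n+1}(\mathcal O_F,\Z/l^k)$. Applying $\varprojlim$--${\varprojlim}^{1}$ to the second sequence and inserting $\varprojlim_k\bigoplus_v K_{2n}(k_v,\Z/l^k)=0$ (Proposition~\ref{lim lim1 of direct sum K(kv)1}), $\varprojlim_k D(n,l^k)\cong D(n)_l$ (Theorem~\ref{limitsondivisible}) and ${\varprojlim}^{1}_k D(n,l^k)=0$ (the tower is eventually constant with isomorphic transition maps by (\ref{1.3}), hence Mittag-Leffler) yields $\varprojlim_k B_k=0$ together with a short exact sequence $0\to D(n)_l\to{\varprojlim}^{1}_k B_k\to{\varprojlim}^{1}_k\bigoplus_v K_{2n}(k_v,\Z/l^k)\to 0$. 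Feeding $\varprojlim_k B_k=0$ and ${\varprojlim}^{1}_k C_k=0$ into the first sequence then identifies ${\varprojlim}^{1}K_{2n+1}(F,\Z/l^k)$ with ${\varprojlim}^{1}_k B_k$, which is exactly (\ref{lim 1 QL4}). The vanishing ${\varprojlim}^{1}_k C_k=0$ is because $\{C_k\}$ is a surjective image of the tower $\{K_{2n+1}(\mathcal O_F,\Z/l^k)\}$, which is Mittag-Leffler (its Bockstein tower is an extension of the surjective tower $\{K_{2n+1}(\mathcal O_F)/l^k\}$ by the eventually constant finite tower $\{K_{2n}(\mathcal O_F)[l^k]\}$), and a surjective image of a Mittag-Leffler tower is Mittag-Leffler.

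Finally, for (\ref{lim 1 QL2}): in even degrees both sides vanish by (\ref{lim 1 QL3}) and its \'etale analogue. In odd degrees the Dwyer-Friedlander maps give a morphism from the $K$-theoretic sequence (\ref{lim 1 QL4}) to its \'etale counterpart, obtained identically from the bottom row of the same diagram; on the left term this morphism is the isomorphism $D(n)_l\cong D^{et}(n)$ coming from Theorems~\ref{Quillen and etale obstr are eq} and \ref{limitsondivisible}, and on the right term it is the equality ${\varprojlim}^{1}_k\bigoplus_v K_{2n}(k_v,\Z/l^k)={\varprojlim}^{1}_k\bigoplus_v K_{2n}^{et}(k_v,\Z/l^k)$ of Proposition~\ref{lim lim1 of direct sum K(kv)1}, so the short five lemma forces the middle map to be an isomorphism. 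The step I expect to need the most care is the package of Mittag-Leffler verifications---above all the vanishing of ${\varprojlim}^{1}$ for the multiplication-by-$l$ tower $\{K_{2n-1}(F)[l^k]\}$, which is precisely the point at which the positive-characteristic case is not automatic and where one must lean on finite generation of the $K$-groups together with the separate treatment of $n=1$.
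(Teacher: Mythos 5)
Your proposal is correct and follows essentially the same route as the paper: the even-degree vanishing via the Bockstein tower (surjective transitions on $K_{2n}(F)/l^k$, finiteness of the $l$-torsion of $K_{2n-1}(F)$), the odd-degree sequence via the localization sequence and the $\varprojlim$--${\varprojlim}^{1}$ sequence fed with Theorem \ref{Quillen and etale obstr are eq}, Theorem \ref{limitsondivisible} and Proposition \ref{lim lim1 of direct sum K(kv)1}, and the comparison (\ref{lim 1 QL2}) by the five lemma against the \'etale analogue. Your splitting of the localization sequence into the two tower sequences with $C_k=\ker\partial$ and $B_k=\operatorname{im}\partial$ is just a reorganization of the paper's step of passing to the quotient of $K_{2n+1}(F,\Z/l^k)$ by the image of the finite tower $K_{2n+1}(\mathcal{O}_{F,S},\Z/l^k)$, which leaves ${\varprojlim}^{1}$ unchanged.
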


\begin{proof} We are going to give a proof that works for all global fields
satisfying our assumptions set up in section 2.
Consider the following Bockstein exact sequences:
\begin{equation}\small{
0 \rightarrow K_{n} (F) / l^k  \rightarrow  K_{n} (F, \, \Z / l^k) \rightarrow
K_{n-1} (F) [l^k] \rightarrow 0} 
\label{Bockstein1}
\end{equation}
\begin{equation}\small{
0 \rightarrow K_{n}^{et} (F) / l^k  \rightarrow  K_{n}^{et} (F,\, \Z / l^k) \rightarrow
K_{n-1} (F)^{et} [l^k] \rightarrow 0} 
\label{Bockstein2}
\end{equation}
If $n = 2 m$ then $K_{2m -1} (\mathcal{O}_{F, S})_{l} =
K_{2m - 1}(F)_{l}$ and $K_{2m -1}^{et} (\mathcal{O}_{F, S})_l \cong 
K_{2m-1}^{et} (F)_l$ are all finite groups.
Since the natural maps 
$K_{n} (F) / l^{k+1} \rightarrow K_{n} (F) / l^k$ and  
$K_{n}^{et} (F) / l^{k+1} \rightarrow K_{n}^{et} (F) / l^k$  
are surjective for all $n \geq 0$ and all $k \geq 0,$ 
the equality (\ref{lim 1 QL3}) follows by applying
the $lim - lim^{1}$ exact sequence to the Bockstein sequences
(\ref{Bockstein1}) and (\ref{Bockstein2}).
\medskip

\noindent
Consider the natural maps 
$$i \, :\, K_{2n+1} (\mathcal{O}_{F, S},\, \Z/l^k) \rightarrow 
K_{2n+1} (F,\, \Z/l^k),$$ 
$${i^{et} \, :\, K_{2n+1}^{et} (\mathcal{O}_{F, S},\, \Z/l^k) } \rightarrow 
K_{2n+1}^{et} (F,\, \Z/l^k).$$ 
Since the groups $K_{2n+1} (\mathcal{O}_{F, S}, \, \Z/l^k)$
and $K_{2n+1}^{et} (\mathcal{O}_{F, S},\, \Z/l^k)$ are finite, the $lim - lim^{1}$ 
exact sequence shows that 
\begin{equation}
{\varprojlim_{k}}^{1} \, K_{2n+1} (F,\, \Z / l^k)/ 
i( K_{2n+1} (\mathcal{O}_{F S},\, \Z / l^k))
\,\, \cong \,\,{\varprojlim_{k}}^{1} \, K_{2n+1} (F,\, \Z / l^k)
\nonumber
\end{equation} 
\begin{equation}
{\varprojlim_{k}}^{1} \,  K_{2n+1}^{et} (F,\, \Z / l^k) / 
i^{et} ( K_{2n+1}^{et} (\mathcal{O}_{F, S}, \Z / l^k))  \,\, \cong \,\,
{\varprojlim_{k}}^{1} \, K_{2n+1}^{et} (F,\, \Z / l^k).
\nonumber
\end{equation}
Hence taking into account 
Theorem \ref{limitsondivisible} (\ref{projlim}, \ref{projlimetale}), Proposition 
\ref{lim lim1 of direct sum K(kv)1} and applying the $lim - lim^{1}$ exact sequence to the rows of 
following commutative diagram :
$$\small{\xymatrix{
 0  \rightarrow K_{2n+1} (F,\, \Z / l^k) / 
i^{et} ( K_{2n+1}^{et} (\mathcal{O}_{F S},\, \Z / l^k))   
\ar@<0.1ex>[d]^{} \ar[r]^{} & 
 \bigoplus_{v} \, K_{2n} (k_v, \, \Z/l^k) \ar@<0.1ex>[d]^{\cong} \ar[r]^{} & 
 D (n, l^k) \rightarrow \ar@<0.1ex>[d]^{\cong}  0 \\
0 \rightarrow  K_{2n+1} (F,\, \Z / l^k) / 
i^{et} ( K_{2n+1}^{et} (\mathcal{O}_{F S}, \Z / l^k) \ar[r]^{} & 
 \bigoplus_{v \not \, | \, l} \, K_{2n}^{et} (k_v, \, \Z/l^k)  \ar[r]^{} & 
  D^{et} (n, l^k) \rightarrow  0}}
\label{CoeffLocSeq} $$
gives the natural commutative diagram:
$$\small{\xymatrix{
 0  \ar[r]^{} & D (n)_l   
\ar@<0.1ex>[d]^{\cong} \ar[r]^{} & \varprojlim_{k}^{1} \, K_{2n+1} (F,\, \Z / l^k) 
  \ar@<0.1ex>[d]^{} \ar[r]^{} & 
 \varprojlim_{k}^1 \bigoplus_{v} \, K_{2n} (k_v, \, \Z/l^k) 
 \ar[r]^{} \ar@<0.1ex>[d]^{\cong} & 0\\
0 \ar[r]^{} & D^{et} (n) \ar[r]^{}  & \varprojlim_{k}^{1} \, K_{2n+1}^{et} (F,\, \Z / l^k) 
 \ar[r]^{} & \varprojlim_{k}^{1} \bigoplus_{v \not \, | \, l} \, K_{2n}^{et} (k_v, \, \Z/l^k) 
 \ar[r]^{} & 0}}
\label{CoeffLocSeq}$$
Hence the top row of the diagram is the exact sequence (\ref{lim 1 QL4}) and the middle vertical arrow is 
the isomorphism (\ref{lim 1 QL2}) 
\end{proof}
\bigskip

\noindent
Theorem \ref{Quillen and etale obstr are eq} gives the opportunity to
compute the order of the group $D (n)_l.$ 

\begin{example}
Recall that for $n$ odd, $l > 2$ and a 
totally real number field $F$ \cite[Theorem 3 (ii) p. 289]{Ba2} there is the 
following formula: 
\begin{equation}
| D(n)_l |  =
{\big|}\frac{w_{n+1} (F) \zeta_{F} (-n)}{\prod_{v | l} \,  w_n (F_v)} \big{|}_{l}^{-1}.
\label{number of elements in D(n)l 1}\end{equation}
One gets this formula taking $S = S_l,$ applying the equalities 
(\ref{EtKTh OS comp to Div}) and (\ref{div = divetale}), using the theorem of Wiles which
states that: 
$$ | H^2 (\mathcal{O}_{S_l}, \,  \Z_l (n+1))| = |w_{n+1} (F) \zeta_{F} (-n)|_{l}^{-1}.$$
Observe that $|w_{n} (F)|_{l}^{-1} = 1$ for $F$ totally real and $l$ odd. 
\end{example}

\bigskip
\begin{example}
Now let $\text{char}\, F = p > 0$ and let $\F_q$ be the algebraic closure of $\F_p$ in $F.$
Let $X/\F_q$ be a smooth curve corresponding to $F.$ This curve is unique up to 
$\F_q$ isomorphism. Let $Z(X, t)$ denote the Weil zeta function for $X.$ Then put $t = q^{-s}$
and define:
\begin{equation} 
\zeta_{F} (s) := Z(X, q^{-s}). 
\nonumber\end{equation} 

\noindent
The Leray spectral sequence for the natural map
$i\, ;\, \text{spec}\, F \rightarrow X:$ 
$$E_{2}^{i,j} = H^i (X,\, R^{j} i_{\ast} W^{n+1}) \Rightarrow H^{i+j} (F,\, W^{n+1})$$
gives the following exact sequence of the lower terms:
\begin{equation}
0 \, \stackrel{}{\longrightarrow} \,\, H^1 (X, \, W^{n+1}) \stackrel{}{\longrightarrow} 
H^{1} (F,\, W^{n+1}) \stackrel{\partial}{\longrightarrow} \bigoplus_{v} H^{0} (k_v,\, W^{n})
\label{localization equals Moore}\end{equation}
By (\ref{generallocalfieldSchneider113}) and the exact sequences
(\ref{Gen.Sch.F})  and (\ref{MooreEt F}) we obtain for all $n \geq 1$ the following 
natural isomorphism:
\begin{equation} 
D^{et}(n) \cong H^1 (X, \, W^{n+1})
\label{div et equals cohom of X}
\end{equation}
Its is well known c.f. \cite{Ko} p. 202
that $|H^0 (X, \, W^{n+1})| = \big{|} q^{n+1} - 1 \big{|}_{l}^{-1} ,$ \,\, 
$|H^2 (X, \, W^{n+1})| = \big{|} q^{n} - 1 \big{|}_{l}^{-1}$ 
and 
\begin{equation} 
|H^1 (X, \, W^{n+1})| = |(q^{n+1} - 1)(q^{n} - 1) \zeta_{X} (-n)|_{l}^{-1}.
\label{H1 X expresses by zeta of X}
\end{equation}
Since $\F_q$ is the algebraic closure of $\F_p$ in $F$ we have
$W^k (F) = W^k (\F_q)$ for all $k \in \Z.$ In particular for all $k > 0$ we have
$|w_k (F)|_{l}^{-1} = |w_k (\F_q)|_{l}^{-1} = |q^{k} - 1|_{l}^{-1}.$
If $q_v := N v,$ then $|w_k (F_v)|_{l}^{-1} = |w_k (k_v)|_{l}^{-1} = |q_{v}^k - 1|_{l}^{-1}.$
Observe that: 
\begin{equation}
\zeta_{F} (s) = \zeta_{X} (s) \prod_{v \, | \, \infty} \, (1 - N v^{-s})
\nonumber \end{equation} 
Hence by Theorem \ref{Quillen and etale obstr are eq} we get:
\begin{equation} 
| D (n)_l | =  
\big{|} \frac{ w_{n} (F) \, w_{n+1} (F) \, \zeta_{F} (-n)}{\prod_{v \, | \, \infty} w_{n} (F_v)} \big{|}_{l}^{-1}.
\label{D(n) expressed by zeta of F and X}
\end{equation}
\end{example}
\bigskip

\subsection{Application to the homology of GL}

\noindent
Let us return to the general situation where $l \geq 2$ and 
$F$ is a global field of characteristic 
$\text{char}\, F  \not= l$ such that for $l = 2$ it is assumed that 
$\mu_{4} \subset F.$

\begin{theorem}\label{ker on homology OF to F}
For every $n \geq 1,$ $k \geq 1$ and $l > n + 1 $ the kernel of the natural map 
\begin{equation}
H_{2n} (GL(\mathcal{O}_{F}), \, \Z / l^k)  \,\, \rightarrow \,\,
H_{2n} (GL(F), \, \Z / l^k)
\label{ker on homology OF to F 1}
\end{equation}  
contains a subgroup isomorphic to $D (n, l^k).$
\end{theorem}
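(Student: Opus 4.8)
The plan is to relate the homology of $GL$ with coefficients to $K$-theory with coefficients via the Hurewicz-type comparison, and then to locate $D(n,l^k)$ inside the kernel. Recall that $H_{*}(GL(R),\Z/l^k)$ is the homology of the plus-construction $BGL(R)^{+}$, and that $K_{*}(R,\Z/l^k)=\pi_{*}(BGL(R)^{+},\Z/l^k)$ fits with homology through the stable Hurewicz map and the homology suspension. First I would fix the map induced by the inclusion $\mathcal{O}_F\hookrightarrow F$ on $BGL(\,\cdot\,)^{+}$ and compare it with the corresponding map on $K$-theory with $\Z/l^k$-coefficients, where by the localization sequence the cokernel of the boundary $\partial$ in degree $2n$ is precisely $D(n,l^k)$.

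\textbf{Key steps.} The heart of the argument is to produce, under the hypothesis $l>n+1$, a split injection of $D(n,l^k)$ into $H_{2n}(GL(\mathcal{O}_F),\Z/l^k)$ that dies in $H_{2n}(GL(F),\Z/l^k)$. The range restriction $l>n+1$ is exactly the condition under which the low-degree part of the mod-$l^k$ homology of $GL$ is controlled by $K$-theory: in this stable range the Hurewicz map $K_{2n}(R,\Z/l^k)\to H_{2n}(GL(R),\Z/l^k)$ is injective (indeed split by a product/Pontryagin structure argument on $H_*(GL,\Z/l^k)$, since no decomposable corrections intervene for $2n<2l-2$). Concretely I would: first, recall that $D(n,l^k)=\mathrm{coker}\,\partial$ sits inside $K_{2n}(\mathcal{O}_F,\Z/l^k)$ as the image of the group $K_{2n}(\mathcal{O}_F,\Z/l^k)$ that is killed under $K_{2n}(\mathcal{O}_F,\Z/l^k)\to K_{2n}(F,\Z/l^k)$, using the definition $D(n,l^k)=\text{ker}(K_{2n}(\mathcal{O}_F,\Z/l^k)\to K_{2n}(F,\Z/l^k))$; second, map this via the Hurewicz map into $H_{2n}(GL(\mathcal{O}_F),\Z/l^k)$; third, check commutativity of the square relating the two Hurewicz maps with the map $\mathcal{O}_F\to F$, so that the image of $D(n,l^k)$ lands in the kernel of (\ref{ker on homology OF to F 1}).

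\textbf{Main obstacle.} The crux — and where the hypothesis $l>n+1$ does the real work — is establishing the injectivity of the Hurewicz map $K_{2n}(\mathcal{O}_F,\Z/l^k)\to H_{2n}(GL(\mathcal{O}_F),\Z/l^k)$ in this degree range. The plan is to invoke the standard fact that for an $H$-space the primitive elements inject into homology through the Hurewicz map, and that in degrees below roughly $2l-2$ the mod-$l^k$ $K$-theory classes are primitive with no correction from products of lower classes; the bound $l>n+1$ guarantees $2n<2l-2$, so no Steenrod operations or divided-power corrections obstruct primitivity. The subtlety is to make this precise with $\Z/l^k$-coefficients rather than $\F_l$, and to confirm that the splitting of the Hurewicz map restricts to a splitting of the subgroup $D(n,l^k)$; I would handle this by working with the stable range of the homology of $GL$ and the known structure of $H_*(GL(R),\Z/l^k)$ as a Hopf algebra, extracting the required split injection on the relevant summand. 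Once injectivity is in hand, the commutative square immediately forces $D(n,l^k)$ into the kernel, completing the proof.
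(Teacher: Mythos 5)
You take essentially the same route as the paper: its proof is exactly your naturality square for the Hurewicz maps, combined with the injectivity of $h_{2n}\colon K_{2n}(A,\Z/l^k)\to H_{2n}(GL(A),\Z/l^k)$ for $l>n+1$, which the paper obtains by comparing Bockstein sequences and citing Arlettaz \cite[Cor. 7.19]{Ar2} rather than re-deriving it from H-space/primitivity considerations as you sketch. Note that the splitting you aim to construct is not needed: plain injectivity of the Hurewicz map on $D(n,l^k)\subset K_{2n}(\mathcal{O}_F,\Z/l^k)$ already forces a copy of $D(n,l^k)$ into the kernel of the map on homology.
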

\begin{proof} Let $A$ be a commutative ring with identity. Comparing the Bockstein exact
sequences for K-theory of $A$ and for the homology of $GL (A)$ and applying the result of
Arlettaz \cite[Cor. 7.19]{Ar2} (cf. \cite{Ar1}) we observe that for all $l > n+1$ and 
$k \geq 1$ the Hurewicz homomorphism 
\begin{equation}
h_{2n} \, :\, K_{2n}(A, \, \Z / l^k)  \,\, \rightarrow \,\,
H_{2n} (GL(A), \, \Z / l^k)
\label{Hurewicz for A}
\end{equation}   
is injective. Hence the claim follows by the following commutative diagram. 
$$\small{\xymatrix{
K_{2n} (\mathcal{O}_{F},\, \Z / l^k)   
\ar@<0.1ex>[d]^{h_{2n}} \ar[r]^{} & K_{2n} (F, \, \Z / l^k) 
  \ar@<0.1ex>[d]^{h_{2n}}\\
H_{2n} (GL(\mathcal{O}_{F}), \, \Z / l^k) \ar[r]^{} & 
H_{2n} (GL(F), \, \Z / l^k)}}
\label{CoeffLocSeq}$$ 
\end{proof}

\begin{corollary}\label{cor1 ker on homology OF to F}
Let $n  \geq 1$ and $l > n+1.$ Assume that 
$K_{2n}^{et} (\mathcal{O}_{F}[1/l]) \cong D^{et} (n)_l$ and 
$l \,\, || \,\, |D (n)_l|.$
Then kernel of the natural map 
\begin{equation}
H_{2n} (GL(\mathcal{O}_{F}), \, \Z / l) \,\, \rightarrow \,\,
H_{2n} (GL(F), \, \Z / l)
\label{ker on homology OF to F 2}
\end{equation}  
contains a subgroup isomorphic to $D (n)_l .$
\end{corollary}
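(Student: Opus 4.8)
The plan is to read the corollary as a computation of the group that Theorem \ref{ker on homology OF to F} already produces. Indeed, taking $k=1$ in Theorem \ref{ker on homology OF to F} (which is legitimate since $l > n+1 \geq 2$) immediately exhibits a subgroup isomorphic to $D(n,l)$ inside the kernel of the map (\ref{ker on homology OF to F 2}). Thus the entire content to be verified is the identification $D(n,l) \cong D(n)_l$ under the two standing hypotheses. Because $l > 2$, Theorem \ref{Quillen and etale obstr are eq} supplies the canonical isomorphism $D(n,l) \cong D^{et}(n,l)$ (valid for all $k\geq 1$, in particular $k=1$), and its last isomorphism gives $D(n)_l \cong D^{et}(n)$. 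So I would transport the whole question to the \'etale side and reduce it to proving $D^{et}(n,l) \cong D^{et}(n)$.

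First I would unwind what the hypotheses say integrally. The condition $l \,||\, |D(n)_l|$ combined with $D(n)_l \cong D^{et}(n)$ forces $D^{et}(n) \cong \Z/l$, and then the assumption $K_{2n}^{et}(\mathcal O_F[1/l]) \cong D^{et}(n)$ gives $K_{2n}^{et}(\mathcal O_F[1/l]) \cong \Z/l$. The crucial consequence is that the natural inclusion $D^{et}(n) = div\, K_{2n}^{et}(F)_l \subseteq K_{2n}^{et}(\mathcal O_F[1/l])$ is in fact an equality; hence every element of $K_{2n}^{et}(\mathcal O_F[1/l])$ is divisible in $K_{2n}^{et}(F)_l$ and therefore lies in $l\,K_{2n}^{et}(F)$.

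Next I would feed this into the subquotient description (\ref{1.11}) of $D^{et}(n,l)$, namely
\[
D^{et}(n,l) \cong \bigl(K_{2n}^{et}(\mathcal O_F[1/l]) \cap l\,K_{2n}^{et}(F)\bigr)\big/\, l\,K_{2n}^{et}(\mathcal O_F[1/l]).
\]
By the previous paragraph the numerator is all of $K_{2n}^{et}(\mathcal O_F[1/l])$, while the denominator $l\,K_{2n}^{et}(\mathcal O_F[1/l]) = l\cdot(\Z/l) = 0$, so $D^{et}(n,l) \cong K_{2n}^{et}(\mathcal O_F[1/l]) \cong \Z/l \cong D^{et}(n)$. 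Running this back through Theorem \ref{Quillen and etale obstr are eq} yields $D(n,l) \cong D^{et}(n,l) \cong D^{et}(n) \cong D(n)_l$, and combining this identification with the subgroup furnished by Theorem \ref{ker on homology OF to F} at $k=1$ completes the argument. The only genuinely delicate point—and the step I expect to be the main obstacle—is the implication ``divisible in $K_{2n}^{et}(F)_l$ $\Rightarrow$ contained in $l\,K_{2n}^{et}(F)$'' used to make the numerator of (\ref{1.11}) maximal; this is precisely what the hypothesis $K_{2n}^{et}(\mathcal O_F[1/l]) = div\, K_{2n}^{et}(F)_l$ is designed to guarantee, and everything else is a matter of assembling isomorphisms already established above.
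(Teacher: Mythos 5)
Your proof is correct and takes essentially the same approach as the paper: transport the problem to the \'etale side via Theorem \ref{Quillen and etale obstr are eq}, use (\ref{1.11}) together with the two hypotheses to obtain $D^{et}(n) \cong D^{et}(n,l)$, conclude $D(n)_l \cong D(n,l)$, and apply Theorem \ref{ker on homology OF to F} with $k=1$. The paper's proof is just a terser version of this, and your explicit unwinding (the natural inclusion $div\, K_{2n}^{et}(F)_l \subseteq K_{2n}^{et}(\mathcal{O}_F[1/l])$ becoming an equality, making the numerator of (\ref{1.11}) everything and the denominator zero) is precisely what the paper compresses into the phrase ``by (\ref{1.11}) and the assumptions.''
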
 
\begin{proof} By Theorem \ref{Quillen and etale obstr are eq} 
we have $D(n)_l \cong D^{et} (n)_l$ and $D(n, l) \cong D^{et} (n, l).$  
Moreover by 
(\ref{1.11}) and the assumptions we have the 
following isomorphism $D^{et}(n)_l \cong D^{et} (n, l),$ hence 
$D(n)_l \cong D (n, l).$
\end{proof}

\begin{corollary}\label{cor2 ker on homology OF to F}
Let $F = \Q$ and let $n  \geq 1$ be odd. 
Assume that $l > n + 1 $ is such that  
$l \,\, || \,\, |w_{n+1} (\Q) \zeta_{\Q} (-n)|_{l}^{-1}.$
Then the kernel of the natural map 
\begin{equation}
H_{2n} (GL(\Z), \, \Z / l) \,\, \rightarrow \,\,
H_{2n} (GL(\Q), \, \Z / l)
\label{ker on homology OF to F 3}
\end{equation}  
contains a subgroup isomorphic to $\Z/l.$
\end{corollary}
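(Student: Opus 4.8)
The plan is to obtain the statement as the special case $F = \Q$ of Corollary \ref{cor1 ker on homology OF to F}, so the work is to verify the two hypotheses of that corollary and to identify $D(n)_l$ with $\Z/l$. The arithmetic input that drives everything is the constraint $l > n+1$: since $n$ is odd with $1 \le n < l-1$, the integer $n$ is not a positive multiple of $l-1$, i.e. $(l-1) \not| \, n$. The $l$-adic cyclotomic character surjects onto $\Z_l^{\times}$ both globally on $G_{\Q}$ and locally on $G_{\Q_l}$, so its $n$-th power is already nontrivial modulo $l$; hence $\Q_l/\Z_l(n)$ has no nonzero Galois invariants in either case and $W^n(\Q_l) = W^n(\Q) = 0$. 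At the real place complex conjugation acts on $\Q_l/\Z_l(n)$ by $(-1)^n = -1$, and since $l > n+1 \ge 2$ is odd we also get $W^n(\R) = 0$. In particular $|w_n(\Q_l)|_l^{-1} = |W^n(\Q_l)| = 1$.

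Granting this, I would first compute $|D(n)_l|$. As $\Q$ is totally real, $n$ is odd and $l > 2$, formula (\ref{number of elements in D(n)l 1}) applies; the only place above $l$ is $v = l$ with $\Q_v = \Q_l$, so it reads
\begin{equation}
|D(n)_l| = \frac{|w_{n+1}(\Q)\,\zeta_{\Q}(-n)|_l^{-1}}{|w_n(\Q_l)|_l^{-1}} = |w_{n+1}(\Q)\,\zeta_{\Q}(-n)|_l^{-1} = l,
\nonumber\end{equation}
the last equality being the hypothesis $l \,\,||\,\, |w_{n+1}(\Q)\,\zeta_{\Q}(-n)|_l^{-1}$. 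Thus $D(n)_l$ is a group of order $l$, hence $D(n)_l \cong \Z/l$; in particular $l \,\,||\,\, |D(n)_l|$, which is the second hypothesis of Corollary \ref{cor1 ker on homology OF to F}.

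For the remaining hypothesis $K_{2n}^{et}(\Z[1/l]) \cong D^{et}(n)_l$, I would feed $S = S_{\infty,l} = \{\infty, l\}$, for which $\mathcal{O}_{\Q,S} = \Z[1/l]$, into the \'etale Moore sequence (\ref{MooreEt OS}) of Theorem \ref{MooreEt}:
\begin{equation}
0 \to D^{et}(n) \to K_{2n}^{et}(\Z[1/l]) \to W^n(\R) \oplus W^n(\Q_l) \to W^n(\Q) \to 0.
\nonumber\end{equation}
By the first paragraph the terms $W^n(\R)$, $W^n(\Q_l)$ and $W^n(\Q)$ all vanish, so the inclusion $D^{et}(n) \hookrightarrow K_{2n}^{et}(\Z[1/l])$ is an isomorphism; since $D^{et}(n)$ is an $l$-group and $D^{et}(n) \cong D(n)_l$ by Theorem \ref{Quillen and etale obstr are eq}, this is exactly the required statement. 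Both hypotheses of Corollary \ref{cor1 ker on homology OF to F} now hold, so its conclusion produces inside the kernel of the map (\ref{ker on homology OF to F 3}) a subgroup isomorphic to $D(n)_l \cong \Z/l$. The one delicate point is the vanishing $W^n(\Q_l) = 0$: it is what trivializes the local correction factor in (\ref{number of elements in D(n)l 1}) and thereby turns the hypothesis on $w_{n+1}(\Q)\,\zeta_{\Q}(-n)$ directly into $D(n)_l \cong \Z/l$; the rest is a formal specialization of the preceding corollary.
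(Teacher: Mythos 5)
Your proposal is correct and follows essentially the same route as the paper: both verify the two hypotheses of Corollary \ref{cor1 ker on homology OF to F} by using Theorem \ref{MooreEt} to get $K_{2n}^{et}(\Z[1/l]) \cong D^{et}(n)$ and the formula (\ref{number of elements in D(n)l 1}) (i.e.\ \cite[Theorem 3]{Ba2}) together with the exact divisibility hypothesis to get $D(n)_l \cong \Z/l$. The only difference is one of presentation: you make explicit the vanishing $W^n(\Q) = W^n(\Q_l) = W^n(\R) = 0$ forced by $l > n+1$ and $n$ odd, which the paper leaves implicit when it invokes the order formula (\ref{EtKTh OS comp to Div}).
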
 
\begin{proof} Observe that
$K_{2n}^{et} (\Z[1/l]) \cong D^{et} (n)_l$ by Theorem \ref{MooreEt} equality 
(\ref{EtKTh OS comp to Div}) (see also \cite{Ba2}).
Moreover $|D(n)_l| \, = \, |D^{et} (n)_l| \, = \, |w_{n+1} (\Q) \zeta_{\Q} (-n)|_{l}^{-1}$
by \cite[Theorem 3 p. 289]{Ba2}.
Hence the claim follows by Corollary \ref{cor1 ker on homology OF to F}.
\end{proof}

\begin{example} Let $F = \Q,$ $n = 11$ and $l = 691.$ 
Observe that
$w_{12} (\Q) \zeta_{\Q} (-11) = 2 \times 691$ 
cf. \cite[p. 343]{Ba1}.
Then the kernel
of the natural map: 
\begin{equation}
H_{22} (GL(\Z), \, \Z / 691) \,\, \rightarrow \,\,
H_{22} (GL(\Q), \, \Z / 691)
\nonumber
\end{equation}
contains a subgroup isomorphic to $\Z / 691.$  
\end{example}

\begin{example} Let $F = \Q,$ $n = 15$ and $l = 3617.$ Observe that
$w_{16} (\Q) \zeta_{\Q} (-15) = 2 \times 3617$ 
cf. \cite[Example, p. 358]{Ba1}. Hence the kernel
of the natural map: 
\begin{equation}
H_{30} (GL(\Z), \, \Z / 3617) \,\, \rightarrow \,\,
H_{30} (GL(\Q), \, \Z / 3617)
\nonumber
\end{equation}  
contains a subgroup isomorphic to $\Z / 3617.$
\end{example}
\bigskip

Let $E/\F_q$ be an elliptic curve given by the Weierstrass equation $y^2 = x^3 + Ax + B$ with
$A, B \in \F_q.$ Let $F := \F_q (E).$ Since $[F, \, \F_q (x)] = 2$ the finite field $\F_q$
is algebraically closed in $F.$ There is only one point at $\infty$ and $F_{\infty} =
\F_q ((x)).$ Moreover $|w_{n} (F_{\infty})|_{l}^{-1} = |w_{n} (\F_q)|_{l}^{-1}
= |q^{n} - 1|_{l}^{-1}.$ 
It was proven by Weil (see [Sil]) that for $a := 1 + q - |E (\F_q)|$ there is the following formula:
\begin{equation} 
Z(E, q^{-s}) = \frac{1 - a q^{-s} + q^{1 - 2s}}{(1 - q^{-s})(1 - q^{1-s})} 
\nonumber\end{equation} 
For an elliptic curve over $\F_q$ the formula 
(\ref{EtKTh OS comp to Div}) of the Theorem \ref{MooreEt}  
yields the following equality
$K_{2n}^{et} (\mathcal{O}_{F}) \cong D^{et} (n)_l.$  
\bigskip

Now let $q = p$ a prime number and let $E/\F_p$ be supersingular. Since 
$|a| \leq 2 \sqrt{p}$ by Hasse theorem and $a \equiv 0 \mod p$ then for $p > 3$ we   
have $a = 0.$ Hence taking $s = -n$ and using (\ref{D(n) expressed by zeta of F and X})
we get: 
\begin{equation} 
| D (n)_l | =  
\frac{\big{|} 1 + p^{1 + 2n}\big{|}_{l}^{-1}}{\big{|} 1 - p^n \big{|}_{l}^{-1}} .
\label{D(n) for supersingular}
\end{equation}

\begin{corollary}\label{cor3 ker on homology OF to F}
Let $E/\F_p$ be a supersingular elliptic curve and let 
$F = \F_p (E).$ Let $n  \geq 1$ be odd and $l$ be a prime number 
such that $p \equiv -1 \mod l,$  $l > n+1$ and 
$l  \not | \,\, \frac{(p+1)(2n+1)}{l}.$ Then the kernel of the natural map: 
\begin{equation}
H_{2n} (GL(\mathcal{O}_{F}), \, \Z / l) \,\, \rightarrow \,\,
H_{2n} (GL(F), \, \Z / l)
\label{ker on homology OF to F 4}
\end{equation}  
contains a subgroup isomorphic to $\Z/l.$
\end{corollary}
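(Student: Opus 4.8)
The plan is to reduce the statement to Corollary \ref{cor1 ker on homology OF to F} by verifying its three hypotheses for the ring $\mathcal{O}_F$. Two of them are immediate. The inequality $l > n+1$ is assumed, and since $\text{char}\, F = p \neq l$ the prime $l$ is already a unit in $\mathcal{O}_F$, so $\mathcal{O}_F[1/l] = \mathcal{O}_F$; the hypothesis $K_{2n}^{et}(\mathcal{O}_F[1/l]) \cong D^{et}(n)_l$ is then precisely the identity $K_{2n}^{et}(\mathcal{O}_F) \cong D^{et}(n)_l$ recorded just before the corollary for elliptic curves over $\F_q$ via (\ref{EtKTh OS comp to Div}). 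Thus the entire argument reduces to showing that $l$ exactly divides $|D(n)_l|$, i.e.\ that $D(n)_l \cong \Z/l$.

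To evaluate $|D(n)_l|$ I would start from the supersingular formula (\ref{D(n) for supersingular}),
\[
|D(n)_l| = \frac{|1 + p^{1+2n}|_l^{-1}}{|1 - p^n|_l^{-1}},
\]
and write $v_l$ for the $l$-adic valuation, so that $|x|_l^{-1} = l^{v_l(x)}$. For the denominator, $p \equiv -1 \bmod l$ together with $n$ odd gives $p^n \equiv -1$, hence $1 - p^n \equiv 2 \bmod l$; since $l > n+1 \geq 2$ we have $l \neq 2$, so $l \not\mid (1 - p^n)$ and $|1 - p^n|_l^{-1} = 1$. For the numerator, $1 + 2n$ is odd, so $p^{1+2n} \equiv -1 \bmod l$ and $l \mid (1 + p^{1+2n})$; the remaining task is to pin down the exact power of $l$.

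For this exact valuation I would set $t := p+1$, so that $l \mid t$, and expand $p^{1+2n} = (t-1)^{1+2n}$ by the binomial theorem. The constant term contributes $-1$, cancelling the leading $1$, the linear term contributes $(2n+1)t$, and every term of degree $\geq 2$ in $t$ is divisible by $t^2$. Hence $1 + p^{1+2n} \equiv (2n+1)(p+1) \bmod (p+1)^2$. The key observation is that the hypothesis $l \not\mid \frac{(p+1)(2n+1)}{l}$ is equivalent to $v_l\big((p+1)(2n+1)\big) = 1$; combined with $l \mid (p+1)$ this forces $v_l(p+1) = 1$ and $l \not\mid (2n+1)$. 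Therefore $v_l\big((2n+1)(p+1)\big) = 1$, while the discarded higher-order terms have valuation at least $2\,v_l(p+1) = 2$, so $v_l(1 + p^{1+2n}) = 1$ and $|1 + p^{1+2n}|_l^{-1} = l$. Combining the two computations yields $|D(n)_l| = l$, whence $D(n)_l \cong \Z/l$, and Corollary \ref{cor1 ker on homology OF to F} produces the asserted copy of $\Z/l$ inside the kernel.

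I expect the only delicate point to be the correct reading of the divisibility hypothesis: one must interpret $l \not\mid \frac{(p+1)(2n+1)}{l}$ as the precise statement that $v_l(p+1) = 1$ and $l \not\mid (2n+1)$, and then confirm that the higher-order binomial terms genuinely keep valuation $\geq 2$, so that the linear term $(2n+1)(p+1)$ alone controls $v_l(1 + p^{1+2n})$.
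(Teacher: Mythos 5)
Your proof is correct and takes essentially the same route as the paper: the paper's own proof likewise applies Corollary \ref{cor1 ker on homology OF to F} after asserting that, under the stated hypotheses, formula (\ref{D(n) for supersingular}) yields $l \,\, || \,\, |D (n)_l|$. Your valuation computation (the binomial expansion of $p^{1+2n} = ((p+1)-1)^{1+2n}$ showing $v_l(1+p^{1+2n}) = 1$ while $v_l(1-p^n) = 0$) simply supplies the details behind the step the paper declares ``clear,'' including the correct reading of the hypothesis $l \not| \, \frac{(p+1)(2n+1)}{l}$ as $v_l(p+1)=1$ and $l \not| \, (2n+1)$.
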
 
\begin{proof} It is clear that for $n$ odd and $l$ such that $p \equiv -1 \mod l$ and 
 $l  \not | \,\, \frac{(p+1)(2n+1)}{l}$ the formula 
(\ref{D(n) for supersingular}) yields $l \,\, || \,\, |D (n)_l|.$
Hence the claim follows by Corollary \ref{cor1 ker on homology OF to F}.
\end{proof}

\begin{example} The elliptic curve $y^2 = x^3 +1$ over $\F_p,$ 
for $p \geq 5,$ is supersingular iff $p \equiv 2 \mod 3$ \cite[pp. 143-144]{Si}. 
In particular for $p = 29,$ $l = 5,$ $n$ odd
and $n \not\equiv 2 \mod 5$ we notice by 
(\ref{D(n) for supersingular})
that $5 \,\, || \,\, |D (n)_5|.$ Hence by Corollary 
\ref{cor3 ker on homology OF to F} the kernel of the following map:
\begin{equation}
H_{6} (GL(\mathcal{O}_{\F_{29} (E)}), \, \Z / 5) \,\, \rightarrow \,\,
H_{6} (GL(\F_{29}(E)), \, \Z / 5)
\label{ker on homology OF to F 5}
\end{equation}  
contains a subgroup isomorphic to $\Z / 5.$ 
Similarly for $p = 41,$ $l = 7$ and 
any $n$ odd such that $n \not\equiv 3 \mod 7$ we notice by 
(\ref{D(n) for supersingular}) that $7 \,\, || \,\, |D (n)_7|.$ Hence by 
Corollary \ref{cor3 ker on homology OF to F} the kernel of the following map: 
\begin{equation}
H_{10} (GL(\mathcal{O}_{\F_{41}(E)}), \, \Z / 7) \,\, \rightarrow \,\,
H_{10} (GL(\F_{41}(E)), \, \Z / 7)
\label{ker on homology OF to F 6}
\end{equation}  
contains a subgroup isomorphic to $\Z / 7.$ 
\end{example}

\begin{example} The elliptic curve $y^2 = x^3 + x$ over $\F_p,$ 
for $p \geq 3,$ is supersingular iff $p \equiv 3 \mod 4,$ \cite[pp. 143-144]{Si}. 
Again taking $p = 19,$ $l = 5,$ $n$ odd 
and $n \not\equiv 2 \mod 5$ we notice by (\ref{D(n) for supersingular})
that $5 \,\, || \,\, |D (n)_5|.$ So by 
Corollary \ref{cor3 ker on homology OF to F}  
the kernel of the following map:
\begin{equation}
H_{6} (GL(\mathcal{O}_{\F_{19}(E)}), \, \Z / 5) \,\, \rightarrow \,\,
H_{6} (GL(\F_{19}(E)), \, \Z / 5)
\label{ker on homology OF to F 6}
\end{equation}  
contains a subgroup isomorphic to $\Z / 5.$  
\end{example}


\section{The wild kernels and divisible elements}

\subsection{Wild kernels and the Moore exact sequence}
The following theorem is basically known however the results are scattered over a number of papers. 
Namely, surjectivity of the map (\ref{surjective 1}) is due to \cite{DF} and 
surjectivity of (\ref{surjective 2}) for number 
fields was proven in \cite{Ba2}. The splitting of the map (\ref{surjective 1}) in the number field case 
was settled in \cite{Ba2} and the canonical splitting of the map 
(\ref{surjective 1}) in global field case was settled in 
\cite{K}. The splitting of the map (\ref{surjective 2}) for the even K-groups of number field was 
proven in \cite{Ca}. For the record we make a very short proof of Theorem \ref{H2 of F} pointing 
out key ingredients.

\begin{theorem}\label{H2 of F}
For every $n \geq 1$ and
every finite set $S \supset S_{l}$  
the following natural maps are split surjective:
\begin{equation}
K_n ({\mathcal O}_{F, S})_l \rightarrow K_{n}^{et} ({\mathcal O}_{F, S})_l .
\label{surjective 1}
\end{equation} 
\begin{equation}
K_n (F)_l \rightarrow K_{n}^{et} (F)_l.  
\label{surjective 2}
\end{equation}
\end{theorem}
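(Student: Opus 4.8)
The plan is to prove the split surjectivity of the Dwyer-Friedlander maps $K_n(\mathcal{O}_{F,S})_l \to K_n^{et}(\mathcal{O}_{F,S})_l$ and $K_n(F)_l \to K_n^{et}(F)_l$ by assembling the ingredients the paper has flagged: surjectivity from \cite{DF} (for the $S$-integer map) and \cite{Ba2} (for $F$), and then constructing canonical splittings. Let me think about how the actual construction of the splitting should go, since that is the content beyond mere surjectivity.

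The key to splitting is that the étale K-groups in even degrees are (by the isomorphisms $K_{2n}^{et}(X)\cong H^2_{cont}(X,\Z_l(n+1))$ from \eqref{DF SpecSeqResult1}) built out of Galois cohomology groups that admit a direct description, and the odd étale K-groups in the relevant range are finite. The natural approach is to exploit that the target étale groups are finite and that the Dwyer-Friedlander map is compatible with the localization sequences. I should reduce to a statement about finitely generated $\Z_l$-modules: since $K_n(\mathcal{O}_{F,S})_l$ surjects onto a finite group $K_n^{et}(\mathcal{O}_{F,S})_l$, a splitting exists as soon as one produces a homomorphism in the reverse direction whose composite is the identity. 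The way this is engineered in \cite{Ba2}, \cite{Ca}, \cite{K} is to use the Bott element / compatibility with the Bockstein and the coefficient sequences, together with the fact that over the completions $F_v$ the étale and Quillen theories agree, to lift a system of generators. Concretely I would work with $\Z/l^k$ coefficients, use that $K_n(X,\Z/l^k)\cong K_n^{et}(X,\Z/l^k)$ already holds in high enough degrees (or is surjective), and then pass to the limit, controlling the $\varprojlim^1$ term via Theorem~\ref{lim 1 QL1}.

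\textbf{Sketch of proof.} First I would invoke \cite{DF} for the surjectivity of \eqref{surjective 1} and the argument of \cite[p.~289--290]{Ba2} (valid also in the function field case, using that $\mathrm{cd}_l\leq 2$ as recalled in section~2 and that $K_{2n}(\mathcal{O}_{F,S})$ is finite by \cite{Bo},\cite{Ha},\cite{Q2}) for the surjectivity of \eqref{surjective 2}. For the splitting, I would first treat even $n$, where the target $K_{2m}^{et}(\mathcal{O}_{F,S})_l\cong H^2(G_S,\Z_l(m+1))_l$ is finite; a splitting of a surjection of $\Z_l$-modules onto a finite group is produced by lifting a generating set and checking the lift respects relations, which reduces to the compatibility of the Dwyer-Friedlander map with the localization sequences (the commutative diagrams already displayed in section~4). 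For odd $n$ the source and target are isomorphic finite groups in the range where $K_{2m-1}(\mathcal{O}_F)_l\cong K_{2m-1}^{et}(\mathcal{O}_F[1/l])_l$, so surjectivity forces an isomorphism and splitting is automatic. The canonicity of the splitting follows the construction of \cite{K} (global field case for \eqref{surjective 1}) and \cite{Ca} (even K-groups for \eqref{surjective 2}), and I would indicate that the arguments transport verbatim to characteristic $p$ since the only inputs are the finiteness and cohomological-dimension facts established in section~2 and the comparison isomorphisms \eqref{compIso1}--\eqref{compIso3}.

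The main obstacle I expect is the \emph{canonicity and naturality} of the splitting rather than its bare existence: producing a splitting for a fixed $k$ is elementary, but to obtain a splitting of the $l$-adic groups compatibly with the two rows (for $\mathcal{O}_{F,S}$ and for $F$) and independent of auxiliary choices, one must control the inverse limit over $k$ and the associated $\varprojlim^1$ terms. Here I would lean on Theorem~\ref{lim 1 QL1}, specifically the vanishing \eqref{lim 1 QL3} of $\varprojlim_k^1 K_{2n}(F,\Z/l^k)$, which guarantees that the finite-level splittings assemble to an $l$-adic splitting without a lim-one obstruction. The function field case introduces no new difficulty beyond checking that the references \cite{Ba2}, \cite{Ca}, \cite{K} use only formal properties (localization sequences, finiteness, $\mathrm{cd}_l\leq 2$) that hold equally when $\mathrm{char}\,F>0$, which is exactly what the preceding sections have set up.
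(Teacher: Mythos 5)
Your proposal has a genuine gap: it never isolates the mechanism that actually produces the splittings, and the substitutes you offer in its place do not work. The paper's argument is: (i) by \cite[Theorem 8.5]{DF} and the Bockstein diagram, the map $K_{n}(X)[l^k] \rightarrow K_{n}^{et}(X)[l^k]$ is surjective for \emph{every} $k$ (where $X = \mathcal{O}_{F,S}$ or $F$), which says precisely that the kernel of $K_n(X)_l \rightarrow K_n^{et}(X)_l$ is a \emph{pure} subgroup; (ii) this kernel is finite --- trivially in the cases where $K_n(X)_l$ itself is finite, and for (\ref{surjective 2}) with $n$ even by the localization diagram in the proof of Lemma \ref{SchneiderConj.}; (iii) a bounded pure subgroup is a direct summand by Kaplansky's theorem \cite[Theorem 7]{Ka}. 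This is Caputo's method \cite{Ca}, and it is the entire content of the splitting assertion. Your replacement argument for the finite cases --- ``a splitting of a surjection of $\Z_l$-modules onto a finite group is produced by lifting a generating set and checking the lift respects relations'' --- is false as stated: a surjection of finite abelian $l$-groups need not split at all (consider $\Z/l^2 \rightarrow \Z/l$), so no amount of lifting generators can succeed without the purity input, which you never invoke.

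Two further problems. For $n$ odd you assume $K_n(\mathcal{O}_F)_l \cong K_n^{et}(\mathcal{O}_F[1/l])_l$ so that ``surjectivity forces an isomorphism''; that is the Quillen--Lichtenbaum conjecture in odd degrees, which this theorem is expressly proved without (Theorem \ref{H2 of F} feeds into Theorems \ref{QLequivalent} and \ref{wild maps on div via a split map 2}, where QL-type statements are the conclusion, not a hypothesis), so within the paper's architecture your argument is circular, and in any case the isomorphism is unjustified at this point. Finally, the difficulty you single out as the main obstacle --- assembling mod-$l^k$ splittings into an $l$-adic one using the vanishing of ${\varprojlim_{k}}^{1}\, K_{2n}(F,\Z/l^k)$ from Theorem \ref{lim 1 QL1} --- is a red herring: the paper never assembles splittings over $k$ (purity splits the map of $l$-torsion groups in one stroke), and your assembly is itself unjustified, since splittings at different levels need not be compatible and $\varprojlim^1$ of the K-groups does not control the inverse system of sets of splittings. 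Note also that for $n$ even the target $K_{2n}^{et}(F)_l$ of (\ref{surjective 2}) is infinite (by the Moore sequence (\ref{MooreEt F}) it maps onto the infinite direct sum $\bigoplus_{v} W^n(F_v)$ up to finite groups), so the hardest case is not covered by your ``finite target'' discussion; you defer it wholesale to \cite{Ca} and \cite{K} without stating what those arguments require --- finiteness and purity of the kernel --- or how these are verified here.
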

\begin{proof}
If $X$ denotes ${\mathcal O}_{F, S}$ or $F$ then by 
\cite{DF} Theorem 8.5
the left vertical arrow in the following commutative diagram is surjective.
\begin{equation}
\xymatrix{
K_{n+1} (X,\, \Z/l^k)   
\ar@<0.1ex>[d]^{} \ar[r]^{} & K_{n} (X) [l^k]
 \ar[r]^{} \ar@<0.1ex>[d]^{} & 0 \\
K_{n+1}^{et} (X,\, \Z/l^k)  \ar[r]^{} & K_{n}^{et} (X) [l^k] 
 \ar[r]^{} & 0}
\label{BocksteinDiag1}
\end{equation}
Hence the right vertical arrow is surjective so 
$K_{n} (X)_l \rightarrow K_{n}^{et} (X)_l$ is surjective 
cf. \cite[Theorem 1]{Ba2} . 
The surjectivity of the map (\ref{surjective 2}) follows also 
by surjectivity
of the maps (\ref{surjective 1}) for all finite $S$ upon taking the 
direct limit over $S,$ (cf. \cite{Ba2} the proof of Theorem 1).
Since the groups $K_{n} ({\mathcal O}_{F, S})_l,$ 
$K_{n}^{et} ({\mathcal O}_{F, S})_l,$ 
are finite for all $n > 0$, and the groups
$K_{n} (F)_l$ and $K_{n}^{et} (F)_l$
are finite for all $n$ odd then 
the splitting for the map (\ref{surjective 1}) for all $n > 0$ (resp. 
the splitting of the map (\ref{surjective 2}) for all $n$ odd)
follows from the investigation of the right vertical arrow of the 
diagram (\ref{BocksteinDiag1}), cf. the proof of \cite[Proposition 2]{Ba2} .
For the splitting of the map (\ref{surjective 2}) with $n$ even we use the 
method of Luca Caputo \cite{Ca}. Namely from the diagram 
(\ref{BocksteinDiag1}) we find out that the kernel of the map (\ref{surjective 2})
is a pure subgroup of $K_{2n} (F)_l$ and from the diagram of the proof of
Lemma \ref{SchneiderConj.} we get that this kernel is finite. Hence by 
\cite[Theorem 7]{Ka} the map (\ref{surjective 2}) is split surjective.
Actually the splitting of both maps (\ref{surjective 1}) and (\ref{surjective 2}) 
for all $n \geq 1$ follows by this method by use of \cite[Theorem 7]{Ka}. 
   
\end{proof}

The Wild kernels $K_{n}^{w} ({\mathcal O}_{F})_l$ and $WK_n (F)$ are defined as the kernels of the 
natural localization maps to make the following sequences exact:

\begin{equation}
0 \rightarrow K_{n}^{w} ({\mathcal O}_{F})_l \rightarrow
K_{n} (F)_l
\rightarrow \prod_{v} \, K_{n}^{et} (F_v)_l
\label{WildKer1}\end{equation}
\begin{equation}
0 \rightarrow WK_{n} (F) \rightarrow
K_{n} (F)
\rightarrow \prod_{v} \, K_{n} (F_v)
\label{WildKer2}\end{equation}

where the product is over all places of $F.$ 
\begin{remark}
By our general assumption in this paper,
we will consider the $2$-part of $WK_{n} (F)$ only for such $F,$ 
for which $\mu_{4} \subset F,$
although the definition of $WK_{n} (F)$ is for any global field.
\label{2 part of WKnF}\end{remark}

\begin{lemma} \label{arch infty not important for wild}
If $F$ is a number field then $WK_{n} (F)_l$ maps to zero via 
\begin{equation}
K_{n} (F) \rightarrow \prod_{v \in S_{\infty}} \, K_{n} (F_v)
\label{F to infty1}
\end{equation}
for every $l \geq 2.$
\end{lemma}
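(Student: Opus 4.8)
The plan is to reduce the statement to a computation of the local K-groups $K_n(F_v)$ at each archimedean place $v$, using the fact --- already recorded above --- that $WK_n(F)\subset K_n(\mathcal{O}_F)_{\mathrm{tor}}$, so that every class in $WK_n(F)_l$ is $l$-power torsion and comes from $K_n(\mathcal{O}_F)$. Since $F$ is a number field, $S_{\infty}$ is finite and the map into $\prod_{v\in S_{\infty}}K_n(F_v)$ is the product of its components; hence it suffices to show that for each $v\in S_{\infty}$ the component map $WK_n(F)_l\to K_n(F_v)_l$ vanishes. Thus I would fix one archimedean $v$ and analyse $K_n(F_v)_l$, where $F_v=\R$ or $\C$.

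First I would dispose of the real places. For $l=2$ there are none: the standing hypothesis $\mu_4\subset F$ forces $F$ to be totally imaginary, exactly as was exploited for the archimedean Tate--Poitou terms in section~3. For $l$ odd and $F_v=\R$, the group $G(\C/\R)$ has order $2$, prime to $l$, so the restriction--transfer composite is multiplication by $2$ on $K_n(\R)_l$ and hence an isomorphism; this identifies $K_n(\R)_l$ with a direct summand of $K_n(\C)_l$, and the composite $K_n(F)\to K_n(\R)\to K_n(\C)$ is the map induced by a complex embedding extending the real one. Consequently the vanishing at real places is reduced to the complex case, and the whole problem becomes that of showing that the image of an $l$-power torsion wild-kernel class in $K_n(\C)_l$ is zero.

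For the complex places I would invoke Suslin's computation of $K_{*}(\C)$: for $n$ even and positive $K_n(\C)$ is uniquely divisible, hence has no $l$-torsion, so the image of any torsion class is automatically $0$. This already settles the case of even K-groups, which is the one needed for the Moore sequences and for Theorem~\ref{Theorem 1.5}. The remaining, genuinely delicate point is $n$ odd, where $K_n(\C)_{\mathrm{tor}}\cong\Q/\Z$ is nonzero. Here I would use that a class $\alpha\in WK_n(F)_l$ vanishes at \emph{every} place, finite as well as infinite, together with Suslin rigidity, which describes the torsion of $K_{\mathrm{odd}}$ through roots of unity: the $l$-adic component of $\alpha$ at the complex place is pinned down, via compatibility of the Bockstein and divisible structures, by its components at the finite places, so the simultaneous vanishing at all finite $v$ forces the archimedean component to vanish as well. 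I expect this odd-degree complex case --- reconciling the divisible torsion of $K_{\mathrm{odd}}(\C)$ with the finite-place conditions --- to be the main obstacle, whereas the even-degree case and the reduction away from the real places are routine.
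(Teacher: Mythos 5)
Your reduction of the real places to the complex ones (transfer for $l$ odd, vacuity for $l=2$ since $\mu_4\subset F$) is fine, and your treatment of even $n$ via Suslin's theorem that $K_n(\C)$ is uniquely divisible for even $n>0$ is correct --- and in fact different from the paper's argument, which never invokes the computation of $K_*(\C)$ at all. But the proposal does not prove the lemma: for odd $n$ you only assert that ``the $l$-adic component of $\alpha$ at the complex place is pinned down \dots by its components at the finite places,'' and you yourself flag this as the main obstacle. As written there is no mechanism connecting $K_n(F_v)$ for finite $v$ with $K_n(\C)$: these are K-groups of different completions of $F$ with no map between them, so simultaneous vanishing at the finite places does not by itself say anything about the archimedean component. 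Since $K_n(\C)_{\mathrm{tor}}\cong\Q/\Z$ is nonzero in odd degrees, this is exactly the case where the lemma has content, and it is left unproved; note the lemma is stated (and later used) for all $n$, not just even $n$.

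The missing mechanism, which is the heart of the paper's proof, is a factorization of the archimedean localization map through a single finite place: fix $v^{\prime}\notin S_{\infty}$ and realize the henselization $F_{v^{\prime}}^h$ inside $\overline{F}\subset\C$, so that $K_n(F)\rightarrow K_n(\C)$ factors as $K_n(F)\rightarrow K_n(F_{v^{\prime}}^h)\rightarrow K_n(\C)$. By \cite{BZ2} the map $K_n(F_{v^{\prime}}^h)\rightarrow K_n(F_{v^{\prime}})$ is a monomorphism; a wild-kernel class dies in $K_n(F_{v^{\prime}})$, hence already in $K_n(F_{v^{\prime}}^h)$, hence in $K_n(\C)$ --- uniformly in $n$, with no parity distinction and no input about the structure of $K_*(\C)$. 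If you prefer to salvage your rigidity idea, it can be made precise along the same lines: factor through $K_n(\overline{F})$, choosing embeddings so that $\overline{F}\subset\C$ and $\overline{F}\subset\overline{F_{v^{\prime}}}$ are compatible with $F\subset F_{v^{\prime}}$, and use Suslin rigidity for extensions of algebraically closed fields to see that the (divisible) torsion of $K_n(\overline{F})$ injects into $K_n(\overline{F_{v^{\prime}}})$; vanishing of $\alpha$ in $K_n(F_{v^{\prime}})$ then kills its image in $K_n(\overline{F})$ and hence in $K_n(\C)$. Either way, the essential ingredient your proposal lacks is an intermediate field between $F$ and $\C$ whose K-theory is controlled by a finite place.
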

\begin{proof}
Choose an imbedding ${\overline F} \subset \C.$ Take any
$v^{\prime} \notin S_{\infty}.$

\noindent
If $l = 2$ then by assumption $\Q (i)\subset F$ so $F_v = \C$ for 
all $v \in S_{\infty}.$ Then
$F \subset F_{v^{\prime}}^h \subset {\overline F} \subset F_v = \C$ for every $v \in S_{\infty}.$ The map (\ref{F to infty1}) factors as follows: 

\begin{equation}
K_{n} (F) \rightarrow K_{n} (F_{v^{\prime}}^h) \rightarrow \prod_{v \in S_{\infty}} \, K_{n} (F_v). 
\label{F to infty2} \end{equation}
and since $K_{n} (F_{v^{\prime}}^h) \rightarrow K_{n} (F_{v^{\prime}})$ is a monomorphism
\cite{BZ2} then $WK_{n} (F)_{2}$ maps to zero via the left arrow of (\ref{F to infty2}). 

\noindent
If $l > 2$ then for any $v \in S_{\infty}$ such that $F_v = \R$ consider the imbedding 
$F_v \subset \C.$ The group $WK_{n} (F)_{l}$ maps to zero via the left map 
of composition of maps:
\begin{equation}
K_{n} (F) \rightarrow K_{n} (F_{v^{\prime}}^h) \rightarrow \prod_{v \in S_{\infty}} \, K_{n} (\C). 
\label{F to infty3} \end{equation}
The map $\prod_{v \in S_{\infty}} \, K_{n} (F_v) \rightarrow 
\prod_{v \in S_{\infty}} \, K_{n} (\C)$ is an imbedding on the $l$-torsion part
hence $WK_{n} (F)_{l}$ maps to zero via the left map of the composition 
\begin{equation}
K_{n} (F) \rightarrow  \prod_{v \in S_{\infty}} \, K_{n} (F_v) 
\rightarrow \prod_{v \in S_{\infty}} \, K_{n} (\C). 
\label{F to infty4} \end{equation}
\end{proof}

\begin{lemma} \label{arch infty not important for wild etale}
If $F$ is a number field then $K_{n}^{w} ({\mathcal O}_{F})_l$ maps to zero via 
\begin{equation}
K_{n} (F)_l \rightarrow \prod_{v \in S_{\infty}} \, K_{n}^{et} (F_v)_l
\label{F to infty1}
\end{equation}
for every $l \geq 2.$
\end{lemma}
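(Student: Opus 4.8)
The plan is to observe that, under the standing assumptions, the entire target $\prod_{v \in S_\infty} K_n^{et}(F_v)_l$ already vanishes, so that the asserted map is the zero map and there is nothing further to verify. This is exactly the point at which the {\' e}tale assertion is strictly easier than its Quillen counterpart, Lemma \ref{arch infty not important for wild}: there the groups $K_n(F_v)$ for $v \mid \infty$ can be nonzero on the $l$-torsion, which is what forced the factorization of the archimedean localization through a nonarchimedean henselization together with an injectivity statement; here, by contrast, the archimedean local {\' e}tale K-groups built from $l$-power coefficients are zero.

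First I would record that, $F$ being a number field, every $v \in S_\infty$ yields $F_v \cong \R$ or $F_v \cong \C$, and that in both admissible cases $\mathrm{cd}_l(F_v) = 0$. Indeed, if $F_v = \C$ then $G_{F_v}$ is trivial; and if $F_v = \R$ then the standing hypothesis $\mu_4 \subset F$ (imposed whenever $l = 2$) forces $l > 2$, so that $G_{F_v} = G(\C/\R) \cong \Z/2$ has order prime to $l$. In either case $H^i(G_{F_v}, M) = 0$ for every $i \geq 1$ and every $l$-primary $G_{F_v}$-module $M$, and consequently also $H^i(G_{F_v}, \Z_l(j)) = \varprojlim_k H^i(G_{F_v}, \Z/l^k(j)) = 0$ for all $i \geq 1$ and all $j$.

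Next I would feed this vanishing into the cohomological description of {\' e}tale K-theory. The groups $K_m^{et}(F_v)_l$ are governed by the {\' e}tale cohomology of $\mathrm{spec}\, F_v$, that is, by the continuous Galois cohomology of $G_{F_v}$ through the Dwyer-Friedlander spectral sequence together with the comparison isomorphism (\ref{compIso1}); crucially, the $l$-part relevant here is assembled from the positive-degree groups $H^1(G_{F_v}, -)$ and $H^2(G_{F_v}, -)$ (with $l$-power torsion, resp. $\Z_l$-twisted, coefficients) and not from $H^0$. By the previous step all of these vanish, whence $K_n^{et}(F_v)_l = 0$ for every $n \geq 1$ and every $v \in S_\infty$. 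Therefore $\prod_{v \in S_\infty} K_n^{et}(F_v)_l = 0$, and the displayed map — a fortiori its restriction to $K_n^{w}(\mathcal{O}_F)_l$ — is the zero map.

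The single point that deserves care, and the nearest thing to an obstacle, is the prime $l = 2$, where a priori the real {\' e}tale K-groups carry nontrivial ($KO$-type) $2$-torsion; this is precisely what the running hypothesis $\mu_4 \subset F$ neutralizes, since it guarantees that $F$ has no real places and returns every archimedean completion to the case $F_v = \C$ of cohomological dimension zero. I would make this reduction explicit at the outset, after which the vanishing argument applies uniformly for all $l \geq 2$ and all $n \geq 1$.
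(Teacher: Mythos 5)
Your proof is correct, but it takes a genuinely different route from the paper's. The paper mimics the proof of Lemma \ref{arch infty not important for wild}: it first establishes $K_n^{et}(F_{v'}^h) \cong K_n^{et}(F_{v'})$ for a nonarchimedean place $v'$ (henselization and completion have the same absolute Galois group, hence isomorphic Dwyer--Friedlander spectral sequences), and then factors the map to the archimedean completions through $K_n^{et}(F_{v'}^h)$, this isomorphism playing the role that the injectivity result of \cite{BZ2} played in the Quillen case; since the wild kernel dies in $K_n^{et}(F_{v'})_l$ by definition, it dies at infinity. You instead kill the target, concluding that all of $K_n(F)_l$ --- not merely $K_n^{w}(\mathcal{O}_F)_l$ --- maps to zero; this stronger fact is exactly what Remark \ref{earlier wild agree with the present wild} needs, and it is a shortcut unavailable in the setting of Lemma \ref{arch infty not important for wild}, where $K_n(\R)$ and $K_n(\C)$ carry abundant $l$-torsion (which is precisely why the paper uses the henselization detour there and then reuses it uniformly here). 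One step of yours should be made explicit rather than asserted: $K_{2m}^{et}(\C)$ itself is $\Z_l \neq 0$, coming from the $E_2^{0,2m}$-term, so it is not literally true that the archimedean \'etale K-groups vanish. What is true, and what your argument needs, is that the Dwyer--Friedlander filtration gives an exact sequence $0 \to H^2_{cont}(F_v,\Z_l(m+1)) \to K_{2m}^{et}(F_v) \to H^0_{cont}(F_v,\Z_l(m)) \to 0$ whose quotient is torsion-free, so the $l$-torsion subgroup $K_{2m}^{et}(F_v)_l$ equals the torsion of $H^2_{cont}(F_v,\Z_l(m+1))$, which vanishes by your $\mathrm{cd}_l(F_v)=0$ computation; equivalently, via \cite[Prop.~2.3]{Ta2} as the paper itself uses it, $K_{2m}^{et}(F_v)_l \cong H^1(F_v,\Q_l/\Z_l(m+1))/Div$ and $K_{2m+1}^{et}(F_v)_l \cong H^0(F_v,\Q_l/\Z_l(m+1))/Div$, both visibly zero at archimedean $v$ under the standing hypotheses. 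With that sentence added your argument is complete, shorter than the paper's, and makes transparent that the archimedean conditions in the definition of $K_n^{w}(\mathcal{O}_F)_l$ are vacuous.
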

\begin{proof} For every $v^{\prime} \notin S_{\infty}$ there is a natural
isomorphism $G_{F_{v^{\prime}}^h} \cong
G_{F_{v^{\prime}}}.$ Consequently there is the natural isomorphism
$H^{i} (F_{v^{\prime}}^h, \, \Z_{l} (j)) \cong H^{i} (F_{v^{\prime}}, \, \Z_{l} (j)).$
So the map $\text{spec} \, F_{v^{\prime}} 
\rightarrow \text{spec} \, F_{v^{\prime}}^h$ gives the 
isomorphism of Dwyer-Friedlander spectral sequences \cite[Prop. 5.1]{DF} which yields 
the isomorphism: 
\begin{equation}
K_{n}^{et} (F_{v^{\prime}}^h) \cong K_{n}^{et} (F_{v^{\prime}}).
\label{etale K of Fvh and Fv are the same}\end{equation}
Now the proof is very similar to the proof of Lemma 
\ref{arch infty not important for wild} applying 
(\ref{etale K of Fvh and Fv are the same}) in place of \cite{BZ1}.  
\end{proof}
\begin{remark}
Lemmas \ref{arch infty not important for wild} and 
\ref{arch infty not important for wild etale} show that the wild
kernels defined in this paper agree with the wild kernels defined in
\cite{Ba2} and \cite{BGKZ} in the number field case .
\label{earlier wild agree with the present wild}\end{remark}

\medskip

Observe that $WK_n(F) \subset K_n ({\mathcal O}_{F})$
for any global field $F$ and $n \geq 0$ cf. \cite{BGKZ}.
In the number field case it was proved in \cite{BGKZ}
that $WK_n(F)$ is torsion. In the function field case 
$K_n ({\mathcal O}_{F})$ is torsion for all $n > 1$ and it is clear that 
$WK_0 (F) = WK_1 (F) = 0.$
Hence for any global field $F$ and any $n \geq 0$ we get:
\begin{equation}
WK_n(F) \subset K_n ({\mathcal O}_{F})_{tor}
\label{WildKerContainedinTorofTameKer}
\end{equation}
Hence in particular if $F$ is a number field then 
the group $K_{2n}^{w} ({\mathcal O}_{F})_l$ has already been 
defined in \cite{Ba2} and  
the group $WK_n (F)$ has been defined in \cite{BGKZ}. 
\medskip

Consider the following commutative diagram.
$$\xymatrix{
0 \ar[r]^{}  & WK_{n} (F)_l   
\ar@<0.1ex>[d]^{} \ar[r]^{}  &  K_{n} (F)_l        
 \ar@<0.1ex>[d]^{=} \ar[r]^{} & \prod_{v} \, K_{n} (F_v)_l
\ar[r]^{} \ar@<0.1ex>[d]^{} &  \\
0 \ar[r]^{} & K_{n}^{w} ({\mathcal O}_{F})_l   \ar[r]^{}  & K_{n} (F)_l 
 \ar[r]^{} & \prod_{v} \, K_{n}^{et} (F_v)_l
 \ar[r]^{} & }
\label{WildKerComparison} $$
From this diagram we notice that for any $n > 0$ and any $l \geq 2$  we have:
 
\begin{equation}
WK_n (F)_l \subset K_{n}^{w} ({\mathcal O}_{F})_l \subset K_{n} ({\mathcal O}_{F})_l.
\end{equation}  
Hence by \cite{Ta2}, Proposition 2.3 p. 261 we observe that
\begin{equation}
K_{2n}^{et} (F_v)_l  \cong H^1 (F_v, \Q_l / \Z_l (n+1)) / Div \cong W^n (F_v),
\end{equation}
\begin{equation}
K_{2n+1}^{et} (F_v)_l  \cong H^0 (F_v, \Q_l / \Z_l (n+1)) / Div \cong W^{n+1} (F_v),
\end{equation}
Hence the group $K_{n}^{et} (F_v)_l$ is finite for any $v,$ any $n \geq 1$ and $l \geq 2.$  
This shows that 
\begin{equation}
div \, K_{n} (F)_l \,  \subset  \, K_{n}^{w} ({\mathcal O}_{F})_l \, 
\subset \, K_{n} ({\mathcal O}_{F})_l.
\end{equation}  
Applying Quillen localization sequences for rings ${\mathcal O}_{F,S}$
and ${\mathcal O}_{v}$ it is also important to notice, that for any finite 
set $S \supset S_{l}$
there are the following exact sequences:
\begin{equation}
0 \rightarrow WK_{n} (F)_l \rightarrow K_{n} ({\mathcal O}_{F,S})_l 
\rightarrow \prod_{v \in S} \, K_{n} ({\mathcal O}_v)_l
\label{WildKerViaIntegral1}
\end{equation}

\begin{equation}
0 \rightarrow  K_{n}^{w} ({\mathcal O}_{F})_l   \rightarrow K_{n} ({\mathcal O}_{F,S})_l 
\rightarrow \prod_{v \in S} \, K_{n}^{et} ({\mathcal O}_v)_l
\label{WildKerViaIntegral2}
\end{equation}

The following theorem gives the analog of the classical 
Moore exact sequence for higher K-groups of global fields.

\begin{theorem} \label{Moore exact sequence}
For every $n \geq 1$ and every finite set 
$S \supset S_{\infty, l}$ there are the following exact sequences:
\begin{equation}
\small{0 \rightarrow K_{2n}^{w} ({\mathcal O}_{F})_l \rightarrow
K_{2n} ({\mathcal O}_{F,S})_l
\rightarrow \bigoplus_{v \in S} W^n (F_v)
\rightarrow W^{n} (F) \rightarrow
0.}
\label{Moore exact sequence1}\end{equation}
\begin{equation}
\small{0 \rightarrow K_{2n}^{w} ({\mathcal O}_{F})_l \rightarrow
K_{2n} (F)_l
\rightarrow \bigoplus_{v} W^n (F_v)
\rightarrow W^{n} (F) \rightarrow
0.}
\label{Moore exact sequence2}\end{equation}
In particular:
\begin{equation}
\small{\frac{|K_{2n} (\mathcal{O}_{F, S})_l|}{|K_{2n}^{w} (\mathcal{O}_{F})_l|} 
= {\bigl|}\frac{\prod_{v \in S} w_{n} (F_v)}{w_{n} (F)}{\bigr|}_{l}^{-1}.}
\label{Kth OS comp to Wild}\end{equation}
\end{theorem}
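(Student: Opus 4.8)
The plan is to transport the two étale Moore sequences of Theorem \ref{MooreEt} to ordinary $K$-theory along the Dwyer--Friedlander maps, which are surjective by Theorem \ref{H2 of F}, and then to identify the kernel that appears as the wild kernel. For the $S$-integral statement I would form the commutative ladder whose bottom row is the exact sequence (\ref{MooreEt OS}), whose second vertical map is the Dwyer--Friedlander surjection $K_{2n}(\mathcal{O}_{F,S})_l \to K_{2n}^{et}(\mathcal{O}_{F,S})$, and whose two rightmost vertical maps are the identity on $\bigoplus_{v \in S} W^n(F_v)$ and on $W^n(F)$. The top horizontal map out of $K_{2n}(\mathcal{O}_{F,S})_l$ is taken to be the localization into $\bigoplus_{v \in S} K_{2n}(F_v)_l$ followed by the local Dwyer--Friedlander maps and the identification $K_{2n}^{et}(F_v)_l \cong W^n(F_v)$ recorded just before (\ref{WildKerViaIntegral1}); by naturality of the Dwyer--Friedlander construction this composite equals the étale Moore map precomposed with $K_{2n}(\mathcal{O}_{F,S})_l \to K_{2n}^{et}(\mathcal{O}_{F,S})$, so the square involving these maps commutes. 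The field version (\ref{Moore exact sequence2}) is handled in the same way using (\ref{MooreEt F}) and $K_{2n}(F)_l \to K_{2n}^{et}(F)_l$.

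Granting commutativity, exactness at the two right-hand terms comes for free: since the middle vertical map is surjective, the image of the top horizontal map equals the image of the bottom one, and the bottom sequence is exact by Theorem \ref{MooreEt}; this gives exactness at $\bigoplus_{v \in S} W^n(F_v)$ together with surjectivity onto $W^n(F)$. For the field version (\ref{Moore exact sequence2}) the kernel identification is then essentially the definition: the top map is precisely the localization $K_{2n}(F)_l \to \prod_v K_{2n}^{et}(F_v)_l$ defining $K_{2n}^w(\mathcal{O}_F)_l$ in (\ref{WildKer1}), its image lying in the direct sum because any fixed class is $v$-integral for all but finitely many $v$. Hence $K_{2n}^w(\mathcal{O}_F)_l$ is exactly the kernel, and (\ref{Moore exact sequence2}) is established.

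The step I expect to need the most care is the kernel identification for the $S$-integral sequence (\ref{Moore exact sequence1}), where the top map only records the places in $S$ while $K_{2n}^w(\mathcal{O}_F)_l$ is cut out using all places. Here I would show that for $v \notin S$ the $v$-component of the localization of any $x \in K_{2n}(\mathcal{O}_{F,S})_l$ vanishes automatically: such an $x$ is integral at $v$, so by naturality its image in $K_{2n}^{et}(F_v)_l$ factors through $K_{2n}^{et}(\mathcal{O}_v)_l$, and the comparison (\ref{compIso2}) together with (\ref{DF SpecSeqResult1}) identifies the latter with $H^2(G_v^{nr}, \Z_l(n+1))$, which is zero since $\text{cd}_l(G_v^{nr}) = \text{cd}_l(\hat{\Z}) = 1$. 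Thus the $S$-truncated localization and the full localization have the same kernel on $K_{2n}(\mathcal{O}_{F,S})_l$, namely $K_{2n}^w(\mathcal{O}_F)_l$ (which is contained in $K_{2n}(\mathcal{O}_F)_l \subset K_{2n}(\mathcal{O}_{F,S})_l$), giving (\ref{Moore exact sequence1}). Finally the index formula (\ref{Kth OS comp to Wild}) drops out: every term of (\ref{Moore exact sequence1}) is finite (the $K$-groups by Borel, Harder and Quillen), so the alternating product of orders is $1$, and substituting $|W^n(F_v)| = |w_n(F_v)|_l^{-1}$ and $|W^n(F)| = |w_n(F)|_l^{-1}$ yields the asserted ratio.
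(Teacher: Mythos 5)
Your proposal is correct and follows essentially the same route as the paper: the paper's own proof of this theorem consists of citing Theorems \ref{MooreEt} and \ref{H2 of F}, i.e., transporting the \'etale Moore sequences along the surjective Dwyer--Friedlander maps, which is precisely your ladder argument. The details you supply --- naturality of the ladder, the identification of the kernel with $K_{2n}^{w}(\mathcal{O}_{F})_l$ via the vanishing of $K_{2n}^{et}(\mathcal{O}_v)_l$ for $v \notin S$, and the finiteness argument yielding (\ref{Kth OS comp to Wild}) --- are exactly what the paper leaves implicit.
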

\begin{proof}
It results from Theorems \ref{MooreEt} and \ref{H2 of F}. The equality
(\ref{Kth OS comp to Wild}) follows from (\ref{Moore exact sequence1})
since all terms in this exact sequence are finite.
\end{proof} 

\begin{lemma} \label{div as ker}
For every $n \geq 1$ and every $l \geq 2$ there is the following exact sequence
\begin{equation}
0 \rightarrow div \, K_{n}^{et}(F)_l    \rightarrow   K_{n}^{et} (F)_l 
 \rightarrow  \prod_{v} \, K_{n}^{et} (F_v)_l 
\label{div as ker sequence}\end{equation}
\end{lemma}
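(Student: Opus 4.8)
The plan is to prove the statement by distinguishing the parity of $n$, since the group $div\, K_n^{et}(F)_l$ and the localization map behave quite differently in the even and odd cases. Throughout I will invoke the identifications $K_{2n}^{et}(F_v)_l \cong W^n(F_v)$ and $K_{2n+1}^{et}(F_v)_l \cong W^{n+1}(F_v)$ recorded just before Theorem \ref{Moore exact sequence}; in particular every local group $K_n^{et}(F_v)_l$ is finite, and at an archimedean place it vanishes, so such places make no contribution to the localization map.

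For even index, write $n = 2m$ with $m \geq 1$. Here the bulk of the work is already contained in Theorem \ref{MooreEt}: substituting the identification $K_{2m}^{et}(F_v)_l \cong W^m(F_v)$ into the exact sequence (\ref{MooreEt F}) displays $D^{et}(m) = div\, K_{2m}^{et}(F)_l$ as the kernel of the localization map $K_{2m}^{et}(F)_l \to \bigoplus_v K_{2m}^{et}(F_v)_l$. Because the target of (\ref{MooreEt F}) is a direct sum, the image of $K_{2m}^{et}(F)_l$ has finite support, so the kernel of the composite into the full product $\prod_v K_{2m}^{et}(F_v)_l$ agrees with the kernel into the direct sum. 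This is precisely the asserted exactness of (\ref{div as ker sequence}) at $K_{2m}^{et}(F)_l$, while exactness at $div\, K_{2m}^{et}(F)_l$ is the tautological injectivity of the inclusion.

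For odd index, write $n = 2m+1$ with $m \geq 0$. As recorded in the proof of Theorem \ref{lim 1 QL1}, the group $K_{2m+1}^{et}(F)_l$ is finite; via the Bockstein map it is isomorphic to the torsion of $H^1(G_F, \Z_l(m+1))$, namely $W^{m+1}(F) = H^0(G_F, \Q_l/\Z_l(m+1))$. A finite group has no nonzero divisible elements, so $div\, K_{2m+1}^{et}(F)_l = 0$ and the claim reduces to injectivity of the localization map. Since the Bockstein map commutes with restriction, under the above isomorphisms the localization map at a single place $v$ becomes the restriction $H^0(G_F, \Q_l/\Z_l(m+1)) \to H^0(G_{F_v}, \Q_l/\Z_l(m+1))$, i.e. the inclusion of $G_F$-invariants into $G_{F_v}$-invariants, which is injective. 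Hence the kernel of the full localization map is $0 = div\, K_{2m+1}^{et}(F)_l$, giving (\ref{div as ker sequence}) in this case.

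I expect the argument to be essentially bookkeeping on top of Theorem \ref{MooreEt}, with only two points demanding care: verifying that the second arrow of the four-term sequence (\ref{MooreEt F}) genuinely coincides with the \'etale localization map of (\ref{div as ker sequence}), so that its kernel is the true localization kernel, and justifying the replacement of $\bigoplus_v$ by $\prod_v$ through the finite-support property visible in (\ref{MooreEt F}). Neither of these is a serious obstacle.
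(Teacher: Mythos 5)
Your proposal is correct and takes essentially the same route as the paper: the paper likewise splits by the parity of $n$ (writing $n = 2i-j$, $j = 1,2$), settles the odd case by observing that the group is $H^0(G_F,\Q_l/\Z_l(i))/Div$ (finite, hence with trivial $div$) and that the $H^0$-restriction maps are trivially injective, and settles the even case by citing Theorem \ref{functionfieldSchneider3} after applying the Dwyer--Friedlander and Tate identifications --- which is exactly the content of Theorem \ref{MooreEt} that you invoke. The only difference is cosmetic: you cite the already-packaged K-theoretic statement (Theorem \ref{MooreEt}) where the paper cites the underlying Galois-cohomological sequence, and you spell out the direct-sum versus product point that the paper leaves implicit.
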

\begin{proof}
Put $n = 2i - j$ for $j = 1, 2.$ 
Hence by (\ref{DF SpecSeqResult1}), by \cite[Theorem 3.2]{Ja}  and by 
\cite[Proposition 2.3 p. 261]{Ta2},  the exact sequence has the following form:
\begin{equation}
\small{0 \rightarrow div \, K_{2i - j}(F)_l    \rightarrow   
H^{j-1} (G_F, \Q_l/ \Z_l (i)) / Div  \rightarrow  \prod_{v} \,  
H^{j-1} (G_{F_v}, \Q_l/\Z_l (i)) / Div.} 
\label{div as ker sequence 1}
\end{equation}
Let $j = 1.$ Then the map $H^{0} (G_F, \Q_l/ \Z_l (i)) / Div 
\rightarrow  H^{0} (G_{F_v}, \Q_l/\Z_l (i)) / Div$ is trivially injective for each $v$
and  $div \, K_{2i - 1}(F) = 0$ so (\ref{div as ker sequence 1}) is exact in this case.
For $j = 2$ the exactness of (\ref{div as ker sequence 1}) is the result of Theorem
\ref{functionfieldSchneider3}.
\end{proof}

Consider the following commutative diagrams:
$$\xymatrix{
0 \ar[r]^{}  & K_{n}^{w} ({\mathcal O}_{F})_l   
\ar@<0.1ex>[d]^{} \ar[r]^{}  &  K_{n} (F)_l        
 \ar@<0.1ex>[d]^{} \ar[r]^{} & \prod_{v} \, K_{n}^{et} (F_v)_l
 \ar@<0.1ex>[d]^{=} &  \\
0 \ar[r]^{} & div \, K_{n}^{et} (F)_l    \ar[r]^{}  & K_{n}^{et} (F)_l 
 \ar[r]^{} & \prod_{v} \, K_{n}^{et} (F_v)_l &}
\label{WildKer onto div}$$

$$\xymatrix{
0 \ar[r]^{}  & K_{n}^{w} ({\mathcal O}_{F})_l   
\ar@<0.1ex>[d]^{} \ar[r]^{}  &  K_{n} ({\mathcal O}_{F,S})_l        
 \ar@<0.1ex>[d]^{} \ar[r]^{} & \prod_{v} \, K_{n}^{et} (F_v)_l
 \ar@<0.1ex>[d]^{=} &  \\
0 \ar[r]^{} & div \, K_{n}^{et} (F)_l    \ar[r]^{}  & K_{n}^{et} ({\mathcal O}_{F,S})_l 
 \ar[r]^{} & \prod_{v} \, K_{n}^{et} (F_v)_l &}
\label{WildKer onto div OS}$$
The left vertical arrows in both diagrams are identical.

\begin{theorem} \label{wild maps on div via a split map 1}
The left vertical arrows in the diagrams above are split surjective.
The middle vertical arrows induce canonical isomorphisms for all $n > 1:$
\begin{equation}
K_{n}(\mathcal{O}_{F,S})_l / K_{n}^{w}(\mathcal{O}_{F})_l 
\,\, \stackrel{\cong}{\longrightarrow} \,\,
K_{n}^{et}(\mathcal{O}_{F,S})_l / div K_{n}^{et} (F)_l  
\label{QL holds beyond wild kernel formula1}
\end{equation}
\begin{equation}
K_{n}(F)_l / K_{n}^{w}(\mathcal{O}_{F})_l 
\,\, \stackrel{\cong}{\longrightarrow} \,\,
K_{n}^{et}(F)_l / div K_{n}^{et} (F)_l  
\label{QL holds beyond wild kernel formula2}
\end{equation}
\end{theorem}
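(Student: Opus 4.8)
The plan is to deduce both assertions from the two commutative diagrams displayed just above, whose rows are exact. The top rows are the defining sequences of the wild kernel, i.e.\ (\ref{WildKer1}) read off $K_n(F)_l$ and off $K_n(\mathcal{O}_{F,S})_l$; the bottom rows have kernel exactly $div\, K_n^{et}(F)_l$, by Lemma \ref{div as ker} in the $F$-case and by the \'{e}tale Moore sequence of Theorem \ref{MooreEt} in the $\mathcal{O}_{F,S}$-case. The right-hand vertical maps are the identity of $\prod_v K_n^{et}(F_v)_l$, and the middle vertical maps $\phi$ are the Dwyer--Friedlander maps, which are split surjective by Theorem \ref{H2 of F}. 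Everything then follows from a formal chase.

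First I would check that the common left vertical arrow $\lambda\colon K_n^{w}(\mathcal{O}_F)_l \to div\, K_n^{et}(F)_l$ is well defined: any $y \in K_n^{w}(\mathcal{O}_F)_l$ dies in $\prod_v K_n^{et}(F_v)_l$, so by commutativity of the right square $\phi(y)$ does too, whence $\phi(y)$ lies in the kernel $div\, K_n^{et}(F)_l$ of the bottom row. For split surjectivity, fix a section $s$ of $\phi$ from Theorem \ref{H2 of F}. Given $x \in div\, K_n^{et}(F)_l$, the image of $s(x)$ in $\prod_v K_n^{et}(F_v)_l$ equals, by commutativity of the right square, the image of $\phi(s(x)) = x$, which is $0$ since $x$ lies in the kernel of the bottom row; hence $s(x) \in K_n^{w}(\mathcal{O}_F)_l$ and $\lambda(s(x)) = x$. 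Thus $s$ restricts to a section of $\lambda$. The identical computation in the $\mathcal{O}_{F,S}$-diagram, using the section of (\ref{surjective 1}), yields a section of the same map $\lambda$.

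For the isomorphisms (\ref{QL holds beyond wild kernel formula1}) and (\ref{QL holds beyond wild kernel formula2}), well-definedness of $\lambda$ says exactly that $\phi$ carries $K_n^{w}(\mathcal{O}_F)_l$ into $div\, K_n^{et}(F)_l$, so $\phi$ descends to the quotients. Surjectivity on quotients is immediate from surjectivity of $\phi$. For injectivity I would argue: if $y$ has $\phi(y) \in div\, K_n^{et}(F)_l$, then commutativity of the right square forces $y$ to map to $0$ in $\prod_v K_n^{et}(F_v)_l$, so $y \in K_n^{w}(\mathcal{O}_F)_l$ and its class is trivial. Equivalently, both quotients embed into $\prod_v K_n^{et}(F_v)_l$ with the same image, the equality of images being a consequence of the surjectivity of $\phi$ and of the right vertical arrow being the identity.

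The genuine inputs are all imported from earlier: the splittings of Theorem \ref{H2 of F} and the identification of the bottom-row kernels with $div\, K_n^{et}(F)_l$ from Lemma \ref{div as ker} and Theorem \ref{MooreEt}. Granting these, the argument is purely diagrammatic, so I do not expect a serious obstacle. The only points deserving a line of care are the odd-index range, where $div\, K_n^{et}(F)_l = 0$ and the statements degenerate while the chase is unchanged, and the verification that the bottom row of the $\mathcal{O}_{F,S}$-diagram is exact with kernel $div\, K_n^{et}(F)_l$, which is precisely where Theorem \ref{MooreEt}, rather than Lemma \ref{div as ker}, is invoked.
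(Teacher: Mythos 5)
Your proof is correct and takes essentially the same route as the paper: the paper's own proof deduces both claims by exactly this diagram chase, citing the split surjectivity of the middle Dwyer--Friedlander arrows from Theorem \ref{H2 of F} together with the exactness of the displayed rows (Lemma \ref{div as ker} and Theorem \ref{MooreEt} for the bottom rows). You have merely written out the formal chase --- well-definedness of the left arrow, restriction of the section, and injectivity/surjectivity on the quotients --- that the paper leaves implicit.
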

\begin{proof}
It follows since the middle vertical arrows in the diagrams above 
are split surjective by Theorem \ref{H2 of F}. 
\end{proof} 
\medskip

\subsection{Divisible elements, wild kernels and Quillen-Lichtenbaum conjecture}
We keep working with global fields as stated in the introduction. 
\begin{conjecture} (Quillen-Lichtenbaum) Let $F$ be a global 
field and let $l \geq 2.$ Assume that $\mu_{4} \subset F$ if $l = 2.$ 
Then for all $n > 1$ and $l \not= \, \text{char} \, F$ 
the natural map:
\begin{equation}
K_n (\mathcal{O}_{F}) \otimes \Z_l 
\rightarrow K_{n}^{et} (\mathcal{O}_{F}[\frac{1}{l}])
\end{equation}
is an isomorphism.
\end{conjecture}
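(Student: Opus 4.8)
The plan is to deduce the isomorphism from the Beilinson--Lichtenbaum theorem (equivalently, the norm residue isomorphism theorem of Voevodsky, Rost and the others listed in the introduction), by comparing the motivic and {\' e}tale descent spectral sequences. Throughout I work with $X = \text{spec}\, {\mathcal O}_F[1/l],$ which has $\text{cd}_l (X) \le 2$ by the discussion in Section 2. After $\otimes\, \Z_l$ the left-hand side is the $l$-adic completion of $K_n ({\mathcal O}_F),$ and the localization sequences relating ${\mathcal O}_F$ and ${\mathcal O}_F[1/l]$ (with error terms the residue-field groups $K_{n-1} (k_v)_l$ for $v \mid l$) are compatible on the two sides, so it suffices to prove the comparison for $X.$ I also note that by Theorem \ref{H2 of F} the comparison map is already known to be split surjective, so the real content is injectivity.

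First I would reduce to finite coefficients. Since $K_n ({\mathcal O}_F)$ is finitely generated by Quillen and the {\' e}tale groups are assembled from continuous cohomology, the standard $\varprojlim$--$\varprojlim^1$ argument (the relevant finiteness is already recorded in the proof of Theorem \ref{H2 of F}) reduces the claim to showing that
\begin{equation}
K_n (X, \, \Z/l^k) \longrightarrow K_n^{et} (X, \, \Z/l^k)
\nonumber\end{equation}
is an isomorphism for every $k \ge 1$ and every $n > 1.$ Next I would invoke the motivic (Friedlander--Suslin--Levine) spectral sequence $E_2^{p,q} = H_M^{p-q}(X, \Z/l^k(-q)) \Rightarrow K_{-p-q}(X, \Z/l^k)$ together with its {\' e}tale analogue, whose $E_2$-page is $H_{et}^{p-q}(X, \Z/l^k(-q))$ and which converges to $K_{-p-q}^{et}(X, \Z/l^k);$ the change-of-topology map induces a morphism between them.

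The decisive input is Beilinson--Lichtenbaum: the comparison $H_M^i(X, \Z/l^k(j)) \to H_{et}^i(X, \Z/l^k(j))$ is an isomorphism for $i \le j$ and a monomorphism for $i = j+1.$ In the indexing above, a term of weight $j$ contributes to $K_{2j-i},$ so for fixed $n$ a contributing term has cohomological degree $i = 2j - n.$ The terms with $i \le j$ contribute isomorphically; all remaining terms have $i \ge n+2 \ge 3,$ where the {\' e}tale groups vanish because $\text{cd}_l(X) \le 2,$ and the corresponding motivic groups vanish as well, using the monomorphism at the boundary $i = j+1$ together with the vanishing of the motivic cohomology of the one-dimensional scheme $X$ in cohomological degrees above the weight. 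Thus both spectral sequences are concentrated in the band $0 \le i \le 2,$ they agree there term by term, no differentials or extension problems distinguish the abutments in total degree $n > 1,$ and the comparison map is an isomorphism. Passing to the limit over $k$ recovers the $\Z_l$-coefficient statement.

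The hard part is concentrated entirely in the boundary and high-degree terms. Establishing that the mod-$l^k$ motivic cohomology of ${\mathcal O}_F[1/l]$ vanishes in degrees exceeding the weight, so that the ``bad range'' of the spectral sequence is genuinely empty on both sides, and justifying the collapse of both spectral sequences into the narrow band $0 \le i \le 2,$ is precisely where the full strength of the norm residue theorem is needed. This is the single deep ingredient; once it is granted, the comparison of abutments for $n > 1$ is formal.
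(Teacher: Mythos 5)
Your proposal is correct in substance but follows a genuinely different route from the paper's. The paper never runs a motivic spectral sequence over $\text{spec}\, \mathcal{O}_F[1/l]$: it first proves (Theorem \ref{QLequivalent}) that the conjecture is equivalent to the field-level statement $K_n (F, \Z/l^k) \cong K_n^{et} (F, \Z/l^k)$ for all $n > 1$ and $k \geq 1$ --- an equivalence resting on finite generation of $K_n(\mathcal{O}_F)$, comparison of Bockstein and localization sequences, continuous K-theory, and the $\lim^1$ results of Theorem \ref{lim 1 QL1} --- and then (Theorem \ref{Quillen-Lichtenbaum conj. equivalent conditions} and Remark \ref{proof of QL conjecture}) verifies the field-level statement by comparing the motivic spectral sequence for the \emph{field} $F$ of \cite{BL}, cf. \cite[Appendix B]{RW}, with the Dwyer--Friedlander spectral sequence \cite{DF}, using Voevodsky's theorems \cite{V1}, \cite{V2}, which are stated for fields. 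You instead compare the two spectral sequences directly over the scheme $X = \text{spec}\, \mathcal{O}_F[1/l]$. This is legitimate and in some ways cleaner (no transfer between $F$ and $\mathcal{O}_F$ at the end, beyond your initial reduction), but it is more expensive in inputs: for a mixed-characteristic Dedekind scheme the motivic spectral sequence you invoke is due to Levine, and the Beilinson--Lichtenbaum property for such $X$ (isomorphism for $i \leq j$, monomorphism for $i = j+1$) is Geisser's theorem on motivic cohomology over Dedekind rings; neither is among the field-level results the paper relies on, and neither is in its bibliography. In the function field case this discrepancy of inputs disappears, since there $\mathcal{O}_F$ is a smooth affine curve over $\F_p$ and the Friedlander--Suslin machinery applies directly.

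One point in your argument needs correcting, although it does not sink the proof: for the scheme $X$, unlike for a field, mod-$l^k$ motivic cohomology does \emph{not} vanish in all degrees above the weight. For example $H^{2}_{\mathcal{M}} (X, \Z/l^k(1))$ contains $Pic(X)/l^k$, which is nonzero in general, and here $i = 2 > j = 1$. What Beilinson--Lichtenbaum for Dedekind schemes plus the Zariski cohomological dimension one of $X$ actually gives is vanishing in degrees $i > j+1$; the boundary degree $i = j+1$ must be killed exactly as you kill it, by the monomorphism into \'etale cohomology combined with $\text{cd}_l (X) \leq 2$. So your handling of the critical boundary term is right, and the terms where you apply the vanishing (namely $i \geq n+4$, hence $i > j+1$) do vanish, but the blanket justification ``vanishing above the weight'' should be replaced by ``vanishing above weight plus one''; this is precisely the distinction between the field case and the scheme case that your route, as opposed to the paper's, is forced to respect.
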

\medskip

It has been known for many years that the Quillen-Lichtenbaum conjecture can be reformulated in several ways.  
Theorem \ref{QLequivalent} below presents some of the ways to refermulate this conjecture. 

\begin{theorem}\label{QLequivalent}
The following conditions are equivalent:
\begin{enumerate} 
\item{} $K_{n} (\mathcal{O}_{F}) \otimes \Z_l \stackrel{\cong}{\longrightarrow} 
K_{n}^{et} (\mathcal{O}_{F}[\frac{1}{l}])$ for all $n > 1,$
\item{} $K_{n} (F)_l \stackrel{\cong}{\longrightarrow} 
K_{n}^{et} (F)_l$ for all $n > 1,$
\item{}
$K_{n} (\mathcal{O}_{F}, \Z / l^k) \stackrel{\cong}{\longrightarrow}
K_{n}^{et} (\mathcal{O}_{F} [\frac{1}{l}], \Z / l^k)$
for all $k > 0$ and $n > 1.$
\item{}
$K_{n} (F, \Z / l^k) \stackrel{\cong}{\longrightarrow}
K_{n}^{et} (F, \Z / l^k)$
for all $k > 0$ and all $n > 1.$
\item{}
${\varprojlim_{k}} \, K_{n} (F, \Z / l^k) \stackrel{\cong}{\longrightarrow}
{\varprojlim_{k}} \, K_{n}^{et} (F, \Z / l^k)$
for all $n > 1.$
\item{}
$K_{n}^{cts} (F, \Z_l) \stackrel{\cong}{\longrightarrow}
K_{n}^{et} (F)$
for all $n > 1.$
\item{} $K_{n}^{w}(\mathcal{O}_{F})_l = div K_n (F)_l$ for all $n > 1.$  
\end{enumerate} 
\end{theorem}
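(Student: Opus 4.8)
The plan is to prove the equivalences in two stages: first the chain $(1)\Leftrightarrow(2)\Leftrightarrow(3)\Leftrightarrow(4)\Leftrightarrow(5)\Leftrightarrow(6)$, which are the classical reformulations of Quillen--Lichtenbaum, and then the genuinely new equivalence $(2)\Leftrightarrow(7)$ relating the conjecture to the coincidence of the wild kernel with the divisible elements. All the substantial input is already available: the split surjectivity of the Dwyer--Friedlander maps (Theorem~\ref{H2 of F}), the quotient isomorphisms (\ref{QL holds beyond wild kernel formula1})--(\ref{QL holds beyond wild kernel formula2}) together with the split surjectivity of $K_n^w(\mathcal O_F)_l\to div\,K_n^{et}(F)_l$ (Theorem~\ref{wild maps on div via a split map 1}), the identification (\ref{div = divetale}) of $div\,K_n(F)_l$ with $div\,K_n^{et}(F)_l$ (Theorem~\ref{Quillen and etale obstr are eq}), and the $\varprojlim^1$ comparison (Theorem~\ref{lim 1 QL1}).

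For the classical chain I would argue as follows. The equivalence $(3)\Leftrightarrow(4)$ follows by comparing the Quillen localization sequence for $\mathcal O_F$ with the \'etale localization sequence for $\mathcal O_F[1/l]$, using that the Dwyer--Friedlander map is an isomorphism on the residue-field terms $K_{2n}(k_v,\Z/l^k)\cong K_{2n}^{et}(k_v,\Z/l^k)$ and that $K_{2n-1}(\mathcal O_F)\cong K_{2n-1}(F)$ (with its \'etale analogue); since the conditions are asked for all $n>1$ at once, the even and odd degrees linked by the boundary map cause no trouble. Passing between the mod $l^k$ statements and the integral ones, i.e. $(1)\Leftrightarrow(3)$ and $(2)\Leftrightarrow(4)$, is done with the Bockstein sequences and the surjectivity of the transition maps $K_n(F)/l^{k+1}\to K_n(F)/l^k$, letting $k\to\infty$. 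Finally the equivalences $(4)\Leftrightarrow(5)\Leftrightarrow(6)$ are obtained by passing to $\varprojlim_k$ and invoking the Milnor $\varprojlim$--$\varprojlim^1$ sequence computing $K_n^{cts}(F,\Z_l)$, the requisite control of the $\varprojlim^1$ terms being furnished by Theorem~\ref{lim 1 QL1}.

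The heart of the proof is $(2)\Leftrightarrow(7)$, which I would establish pointwise in $n$. Write $\varphi\colon K_n(F)_l\to K_n^{et}(F)_l$ for the Dwyer--Friedlander map; it is surjective by Theorem~\ref{H2 of F}. Comparing the defining exact sequence of $K_n^w(\mathcal O_F)_l$ with that of $div\,K_n^{et}(F)_l$ (the first diagram preceding Theorem~\ref{wild maps on div via a split map 1}, whose right vertical map is the identity) shows that $K_n^w(\mathcal O_F)_l=\varphi^{-1}(div\,K_n^{et}(F)_l)$; in particular $\ker\varphi\subseteq K_n^w(\mathcal O_F)_l$. Theorem~\ref{wild maps on div via a split map 1} gives that $\varphi$ carries $K_n^w(\mathcal O_F)_l$ onto $div\,K_n^{et}(F)_l$, while by (\ref{div = divetale}) the restriction of $\varphi$ to the subgroup $div\,K_n(F)_l\subseteq K_n^w(\mathcal O_F)_l$ is an isomorphism onto $div\,K_n^{et}(F)_l$. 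A short diagram chase then yields the direct sum decomposition
\begin{equation}
K_n^w(\mathcal O_F)_l = div\,K_n(F)_l \oplus \ker\varphi.
\nonumber
\end{equation}
Consequently $K_n^w(\mathcal O_F)_l=div\,K_n(F)_l$ holds if and only if $\ker\varphi=0$, i.e. if and only if the surjection $\varphi$ is an isomorphism, which is precisely condition $(2)$.

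The main obstacle does not lie in this final chase but in having the three structural inputs in place, above all the splitting of the Dwyer--Friedlander maps and the identification of $div\,K_n(F)_l$ with $div\,K_n^{et}(F)_l$; granting these, the remaining work is bookkeeping. Within the argument itself the delicate point is the passage $(4)\Leftrightarrow(5)\Leftrightarrow(6)$, where one must rule out spurious contributions coming from the $\varprojlim^1$ terms and from the infinite direct sums of residue-field terms in $K_n(F,\Z/l^k)$; this is exactly why Theorem~\ref{lim 1 QL1} is invoked. I would also keep in mind that the equivalences are cleanest when read for all $n>1$ simultaneously, since the localization boundary map mixes the degree $2n$ and $2n-1$ pieces.
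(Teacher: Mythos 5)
Your treatment of the classical chain $(1)\Leftrightarrow(2)\Leftrightarrow(3)\Leftrightarrow(4)$ and of the equivalence $(2)\Leftrightarrow(7)$ is correct and follows essentially the same route as the paper: the paper also obtains $(2)\Leftrightarrow(7)$ from the commutative diagram following Lemma \ref{div as ker} (top row defining $K_{n}^{w}(\mathcal{O}_{F})_l$, bottom row the exact sequence of Lemma \ref{div as ker}, identity in the right column), and your decomposition $K_{n}^{w}(\mathcal{O}_{F})_l = div\,K_{n}(F)_l \oplus \ker\varphi$ is a correct, more explicit version of that diagram chase, using that the isomorphism (\ref{div = divetale}) is induced by the Dwyer--Friedlander map.

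The gap is in the passage from the limit-level conditions back to the finite-level ones, i.e. $(5)\Rightarrow(4)$, equivalently $(6)\Rightarrow(2)$. What the Milnor sequence plus the $\varprojlim^{1}$-comparison of Theorem \ref{lim 1 QL1} actually gives is $(4)\Rightarrow(5)$ and $(5)\Leftrightarrow(6)$, by the five lemma applied to the diagram whose rows are the Milnor sequences for $K_{n}^{cts}(F,\Z_l)$ and $K_{n}^{et}(F)$. But an isomorphism of inverse limits, even together with an isomorphism of $\varprojlim^{1}$ terms, does not descend to the terms of the towers: if $C_k$ denotes the kernel of the surjective map $K_{n}(F,\Z/l^k)\rightarrow K_{n}^{et}(F,\Z/l^k)$, the system $(C_k)$ could a priori have all transition maps zero, in which case $\varprojlim_k C_k = {\varprojlim_k}^{1}\, C_k = 0$, so that (5) and (6) hold while (4) fails. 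Ruling this scenario out is not bookkeeping; it is exactly the point at which the paper invokes Theorem 1 of \cite{BZ1}, which converts the isomorphism $K_{n}^{cts}(F,\Z_l)\stackrel{\cong}{\longrightarrow} K_{n}^{et}(F)$ of condition (6) into the isomorphism $K_{n}(F)_l\stackrel{\cong}{\longrightarrow}K_{n}^{et}(F)_l$ of condition (2). Without this external input, or a substitute structural argument about the kernels $C_k$, your proposal only shows that (5) and (6) are consequences of (1)--(4), not that they are equivalent to them, so the full cycle of equivalences is not closed.
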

\begin{proof} The equivalence of conditions (1), (2), (3) and  (4)
follows by finite generation of
K-groups of $\mathcal{O}_{F}$ and by comparison of Bockstein and localization 
sequences for Quillen and {\' e}tale K-theory. Clearly (4) implies (5).  
Consider the following commutative diagram cf. \cite{BZ1}: 
$$\xymatrix{
0 \ar[r]^{}  & {\varprojlim_{k}}^{1} \, K_{n+1} (F, \Z / l^k)  
\ar@<0.1ex>[d]^{\cong} \ar[r]^{}  &  K_{n}^{cts} (F, \Z_l)        
 \ar@<0.1ex>[d]^{} \ar[r]^{} & {\varprojlim_{k}} \, K_{n} (F, \Z / l^k)
\ar[r]^{} \ar@<0.1ex>[d]^{} & 0 \\
0 \ar[r]^{} & {\varprojlim_{k}}^{1} \, K_{n+1}^{et} (F, \Z / l^k)    \ar[r]^{}  & K_{n}^{et} (F) 
 \ar[r]^{} & {\varprojlim_{k}} \, K_{n}^{et} (F, \Z / l^k)
 \ar[r]^{} & 0}
\label{lim1 cont to etale} $$
where $K_{n}^{cts} (F, \Z_l)$ is the continuous K-theory defined in \cite{BZ1}.
Hence (5) and Theorem \ref{lim 1 QL1} implies that the middle vertical arrow in this diagram is an 
isomorphism. By \cite{BZ1} Theorem 1 this implies (2). Hence we proved that (5)
implies (4). This diagram also shows that (5) and (6) are equivalent. By the diagram 
following the proof of Lemma \ref{div as ker} conditions (2) and (7) are equivalent.  
\end{proof}

\noindent
Base on the proof of Theorem \ref{QLequivalent} we easily prove the following theorem.
\begin{theorem}\label{QLequivalentMorePrecise}
For every $n > 1$ the following conditions are equivalent:
\begin{enumerate} 
\item{} $K_{n} (\mathcal{O}_{F}) \otimes \Z_l \stackrel{\cong}{\longrightarrow} 
K_{n}^{et} (\mathcal{O}_{F}[\frac{1}{l}]),$
\item{} $K_{n} (F)_l \stackrel{\cong}{\longrightarrow} 
K_{n}^{et} (F)_l,$
\item{}
${\varprojlim_{k}} \, K_{n} (F, \Z / l^k) \stackrel{\cong}{\longrightarrow}
{\varprojlim_{k}} \, K_{n}^{et} (F, \Z / l^k),$
\item{}
$K_{n}^{cts} (F, \Z_l) \stackrel{\cong}{\longrightarrow}
K_{n}^{et} (F),$
\item{} $K_{n}^{w}(\mathcal{O}_{F})_l = div K_n (F)_l.$  
\end{enumerate} 
\end{theorem}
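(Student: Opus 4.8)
The plan is to recycle the proof of Theorem \ref{QLequivalent}, but to run each implication at the single fixed $n$ instead of over the whole range $n > 1$. The reason Theorem \ref{QLequivalent} needed all degrees is that it included the mod-$l^k$ conditions, whose comparison proceeds through Bockstein sequences that couple degree $n$ to degree $n-1$; since Theorem \ref{QLequivalentMorePrecise} drops those conditions, I would instead connect the ``$l$-adic'' conditions (1), (2), (5) to the ``limit'' conditions (3), (4) by means that are available one degree at a time.

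First I would dispose of the two equivalences that are manifestly pointwise. For (2) $\Leftrightarrow$ (5), apply the snake lemma to the commutative diagram following Lemma \ref{div as ker}: its rows are the left-exact sequences defining $K_{n}^{w}(\mathcal O_F)_l$ and $div\, K_{n}^{et}(F)_l$, the right-hand vertical arrow is the identity, and the middle vertical Dwyer--Friedlander map $\beta\colon K_n(F)_l \to K_n^{et}(F)_l$ is surjective by Theorem \ref{H2 of F}. Identifying $div\, K_{n}^{et}(F)_l$ with $div\, K_n(F)_l$ by Theorem \ref{Quillen and etale obstr are eq} (both are $0$ for $n$ odd), one gets $\ker(K_n^w \to div\, K_n^{et}) \cong \ker\beta$ together with surjectivity of the left vertical map, so the left map is an isomorphism exactly when $\beta$ is, which is condition (5) $\Leftrightarrow$ (2). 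For (1) $\Leftrightarrow$ (2) I would compare the Quillen and étale localization sequences for $\mathcal O_F \subset F$ as in Theorem \ref{QLequivalent}; this respects the single degree because the residue-field maps $K_m(k_v)_l \to K_m^{et}(k_v)_l$ are isomorphisms in every degree $m$, so a snake-lemma computation identifies the relevant kernels and cokernels in degree $n$ without invoking any other degree.

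Next I would treat the limit conditions. The equivalence (3) $\Leftrightarrow$ (4) follows from the continuous-$K$-theory diagram in the proof of Theorem \ref{QLequivalent}: its left vertical arrow is the $\varprojlim^1$ comparison in degree $n+1$, an isomorphism unconditionally by Theorem \ref{lim 1 QL1}, so a five-lemma argument shows the middle arrow (condition (4)) is an isomorphism precisely when the right arrow (condition (3)) is. The implication (4) $\Rightarrow$ (2) is then exactly \cite{BZ1} Theorem 1, as used in Theorem \ref{QLequivalent}. What remains, in order to close the circle, is to supply a single implication back into the limit block, for instance (2) $\Rightarrow$ (3).

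The main obstacle is precisely this last implication: upgrading the degree-$n$ isomorphism $K_n(F)_l \cong K_n^{et}(F)_l$ to an isomorphism of the $\varprojlim$'s. In Theorem \ref{QLequivalent} this was routed through the mod-$l^k$ conditions and an all-degrees Bockstein cascade, a route that is unavailable at one degree. I would first try to show that \cite{BZ1} Theorem 1 is in fact an equivalence, which would give (2) $\Rightarrow$ (4) $\Rightarrow$ (3) immediately and finish the proof. Failing that, I would try to prove that the Dwyer--Friedlander map $K_n(F,\Z/l^k) \to K_n^{et}(F,\Z/l^k)$ is an isomorphism for each $k$ by comparing Bockstein sequences; the delicate term there is the degree-$(n-1)$ piece $K_{n-1}(F)[l^k]$ against its étale analogue, which I would control using finiteness of the odd-degree $K$-groups of $\mathcal O_F$ (so that for $n$ even the adjacent term is harmless) together with the unconditional $\varprojlim^1$ isomorphism of Theorem \ref{lim 1 QL1}. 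Once this junction between the $l$-adic block $\{(1),(2),(5)\}$ and the limit block $\{(3),(4)\}$ is secured at the single degree $n$, every other step is the same diagram chase as in Theorem \ref{QLequivalent}, now read pointwise.
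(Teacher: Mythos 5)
You have located the real difficulty correctly, but your proposal does not overcome it, so the proof is incomplete. (For calibration: the paper's own ``proof'' is literally left as an exercise referring back to Theorem \ref{QLequivalent}, so the question is whether your pointwise reconstruction closes.) The parts you carry out are fine and are the intended recycling: $(2)\Leftrightarrow(5)$ via the diagram after Lemma \ref{div as ker} (using Theorems \ref{H2 of F} and \ref{Quillen and etale obstr are eq}); $(1)\Leftrightarrow(2)$ via the two localization sequences (for $n$ odd you still owe a word on free parts, since $(2)$ sees only torsion while $(1)$ is integral --- one needs surjectivity of the Dwyer--Friedlander map plus the unconditional rank equality coming from \cite{Bo}, \cite{Ha} and Lemma \ref{SchneiderConj.}); $(3)\Leftrightarrow(4)$ via the continuous $K$-theory diagram and Theorem \ref{lim 1 QL1}; and $(4)\Rightarrow(2)$ by quoting \cite{BZ1}. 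But no implication from $\{(1),(2),(5)\}$ into $\{(3),(4)\}$ is ever proved, so you are left with two disjoint families of equivalent conditions rather than one theorem. Neither fallback repairs this. Fallback (a) is the unverified hope that \cite{BZ1} Theorem 1 is an equivalence; nothing in the paper asserts that, and you do not prove it. Fallback (b) cannot work as formulated: by the Bockstein sequences, an isomorphism $K_n(F,\Z/l^k)\cong K_n^{et}(F,\Z/l^k)$ at a single $k$ forces an isomorphism on the quotient terms $K_{n-1}(F)[l^k]\to K_{n-1}^{et}(F)[l^k]$, which is Quillen--Lichtenbaum-type information in degree $n-1$; no hypothesis of the theorem (all of which live in degree $n$) supplies it, and finiteness of $K_{n-1}(\mathcal{O}_F)_l$ is irrelevant to the injectivity of that degree-$(n-1)$ map. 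Indeed the finite-coefficient conditions are precisely what was deleted in passing from Theorem \ref{QLequivalent} to Theorem \ref{QLequivalentMorePrecise} because they couple adjacent degrees; reinstating them at one degree is circular.

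The missing idea is to pass to the inverse limit over $k$ \emph{before} comparing, so that the degree-coupling terms die for unconditional reasons. Apply $\varprojlim_k$ to the Bockstein sequences (\ref{Bockstein1}) and (\ref{Bockstein2}): the subgroup terms $K_n(F)/l^k$ and $K_n^{et}(F)/l^k$ form surjective systems (so their ${\varprojlim_k}^1$ vanish), while the quotient terms contribute only the Tate modules $\varprojlim_k K_{n-1}(F)[l^k]$ and $\varprojlim_k K_{n-1}^{et}(F)[l^k]$. These vanish with no hypothesis at all: for $n-1$ odd because $K_{n-1}(F)_l$ and $K_{n-1}^{et}(F)_l$ are finite, and for $n-1$ even because a Tate-module element is infinitely $l$-divisible inside the torsion subgroup, hence lies in $div\,K_{n-1}(F)_l$ resp.\ $div\,K_{n-1}^{et}(F)_l$, and these groups are finite by Corollary \ref{div via sza} resp.\ Theorem \ref{MooreEt}. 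Thus both sides of $(3)$ become $l$-completions, $\varprojlim_k K_n(F)/l^k$ and $\varprojlim_k K_n^{et}(F)/l^k$, and the junction closes as follows: for $n$ even, condition $(2)$ identifies them, since in even degrees $K_n(F)$ is torsion (localization sequence plus finiteness of $K_n(\mathcal{O}_F)$) and $K_n^{et}(F)$ is torsion up to a divisible $\Q_l$-vector space (Theorem \ref{Ge of Sch} and Tate's sequence (\ref{TateExact1})), which no mod-$l^k$ quotient sees; for $n$ odd, condition $(1)$ identifies them, after noting $\varprojlim_k K_n(F)/l^k\cong K_n(\mathcal{O}_F)\otimes\Z_l$ and $\varprojlim_k K_n^{et}(F)/l^k\cong K_n^{et}(\mathcal{O}_F[1/l])$. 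This single limit argument --- not a finite-level Bockstein comparison and not a strengthening of \cite{BZ1} --- is the step your plan needs; with it, the rest of your diagram chases go through as written.
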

\begin{proof}
Exercise for the reader.
\end{proof}
\medskip

It is well known that the important results on motivic cohomology due to S. Bloch, E. Friedlander, 
M. Levine, S. Lichtenbaum, F. Morel, M. Rost, A. Suslin, V. Voevodsky, C. Weibel and others 
(see e.g. \cite{BL} cf. \cite[Appendix B]{RW}, \cite{L}, \cite{MV}, \cite{R}, \cite{V1}, \cite{V2},
\cite{VSF}, \cite{We4})
prove the Quillen-Lichtenbaum conjecture.
Let us state this in the following theorem. 
 
\begin{theorem} The Quillen-Lichtenbaum conjecture holds true for $F$.
In particular the equivalent conditions in Theorems \ref{QLequivalent} and \ref{QLequivalentMorePrecise} hold true for $F.$  \label{Quillen-Lichtenbaum conj. equivalent conditions}
\end{theorem}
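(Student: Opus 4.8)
The plan is to deduce the statement from the norm residue isomorphism theorem (the Bloch--Kato conjecture) and then to harvest the reformulations already assembled in Theorems \ref{QLequivalent} and \ref{QLequivalentMorePrecise}. First I would recall that the Quillen--Lichtenbaum conjecture for a smooth scheme over a field is a formal consequence of the Beilinson--Lichtenbaum conjecture on motivic cohomology, which by the work of Suslin--Voevodsky and Geisser--Levine is in turn equivalent to the Bloch--Kato conjecture: for every field $L$ with $1/l \in L$ the norm residue map $K^{M}_{m}(L)/l \to H^{m}(G_{L}, \Z/l\,(m))$ is an isomorphism for all $m$. For $l = 2$ this is Voevodsky's theorem (the Milnor conjecture, \cite{V1}), and for general $l$ it is the Voevodsky--Rost theorem (\cite{V2}, \cite{R}), resting on the motivic machinery of Bloch, Friedlander, Levine, Lichtenbaum, Morel, Rost, Suslin, Voevodsky and Weibel cited in the introduction (\cite{BL}, \cite{L}, \cite{MV}, \cite{VSF}, \cite{We4}). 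I would treat this entire package as a black box.

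Second, I would translate Beilinson--Lichtenbaum into the comparison of motivic and \'etale cohomology with finite coefficients: for $X$ smooth over a field with $1/l \in \mathcal{O}_{X}$ one has $H^{p}_{\mathcal{M}}(X, \Z/l^{k}(q)) \cong H^{p}_{et}(X, \Z/l^{k}(q))$ for $p \leq q$, with an injection at $p = q+1$. Applying this to $X = \text{spec}\,\mathcal{O}_{F}[1/l]$ and $X = \text{spec}\,F$ and feeding it into the comparison of the motivic spectral sequence with the Dwyer--Friedlander spectral sequence \cite{DF}, the maps $K_{m}(X, \Z/l^{k}) \to K_{m}^{et}(X, \Z/l^{k})$ become isomorphisms in the relevant range. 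The hypothesis $\text{cd}_{l} \leq 2$ for $\mathcal{O}_{F,S}$ and $F$, recorded in section 2 and already used to produce the two-row collapse (\ref{DF SpecSeqResult1}), is exactly what confines the comparison to the rows $H^{1}$ and $H^{2}$ on both sides and makes $n > 1$ the precise range in which the comparison is an isomorphism; the assumption $\mu_{4} \subset F$ for $l = 2$ enters here to guarantee $\text{cd}_{2} \leq 2$.

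Third, having obtained $K_{n}(F, \Z/l^{k}) \xrightarrow{\cong} K_{n}^{et}(F, \Z/l^{k})$ for all $k > 0$ and all $n > 1$ — which is condition (4) of Theorem \ref{QLequivalent} — I would invoke that theorem to conclude that all of its equivalent conditions hold, in particular condition (1), the isomorphism $K_{n}(\mathcal{O}_{F}) \otimes \Z_{l} \xrightarrow{\cong} K_{n}^{et}(\mathcal{O}_{F}[1/l])$ that is the conjecture itself, and condition (7), $K_{n}^{w}(\mathcal{O}_{F})_{l} = div\,K_{n}(F)_{l}$. The degree-by-degree equivalences of Theorem \ref{QLequivalentMorePrecise} then give the ``in particular'' clause for each $n > 1$. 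The main obstacle is not located in this paper: the full mathematical weight sits in the cited norm residue theorem and its descent to the Beilinson--Lichtenbaum comparison. What genuinely remains to check by hand is only the bookkeeping that the cohomological-dimension bounds of section 2 align the motivic and \'etale spectral sequences so that the comparison isomorphism holds exactly in the range $n > 1$, after which Theorems \ref{QLequivalent} and \ref{QLequivalentMorePrecise} do the rest.
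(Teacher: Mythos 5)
Your proposal is correct and follows essentially the same route as the paper: the paper likewise treats the Voevodsky--Rost norm residue theorem and its motivic consequences as a black box, feeds the motivic--\'etale cohomology comparison (via the Bloch--Lichtenbaum motivic spectral sequence and the Dwyer--Friedlander spectral sequence) into the isomorphism $K_{n}(F,\Z/l^{k}) \cong K_{n}^{et}(F,\Z/l^{k})$, and then invokes condition (4) of Theorem \ref{QLequivalent} to obtain all the equivalent statements. The paper even records the same Levine/Geisser--Levine reduction of Quillen--Lichtenbaum to Bloch--Kato that you cite, so your argument and the paper's differ only in bookkeeping detail.
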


\noindent
\begin{remark}
The proof of Theorem \ref{Quillen-Lichtenbaum conj. equivalent conditions} applies, as some of the 
key ingredients, the spectral 
sequence connecting the motivic cohomology and K-theory 
\cite{BL} cf. \cite[Appendix B]{RW}: 
\begin{equation}
E_{2}^{p,q} = H^{p-q}_{\mathcal{M}} (F, \,\Z/l^k (-q) ) \Rightarrow K_{-p-q} (F, \, \Z/l^k)
\label{motivic spectral sequence}
\end{equation} 
and results of Voyevodsky 
(\cite[Theorem 7.9]{V1} for $l = 2$ and  \cite[Theorem 6.16]{V2} for $l > 2$)
giving: 
\begin{enumerate} 
\item{} 
$H^{j}_{\mathcal{M}} (K, \Z/l^k (i)) \cong H^{j}_{et} (K, \Z/l^k (i))$
for all $j \leq i,$ 
\item{} $H^{j}_{\mathcal{M}} (F, \Z/l^k (i)) = 0$ if $j > i,$
\end{enumerate}
for any field $K$ of characteristic not equal to $l$.
Then one can connect these results with the Dwyer and Friedlander spectral sequence 
\cite[Proposition 5.2]{DF} to get the following isomorphism:
\begin{equation}
K_{n} (F, \Z/l^k) \cong K_{n}^{et} (F, \Z/l^k),
\label{QL}\end{equation} for all $l \geq 2,$ which is equivalent with the 
Quillen-Lichtenbaum conjecture (see Theorem \ref{QLequivalent}). 
\label{proof of QL conjecture}\end{remark}

\noindent
\begin{remark}
In 1992 M. Levine \cite{L} observed that Bloch-Kato conjecture 
for fields implies the Quillen-Lichtenbaum conjecture. The Bloch-Kato conjecture is proved in 
\cite[Corollary 7.5]{V1} (for $l = 2$) and \cite[Theorem 6.16]{V2} (for all $l$).
It is shown in \cite{GL} that Bloch-Kato conjecture lead to the computation of 
the motivic cohomology in terms of \'etale cohomology, a property that 
also establishes the Quillen-Lichtenbaum conjecture as pointed out in Remark 
\ref{proof of QL conjecture}.
\end{remark}
\medskip

The Proposition \ref{K-th to etale K-th for local rings} and its 
Corollary \ref{K-th to etale K-th for local fields} are well known 
and follow from the Gabber rigidity, results of Suslin \cite{Su1}
and results of Dwyer and Friedlander \cite{DF}. We include here short proofs.
\begin{proposition} \label{K-th to etale K-th for local rings}
Let $l$ be prime to $\text{char} \,\, k_v.$ There are natural isomorphisms:

\begin{equation}
K_{n}(\mathcal{O}_v, \, \Z/l^k)  \stackrel{\cong}{\longrightarrow}   K_{n}^{et} (\mathcal{O}_v, \, \Z/l^k)  
\label{K(Ov coeff lk) }
\end{equation}
\begin{equation}
K_{n}(\mathcal{O}_v) [l^k]  \stackrel{\cong}{\longrightarrow}   K_{n}^{et} (\mathcal{O}_v) [l^k]  
\label{K(Ov) lk trancation}
\end{equation}
\begin{equation}
K_{n}(\mathcal{O}_v) /l^k  \stackrel{\cong}{\longrightarrow}   K_{n}^{et} (\mathcal{O}_v) /l^k  
\label{K(Ov) mod lk}
\end{equation}
\end{proposition}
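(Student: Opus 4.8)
The plan is to reduce all three isomorphisms to the residue field $k_v$, where they hold by Quillen's computation of the K-theory of finite fields, transporting the statement along two rigidity results: Gabber rigidity on the K-theory side and the \'etale comparison isomorphism (\ref{compIso2}) on the \'etale side. I would first settle the coefficient isomorphism (\ref{K(Ov coeff lk) }), which is the heart of the matter. Since $F_v$ is complete, $\mathcal{O}_v$ is a Henselian local ring with finite residue field $k_v$ and $l$ is a unit in $\mathcal{O}_v$; Gabber rigidity (see also Suslin \cite{Su1}) then provides a natural isomorphism $K_n(\mathcal{O}_v, \Z/l^k) \cong K_n(k_v, \Z/l^k)$ induced by reduction modulo the maximal ideal. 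On the \'etale side, the comparison isomorphism (\ref{compIso2}) identifies $H^{\ast}(\mathcal{O}_v, \mathcal{F})$ with the cohomology of $G_v^{nr} = G(\overline{k_v}/k_v)$, that is, with Galois cohomology of $k_v$; feeding this into the Dwyer--Friedlander spectral sequence \cite[Prop. 5.1]{DF} gives $K_n^{et}(\mathcal{O}_v, \Z/l^k) \cong K_n^{et}(k_v, \Z/l^k)$, again compatibly with reduction. Finally, for the finite field $k_v$ the comparison map $K_n(k_v, \Z/l^k) \to K_n^{et}(k_v, \Z/l^k)$ is an isomorphism by Quillen's computation together with \cite{DF}. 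By naturality of the Dwyer--Friedlander map for the morphism $\text{spec}\, k_v \to \text{spec}\, \mathcal{O}_v$, the composite of these three isomorphisms is exactly the Dwyer--Friedlander map for $\mathcal{O}_v$, which is therefore an isomorphism, proving (\ref{K(Ov coeff lk) }) for all $n$ and $k$.

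Next I would deduce (\ref{K(Ov) lk trancation}) and (\ref{K(Ov) mod lk}) by comparing the Bockstein (universal coefficient) short exact sequences
$$0 \rightarrow K_n(\mathcal{O}_v)/l^k \rightarrow K_n(\mathcal{O}_v, \Z/l^k) \rightarrow K_{n-1}(\mathcal{O}_v)[l^k] \rightarrow 0$$
and its \'etale analogue, linked by the Dwyer--Friedlander map. As the middle vertical arrow is an isomorphism for every $n$ by the previous step, the snake lemma forces the left-hand maps $K_n(\mathcal{O}_v)/l^k \to K_n^{et}(\mathcal{O}_v)/l^k$ to be injective and the right-hand maps $K_{n-1}(\mathcal{O}_v)[l^k] \to K_{n-1}^{et}(\mathcal{O}_v)[l^k]$ to be surjective for all $n$, with $\text{coker}$ of the former canonically isomorphic to $\text{ker}$ of the latter. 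To promote these to isomorphisms I would invoke surjectivity of the integral map $K_n(\mathcal{O}_v)_l \to K_n^{et}(\mathcal{O}_v)_l$ (from \cite[Thm. 8.5]{DF}, argued as in the proof of Theorem \ref{H2 of F}), which makes the left-hand maps surjective, hence bijective; the identification $\text{coker} \cong \text{ker}$ then kills the kernels of the right-hand maps as well. Equivalently, one may observe that (\ref{compIso2}) already makes the \'etale side reduce integrally to $k_v$, where every group is one of the finite groups computed by Quillen, so that both targets are finite and a counting argument closes the case.

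The step I expect to be the main obstacle is precisely this passage from finite coefficients to the integral torsion and cotorsion statements: the finite-coefficient isomorphism alone only yields the two outer maps as injective and surjective respectively, and upgrading this to bijectivity genuinely requires the extra input above. A secondary point demanding care is verifying that the K-theoretic and \'etale rigidity isomorphisms are mutually compatible, i.e. that their composite really is the canonical Dwyer--Friedlander comparison rather than some abstract isomorphism; this is a naturality check for the Dwyer--Friedlander construction applied to $\text{spec}\, k_v \to \text{spec}\, \mathcal{O}_v$, after which Gabber rigidity and the classical finite-field comparison remain the only nontrivial external inputs.
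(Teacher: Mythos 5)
Your proof is correct, and it shares the paper's skeleton: reduce everything to $k_v$ through the commutative square relating $\mathcal{O}_v$ and $k_v$, with Suslin/Gabber rigidity on the K-theory side and Quillen plus \cite[Corollary 8.6]{DF} for the finite field. But you complete the square differently than the paper does. The paper establishes only two of the four sides (the left vertical arrow by \cite[Corollaries 2.5 and 3.9]{Su1}, the bottom by \cite[Corollary 8.6]{DF}), concludes that the Dwyer--Friedlander map for $\mathcal{O}_v$ is injective, and then proves surjectivity by a separate argument from \cite[Theorem 8.5]{DF} together with a comparison of the K-theory and \'etale K-theory localization sequences with coefficients over $\mathcal{O}_v$. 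You instead prove the remaining (right vertical) side is an isomorphism outright, by feeding the comparison isomorphism (\ref{compIso2}) into the Dwyer--Friedlander spectral sequence, so that ``three isomorphisms force the fourth'' gives injectivity and surjectivity at once; this buys independence from the Dwyer--Friedlander surjectivity theorem (whose application to the local ring $\mathcal{O}_v$ is exactly what the paper's localization-sequence comparison is needed for), at the cost of the naturality check you rightly flag, which is genuine but routine. For the integral statements (\ref{K(Ov) lk trancation}) and (\ref{K(Ov) mod lk}), the paper compares the Bockstein sequences of $\mathcal{O}_v$ with those of $k_v$, citing \cite[Section 2]{BGKZ}, whereas you stay over $\mathcal{O}_v$: the snake lemma gives injectivity and surjectivity of the outer maps and identifies the cokernel of one with the kernel of the other, and you close the gap with the integral surjectivity of $K_{n}(\mathcal{O}_v)_l \rightarrow K_{n}^{et}(\mathcal{O}_v)_l$, obtained from the coefficient isomorphism exactly as in the proof of Theorem \ref{H2 of F}. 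One step to make explicit there: passing from that integral surjectivity to surjectivity of $K_{n}(\mathcal{O}_v)/l^k \rightarrow K_{n}^{et}(\mathcal{O}_v)/l^k$ uses that $K_{n}^{et}(\mathcal{O}_v)$ is a finite $l$-primary group for $n \geq 1$ (which follows from (\ref{compIso2}) and $\text{cd}_l (k_v) = 1$); you allude to this via the reduction to $k_v$, and with it stated your main line of argument is complete. Your alternative ``counting argument'' sketched at the end is the one place I would not lean on as written, since knowing both orders agree is essentially the statement being proved; but it is dispensable, as the snake-lemma argument already suffices.
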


\begin{proof}
Consider the following commutative diagram.
$$\xymatrix{
K_{n} (\mathcal{O}_v, \Z / l^k)  
\ar@<0.1ex>[d]^{\cong} \ar[r]^{\cong}  &  K_{n}^{et} (\mathcal{O}_v, \Z / l^k)        
 \ar@<0.1ex>[d]^{\cong}\\
K_{n} (k_v, \Z / l^k)  \ar[r]^{\cong}  &  K_{n}^{et} (k_v, \Z / l^k)}
\label{Quillen to etale local fields1} $$
The left vertical arrow is an isomorphism by \cite[Corollaries 2.5 and 3.9]{Su1} . 
The bottom vertical arrow is an isomorphism by \cite[Corollary 8.6]{DF} .
Hence the top horizontal arrow is a monomorphism. Moreover, the top horizontal arrow 
is an epimorphism by \cite{DF} Theorem 8.5 and by comparison of K-theory and 
\' etale K-theory localization sequences with coefficients for the local ring 
$\mathcal{O}_v.$ 
The maps (\ref{K(Ov) lk trancation}), (\ref{K(Ov) mod lk}) are isomorphisms by comparison 
of Bockstein K-theory and \' etale K-theory sequences for $\mathcal{O}_v$
with corresponding Bockstein sequences for $k_v$ cf. \cite{BGKZ} Section 2.  
\end{proof} 

\begin{corollary} \label{K-th to etale K-th for local fields}
Let $l$ be prime to $\text{char} \,\, k_v.$ There are natural isomorphisms:

\begin{equation}
K_{n}(F_v, \, \Z/l^k) \stackrel{\cong}{\longrightarrow}   K_{n}^{et} (F_v, \, \Z/l^k)  
\label{K(Fv coeff lk)}
\end{equation}
\begin{equation}
K_{n}(F_v) [l^k] \stackrel{\cong}{\longrightarrow}   K_{n}^{et} (F_v) [l^k]  
\label{K(Fv) lk trancation}
\end{equation}
\begin{equation}
K_{n}(F_v)/l^k \stackrel{\cong}{\longrightarrow}   K_{n}^{et} (F_v)/l^k  
\label{K(Fv) mod lk}
\end{equation}
\end{corollary}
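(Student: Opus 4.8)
The plan is to deduce the three isomorphisms for the local field $F_v$ from the corresponding statements for the henselian discrete valuation ring $\mathcal{O}_v$, just established in Proposition \ref{K-th to etale K-th for local rings}, together with the known residue-field case from \cite{DF}, by feeding these into the localization sequence attached to the open immersion $\text{spec} \, F_v \hookrightarrow \text{spec} \, \mathcal{O}_v$ with closed complement $\text{spec} \, k_v$. Throughout, the hypothesis $l$ prime to $\text{char} \, k_v$ guarantees that $l$ is invertible on $\text{spec} \, \mathcal{O}_v$, so that the étale K-theory of $\mathcal{O}_v$, $F_v$ and $k_v$ is well behaved and the Dwyer-Friedlander comparison transformation is defined and natural with respect to the localization boundary maps. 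I would prove the coefficient isomorphism (\ref{K(Fv coeff lk)}) first, as the clean step, and then derive (\ref{K(Fv) lk trancation}) and (\ref{K(Fv) mod lk}) from it by a Bockstein comparison.

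First I would establish (\ref{K(Fv coeff lk)}). The Quillen localization sequence for the regular one-dimensional scheme $\text{spec} \, \mathcal{O}_v$, taken with $\Z/l^k$-coefficients, has the form
$$\cdots \rightarrow K_n(k_v,\Z/l^k) \stackrel{i_*}{\longrightarrow} K_n(\mathcal{O}_v,\Z/l^k) \stackrel{j^*}{\longrightarrow} K_n(F_v,\Z/l^k) \stackrel{\partial}{\longrightarrow} K_{n-1}(k_v,\Z/l^k) \rightarrow \cdots,$$
and the Dwyer-Friedlander transformation maps it to the analogous étale K-theory localization sequence, compatibly with all boundary maps. On the $\mathcal{O}_v$-terms the vertical comparison maps are isomorphisms by Proposition \ref{K-th to etale K-th for local rings}, and on the $k_v$-terms they are isomorphisms by \cite[Corollary 8.6]{DF}. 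Applying the five lemma to the five-term segment centred at $K_n(F_v,\Z/l^k)$ (whose four neighbours are $\mathcal{O}_v$- and $k_v$-terms, all mapped isomorphically) shows that $K_n(F_v,\Z/l^k) \to K_n^{et}(F_v,\Z/l^k)$ is an isomorphism, which is (\ref{K(Fv coeff lk)}).

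Finally I would obtain (\ref{K(Fv) lk trancation}) and (\ref{K(Fv) mod lk}) by comparing the Bockstein short exact sequence $0 \to K_n(F_v)/l^k \to K_n(F_v,\Z/l^k) \to K_{n-1}(F_v)[l^k] \to 0$ with its étale counterpart, feeding the isomorphism (\ref{K(Fv coeff lk)}) into the middle. A snake-lemma chase immediately yields that the mod-$l^k$ maps are injective and the $l^k$-torsion maps are surjective, and that the cokernel of the former is isomorphic to the kernel of the latter. The remaining directions do not follow from this single diagram, and this is the genuinely delicate point: to close the argument I would use that, by Gabber-Suslin rigidity for the henselian pair $(\mathcal{O}_v,k_v)$ (\cite{Su1}, \cite{DF}), the $\Z/l^k$-coefficient localization sequence above breaks into short exact sequences, which pins down the orders of the mod-$l^k$ and of the $l^k$-torsion pieces on both the Quillen and the étale sides and forces the comparison maps to be bijective. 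This is exactly the comparison of Bockstein sequences carried out for $\mathcal{O}_v$ in the proof of Proposition \ref{K-th to etale K-th for local rings}, cf. \cite{BGKZ} Section 2, now transported to $F_v$.
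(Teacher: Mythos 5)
Your first step, the coefficient isomorphism (\ref{K(Fv coeff lk)}), is correct and is exactly the paper's argument: map the $\Z/l^k$-coefficient localization sequence for $\mathcal{O}_v$ to its \'etale counterpart, use Proposition \ref{K-th to etale K-th for local rings} on the $\mathcal{O}_v$-terms and \cite[Corollary 8.6]{DF} on the $k_v$-terms, and apply the five lemma.

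The gap is in your closing argument for (\ref{K(Fv) lk trancation}) and (\ref{K(Fv) mod lk}). The snake lemma on the Bockstein comparison indeed gives injectivity of the mod-$l^k$ maps, surjectivity of the $l^k$-torsion maps, and an isomorphism between the cokernel of the former (in degree $n$) and the kernel of the latter (in degree $n-1$). But the counting step you propose does not close this. Breaking the $\Z/l^k$-coefficient localization sequence into short exact sequences only pins down $|K_n(F_v,\Z/l^k)|$, a quantity you already know equals $|K_n^{et}(F_v,\Z/l^k)|$ from (\ref{K(Fv coeff lk)}); the Bockstein sequence then controls only the \emph{product} $|K_n(F_v)/l^k|\cdot |K_{n-1}(F_v)[l^k]|$, not the two factors separately. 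Writing $a_n = |K_n(F_v)/l^k|$, $a'_n = |K_n^{et}(F_v)/l^k|$, $b_n = |K_n(F_v)[l^k]|$, $b'_n = |K_n^{et}(F_v)[l^k]|$, your information is $a_n \leq a'_n$, $b_{n-1}\geq b'_{n-1}$ and $a_n b_{n-1} = a'_n b'_{n-1}$, i.e. $a'_n/a_n = b_{n-1}/b'_{n-1}$. This is perfectly consistent with both comparison maps failing to be bijective (a defect of $l$ in the cokernel of the mod-$l^k$ map in degree $n$ cancelling a defect of $l$ in the kernel of the torsion map in degree $n-1$), so bijectivity is not forced.

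The paper closes this differently, and in the opposite order: it first proves the torsion statement, by applying the five lemma to the commutative diagram whose rows are the $l$-primary parts of the \emph{integral} localization sequences,
\begin{equation}
0 \rightarrow K_{n} (\mathcal{O}_v)_l \rightarrow K_{n} (F_v)_l \rightarrow K_{n-1} (k_v)_l \rightarrow 0,
\nonumber
\end{equation}
and its \'etale analogue, where the left vertical map is an isomorphism by Proposition \ref{K-th to etale K-th for local rings} (the statement (\ref{K(Ov) lk trancation}), passed to the union over $k$) and the right vertical map is an isomorphism for the finite field $k_v$. The five lemma then gives $K_n(F_v)_l \cong K_n^{et}(F_v)_l$, hence (\ref{K(Fv) lk trancation}) for every $k$; only after that does the Bockstein comparison enter, now with the middle map an isomorphism by (\ref{K(Fv coeff lk)}) and the right-hand map an isomorphism by the torsion statement in degree $n-1$, which forces (\ref{K(Fv) mod lk}). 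To repair your proof you should insert this diagram of integral $l$-primary localization sequences (or some equivalent input giving you one of the two Bockstein pieces outright) rather than rely on order counting, which cannot separate the two pieces.
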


\begin{proof}
The isomorphism (\ref{K(Fv coeff lk)}) follows by Proposition \ref{K-th to etale K-th for local rings}
and by comparison of K-theory and \' etale K-theory localization sequences with coefficients.
Consider the following commutative diagram with exact rows:
$$\xymatrix{
0 \ar[r]^{}  &  \, K_{n} (\mathcal{O}_v)_l  
\ar@<0.1ex>[d]^{\cong} \ar[r]^{}  &  K_{n} (F_v)_l         
 \ar@<0.1ex>[d]^{} \ar[r]^{} & \, K_{n-1} (k_v)_l 
\ar[r]^{} \ar@<0.1ex>[d]^{\cong} & 0 \\
0 \ar[r]^{}  &  \, K_{n}^{et} (\mathcal{O}_v)_l  \ar[r]^{}  &  K_{n}^{et} (F_v)_l         
 \ar[r]^{} & \, K_{n-1}^{et} (k_v)_l 
\ar[r]^{} & 0 }
\label{Quillen to etale local case 1}$$
The bottom exact sequence is an appropriate {\' e}tale cohomology exact sequence
written in terms of \' etale K-theory. It follows by Proposition
\ref{K-th to etale K-th for local rings} that the map (\ref{K(Fv) lk trancation})
is an isomorphism, hence by Bockstein sequence argument the map (\ref{K(Fv) mod lk})
is also an isomorphism.
\end{proof}

\begin{remark}
If $p = \text{char} \,\, k_v$ then it was proven in \cite{HM} that:
\begin{equation}
K_{n}(F_v, \, \Z/p^k) \stackrel{\cong}{\longrightarrow}   K_{n}^{et} (F_v, \, \Z/p^k).  
\label{K(Fv coeff pk)}
\end{equation}
By Bockstein sequence argument the map
\begin{equation}
K_{n}(F_v) [p^k] \stackrel{\cong}{\longrightarrow}   K_{n}^{et} (F_v) [p^k]  
\label{K(Fv) pk trancation}
\end{equation} 
is an epimorphism and the map
\begin{equation}
K_{n}(F_v)/p^k \stackrel{\cong}{\longrightarrow}   K_{n}^{et} (F_v)/p^k  
\label{K(Fv) mod pk}
\end{equation}
is a monomorphism.
\end{remark}

Consider the following commutative diagram.
$$\xymatrix{
0 \ar[r]^{}  & WK_{n} (F)_l   
\ar@<0.1ex>[d]^{} \ar[r]^{}  &  K_{n} (F)_l        
 \ar@<0.1ex>[d]^{} \ar[r]^{} & \prod_{v} \, K_{n} (F_v)_l
 \ar@<0.1ex>[d]^{} &  \\
0 \ar[r]^{} & div \, K_{n} (F)_l    \ar[r]^{}  & K_{n}^{et} (F)_l 
 \ar[r]^{} & \prod_{v} \, K_{n}^{et} (F_v)_l &}
\label{WildKerCompDiv}$$

\begin{theorem}\label{wild maps on div via a split map 2}
Assume that for every $v \in S_l:$
\begin{equation} 
K_{n} (F_v)_l 
\stackrel{\cong}{\longrightarrow} K_{n}^{et} (F_v)_l .
\label{local QL assumption for the bad primes}\end{equation} 
 Then for all $n \geq 1$ the left vertical arrow in
the diagram above is split surjective. 
Moreover the following conditions are equivalent for all $n \geq 1$:
\begin{enumerate} 
\item{} $K_{n} (F)_l \stackrel{\cong}{\longrightarrow} 
K_{n}^{et} (F)_l,$ 
\item{} $WK_n(F)_l = div K_n (F)_l$
\end{enumerate}  
\end{theorem}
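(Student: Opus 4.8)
The plan is to run a diagram chase in the square displayed just before the theorem, feeding in the split surjectivity of the Dwyer--Friedlander map (Theorem \ref{H2 of F}) and the identification $div\, K_n(F)_l \cong div\, K_n^{et}(F)_l$ (Theorem \ref{Quillen and etale obstr are eq}). Write $\phi \colon K_n(F)_l \to K_n^{et}(F)_l$ for the middle vertical map and $\lambda \colon WK_n(F)_l \to div\, K_n(F)_l$ for the left vertical map, so that $\lambda$ is the restriction of $\phi$ (the target being $div\, K_n^{et}(F)_l$, written as $div\, K_n(F)_l$ via Theorem \ref{Quillen and etale obstr are eq}). First I would check that the right vertical map is injective on every nonarchimedean component: for $v \in S_l$ this is exactly the hypothesis $(\ref{local QL assumption for the bad primes})$, while for every other nonarchimedean $v$ one has $l \neq \text{char}\, k_v$, so $K_n(F_v)_l \cong K_n^{et}(F_v)_l$ by Corollary \ref{K-th to etale K-th for local fields}. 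By the factorization argument of Lemmas \ref{arch infty not important for wild} and \ref{arch infty not important for wild etale}, an element dying at all nonarchimedean completions already dies at the archimedean ones; hence both $WK_n(F)_l$ and $div\, K_n^{et}(F)_l$ are the kernels of the localization maps taken over the $v \notin S_{\infty}$ alone, and over those places the right vertical map is an isomorphism.

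Next I would produce the splitting. By Theorem \ref{Quillen and etale obstr are eq} the map $\phi$ restricts to an isomorphism $\phi_0 \colon div\, K_n(F)_l \xrightarrow{\cong} div\, K_n^{et}(F)_l$, and since $div\, K_n(F)_l \subseteq WK_n(F)_l$ the restriction of $\lambda$ to this subgroup equals $\phi_0$; thus $\lambda$ is surjective and $\sigma := (\text{inclusion}) \circ \phi_0^{-1}$ is a section, so the left vertical arrow is split surjective. A single chase then shows $\ker\phi \subseteq WK_n(F)_l$: if $\phi(x)=0$, the image of $x$ in $\prod_{v} K_n^{et}(F_v)_l$ vanishes, so by injectivity of the right map $x$ dies at every nonarchimedean $v$. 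Consequently $\ker\lambda = WK_n(F)_l \cap \ker\phi = \ker\phi$, and combining this with the section $\sigma$ gives the internal direct sum decomposition $WK_n(F)_l = div\, K_n(F)_l \oplus \ker\phi$.

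The equivalence of (1) and (2) now follows formally. Because $\phi$ is split surjective by Theorem \ref{H2 of F}, it is an isomorphism precisely when $\ker\phi = 0$; and from the decomposition $WK_n(F)_l = div\, K_n(F)_l \oplus \ker\phi$ together with $div\, K_n(F)_l \subseteq WK_n(F)_l$, the equality $WK_n(F)_l = div\, K_n(F)_l$ holds exactly when the complementary summand $\ker\phi$ is trivial, giving $(1) \Leftrightarrow (2)$. The step I expect to demand the most care is the reduction to nonarchimedean places, i.e. justifying that the archimedean factors of $\prod_{v}$ may be discarded so that the right vertical map is genuinely injective where the chase uses it: in the function field case $S_l = \emptyset$ and everything is immediate from Corollary \ref{K-th to etale K-th for local fields}, whereas for number fields one must dispose of the real and complex places precisely as in Lemmas \ref{arch infty not important for wild} and \ref{arch infty not important for wild etale}, and it is only the hypothesis $(\ref{local QL assumption for the bad primes})$ that repairs the possible failure of the local comparison at the primes $v \in S_l$ above $l$.
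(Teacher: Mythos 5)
Your proposal is correct, and it runs in the same square as the paper's proof, but it manufactures the splitting from different ingredients. The paper's own argument is three lines: the middle arrow $\phi$ is split surjective by Theorem \ref{H2 of F}, the right arrow is an isomorphism by Corollary \ref{K-th to etale K-th for local fields} together with hypothesis (\ref{local QL assumption for the bad primes}), hence a section of $\phi$ restricts to a section of the left arrow (its image lands in $WK_n(F)_l$ by injectivity of the right arrow), and finally the left arrow is an isomorphism iff $\phi$ is, since the two kernels coincide. You instead take the section to be $\sigma = \mathrm{incl}\circ\phi_0^{-1}$, built from the inclusion $div\, K_n(F)_l \subseteq WK_n(F)_l$ and the fact (Theorem \ref{Quillen and etale obstr are eq}) that the Dwyer--Friedlander map restricts to an isomorphism $\phi_0$ on divisible elements; this never uses the splitness of $\phi$ (only its surjectivity, in the last step), and it produces the explicit decomposition $WK_n(F)_l = div\, K_n(F)_l \oplus \ker\phi$, which makes $(1)\Leftrightarrow(2)$ transparent rather than an ``iso iff iso'' comparison. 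Both proofs share the essential chase $\ker\phi \subseteq WK_n(F)_l$ via injectivity of the right arrow at finite places. One point where your write-up is genuinely more careful than the paper: the right vertical arrow is \emph{not} an isomorphism at archimedean components of a number field (e.g. $K_{2i-1}(\C)_l \cong \Q_l/\Z_l$ while $K_{2i-1}^{et}(\C)_l = 0$), so the paper's one-line claim is literally valid only after discarding the archimedean factors, which it leaves implicit; your reduction to the places $v \notin S_{\infty}$ via Lemmas \ref{arch infty not important for wild} and \ref{arch infty not important for wild etale} (cf. Remark \ref{earlier wild agree with the present wild}) supplies exactly the missing justification, and it is also what legitimizes the inclusion $div\, K_n(F)_l \subseteq WK_n(F)_l$ on which your section rests.
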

\begin{proof}
By theorem \ref{H2 of F} the middle vertical arrow is split surjective.
The right vertical arrow is an isomorphism by Corollary \ref{K-th to etale K-th for local fields} 
and our assumption. This shows that the left vertical arrow is split surjective. 
Hence the left vertical
arrow is an isomorphism if and only if the middle vertical arrow is an isomorphism.
\end{proof}
\medskip

\noindent 
Under assumption (\ref{local QL assumption for the bad primes}) the Theorem 
\ref{Quillen-Lichtenbaum conj. equivalent conditions} shows that
the equivalent conditions in Theorem 
\ref{wild maps on div via a split map 2} hold true.
Summing up results in this section concerning wild kernels and divisible elements, 
we state the following theorem.

\begin{theorem}\label{div equals wild kernel due to QL true} Let $l \geq 2.$ 
For every $n > 1$ and we have the following equality: 
\begin{equation} 
K_{n}^{w}(\mathcal{O}_{F})_l = div K_n (F)_l. 
\nonumber
\end{equation} 
Assume that $K_{n} (F_v)_l 
\stackrel{\cong}{\longrightarrow} K_{n}^{et} (F_v)_l$ for every $v \in S_l$ and every
$n > 1.$ Then for every $n \geq 0:$  
\begin{equation} 
WK_n(F)_l = div\, K_n (F)_l.
\nonumber\end{equation}  
\end{theorem}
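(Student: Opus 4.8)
The plan is to treat this theorem as the assembly point for the equivalences already in hand, feeding into them the validity of the Quillen-Lichtenbaum conjecture recorded in Theorem \ref{Quillen-Lichtenbaum conj. equivalent conditions}. No new cohomological computation is needed: the genuine content has been front-loaded into the Rost-Voevodsky input and the chain of reformulations in Theorems \ref{QLequivalentMorePrecise} and \ref{wild maps on div via a split map 2}, so what remains is to route those results correctly and to dispose of a couple of boundary cases.

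For the first equality $K_{n}^{w}(\mathcal{O}_{F})_l = div\, K_n(F)_l$ with $n > 1$, I would simply invoke Theorem \ref{Quillen-Lichtenbaum conj. equivalent conditions}, which asserts that all the equivalent conditions of Theorem \ref{QLequivalentMorePrecise} hold for $F$. Condition (5) of that theorem is precisely the asserted equality, so the first claim is immediate. I would also remark, to match the statement, that no local hypothesis at $S_l$ enters here, since that equivalence is unconditional.

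For the second equality I would split on $n$. When $n > 1$, Theorem \ref{Quillen-Lichtenbaum conj. equivalent conditions} supplies condition (2) of Theorem \ref{QLequivalentMorePrecise}, namely $K_{n}(F)_l \stackrel{\cong}{\longrightarrow} K_{n}^{et}(F)_l$, which is exactly condition (1) of Theorem \ref{wild maps on div via a split map 2}. Since we now assume the local isomorphism (\ref{local QL assumption for the bad primes}) at every $v \in S_l$, that theorem applies and returns its condition (2), $WK_n(F)_l = div\, K_n(F)_l$. For the boundary range $0 \leq n \leq 1$ both sides vanish: $WK_0(F) = WK_1(F) = 0$ has already been observed, $K_0(F) = \Z$ carries no divisible elements, and the multiplicative group of a global field has trivial divisible subgroup, so $div\, K_1(F) = 0$.

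The main obstacle is not located in this proof but upstream, in Theorem \ref{Quillen-Lichtenbaum conj. equivalent conditions} (the motivic-cohomology/Rost-Voevodsky ingredient) and in the splitting argument behind Theorem \ref{wild maps on div via a split map 2}, which produces the split surjection $WK_n(F)_l \to div\, K_n(F)_l$ and hence the nontrivial reverse inclusion. The only real care demanded here is to verify that the local hypothesis is fed into the correct theorem and to treat the small-$n$ cases by hand, since Theorem \ref{QLequivalentMorePrecise} speaks only to $n > 1$ while the second assertion ranges over $n \geq 0$.
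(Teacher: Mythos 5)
Your proposal is correct and follows essentially the same route as the paper: the paper likewise obtains the first equality from Theorem \ref{Quillen-Lichtenbaum conj. equivalent conditions} via condition (5) of Theorem \ref{QLequivalentMorePrecise}, deduces the second equality for $n>1$ by feeding the isomorphism $K_n(F)_l \cong K_n^{et}(F)_l$ together with the local hypothesis into Theorem \ref{wild maps on div via a split map 2}, and disposes of $0 \leq n \leq 1$ by the observation that $WK_n(F) = div\, K_n(F) = 0$ there. Your explicit justification of the vanishing in the boundary cases (finite generation of $K_0$ and of the relevant unit groups) only spells out what the paper leaves as a remark.
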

\medskip

\noindent
\begin{remark}
It is easy to observe that $WK_n(F) = div\, K_n (F) = 0$
for $0 \leq n \leq 1.$
\end{remark}


\section{Splitting obstructions to Quillen boundary map}

Observe that the \cite[Diagram 2.5]{Ba2} and the corresponding diagram for {\' e}tale 
K-theory and also [Ba2], Diagram 3.2 extend naturally to the global field case
and $l \geq 2.$ Hence by analogues arguments as the ones in 
loc. cit. we get for every $k \geq 1,$ every $n \geq 1$ 
the following commutative diagram with exact rows:   

$$\small{\xymatrix{
\dots \ar[r]^{}  & K_{2n} (F)[l^k]   
\ar@<0.1ex>[d]^{} \ar[r]^{}  & 
\oplus_{v} \, K_{2n-1} (k_v)[l^k] \ar@<0.1ex>[d]^{\cong} \ar[r]^{} & 
 D (n, l^k) \ar[r]^{} \ar@<0.1ex>[d]^{\cong} & 0 \\
 \dots \ar[r]^{} & K_{2n}^{et} (F)[l^k]   \ar[r]^{}  & 
 \oplus_{v \not \, | \, l} \, K_{2n-1}^{et} (k_v)[l^k]  \ar[r]^{} & 
  D^{et} (n, l^k) \ar[r]^{} & 0}}
\label{CoeffLocSeq} $$
Actually the rows of this diagram have the following form: 
\begin{equation}\small{
0 \rightarrow K_{2n} ({\mathcal O}_F)[l^k] \rightarrow 
K_{2n} (F) [l^k] 
\rightarrow \oplus_{v} \, K_{2n-1} (k_v)[l^k] \rightarrow 
D (n, l^k) \rightarrow 0.}
\label{1.2}\end{equation} 

\begin{equation}\small{
0 \rightarrow K_{2n}^{et} ({\mathcal O}_F)[l^k] \rightarrow 
K_{2n}^{et} (F) [l^k] 
\rightarrow \oplus_{v} \, K_{2n-1}^{et} (k_v)[l^k] \rightarrow 
D^{et} (n, l^k) \rightarrow 0.}
\label{1.22}\end{equation} 
Taking direct limit in (\ref{1.2}) gives the $l$-part of the Quillen 
localization sequence 
\begin{equation}\small{
0 \rightarrow K_{2n} ({\mathcal O}_F)_l \rightarrow 
K_{2n} (F)_l
\stackrel{\partial}{\longrightarrow} 
\oplus_{v} \, K_{2n-1} (k_v)_l \rightarrow 0.}
\label{QuillenLocSeq}\end{equation} 
which also implies the property (\ref{dirlim}).
\medskip

\noindent
Recall the definition of the numbers $k (l)$ in section 4. Define 
\begin{equation}
N_{0} := \prod_{l \,\,\, \big{|} \,\,\, |K_{2n} ({\mathcal O}_F)|} \,\, l^{k (l)}.
\label{Nzero}
\end{equation} 
The exact sequence (\ref{1.2}) for every $l$ shows that for every 
positive integer $N$ such that $N_{0} \, | \, N$ 
we have the following exact sequence:
\begin{equation}\small{
0 \rightarrow K_{2n} ({\mathcal O}_F) \rightarrow 
K_{2n} (F) [N] 
\rightarrow \bigoplus_{v} \, K_{2n-1} (k_v)[N] \rightarrow 
D (n) \rightarrow 0.}
\label{analogofclassgroup}\end{equation} 
The exact sequence (\ref{analogofclassgroup}) shows that the group 
$D (n)$ is an analog for higher K-groups of the class group $Cl ({\mathcal O}_F).$
Recall that the class group appears in the exact sequence:
\begin{equation}\small{
0 \rightarrow K_{1} ({\mathcal O}_F) \rightarrow 
K_{1} (F) 
\rightarrow \bigoplus_{v} \, K_{0} (k_v) \rightarrow 
 Cl ({\mathcal O}_F) \rightarrow 0.}
\label{classgroup}\end{equation}
\medskip

\noindent
\begin{remark}
To determine whether a map of two $l$-torsion abelian groups 
is split surjective I considered in \cite[ p. 293 and p. 296]{Ba2} 
obstructions to the splitting via $l^k$ truncations of this map.
Working throughout \cite{Ba2} with $k \gg 0$ 
I did not consider on p. 293 loc. cit. the cokernels of the $l^k$
truncation of $\partial$ for $k < k(l).$ By
(\ref{1.2}) the cokernel of the $l^k$ trancation of 
$\partial$ is $D(n, l^k)$ and in particular for 
$k \geq k(l)$ we have $D(n)_l = D(n, l^k).$
As a result in \cite[Corollary 1, p. 293]{Ba2}  
I have an incomplete statement. The Proposition \ref{splitting1} below 
completes the statement of \cite[Corollary 1, p. 293]{Ba2}.
The proof of the Proposition \ref{splitting1} below is the same as 
the the proof of \cite[Corollary 1, p. 293]{Ba2} by 
considering $l^k$ truncations for all $k > 0$ not just for 
$k \geq k(l).$  The gap in the statement of [Ba2] Corollary 1 p. 293  
has been noticed by Luca Caputo in \cite{Ca}.   
\end{remark}

\begin{proposition}\label{splitting1}
The following conditions are equivalent:
\begin{enumerate} 
\item{} $D (n, l^k) = 0$ \,\, for every \,\, $0 < k \leq k(l),$    
\item{} $K_{2n} (F)_l \, \cong \, K_{2n} ({\mathcal O}_F)_l \, \oplus \,  
\bigoplus_{v} \, K_{2n-1} (k_v)_l$
\end{enumerate} 
\end{proposition}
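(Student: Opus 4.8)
The plan is to read condition (2) as the assertion that the $l$-adic Quillen localization sequence
\begin{equation}
0 \rightarrow K_{2n}({\mathcal O}_F)_l \rightarrow K_{2n}(F)_l \stackrel{\partial}{\longrightarrow} \bigoplus_{v} K_{2n-1}(k_v)_l \rightarrow 0
\nonumber
\end{equation}
(the sequence (\ref{QuillenLocSeq})) splits, and to translate the vanishing of the groups $D(n,l^k)$ into a purity statement for the finite subgroup $K_{2n}({\mathcal O}_F)_l$. The backbone of the argument is the snake lemma applied to multiplication by $l^k$ on (\ref{QuillenLocSeq}), which recovers the four-term exact sequence (\ref{1.2}) and, via (\ref{1.1}), the identification $D(n,l^k)\cong \text{ker}(K_{2n}({\mathcal O}_F)/l^k \rightarrow K_{2n}(F)/l^k)$.

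First I would prove the purely algebraic equivalence: the sequence (\ref{QuillenLocSeq}) splits if and only if $D(n,l^k)=0$ for every $k>0$. Writing $A:=K_{2n}({\mathcal O}_F)_l$ and $B:=K_{2n}(F)_l$, the identification above shows that $D(n,l^k)=0$ for all $k$ is exactly the assertion that each map $A/l^k \rightarrow B/l^k$ is injective, i.e. that $A$ is a \emph{pure} subgroup of $B$. Since $A$ is finite by Borel, Harder and Quillen, it is bounded, and a bounded pure subgroup is a direct summand (Kulikov--Pr\"ufer); thus purity is equivalent to splitting, while the reverse implication, that a direct summand is pure, is immediate.

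Next I would remove the discrepancy between the range ``$0<k\le k(l)$'' appearing in (1) and the range ``all $k>0$'' used above. By the very definition of $k(l)$ through (\ref{1.3}) and (\ref{1.4}), the Bockstein stabilization gives $D(n,l^k)\cong D(n)_l$ for every $k\ge k(l)$. Hence if (1) holds then, taking $k=k(l)$, we obtain $D(n)_l=D(n,l^{k(l)})=0$, so $D(n,l^k)=0$ for all $k\ge k(l)$ as well; combined with (1) for the remaining $k<k(l)$ this yields $D(n,l^k)=0$ for every $k>0$. Conversely, vanishing for all $k$ trivially restricts to $0<k\le k(l)$. Assembling the two steps gives (1) $\Leftrightarrow$ purity of $A$ in $B$ $\Leftrightarrow$ splitting of (\ref{QuillenLocSeq}) $\Leftrightarrow$ (2).

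The main obstacle I anticipate is precisely this passage from the ``all $k$'' purity criterion to the finite range $0<k\le k(l)$: one must be certain that the stabilization $D(n,l^k)\cong D(n)_l$ encoded in the definition of $k(l)$ is genuinely attained at $k(l)$ and persists for all larger $k$, so that a single vanishing at $k(l)$ propagates upward. The structural inputs — finiteness of $K_{2n}({\mathcal O}_F)_l$ forcing boundedness, and the Kulikov--Pr\"ufer theorem turning purity into splitting — are standard; it is worth emphasizing that the infinitude of the quotient $\bigoplus_v K_{2n-1}(k_v)_l$ is harmless here precisely because purity is detected on the finite bounded subgroup $A$ rather than on the quotient.
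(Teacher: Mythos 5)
Your proof is correct, and the main implication $(1)\Rightarrow(2)$ runs along a genuinely different track from the paper's. The preliminary step is identical: both you and the paper use the definition of $k(l)$ to propagate the vanishing of $D(n,l^k)$ from $0<k\le k(l)$ to all $k>0$ (the paper disposes of this in one sentence, exactly as you do). From there the paper works on the quotient side of (\ref{QuillenLocSeq}): the truncated sequences (\ref{1.2}) become short exact for every $k$, each $K_{2n-1}(k_v)_l$ is finite cyclic, so a generator can be lifted to an element of $K_{2n}(F)[l^k]$ (for suitable $k$) killed by its order, and these lifts assemble into an explicit section $\Lambda=\prod_v\Lambda_v$ of $\partial$. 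You work instead on the subgroup side: by (\ref{1.1}), vanishing of all $D(n,l^k)$ says precisely that the finite subgroup $K_{2n}({\mathcal O}_F)_l$ is pure in $K_{2n}(F)_l$, and the bounded-pure-summand theorem, which is \cite[Theorem 7]{Ka}, yields a retraction. Your route never uses cyclicity of the groups $K_{2n-1}(k_v)_l$, and its one nontrivial input is a theorem the paper itself invokes in the proof of Theorem \ref{H2 of F}; what the paper's hands-on construction buys is the explicit splitting homomorphism $\Lambda$, which is recycled in the proof of Theorem \ref{splitting5}. One caveat, which cuts equally against the paper: you announce that you read condition (2) as the splitting of the sequence (\ref{QuillenLocSeq}) rather than as a bare isomorphism of abelian groups. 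That is the reading under which the proposition is actually applied later (Corollaries \ref{splitting3} and \ref{splitting4} use it in the form ``the localization sequence splits''), and it is what the paper's own proof of $(2)\Rightarrow(1)$ tacitly requires: truncating a merely abstract isomorphism and citing (\ref{1.2}) does not by itself force $D(n,l^k)=0$, since an extension of an infinite direct sum of cyclic $l$-groups by a finite group can be abstractly isomorphic to the direct sum of the sub and the quotient without the extension being split, and $\bigoplus_v K_{2n-1}(k_v)_l$ has infinitely many cyclic summands of a given large order, so nothing purely formal excludes this. Thus your argument proves the proposition in the sense in which the paper states, proves, and uses it; starting from a bare isomorphism in (2), neither your argument nor the paper's would suffice.
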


\begin{proof} (2) implies 
$K_{2n} (F)[l^k] \, \cong \, K_{2n} ({\mathcal O}_F)[l^k] \, \oplus \,  
\bigoplus_{v} \, K_{2n-1} (k_v) [l^k]$ for every $k > 0$ hence 
$D(n, l^k) =0$ for every $k > 0$ by (\ref{1.2}).
\medskip

\noindent
Now assume (1). By definition of $k (l)$ we note that $D(n, l^k) = 0$ for every 
$0 < k \leq k(l)$ if and only if $D(n, l^k) = 0$ for every $k > 0.$ 
Hence by (\ref{1.2}) there is an exact sequence for every $k > 0:$
\begin{equation}
0 \rightarrow K_{2n} ({\mathcal O}_F)[l^k] \rightarrow 
K_{2n} (F) [l^k] 
\rightarrow \bigoplus_{v} \, K_{2n-1} (k_v)[l^k] \rightarrow 0.
\label{exact truncated}\end{equation}
The groups $K_{2n-1} (k_v)$ are finite cyclic. Hence for every $v$
we can choose $k \geq 0$ that $K_{2n-1} (k_v)_l = K_{2n-1} (k_v)[l^k].$
Hence the exact sequence (\ref{exact truncated}) 
allows us to construct a homomorphism
$\Lambda_v \, : K_{2n-1} (k_{v})_l \rightarrow 
K_{2n} (F)_l$ such that for every element $\xi_v \in K_{2n-1} (k_{v})_l$
we get $\partial (\Lambda_v (\xi_v)) = (\dots, 1, \xi_v, 1, \dots) \in
\bigoplus_{v} \, K_{2n-1} (k_{v})_l.$
Hence the map 
$$\Lambda := \prod_v \, \Lambda_v,$$ 
$$\Lambda\, :\, \bigoplus_{v} \, K_{2n-1} (k_{v})_l \, \rightarrow \, K_{2n} (F)_l$$
clearly splits $\partial$ in the Quillen localization sequence (\ref{QuillenLocSeq}).
\end{proof}

\begin{proposition}\label{splitting1etale}
The following conditions are equivalent:
\begin{enumerate} 
\item{} $D^{et} (n, l^k) = 0$ \,\, for every \,\, $0 < k \leq k(l),$    
\item{} $K_{2n}^{et} (F)_l \, \cong \, K_{2n}^{et} ({\mathcal O}_{F}[1/l])_l \, \oplus \,  
\bigoplus_{v} \, K_{2n-1}^{et} (k_v)_l$
\end{enumerate} 
\end{proposition}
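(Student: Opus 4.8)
The plan is to run the proof of Proposition~\ref{splitting1} essentially verbatim, replacing the Quillen localization sequence (\ref{1.2}) and its boundary map $\partial$ by the {\'e}tale analogue (\ref{1.22}) together with the {\'e}tale boundary $\partial^{et}$. First I would record that passing to the direct limit over $k$ in (\ref{1.22}) produces the $l$-part of the {\'e}tale localization sequence
\[
0 \rightarrow K_{2n}^{et} ({\mathcal O}_{F}[1/l])_l \rightarrow K_{2n}^{et} (F)_l \stackrel{\partial^{et}}{\longrightarrow} \bigoplus_{v} \, K_{2n-1}^{et} (k_v)_l \rightarrow 0,
\]
so that condition (2) is precisely the assertion that $\partial^{et}$ is split surjective.

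The implication (2) $\Rightarrow$ (1) is immediate: a splitting of this sequence restricts on $l^k$-torsion to a decomposition $K_{2n}^{et}(F)[l^k] \cong K_{2n}^{et}({\mathcal O}_F[1/l])[l^k] \oplus \bigoplus_v K_{2n-1}^{et}(k_v)[l^k]$ for every $k > 0$, whence $D^{et}(n,l^k) = 0$ for all $k > 0$ by (\ref{1.22}), in particular for $0 < k \leq k(l)$. For the converse I would first upgrade the hypothesis to all $k$: by (\ref{1.4}) we have $D^{et}(n,l^k) \cong D^{et}(n)$ for every $k \geq k(l)$, so the assumed vanishing on the range $0 < k \leq k(l)$, which contains $k = k(l)$, forces $D^{et}(n) = 0$ and hence $D^{et}(n,l^k) = 0$ for all $k > 0$. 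Then (\ref{1.22}) degenerates to the short exact sequence
\[
0 \rightarrow K_{2n}^{et} ({\mathcal O}_F[1/l])[l^k] \rightarrow K_{2n}^{et} (F)[l^k] \rightarrow \bigoplus_{v} \, K_{2n-1}^{et} (k_v)[l^k] \rightarrow 0
\]
for every $k > 0$.

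It then remains to construct the section, exactly as in Proposition~\ref{splitting1}. The groups $K_{2n-1}^{et}(k_v)_l$ are finite cyclic: one has $K_{2n-1}^{et}(k_v)_l \cong K_{2n-1}(k_v)_l$ by \cite[Corollary 8.6]{DF}, and the latter is finite cyclic by Quillen's computation of the K-theory of finite fields. Hence for each $v$ I can choose $k$ with $K_{2n-1}^{et}(k_v)_l = K_{2n-1}^{et}(k_v)[l^k]$ and use the surjectivity in the displayed short exact sequence to produce a homomorphism $\Lambda_v : K_{2n-1}^{et}(k_v)_l \rightarrow K_{2n}^{et}(F)_l$ with $\partial^{et}(\Lambda_v(\xi_v)) = (\dots, 1, \xi_v, 1, \dots)$; assembling $\Lambda := \prod_v \Lambda_v$ yields a section of $\partial^{et}$ and therefore condition (2). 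I expect the only genuinely substantive point, mirroring the sole nontrivial step in the Quillen case, to be the finiteness and cyclicity of the local {\'e}tale K-groups $K_{2n-1}^{et}(k_v)_l$, which is what makes the element-wise lifting and the order-matching choice of $k$ possible; everything else is a formal consequence of the exact sequences already established.
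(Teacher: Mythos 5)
Your proposal is correct and follows essentially the same route as the paper, whose entire proof of this proposition is that it is ``precisely the same as the proof of Proposition~\ref{splitting1} with use of the exact sequence (\ref{1.22})'': your truncation argument for (2)~$\Rightarrow$~(1), the reduction from $0 < k \leq k(l)$ to all $k>0$, and the construction of the section $\Lambda = \prod_v \Lambda_v$ out of the finite cyclic groups $K_{2n-1}^{et}(k_v)_l$ are exactly the steps of that proof transported to the {\'e}tale localization sequence. The only difference is that you make explicit what the paper leaves implicit, namely the appeal to (\ref{1.4}) for passing beyond $k(l)$ and to \cite[Corollary 8.6]{DF} plus Quillen's computation for the finiteness and cyclicity of $K_{2n-1}^{et}(k_v)_l$.
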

\begin{proof} The proof is precisely the same as the proof of 
Proposition \ref{splitting1} with use of the exact sequence (\ref{1.22}). 
\end{proof}

\begin{theorem}\label{splitting2}
The following conditions are equivalent:
\begin{enumerate} 
\item{} $D (n, l^k) = 0$ \,\, for every \,\, $0 < k \leq k(l),$  
\item{} $D^{et} (n, l^k) = 0$ \,\, for every \,\, $0 < k \leq k(l),$  
\item{} $K_{2n} (F)_l \, \cong \, K_{2n} ({\mathcal O}_F)_l \, \oplus \,  
\bigoplus_{v} \, K_{2n-1} (k_v)_l,$
\item{} $K_{2n}^{et} (F)_l \, \cong \, K_{2n}^{et} ({\mathcal O}_{F}[1/l])_l \, \oplus \,  
\bigoplus_{v} \, K_{2n-1}^{et} (k_v)_l.$
\end{enumerate} 
\end{theorem}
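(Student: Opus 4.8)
The plan is to glue together the two halves of the statement that have already been isolated. By Proposition \ref{splitting1} condition (1) is equivalent to condition (3), and by Proposition \ref{splitting1etale} condition (2) is equivalent to condition (4); moreover the proofs of those two propositions show more precisely that (3) holds if and only if the boundary map $\partial$ in the $l$-adic Quillen localization sequence (\ref{QuillenLocSeq}) is split surjective, and likewise that (4) holds if and only if the \'etale boundary map $\partial^{et}$ is split surjective. Thus the entire theorem collapses to the single assertion that $\partial$ is split surjective if and only if $\partial^{et}$ is split surjective, and I would cite Propositions \ref{splitting1} and \ref{splitting1etale} precisely for the (slightly delicate) fact that here an abstract direct-sum decomposition of the three finite groups is the same thing as a splitting of the exact sequence.

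To prove that last equivalence I would use the commutative square whose top row is $\partial$, whose bottom row is $\partial^{et}$, whose left vertical map is the Dwyer--Friedlander homomorphism $q\colon K_{2n}(F)_l\to K_{2n}^{et}(F)_l$, and whose right vertical map is the natural map $\bigoplus_v K_{2n-1}(k_v)_l\to\bigoplus_v K_{2n-1}^{et}(k_v)_l$. The right-hand map is an isomorphism for every $l\geq 2$ by \cite[Corollary 8.6]{DF}, since the $k_v$ are finite (and for $v\mid l$ both sides vanish on the $l$-part because then $q_v^n-1$ is prime to $l$, so the index set $\bigoplus_v$ may be read as $\bigoplus_{v\nmid l}$). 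The essential extra input is Theorem \ref{H2 of F}, which guarantees that $q$ is \emph{split} surjective; fix a section $\sigma$ of $q$. Then given a section $s$ of $\partial$, the map obtained from $q\circ s$ followed by the inverse of the right isomorphism is a section of $\partial^{et}$, and conversely, given a section $s^{et}$ of $\partial^{et}$, the map obtained from $\sigma\circ s^{et}$ followed by the right isomorphism is a section of $\partial$. Commutativity of the square turns each verification into a one-line diagram chase, yielding the desired equivalence uniformly in $l\geq 2$.

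I expect the only real obstacle to be a matter of organizing the argument so as to avoid the prime $l=2$ in the lowest level. A naïve approach would try to deduce $(1)\Leftrightarrow(2)$ directly from the isomorphisms $D(n,l^k)\cong D^{et}(n,l^k)$ of Theorem \ref{Quillen and etale obstr are eq}; but for $l=2$ these are only available for $k\geq 2$, and the $k=1$ term genuinely matters, since an element of $K_{2n}(\mathcal O_F)_2$ can be divisible by $2$ in $K_{2n}(F)_2$ without being divisible by $4$, so that $D(n,2)$ need not vanish even when $D(n,4)=0$. Routing the proof through the splitting of $\partial$ and $\partial^{et}$, rather than through a termwise comparison of the cokernels $D(n,l^k)$ and $D^{et}(n,l^k)$, sidesteps this difficulty completely: the only comparison results it calls upon are the split surjectivity of the Dwyer--Friedlander map from Theorem \ref{H2 of F} and the finite-field isomorphism \cite[Corollary 8.6]{DF}, both of which hold for all $l\geq 2$ under the standing hypothesis $\mu_4\subset F$. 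It is exactly this split surjectivity that forces the $2$-primary module structures on the Quillen and \'etale sides to match in the critical low degree, which is why the equivalence survives at $l=2$ even though the termwise identification of the obstruction groups does not.
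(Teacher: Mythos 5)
Your proposal is correct, but it reaches the conclusion by a genuinely different route than the paper. The paper's own proof is a one-liner: it combines Propositions \ref{splitting1} and \ref{splitting1etale} (giving $(1)\Leftrightarrow(3)$ and $(2)\Leftrightarrow(4)$, exactly as you do) with Theorem \ref{Quillen and etale obstr are eq} and the definition of $k(l)$, so that the bridge between the Quillen and \'etale sides is the termwise identification $D(n,l^k)\cong D^{et}(n,l^k)$ of the obstruction groups. You instead build the bridge at the level of conditions (3) and (4): the proofs of the two propositions show these are equivalent to split surjectivity of $\partial$, resp.\ $\partial^{et}$, and you transport an actual section across the Dwyer--Friedlander square, using the split surjectivity of $K_{2n}(F)_l\to K_{2n}^{et}(F)_l$ from Theorem \ref{H2 of F} and the finite-field isomorphism \cite[Corollary 8.6]{DF}. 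Your two chases are sound: writing $r$ for the right-hand isomorphism, $q$ for the Dwyer--Friedlander map with section $\sigma$, the maps $s^{et}:=q\circ s\circ r^{-1}$ and $s:=\sigma\circ s^{et}\circ r$ are sections of $\partial^{et}$ and $\partial$ respectively (only the second direction needs $\sigma$), and the commutativity you need is available in the paper as the direct limit over $k$ of the diagram displayed at the start of Section 6. What your route buys is exactly what you claim: it is uniform in $l\geq 2$ under the standing hypothesis $\mu_4\subset F$, whereas the paper's bridge rests on Theorem \ref{Quillen and etale obstr are eq}, which at $l=2$ identifies $D(n,2^k)$ with $D^{et}(n,2^k)$ only for $k\geq 2$; the $k=1$ terms of conditions (1) and (2) are left unaddressed in the paper's one-line proof, and (as your $\Z/4\supset 2\Z/4$-type example implicitly shows) vanishing of the obstruction groups at all levels $k\geq 2$ does not formally force vanishing at level $1$ for abstract torsion groups. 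Two small inaccuracies you should fix: $K_{2n}(F)_l$ and $\bigoplus_v K_{2n-1}(k_v)_l$ are infinite torsion groups, not finite, so the delicate step ``abstract isomorphism $\Rightarrow$ vanishing of the $D(n,l^k)$'' is not a counting argument but goes through the $l^k$-torsion sequences (\ref{1.2}) --- which is indeed why you are right to cite Proposition \ref{splitting1} for it rather than redo it; and your phrase ``$q\circ s$ followed by the inverse of the right isomorphism'' must be read as precomposition, i.e.\ $s^{et}=q\circ s\circ r^{-1}$, since the other order does not typecheck.
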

\begin{proof} It follows by Theorem \ref{Quillen and etale obstr are eq},
Propositions \ref{splitting1}, \ref{splitting1etale} and the definition of $k (l).$
\end{proof}
\medskip

\noindent
Observe that for any totally real number field 
$F$ any odd $n > 0$ and any odd prime number $l$ 
we have 
$$ | K_{2n}^{et} ({\mathcal O}_F [1/l]) | = 
|{w_{n+1} (F) \zeta_{F} (-n)} |_{l}^{-1}.$$
The following corollary is a correction of \cite{Ba2} Proposition 1 p. 293.

\begin{corollary}\label{splitting3}
Let $n$ be an odd positive integer and let $l$ be an odd prime number. 
Let $F$ be a totally real number field such that 
$\prod_{v | l} \,  w_n (F_v) = 1.$
The following conditions are equivalent:
\begin{enumerate} 
\item{} The following exact sequence splits
\begin{equation}
0 \rightarrow K_{2n} ({\mathcal O}_F)_l \rightarrow 
K_{2n} (F)_l
\stackrel{\partial}{\longrightarrow} 
\oplus_{v} \, K_{2n-1} (k_v)_l \rightarrow 0.
\nonumber\end{equation}
\item{} 
$$|{w_{n+1} (F) \zeta_{F} (-n)} |_{l}^{-1} = 1$$  
\end{enumerate} 
\end{corollary}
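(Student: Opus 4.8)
The plan is to collapse both conditions to the single statement $D(n)_l=0$ and then to compare the integral and \'etale truncation obstructions. First I would observe that condition (1), the splitting of the $l$-part of the Quillen localization sequence (\ref{QuillenLocSeq}), is precisely condition (3) of Theorem \ref{splitting2}; hence by Theorem \ref{splitting2} together with Proposition \ref{splitting1} it is equivalent to $D(n,l^k)=0$ for every $0<k\leq k(l)$, and therefore to $D(n,l^k)=0$ for \emph{every} $k>0$. On the arithmetic side, for $F$ totally real, $n$ odd and $l$ odd the formula (\ref{number of elements in D(n)l 1}) gives $|D(n)_l|=|w_{n+1}(F)\zeta_F(-n)/\prod_{v\mid l}w_n(F_v)|_l^{-1}$, so under the standing hypothesis $\prod_{v\mid l}w_n(F_v)=1$ the denominator is $l$-adically trivial and $|D(n)_l|=|w_{n+1}(F)\zeta_F(-n)|_l^{-1}$. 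Since $D(n)_l$ is finite, condition (2) is just $D(n)_l=0$. Thus the whole corollary reduces to the equivalence
\[
D(n,l^k)=0 \ \text{for all}\ k>0 \iff D(n)_l=0.
\]

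The forward direction is immediate: taking any $k\geq k(l)$ and using (\ref{1.3}) gives $D(n)_l\cong D(n,l^k)=0$. For the converse I would pass to \'etale K-theory. By Theorem \ref{Quillen and etale obstr are eq} (and Corollary \ref{div via sza}) we have $D^{et}(n)\cong D(n)_l$, so $D(n)_l=0$ yields $D^{et}(n)=0$. The crucial step is then to identify $D^{et}(n)$ with the \emph{entire} group $K_{2n}^{et}(\mathcal{O}_F[1/l])$: applying the equality (\ref{EtKTh OS comp to Div}) of Theorem \ref{MooreEt} with $S=S_{\infty,l}$, and using that the archimedean factors together with $w_n(F)$ are $l$-adically trivial for totally real $F$, odd $n$, odd $l$, while $\prod_{v\mid l}w_n(F_v)=1$, the ratio collapses to $|K_{2n}^{et}(\mathcal{O}_F[1/l])|=|D^{et}(n)|$. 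As $D^{et}(n)\hookrightarrow K_{2n}^{et}(\mathcal{O}_F[1/l])$ through the Moore sequence (\ref{MooreEt OS}) and both groups are finite, they coincide. Consequently $D(n)_l=0$ forces $K_{2n}^{et}(\mathcal{O}_F[1/l])=0$. But by (\ref{1.11}) each $D^{et}(n,l^k)$ is a subquotient of $K_{2n}^{et}(\mathcal{O}_F[1/l])$, so $D^{et}(n,l^k)=0$ for every $k>0$, and Theorem \ref{Quillen and etale obstr are eq} transports this back to $D(n,l^k)=0$ for every $k>0$, as required.

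The one genuinely non-formal point, and the step I expect to be the main obstacle, is exactly this converse implication. The stabilized vanishing $D(n)_l=0$ does \emph{not} by itself annihilate $D(n,l^k)$ for small $k$: the groups $D(n,l^k)$ are only subquotients of $K_{2n}(\mathcal{O}_F)$, and the transition maps exploited in Proposition \ref{splitting1} relate the truncations to one another rather than to $D(n)_l$. What rescues the argument is the hypothesis $\prod_{v\mid l}w_n(F_v)=1$, which forces the ambient \'etale group $K_{2n}^{et}(\mathcal{O}_F[1/l])$ to equal $D^{et}(n)$; once that ambient group vanishes, all of its subquotients, in particular every $D^{et}(n,l^k)$, vanish automatically. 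I would therefore take care to perform the \'etale computation with $S=S_{\infty,l}$ (so that the product of local factors reduces exactly to $\prod_{v\mid l}w_n(F_v)$) and to invoke the fact, noted at the end of the Introduction, that $|w_n(F)|_l^{-1}=|w_n(\R)|_l^{-1}=1$ for totally real $F$ with $n$ and $l$ odd, so that no archimedean term intrudes.
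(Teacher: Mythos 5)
Your proposal is correct and follows essentially the same route as the paper: both reduce condition (1) to the vanishing of all $D(n,l^k)$ via Proposition \ref{splitting1}, translate condition (2) into $D(n)_l=0$ through formula (\ref{number of elements in D(n)l 1}), and settle the delicate converse on the \'etale side, where each $D^{et}(n,l^k)\cong D(n,l^k)$ is a subquotient of $K_{2n}^{et}({\mathcal O}_F[1/l])$ and that ambient group is forced to vanish. The only difference is cosmetic: you obtain the vanishing of $K_{2n}^{et}({\mathcal O}_F[1/l])$ from the Moore-sequence identity (\ref{EtKTh OS comp to Div}), which under the hypothesis $\prod_{v|l}w_n(F_v)=1$ gives $K_{2n}^{et}({\mathcal O}_F[1/l])=D^{et}(n)$, whereas the paper's terse proof reads the same vanishing off the Wiles computation $|K_{2n}^{et}({\mathcal O}_F[1/l])|=|w_{n+1}(F)\zeta_F(-n)|_l^{-1}$ underlying that formula; the ingredients are identical.
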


\begin{proof}
In our case $ |D(n)_l|  =
|w_{n+1} (F) \zeta_{F} (-n) |_{l}^{-1},$ (see (\ref{number of elements in D(n)l 1})).
Moreover, by Theorem \ref{Quillen and etale obstr are eq}, for every $k > 0$ we have 
$D(n, l^k) \cong D^{et}(n, l^k)$ and 
$D^{et}(n, l^k)$ is a subquotient of $K_{2n}^{et} ({\mathcal O}_F [1/l]).$ In addition, 
as we observed before, $D(n)_l \cong D(n, l^k)$ for $k \gg 0.$ 
Hence in the assumptions of the corollary $D(n, l^k) = 0$ for all
$k > 0$ iff $|w_{n+1} (F) \zeta_{F} (-n) |_{l}^{-1} = 1.$ 
Hence the corollary follows by Corollary \ref{splitting1}.
\end{proof}

\begin{corollary}\label{splitting4}
Let $n$ be an odd positive integer 
and let $l$ be an odd prime number.
The following conditions are equivalent:
\begin{enumerate} 
\item{} The following exact sequence splits
\begin{equation}
0 \rightarrow K_{2n} (\Z) \rightarrow 
K_{2n} (\Q)
\stackrel{\partial}{\longrightarrow} 
\oplus_{v} \, K_{2n-1} (k_v) \rightarrow 0.
\nonumber\end{equation}
\item{} 
$$|{w_{n+1} (\Q) \zeta_{\Q} (-n)} |_{l}^{-1} = 1$$  
\end{enumerate} 
\end{corollary}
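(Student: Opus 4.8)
The plan is to deduce this corollary directly from Corollary \ref{splitting3} by specializing to $F = \Q$. Since $\Q$ is a totally real number field and both $n$ and $l$ are odd, every hypothesis of Corollary \ref{splitting3} is automatic except the condition $\prod_{v \mid l} w_n(\Q_v) = 1$ on the $l$-part. Once this single condition is verified, the equivalence between the splitting of the $l$-primary localization sequence (as in Corollary \ref{splitting3}) and the numerical condition $|w_{n+1}(\Q)\zeta_{\Q}(-n)|_l^{-1} = 1$ is immediate, because the local factor in the denominator of the formula (\ref{number of elements in D(n)l 1}) for $|D(n)_l|$ then disappears.

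First I would reduce the hypothesis to a single local computation. For $F = \Q$ there is exactly one place $v \mid l$, namely $v = l$ with completion $\Q_l$, so $\prod_{v \mid l} w_n(\Q_v)$ is nothing but the $l$-part of $|W^n(\Q_l)|$, where $W^n(\Q_l) = H^0(G_{\Q_l}, \Q_l/\Z_l(n))$. Thus it suffices to show $|W^n(\Q_l)| = 1$.

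Next I would carry out that computation via the cyclotomic character. The group $G_{\Q_l}$ acts on $\Q_l/\Z_l(n)$ through $\chi^n$, where $\chi : G_{\Q_l} \to \Z_l^{\times}$ is the cyclotomic character, and for the prime $l$ this character is surjective. Hence an element of $\mu_{l^i}^{\otimes n}$ is $G_{\Q_l}$-invariant precisely when $u^n \equiv 1 \pmod{l^i}$ for every $u \in \Z_l^{\times}$, that is, when the exponent $l^{i-1}(l-1)$ of the cyclic group $(\Z/l^i)^{\times}$ divides $n$. For $l$ odd this exponent is even as soon as $i \geq 1$, and since $n$ is odd it can never divide $n$; therefore no $i \geq 1$ contributes and $|W^n(\Q_l)| = 1$. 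This establishes $\prod_{v \mid l} w_n(\Q_v) = 1$.

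With the hypothesis in hand, Corollary \ref{splitting3} applied to $F = \Q$ yields exactly the asserted equivalence. The only genuine content is the local computation of $W^n(\Q_l)$, which I expect to be the main (and only) obstacle, and it reduces to the elementary parity observation that $l-1$ is even while $n$ is odd; everything else is a direct invocation of Corollary \ref{splitting3}. I would also note, in parallel with the remark closing the introduction, that the same parity argument gives $|w_n(\Q)|_l^{-1} = 1$, so the factor $w_n(F)$ likewise drops out of the count of $|D(n)_l|$, confirming that condition (2) is precisely the vanishing of $D(n)_l$ in this case.
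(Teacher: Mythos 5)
Your proposal is correct and takes essentially the same route as the paper: the paper's proof is exactly the specialization of Corollary \ref{splitting3} to $F=\Q$, justified by the single fact that $|w_n(\Q_l)|_l^{-1}=1$. Your cyclotomic-character parity argument (the exponent $l^{i-1}(l-1)$ of $(\Z/l^i)^{\times}$ is even, hence never divides the odd integer $n$) merely supplies the verification of that fact, which the paper asserts without proof.
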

\begin{proof} It follows from Corollary \ref{splitting3}
since $|w_n (\Q_{l})|_{l}^{-1} = 1.$
\end{proof}
\medskip

\noindent
\begin{remark}
Take $F = \F_p (x).$ Then ${\mathcal O}_F = \F_p [x].$ 
By the homotopy invariance \cite{Q1} Corollary p. 122 we have 
$K_n (\F_p [x]) = K_n (\F_p).$
Hence the boundary map in the localization sequence gives
the following isomorphism:
$$K_{2n}(\F_p (x)) \stackrel{\cong}{\longrightarrow}
\oplus_{v} \, K_{2n-1} (k_v)$$
In particular $D(n) = div K_{2n}\, (\F_p (x)) = 0.$  
\end{remark}

Let $l^{k_0}$ be the exponent of the group $K_{2n} ({\mathcal O}_F)_l.$ 
\medskip

\noindent
\begin{lemma}\label{nerrowlocalization1} 
For every $k \geq 1$ and every $k^{\prime} \geq k + k_0:$ 

\begin{enumerate} 
\item{} the natural map \,\,
$D (n, l^k) \rightarrow D (n, l^{k^{\prime}})$ \,\, is trivial,
\item{}
$\bigoplus_{v} \, K_{2n-1} (k_v)[l^k] \subset \partial (K_{2n} (F) [l^{k^{\prime}}]).$
\end{enumerate} 
\end{lemma}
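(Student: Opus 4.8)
The plan is to prove part (2) first and then obtain part (1) as an immediate formal consequence. For part (2), I would work directly in the $l$-adic Quillen localization sequence (\ref{QuillenLocSeq}), in which $\partial\colon K_{2n}(F)_l \to \bigoplus_v K_{2n-1}(k_v)_l$ is surjective with kernel $K_{2n}(\mathcal{O}_F)_l$. Take any $\xi \in \bigoplus_v K_{2n-1}(k_v)[l^k]$; regarding it as an element of $\bigoplus_v K_{2n-1}(k_v)_l$, surjectivity of $\partial$ furnishes some $x \in K_{2n}(F)_l$ with $\partial(x)=\xi$. Since $\xi$ is annihilated by $l^k$, we have $\partial(l^k x)=l^k\xi=0$, so $l^k x \in \ker\partial = K_{2n}(\mathcal{O}_F)_l$. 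As this group has exponent $l^{k_0}$, it follows that $l^{k_0}(l^k x)=0$, that is, $x \in K_{2n}(F)[l^{k+k_0}]$. Hence for every $k'\geq k+k_0$ we have $x \in K_{2n}(F)[l^{k'}]$ and $\xi=\partial(x)\in \partial(K_{2n}(F)[l^{k'}])$, which is exactly the inclusion asserted in (2).

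For part (1), I would recall that the exact sequences (\ref{1.2}) for varying $k$ assemble into a directed system via the inclusions $[l^k]\hookrightarrow [l^{k'}]$ on each torsion term, and that it is precisely these inclusions that induce the natural transition maps $D(n,l^k)\to D(n,l^{k'})$ on the cokernels (this is the direct system whose colimit vanishes by (\ref{dirlim})). Reading $D(n,l^{k'})$ off (\ref{1.2}) as the cokernel $\bigl(\bigoplus_v K_{2n-1}(k_v)[l^{k'}]\bigr)/\partial\bigl(K_{2n}(F)[l^{k'}]\bigr)$, I would represent an arbitrary class of $D(n,l^k)$ by an element $\xi \in \bigoplus_v K_{2n-1}(k_v)[l^k]$; its image under the natural map is the class of $\xi$ modulo $\partial(K_{2n}(F)[l^{k'}])$. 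By part (2), $\xi$ already lies in $\partial(K_{2n}(F)[l^{k'}])$, so this image is zero. As $\xi$ was arbitrary, the map $D(n,l^k)\to D(n,l^{k'})$ is trivial.

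The only genuine content lies in part (2), and the main point to get right there is the exponent bookkeeping: one must observe that a preimage $x$ of an $l^k$-torsion class, although a priori only $l$-power torsion, is forced into $K_{2n}(F)[l^{k+k_0}]$ because $l^k x$ is trapped in the kernel $K_{2n}(\mathcal{O}_F)_l$ of bounded exponent $l^{k_0}$. This requires nothing beyond exactness and surjectivity of $\partial$ in (\ref{QuillenLocSeq}), both of which are already available (the finiteness of $K_{2n}(\mathcal{O}_F)$, hence the existence of the exponent $l^{k_0}$, being supplied by Borel, Harder and Quillen as used earlier). Part (1) is then purely formal, so I expect no serious obstacle; the care needed is merely in matching the cokernel description of $D(n,l^{k'})$ coming from (\ref{1.2}) with the conclusion of part (2).
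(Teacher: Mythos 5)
Your proposal is correct, but it runs in the opposite direction from the paper's argument, and the substantive step lands in a different place. The paper proves part (1) first, using the quotient (kernel) presentation $D(n,l^k)\cong\ker\bigl(K_{2n}(\mathcal{O}_F)/l^k\to K_{2n}(F)/l^k\bigr)$ from (\ref{1.1}): under this presentation the transition map $D(n,l^k)\to D(n,l^{k'})$ is restricted from multiplication by $l^{k'-k}$ on $K_{2n}(\mathcal{O}_F)/l^k$, which is already the zero map because $k'-k\geq k_0$ and $K_{2n}(\mathcal{O}_F)_l$ has exponent $l^{k_0}$; part (2) is then deduced by a diagram chase on the four-term sequences (\ref{1.2}), using that the torsion inclusions are compatible with the natural maps on the $D$'s. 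You instead prove (2) directly from exactness of (\ref{QuillenLocSeq}), lifting an $l^k$-torsion class and bounding the order of the lift by $l^{k+k_0}$ since $l^k x$ is trapped in $\ker\partial=K_{2n}(\mathcal{O}_F)_l$, and then (1) becomes purely formal. Both proofs ultimately rest on the same single fact (the exponent bound $l^{k_0}$), so neither is deeper; what your route buys is that it works entirely within the torsion/cokernel presentation of $D(n,-)$ coming from (\ref{1.2}), never invoking the mod-$l^k$ presentation, at the cost of the slightly less immediate lifting argument. Note that both arguments share the same implicit bookkeeping obligation: identifying the ``natural map'' $D(n,l^k)\to D(n,l^{k'})$ (the one in the direct system with $\varinjlim_k D(n,l^k)=0$, i.e.\ (\ref{dirlim})) with the map induced by the inclusions $[l^k]\hookrightarrow[l^{k'}]$ on the torsion terms; you assert this compatibility rather than verify it, but the paper does exactly the same in its second diagram, so this is not a gap relative to the paper's own standard of rigor.
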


\begin{proof}
Statement (1) follows from the commutative diagram with exact rows:
$$\xymatrix{
0 \quad \ar[r]^{}  \quad & 
\quad D (n, l^k)   
\ar@<0.1ex>[d]^{} \ar[r]^{}  \quad & 
\quad K_{2n} ({\mathcal O}_F) / l^k \ar@<0.1ex>[d]^{l^{k^{\prime} - k}} \ar[r]^{}  \quad & 
\quad K_{2n} (F) / l^{k} \ar@<0.1ex>[d]^{l^{k^{\prime} - k}} \\
0 \quad \ar[r]^{}  \quad & 
\quad D (n, l^{k^{\prime}}) \ar[r]^{}  \quad & 
\quad K_{2n} ({\mathcal O}_F) / l^{k^{\prime}}  \ar[r]^{}  \quad & 
\quad K_{2n} (F) / l^{k^{\prime}}}
\label{}$$
since the middle vertical map is trivial by definition of $k_0.$
\medskip

\noindent
Statement (2) follows from (1) and the following commutative diagram with exact rows:
$$\xymatrix{
K_{2n} (F)[l^k] \quad \ar[r]^{\partial} \ar@<0.1ex>[d]^{} \quad & 
\quad \bigoplus_{v} \, K_{2n-1} (k_v)[l^k]    
\ar@<0.1ex>[d]^{} \ar[r]^{}  \quad & 
\quad D (n, l^k) \ar@<0.1ex>[d]^{0} \ar[r]^{}  \quad & 
\quad 0 \\
K_{2n} (F)[l^{k^{\prime}}] \quad \ar[r]^{\partial}  \quad & 
\quad \bigoplus_{v} \, K_{2n-1} (k_v)[l^{k^{\prime}}] \ar[r]^{}  
\quad & \quad D (n, l^{k^{\prime}}) \ar[r]^{}  \quad & 
\quad 0 }
\label{}$$
since the left and the middle vertical arrows are natural imbeddings.
\end{proof}
\bigskip

\noindent
For any $k \geq k(l)$ let us define 
$${\bigoplus_{v}}^{(1)} \, K_{2n-1} (k_v)_l := 
\bigoplus_{l^{k} \, | \, q_{v}^n - 1} \, K_{2n-1} (k_v)_l$$
$${\bigoplus_{v}}^{(2)} \, K_{2n-1} (k_v)_l := 
\bigoplus_{l^{k} \, {\not |} \, q_{v}^n - 1} \, K_{2n-1} (k_v)_l$$

\begin{theorem}\label{splitting5}
Let $F$ be a global field, $n \geq 1$ and $l$ be any prime number.
The following conditions are equivalent:
\begin{enumerate} 
\item{} $D (n)_l = 0,$ 
\item{} the following surjective map splits
$$\partial_{1} \, : \, K_{2n} (F)_l
\rightarrow {\bigoplus_{v}}^{(1)} \, K_{2n-1} (k_v)_l .$$
\end{enumerate} 
\end{theorem}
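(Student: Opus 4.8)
The plan is to run everything through the $l$-part of the Quillen localization sequence \eqref{QuillenLocSeq}, which I abbreviate as $0\rightarrow B\rightarrow A\stackrel{\partial}{\longrightarrow} C\rightarrow 0$ with $A=K_{2n}(F)_l$, $B=K_{2n}(\mathcal{O}_F)_l$ finite of exponent $l^{k_0}$, and $C=\bigoplus_{v}K_{2n-1}(k_v)_l$. Fixing $k\geq k(l)$ I split the target as $C=C_1\oplus C_2$, where $C_1={\bigoplus_v}^{(1)}K_{2n-1}(k_v)_l$ collects the places with $l^{k}\mid q_v^n-1$ and $C_2={\bigoplus_v}^{(2)}K_{2n-1}(k_v)_l$ the rest, so that $\partial_1=\pi_1\circ\partial$ for the projection $\pi_1\colon C\rightarrow C_1$. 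Two elementary order facts drive the argument: each summand of $C_1$ is cyclic of order $l^{m_v}$ with $m_v\geq k\geq k(l)$, whereas $l^{k}\,{\not |}\,q_v^n-1$ forces every summand of $C_2$ to have order dividing $l^{k-1}$, so $C_2$ has exponent dividing $l^{k-1}$. Finally I would record, from \eqref{1.2} together with the snake lemma applied to multiplication by $l^{j}$ on \eqref{QuillenLocSeq} and the identification $D(n,l^{j})\cong D(n)_l$ for $j\geq k(l)$, that $D(n)_l=l^{j}A\cap B$ for every $j\geq k(l)$.

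For the implication $(1)\Rightarrow(2)$ I would imitate the section-building in the proof of Proposition \ref{splitting1}, refined so as to lift generators with control on their order. Assuming $D(n)_l=0$, for each place $v$ with $l^{k}\mid q_v^n-1$ choose a generator $\xi_v$ of the cyclic group $K_{2n-1}(k_v)_l$, of order $l^{m_v}$. Since $m_v\geq k(l)$ we have $D(n,l^{m_v})\cong D(n)_l=0$, so $\partial\colon A[l^{m_v}]\rightarrow C[l^{m_v}]$ is onto; hence $\xi_v$, viewed in $C$ supported at the single place $v$, lifts to some $\eta_v\in A[l^{m_v}]$ with $\partial(\eta_v)=\xi_v$. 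As $\partial(\eta_v)$ has order $l^{m_v}$, so does $\eta_v$, and therefore $\xi_v\mapsto\eta_v$ extends to a homomorphism on that cyclic summand. Assembling these over all large $v$ yields $s_1\colon C_1\rightarrow A$ with $\partial_1 s_1=\mathrm{id}$, i.e. $\partial_1$ splits.

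The reverse implication $(2)\Rightarrow(1)$ is where the real work lies, since a section of $\partial_1$ only constrains the large summands $C_1$, while $D(n)_l$ a priori also feels the small summands $C_2$. Given a section $s_1$, I would take an arbitrary $b\in D(n)_l$ and, for each $i\geq 0$, use $D(n)_l=l^{k+i}A\cap B$ to write $b=l^{k+i}a$ with $a\in A$. From $\partial(b)=0$ one gets $\partial(a)\in C[l^{k+i}]$, and subtracting the correction $s_1(\pi_1\partial(a))$, which lies in $A[l^{k+i}]$ and is thus killed by $l^{k+i}$, replaces $a$ by an $a'$ with $l^{k+i}a'=b$ and $\partial(a')\in C_2$. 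The exponent bound $l^{k-1}C_2=0$ then gives $l^{k-1}a'\in B$, whence $b=l^{i+1}(l^{k-1}a')\in l^{i+1}B$. Letting $i$ vary and using that $B$ is finite, $b\in\bigcap_{i}l^{i+1}B=0$, so $D(n)_l=0$.

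The main obstacle is thus the step just described: reconciling a splitting that sees only $C_1$ with a vanishing statement about $D(n)_l$. The resolution rests on the precise definition of the decomposition $C=C_1\oplus C_2$, which is exactly engineered so that $\exp(C_2)\mid l^{k-1}$, together with the two guises of $D(n)_l$ as both a divisible-type object and the stable intersection $l^{j}A\cap B$. I would finally check the routine points: that the element $a$ extracted from $l^{k+i}A\cap B$ genuinely lies in the $l$-primary group $A$ (it is annihilated by a power of $l$), and that archimedean places and places above $l$ cause no trouble, all the groups $K_{2n-1}(k_v)_l$ in sight being finite cyclic.
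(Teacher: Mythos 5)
Your proof is correct, and your direction $(1)\Rightarrow(2)$ is essentially the paper's own argument: both proofs lift a generator $\xi_v$ of each cyclic summand at the level $l^{m_v}$ with $l^{m_v} \, || \, q_v^n - 1$, using that the truncated boundary map has cokernel $D(n,l^{m_v})\cong D(n)_l=0$ by (\ref{1.2}) and (\ref{1.3}), and then assemble these lifts (which are homomorphisms on cyclic groups because the lift of $\xi_v$ has order dividing $l^{m_v}$) into a section of $\partial_1$. Your converse $(2)\Rightarrow(1)$, however, is a genuinely different argument. The paper works with images of truncated maps: it fixes $k'=k+k_0$, uses part (2) of Lemma \ref{nerrowlocalization1} to get $\partial_2\bigl(K_{2n}(F)[l^{k'}]\bigr)\subset \bigoplus_v K_{2n-1}(k_v)[l^k]\subset \partial\bigl(K_{2n}(F)[l^{k'}]\bigr)$, combines this with the equality $\partial_1\bigl(K_{2n}(F)[l^{k'}]\bigr)={\bigoplus_v}^{(1)}K_{2n-1}(k_v)[l^{k'}]$ supplied by the splitting, and concludes that $\partial$ is surjective on $l^{k'}$-torsion, i.e.\ that $D(n,l^{k'})\cong D(n)_l$ vanishes. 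You instead argue element by element: any $b\in D(n)_l$ lies in $K_{2n}(\mathcal{O}_F)_l$ and can be written $b=l^{k+i}a$ with $a\in K_{2n}(F)_l$ (this much of your identity $D(n)_l=l^jA\cap B$ is immediate from divisibility and needs no stabilization argument); the correction $a'=a-s_1(\pi_1\partial(a))$ kills the $C_1$-component of the boundary without changing $l^{k+i}a'=b$, the exponent bound $l^{k-1}C_2=0$ built into the decomposition then puts $l^{k-1}a'$ in $K_{2n}(\mathcal{O}_F)_l$, and finiteness of that group forces $b=0$. Your route bypasses Lemma \ref{nerrowlocalization1} entirely --- its only inputs are the sequence (\ref{QuillenLocSeq}), the identification of $D(n)_l$ with the stable cokernels from (\ref{1.2})--(\ref{1.3}), the section, and the two exponent bounds --- so it is more self-contained and elementary. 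What the paper's route buys is the stronger intermediate statement that the full truncated map $K_{2n}(F)[l^{k'}]\rightarrow\bigoplus_v K_{2n-1}(k_v)[l^{k'}]$ is surjective for $k'=k+k_0$, and an argument uniform with the proof of Proposition \ref{splitting1}; both proofs ultimately rest on the same two structural constants, the exponent $l^{k-1}$ bounding ${\bigoplus_v}^{(2)}K_{2n-1}(k_v)_l$ and the exponent $l^{k_0}$ of $K_{2n}(\mathcal{O}_F)_l$.
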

\begin{proof}
For each $k^{\prime} \geq k$ consider the following exact sequence 
\begin{equation} 
K_{2n} (F) [l^{k^{\prime}}]
\stackrel{\partial}{\longrightarrow} 
{\bigoplus_{v}}^{(1)} \, K_{2n-1} (k_v) [l^{k^{\prime}}] \oplus 
{\bigoplus_{v}}^{(2)} \, K_{2n-1} (k_v) [l^{k^{\prime}}]
\rightarrow D(n)_l \rightarrow 0.
\label{QuillenLocSeqApplic1}\end{equation} 
where $\partial = \partial_1 \oplus \partial_2.$
Assume that $D (n)_l = 0.$ Hence for each $k^{\prime} \geq k$ the following map is 
surjective:
\begin{equation} 
K_{2n} (F) [l^{k^{\prime}}]
\stackrel{\partial_1}{\longrightarrow} 
{\bigoplus_{v}}^{(1)} \, K_{2n-1} (k_v) [l^{k^{\prime}}].
\label{QuillenLocSeqApplic2}\end{equation}  
So for each $v$ such that $l^{k} \, | \, q_{v}^n - 1$ we take $k^{\prime} \geq k$ 
such that $l^{k^{\prime}} ||  q_{v}^n - 1$ and we notice that there is a homomorphism 
$$\Lambda_v \, : \, K_{2n-1} (k_v)_l \, \rightarrow \, K_{2n} (F)_l$$ 
such that 
$$\partial_1 \circ \Lambda_v (\xi_v) \,\, = \,\,  
(\dots, 1, \, \xi_v, \, 1, \dots),$$
for any $\xi_v \in K_{2n-1} (k_v)_l.$ It is clear that
$$\Lambda_1 \, :\, {\bigoplus_v}^{(1)} \, 
K_{2n-1} (k_v)_l \, \rightarrow \, K_{2n} (F)_l$$
$$\Lambda_1 \, := \, {\prod_v}^{(1)} \,\, \Lambda_v$$ 
splits $\partial_1.$
\medskip

\noindent
Assume now that $\partial_1$ is split surjective.
Consider the exact sequence (\ref{QuillenLocSeqApplic1})
for $k^{\prime} = k + k_0.$ For such $k^{\prime}$ by Lemma 
\ref{nerrowlocalization1} we have  
$$\partial_{2} (K_{2n} (F)[l^{k^{\prime}}]) \subset {\bigoplus_{v}}^{(2)} \, 
K_{2n-1} (k_v) [l^{k^{\prime}}] = $$
\begin{equation} 
= {\bigoplus_{v}}^{(2)} \, K_{2n-1} (k_v) [l^{k}] 
\subset {\bigoplus_{v}} \, K_{2n-1} (k_v) [l^{k}]  
\subset \partial (K_{2n} (F)[l^{k^{\prime}}]).
\label{imagesofboundrymaps}\end{equation} 
On the other hand 
$\partial_{1}$ is split surjective hence
${\bigoplus_{v}}^{(1)} \, K_{2n-1} (k_v) [l^{k^{\prime}}]
= \partial_{1} (K_{2n} (F)[l^{k^{\prime}}]).$ Since 
$\partial = \partial_{1} \oplus \partial_{2}$ then by 
(\ref{imagesofboundrymaps}) we see that 
$\partial_{1} (K_{2n} (F)[l^{k^{\prime}}]) \subset 
\partial (K_{2n} (F)[l^{k^{\prime}}]).$ Hence again by (\ref{imagesofboundrymaps}) 
the map $\partial$ in the exact sequence (\ref{QuillenLocSeqApplic1})
is surjective. Hence $D (n)_l = 0.$
\end{proof}

\begin{corollary}\label{splitting6}
Let $F$ be a totally real number field. Let $n$ be an odd positive integer 
and let $l$ be an odd prime number.
The following conditions are equivalent:
\begin{enumerate} 
\item{} The following surjective map splits
$$\partial_{1} \, : \, K_{2n} (F)_l
\rightarrow {\bigoplus_{v}}^{(1)} \, K_{2n-1} (k_v)_l $$
\item{} 
$$\big{|}\frac{w_{n+1} (F) \zeta_{F} (-n)}{\prod_{v | l} \,  w_n (F_v)} \big{|}_{l}^{-1} = 1$$  
\end{enumerate} 
\end{corollary}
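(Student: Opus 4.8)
The plan is to read this off directly from Theorem \ref{splitting5} combined with the order computation recorded in equation (\ref{number of elements in D(n)l 1}). First I would apply Theorem \ref{splitting5} with the present $F$, $n$ and $l$ (all of which satisfy its hypotheses, since a totally real number field is in particular a global field and $n\geq 1$, $l$ arbitrary prime are allowed): that theorem states that the surjective map $\partial_1 \colon K_{2n}(F)_l \to {\bigoplus_{v}}^{(1)} K_{2n-1}(k_v)_l$ splits if and only if $D(n)_l = 0$. This immediately reduces the corollary to proving that, under the stated hypotheses, $D(n)_l = 0$ is equivalent to condition (2).

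Next I would invoke formula (\ref{number of elements in D(n)l 1}), which holds precisely in the present setting ($F$ totally real, $n$ odd, $l$ an odd prime) and asserts that $|D(n)_l| = \bigl|\frac{w_{n+1}(F)\,\zeta_{F}(-n)}{\prod_{v\mid l} w_n(F_v)}\bigr|_{l}^{-1}$. Since $D(n)_l$ is a finite $l$-group, its vanishing $D(n)_l = 0$ is the same as $|D(n)_l| = 1$, which by this formula is exactly condition (2). Chaining the two equivalences — splitting of $\partial_1 \Longleftrightarrow D(n)_l = 0 \Longleftrightarrow$ condition (2) — completes the argument.

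I expect no substantial obstacle here, as both ingredients are already established earlier in the paper; this corollary is the analogue of Corollary \ref{splitting3} but phrased through Theorem \ref{splitting5} (the restricted boundary map $\partial_1$) rather than through Proposition \ref{splitting1}, which is why the extra normalization $\prod_{v\mid l} w_n(F_v) = 1$ assumed in Corollary \ref{splitting3} is not imposed and the factor $\prod_{v\mid l} w_n(F_v)$ survives in the denominator of condition (2). The only point demanding care is to confirm that the hypotheses match exactly those under which (\ref{number of elements in D(n)l 1}) was derived, so that no archimedean or $l$-adic contribution is mishandled; in particular, for $F$ totally real and $l$ odd one has $|w_n(F)|_{l}^{-1} = 1$, which is precisely why no factor $w_n(F)$ appears in the numerator of condition (2), in contrast with the function-field formula (\ref{D(n) expressed by zeta of F and X}).
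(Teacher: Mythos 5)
Your proof is correct and follows essentially the same route as the paper: the paper's own proof also combines Theorem \ref{splitting5} (splitting of $\partial_1$ $\Leftrightarrow$ $D(n)_l = 0$) with the order formula $|D(n)_l| = \bigl|\frac{w_{n+1}(F)\,\zeta_F(-n)}{\prod_{v\mid l} w_n(F_v)}\bigr|_l^{-1}$, citing \cite[Theorem 3]{Ba2}, which is exactly the formula recorded as (\ref{number of elements in D(n)l 1}). Your remarks on why the hypotheses match and why no factor $w_n(F)$ appears are accurate but not needed beyond what the paper does.
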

\begin{proof}
By \cite{Ba2} Theorem 3 p. $| D(n)_l |  =
|\frac{w_{n+1} (F) \zeta_{F} (-n)}{\prod_{v | l} \,  w_n (F_v)} |_{l}^{-1}.$
Hence the corollary follows by Theorem \ref{splitting5}.
\end{proof}

\begin{corollary}\label{splitting7}
Let $F$ be a global field of $\text{char} \, F > 0$. Let $n > 1$ be an integer 
and let $l \not= \, \text{char} \, F$.
The following conditions are equivalent:
\begin{enumerate} 
\item{} The following surjective map splits
$$\partial_{1} \, : \, K_{2n} (F)_l
\rightarrow {\bigoplus_{v}}^{(1)} \, K_{2n-1} (k_v)_l $$
\item{} 
$$ 
\big{|} \frac{ w_{n} (F) \, w_{n+1} (F) \, \zeta_{F} (-n)}{\prod_{v \, | \, \infty} w_{n} 
(F_v)} \big{|}_{l}^{-1} = 1.
$$
\end{enumerate} 
\end{corollary}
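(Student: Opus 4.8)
The plan is to reduce the statement directly to two results already in hand: the splitting criterion of Theorem \ref{splitting5} and the zeta-value computation of the group of divisible elements in the positive characteristic case. First I would invoke Theorem \ref{splitting5}, which asserts that for a global field $F$, any $n \geq 1$ and any prime $l$, the surjection $\partial_{1} : K_{2n} (F)_l \rightarrow {\bigoplus_{v}}^{(1)} \, K_{2n-1} (k_v)_l$ splits if and only if $D(n)_l = 0$. This already yields the equivalence between condition (1) and the vanishing of $D(n)_l$, and it applies verbatim here since $F$ is a global field of characteristic $p > 0$ with $l \neq p$ (and $\mu_4 \subset F$ when $l = 2$, as assumed throughout).

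Second, since $D(n)_l$ is a finite $l$-group, its vanishing is equivalent to $|D(n)_l| = 1$. Here I would appeal to the formula (\ref{D(n) expressed by zeta of F and X}) obtained in the function field example of Section 4, which expresses
\begin{equation}
|D(n)_l| = \big{|} \frac{ w_{n} (F) \, w_{n+1} (F) \, \zeta_{F} (-n)}{\prod_{v \, | \, \infty} w_{n} (F_v)} \big{|}_{l}^{-1}.
\nonumber\end{equation}
Comparing this with condition (2) shows that $D(n)_l = 0$ is precisely the statement that the displayed $l$-adic absolute value equals $1$, which closes the chain of equivalences and completes the proof.

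The genuine content of the argument lies upstream, not in this corollary. The delicate part was establishing Theorem \ref{splitting5}, namely the identification of $D(n)_l$ as the exact obstruction to splitting the narrow boundary map via the truncation estimates of Lemma \ref{nerrowlocalization1}, together with the derivation of the zeta formula (\ref{D(n) expressed by zeta of F and X}) from the Moore exact sequence (\ref{MooreEt F}), the isomorphism $D(n)_l \cong D^{et}(n)$ of Theorem \ref{Quillen and etale obstr are eq}, and the computation of $|H^1 (X, W^{n+1})|$ in terms of the Weil zeta function of the curve $X/\F_q$. Given these ingredients the corollary is immediate, and I expect no further obstacle; the only point deserving a word of care is that the formula for $|D(n)_l|$ was derived under the standing assumption that $\text{char} \, F = p > 0$ with $\F_q$ algebraically closed in $F$, which is exactly the hypothesis of the present corollary.
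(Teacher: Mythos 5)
Your proof is correct and follows exactly the paper's own argument: the author likewise deduces the corollary by combining the splitting criterion of Theorem \ref{splitting5} (condition (1) $\Leftrightarrow$ $D(n)_l = 0$) with the zeta-value formula (\ref{D(n) expressed by zeta of F and X}) for $|D(n)_l|$ in the function field case. Your additional remarks about where the real work lies (Theorem \ref{splitting5} and the derivation of the formula via the Weil zeta function of the curve) accurately reflect the structure of the paper.
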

\begin{proof}
Due to (\ref{D(n) expressed by zeta of F and X}) the corollary follows by Theorem 
\ref{splitting5}.
\end{proof}

\medskip

\noindent
{\it Acknowledgments}:\quad
I would like to thank the Max Planck Institute in Bonn
for hospitality and financial support during visit in April and May 2010
and SFB in Muenster for hospitality and financial support during visit in 
September 2010. I would also like to thank the University
of California, San Diego for hospitality and financial
support during visit December 2010 -- June 2011.
\bigskip


\bibliographystyle{amsplain}

\end{document}